\let\frak\mathfrak
\let\Bbb\mathbb
\def\>{\relax\ifmmode\mskip.666667\thinmuskip\relax\else\kern.111111em\fi}
\def\<{\relax\ifmmode\mskip-.333333\thinmuskip\relax\else\kern-.0555556em\fi}
\def\vsk#1>{\vskip#1\baselineskip}
\def\vv#1>{\vadjust{\vsk#1>}\ignorespaces}
\def\vvn#1>{\vadjust{\nobreak\vsk#1>\nobreak}\ignorespaces}
  \let\ssize\scriptstyle
\let\sssize\scriptscriptstyle
\let\Medskip\medskip
\def\medskip{\par\Medskip}
\let\Bigskip\bigskip
\def\bigskip{\par\Bigskip}
\let\Maketitle\maketitle
\def\maketitle{\Maketitle\thispagestyle{empty}\let\maketitle\empty}
\newtheorem{thm}{Theorem}[section]
\newtheorem{cor}[thm]{Corollary}
\newtheorem{lem}[thm]{Lemma}
\newtheorem{prop}[thm]{Proposition}
\numberwithin{equation}{section}
\theoremstyle{definition}
\newtheorem{defn}[thm]{Definition}
\newtheorem{rem}[thm]{Remark}
\newtheorem*{example}{Example}
\let\mc\mathcal
\let\nc\newcommand
\let\al\alpha
\let\bt\beta
\let\Gm\Gamma
\let\la\lambda
\let\La\Lambda
\let\pho\phi
\let\phi\varphi
\let\si\sigma
\let\Si\Sigma
\let\om\omega
\let\Om\Omega
\let\der\partial
\let\ge\geqslant
\let\geq\geqslant
\let\le\leqslant
\let\leq\leqslant
\let\on\operatorname
\let\bi\bibitem
\let\bs\boldsymbol
\def\C{{\mathbb C}}
\def\Z{{\mathbb Z}}
\def\Q{{\mathbb Q}}
\def\B{{\mc B}}
\def\F{{\mc F}}
\def\+#1{^{\{#1\}}}
\def\id{\on{id}}
\def\sln{\mathfrak{sl}_N}
\def\beq{\begin{equation}}
\def\eeq{\end{equation}}
\def\be{\begin{equation*}}
\def\ee{\end{equation*}}
\nc{\bea}{\begin{eqnarray*}}
\nc{\eea}{\end{eqnarray*}}
\nc{\bean}{\begin{eqnarray}}
\nc{\eean}{\end{eqnarray}}
\nc{\bal}{\begin{align*}}
\nc{\eal}{\end{align*}}
\nc{\baln}{\begin{align}}
\nc{\ealn}{\end{align}}
\let\ga\gamma
\let\Ga\Gamma
\nc{\Il}{{\mc I_{\bs\la}}}
\nc{\bla}{{\bs\la}}
\nc{\Fla}{\F_\bla}
\nc{\tfl}{{T^*\Fla}}
\nc{\GL}{{GL_n(\C)}}
\nc{\GLC}{{GL_n(\C)\times\C^*}}
\let\sd s 
\def\ddk_#1{\kk_{#1}\<\>\frac\der{\der\<\>\kk_{#1}}}
\def\bul{\mathbin{\raise.2ex\hbox{$\sssize\bullet$}}}
\def\intt{\mathchoice
{\mathop{\raise.2ex\rlap{$\,\,\ssize\backslash$}{\intop}}\nolimits}
{\mathop{\raise.3ex\rlap{$\,\sssize\backslash$}{\intop}}\nolimits}
{\mathop{\raise.1ex\rlap{$\sssize\>\backslash$}{\intop}}\nolimits}
{\mathop{\rlap{$\sssize\<\>\backslash$}{\intop}}\nolimits}}
\let\kk q 
\let\cc c
\let\Ko K
\def\GZ/{Gelfand-Zetlin}
\def\KZ/{{\slshape KZ\/}}
\def\qKZ/{{\slshape qKZ\/}}
\def\XXX/{{\slshape XXX\/}}
\nc{\slnl}{{\sln (\lambda)}}
\nc{\PCN}{{   (\C[x])^N   }}
\nc{\di}{\on{Diag}}
\nc{\dio}{\on{Diag}_0}
\nc{\Mm}{{\mc M}}
\nc{\Nn}{{\mc N}}
\nc{\A}{{\mc C}}
\nc{\PCr}{{  P  (\C[x])^n   }}
\nc{\Pk}{{(\bs{P}^1)^k}}
\nc{\N}{{\Bbb N}}
\def\D{{\mc D}}
\nc{\Ll}{{\mc L}}
\nc{\ord}{{\on{ord}\,}}
\nc{\Sing}{{\on{Sing}\,}}
\nc{\sing}{{\on{Sing}\,}}
\nc{\Hess}{{\on{Hess}}}
\nc{\R}{{\Bbb R}}
\let\on\operatorname
\nc{\Kk}{{\bs K}}
\nc{\Ap}{{A_\Phi(z)}}
\nc{\ap}{{A_\Phi(z)}}
\nc{\sv}{{\sing V}}
\nc{\cd}{{\C^n-\Delta}}
\nc{\UT}{{U^0}}   
\nc{\ep}{\epsilon}
\newcommand\cyr{\fontencoding{OT2}\fontfamily{wncyr}\selectfont
   \language\fakelanguage}
\DeclareTextFontCommand{\textcyr}{\cyr}
\numberwithin{equation}{section}
\DeclareMathOperator{\HOM}{\mathscr{H}\text{\kern -3pt {\calligra\large om}}\,}
\newsavebox{\@brx}
\newcommand{\llangle}[1][]{\savebox{\@brx}{\(\m@th{#1\langle}\)}%
  \mathopen{\copy\@brx\kern-0.5\wd\@brx\usebox{\@brx}}}
\newcommand{\rrangle}[1][]{\savebox{\@brx}{\(\m@th{#1\rangle}\)}%
  \mathclose{\copy\@brx\kern-0.5\wd\@brx\usebox{\@brx}}}
\newcommand{\bsh}{\begin{shaded}}
\newcommand{\esh}{\end{shaded}}
\newcommand{\Mcl}{\mc M^c}
\newcommand{\Zsym}{\Z[\bm z^{\pm 1}]^{\frak S_3}}
\newcommand{\smg}{\Gamma_{\!M}}
\newcommand{\mg}{\Gamma_{\!M}^c}
\newcommand{\svg}{\Gamma_{\!V}}
\newcommand{\vg}{\Gamma_{\!V}^c}
\newcommand{\Gc}[1]{G^c_{#1}}
\newcommand{\Gs}[1]{G_{#1}}
\newcommand{\evo}{{\rm ev}_{\bm s_o}}
\newcommand{\Evo}{{\rm Ev}_{\bm s_o}}
\newcommand{\GH}{G_{\rm Hor}}
\let\bm\boldsymbol
\newcommand{\puqed}{\pushQED{\qed}}
\newcommand{\poqed}{\popQED}
\newcommand{\autG}[1]{G_{#1}^{\rm aut}}
\newcommand{\amg}{\Gamma_M^{\rm aut}}
\newcommand{\gmax}{\Gamma^{\rm max}}
\newcommand{\Mclp}{\mc M_+^c}
\newcommand{\Mclmone}{\mc M_{12}^c}
\newcommand{\Mclmtwo}{\mc M_{13}^c}
\newcommand{\Mclmthr}{\mc M_{23}^c}
\nc{\GI}{{\Ga_MI}}
\let\saveqed\qed
\renewcommand\qed{%
   \ifmmode\displaymath@qed
   \else\saveqed
   \fi}
\newcommand{\mqed}{\makeatletter\displaymath@qed}
\newcommand{\pushright}[1]{\ifmeasuring@#1\else\omit\hfill$\displaystyle#1$\fi\ignorespaces}
\newcommand{\pushleft}[1]{\ifmeasuring@#1\else\omit$\displaystyle#1$\hfill\fi\ignorespaces}
\newcommand{\pb}{^\star}
\nc{\Zs}{{\mathbb Z[s_1,s_2,s_3^{\pm 1}]}}
\begin{document}
\title{The $*$-Markov equation for Laurent polynomials}
\author[Giordano Cotti and Alexander Varchenko]{Giordano Cotti$\>^\circ$ and Alexander Varchenko$\>^\star$}
\maketitle
\begin{center}
\textit{ $^\circ\>$Faculdade de Ci\^encias da Universidade de Lisboa - Grupo de F\'isica Matem\'atica \\
Campo Grande Edif\'icio C6, 1749-016 Lisboa, Portugal\/}

\vskip4pt
\textit{ $^{\star}\>$Department of Mathematics, University
of North Carolina at Chapel Hill\\ Chapel Hill, NC 27599-3250, USA\/}

\vskip4pt
\textit{ $^{\star}\>$Faculty of Mathematics and Mechanics, Lomonosov Moscow State
University\\ Leninskiye Gory 1, 119991 Moscow GSP-1, Russia\/}

\end{center}

{\let\thefootnote\relax
\footnotetext{\vskip5pt 
\noindent
$^\circ\>$\textit{ E-mail}:  gcotti@fc.ul.pt, gcotti@sissa.it 
\\
$^\star\>$\textit{ E-mail}:  anv@email.unc.edu }
}
\vskip1cm
\begin{center}
\it On the Occasion of the 70th Birthday of Sabir Gusein-Zade
\end{center}
\begin{abstract}

We consider the $*$-Markov equation for the symmetric Laurent polynomials in three variables with integer coefficients,
 which appears as
  an equivariant analog of  the classical Markov equation for integers.
We study how the properties of the Markov equation and its solutions are reflected in the properties 
 of the $*$-Markov equation and its solutions.

\end{abstract}

\tableofcontents

\section{Introduction}

\subsection{Markov equation}
\subsubsection{}  The {\it Markov equation} is the Diophantine  equation
\beq
\label{Me}
a^2+b^2+c^2-abc=0\quad a,b,c\in \Z,
\eeq
with initial solution  $(3,3,3)$. If a triple $(a,b,c)$ is a solution, then a permutation of the triple
is a solution. One may also change the sign of two of the three coordinates of a solution.
The braid group $\mc B_3$ acts on the set of solutions,
\bean
\label{Br a}
\tau_1: (a,b,c) \mapsto (-a,c,b-ac),
\\
\notag
\tau_2: (a,b,c) \mapsto (b, a-bc, -c).
\eean
The classical Markov theorem says that all nonzero
solutions of the Markov equation can be obtained from the initial solution
$(3,3,3)$ by these operations, see \cite{Mar1,Mar2}. This group of symmetries of the equation is called the Markov group.
A solution with positive coordinates is called a Markov triple, the positive coordinates are called the
 Markov numbers.

 The  Markov equation is traditionally studied in the form
\beq
\label{clMe0i}
a^2+b^2+c^2-3abc=0,\quad a,b,c \in\Z.
\eeq
Equations  \eqref{Me} and \eqref{clMe0i}  are equivalent.
 A triple $(a,b,c)\in\Z^3$ is a solution of \eqref{clMe0i} if and only if $(3a,3b,3c)$ is a solution of \eqref{Me}. 

\vsk.2>
The equation was introduced by A.A.\,Markov in \cite{Mar1,Mar2}
 in the analysis of minimal values of indefinite binary quadratic forms
 and  was studied in hundreds of papers,  see for example 
 the book \cite{Aig}
 and references therein. 

\subsection{Motivation from exceptional collections and Stokes matrices}

\subsubsection{}
Our motivation came from the works by 
A.\,Rudakov \cite{Rud} on full exceptional collections in derived categories and by
B.\,Dubrovin \cite{Du96,Du98,Du99} on Frobenius manifolds and isomonodromic deformations.

In 1989 A.\,Rudakov studied the full exceptional collections in the derived category $\D^b(\mathbb P^2)$ of the projective plane $\mathbb P^2$.
 These are triples 
$(E_1,E_2,E_3)$ of objects in $\D^b(\mathbb P^2)$ generating $\D^b(\mathbb P^2)$ and such that 
the matrix of Euler characteristics  $(\chi(E_i^*\otimes E_j))$ has the form
$\left(\begin{array}{cccccc}
 1 & a & b
 \\
 0 & 1 & c
 \\
 0&0&1
 \end{array}\right)$. Rudakov observed that the triple $(a,b,c)$ is a solution of the Markov equation. 
 The braid group $\mc B_3$ naturally acts on the set of
 full exceptional collections
  and the induced action on the set of   matrices of Euler characteristics
   coincides  with the action of the braid group on the set of solutions of the Markov equation.

In the 90's Dubrovin considered the isomonodromic deformations of the quantum differential equation of the projective plane
$\Bbb P^2$, see \cite{Du99}. This is a system of three first order linear 
ordinary differential equations with two singular points: one regular point at
the origin and one irregular at the infinity. Dubrovin observed that the Stokes matrix $S$ of a Stokes basis of the space of
solutions at the infinity is of the form
$S=\left(\begin{array}{cccccc}
 1 & a & b
 \\
 0 & 1 & c
 \\
 0&0&1
 \end{array}
\right)$, where $(a,b,c)$ is a solution of the Markov equation. The braid group $\mc B_3$ 
naturally acts on the set of Stokes bases,  and the induced action on the set of 
Stokes matrices coincides  with the action of the braid group on the set of solutions of the Markov equation. 

These two observations allowed to Dubrovin to conclude that the Stokes bases of the isomonodromic deformations
of 
the quantum differential equation of $\mathbb P^2$ correspond to
 the full exceptional collections in the derived category
$\D^b(\mathbb P^2)$ and more generally to conjecture that the derived
 category of an algebraic variety is responsible for the monodromy data of its quantum differential equation, 
 see \cite{Du98, CDG}.

\subsubsection{}

Recently in \cite{TV} V.\,Tarasov and the second author considered the
equivariant quantum differential equation for $\Bbb P^2$
with respect to the torus $T=(\C^\times)^3$ action on $\Bbb P^2$. 
That equivariant  quantum differential equation is a system of three first order linear
ordinary differential equations depending on three equivariant parameters $\bs z=(z_1,z_2,z_3)$. The system has 
 two singular points: one regular point at the origin and one irregular at the infinity.
It turns out that the Stokes matrix $S$ of a Stokes basis of the space of
solutions at the infinity is of the form
$S(\bs z)=\left(\begin{array}{cccccc}
 1 & a & b
 \\
 0 & 1 & c
 \\
 0&0&1
 \end{array}
\right)$, where $a,b,c$ are symmetric Laurent polynomials in the equivariant
parameters $\bs z$ with integer coefficients.  In \cite{CV} we observed that the Stokes bases correspond to $T$-full
exceptional collections in the equivariant derived category
$\D^b_T(\Bbb P^2)$.  If  $(E_1,E_2,E_3)$ is a $T$-full  exceptional collection, then the
equivariant  Euler characteristic
$\chi_T(E_i^*\otimes E_j)$ is an element of the representation ring of the torus, that  is, a Laurent polynomial
in the equivariant parameters with integer coefficients.
It turns out that if a $T$-full  exceptional collection $(E_1,E_2,E_3)$ corresponds
 to a Stokes basis, then the corresponding
Stokes matrix equals  the matrix  $(\chi_T(E_i^*\otimes E_j))$ of equivariant Euler characteristics.
Moreover the three symmetric Laurent polynomials $(a,b,c)$,
 appearing in this construction, satisfy the equation
\bean
\label{mme}
aa^*+bb^*+cc^*-ab^*c = 3-\frac{z_1^3+z_2^3+z_3^3}{z_1z_2z_3},
\eean
where $f^*(z_1,z_2,z_3):= f(1/z_1,1/z_2,1/z_3)$ for any Laurent polynomial,
see \cite[Formula (3.20)]{CV}.  
If $z_1=z_2=z_3=1$, then the right-hand side of \eqref{mme} equals zero,
the equivariant Euler characteristics $\chi_T(E_i^*\otimes E_j)$ become the non-equivariant Euler characteristics 
$\chi(E_i^*\otimes E_j)$, 
and the triple of symmetric Laurent polynomials $(a,b,c)$ evaluated at  $z_1=z_2=z_3=1$ becomes a solution of the Markov equation \eqref{Me}.

\vsk.2>
We call equation \eqref{mme} for symmetric Laurent polynomials with integer coefficients
the {\it $*$-Markov equation}. 

\vsk.2>

The transition from the Markov equation to the $*$-Markov equation provides us with a deformation
of the Markov numbers by replacing Markov numbers with symmetric Laurent polynomials, which
recover the numbers  after the  evaluation at 
$z_1=z_2=z_3=1$.

\vsk.2>

The goal of this paper is to
observe how the properties of the Markov equation and its solutions 
are reflected in the properties of the $*$-Markov equation  and its solutions.

\subsubsection{}
There are  interesting instances of the transition from the Diophantine Markov equation \eqref{Me} to an equation
of the form
\bea
a(t)^2+b(t)^2+c(t)^2 - a(t)b(t)c(t) = R(t),
\eea
where $a(t),b(t),c(t)$ are unknown functions in some variables $t$ and $R(t)$ is a given function.
Such  deformations among other subjects are related to hyperbolic geometry and cluster algebras, 
see the fundamental papers \cite{CP07, FG07}. 

\vsk.2>
The difference between deformations of this type and the $*$-Markov equation  is that equation \eqref{mme}
includes the  $*$-operation dictated by the equivariant $K$-theoretic setting.
 It is an interesting problem
to find relations of the  $*$-Markov equation to hyperbolic geometry and cluster algebras.

\subsection{$*$-Markov equation and $*$-Markov group}

\subsubsection{}

It is convenient to use the elementary symmetric functions $(s_1,s_2, s_3)$,
\bea
s_1=z_1+z_2+z_3, \quad s_2=z_1z_2+z_1z_3+z_2z_3,\quad s_3=z_1z_2z_3,
\eea
change variables $(a,b,c)$ to $(a,b^*,c)$,
 and reformulate equation \eqref{mme} in a more symmetric form
\bean
\label{mm}
aa^*+bb^*+cc^*-abc = \frac{3s_1s_2-s_1^3}{s_3}.
\eean
The problem is to find  Laurent polynomials $a,b,c \in \Z[s_1,s_2,s_3^{\pm1}]$
satisfying equation
\eqref{mm}.  The equation has the initial solution
\beq\label{ESi}
I=
\left(z_1+z_2+z_3, \frac {z_1+z_2+z_3}{z_1z_2z_3} ,  z_1+z_2+z_3\right)
=\left(s_1,\frac{s_1}{s_3},s_1\right),
\eeq
whose evaluation at  $s_1=s_2=3, s_3=1$ is the initial solution (3,3,3) of the Markov equation.

\vsk.2>
 From now on we call equation \eqref{mm} the $*$-Markov equation.

\subsubsection{}
The group $\Ga_M$ of symmetries of the $*$-Markov equation is
called the $*$-Markov group. It  consists of permutations 
of variables, changes of sign of two of the three variables, the braid group $\mc B_3$ transformations
\bean
\label{tau1i}
&&
\tau_1\colon (a,b,c)\mapsto   (-a^*,\quad c^*,\quad b^*-ac),
\\
&&
\notag
\tau_2\colon (a,b,c)\mapsto (b^*,\quad a^*-bc,\quad -c^*),
\eean
and the new transformations
\bea
\mu_{i,j}\colon 
(a,b,c) \mapsto              (s_3^i a, 
                         s_3^{-i-j} b,
                                s_3^j c),\quad i,j\in\mathbb Z.
\eea
We have an obvious epimorphism of the $*$-Markov group onto the Markov group.
This fact and the Markov theorem imply that for any Markov triple of numbers there 
exists a triple of Laurent polynomials,
solving the $*$-Markov equation, obtained from the initial solution  $I$ by transformations
of the $*$-Markov group, whose evaluation at  $s_1=s_2=3, s_3=1$  gives the Markov triple.

\vsk.2>
It is an open question if any solution of the $*$-Markov equation can be obtained from
the initial solution $I$ by a transformation of the $*$-Markov group.
In analogy with the  Markov equation
we may expect that all solutions lie in $\Ga_MI$.

\subsection{Solutions in the orbit of the initial solution}

\subsubsection{}
As the  first topic of this paper we study $\Ga_MI$, the set of solutions of the $*$-Markov equation obtained from
the initial solution $I$ by  transformations of the $*$-Markov group. 

\vsk.2>
In the interpretation of solutions of the $*$-Markov equation as
matrices $(\chi_T(E_i^*\otimes E_j))$
of equivariant Euler characteristics   for $T$-full exceptional collections,
the set $\Ga_MI$ corresponds to the set of matrices $(\chi_T(E_i^*\otimes E_j))$ for the
 $T$-full exceptional collections in $\D_T^b(\Bbb P^2)$ lying in the braid group orbit of the 
 so-called   Beilinson  $T$-full exceptional collection, see \cite{CV, Bei}.
 
 \vsk.2>
Several first elements of $\Ga_MI$ different from $I$
are 
 \bean
 \label{Xe1} 
&&
(s_1^*,s_1^2-s_2, s_1^*) ,
\\
\label{Xe11}
&&
(s_2^*, s_1^2s_2 - s_1s_3-s_2^2 , (s_1^2-s_2)^*),
\\
 \label{Xe2} 
&&
(s_1^*, s_1^3s_2-2  s_1^2s_3- s_1s_2^2+s_2 s_3       , (s_1^2s_2 - s_1s_3-s_2^2)^*),
\\
 \label{Xe3} 
&&
((s_1^2s_2 - s_1s_3-s_2^2)^*, s_1^2 s_2^3-s_1^3  s_2s_3-s_2s_3^2 +s_1^2 s_3^2-s_2^4  ,  (s_2^2-s_1s_3)^*).
\eean
Evaluated at $s_1=s_2=3, s_3=1$ they represent the Markov triples
(3,6,3), (3,15,6), (3,39,15), (15,87,6), respectively.

\vsk.2>
Random application of generators of the $*$-Markov group
 to the initial solution $I$
 will produce the Laurent polynomial solutions of the $*$-Markov equation, but they will not be
  polynomial. We  make them close to being polynomial as follows.
  
  \vsk.2>

We look for solutions of the $*$-Markov equation in the form $(f_1^*, f_2,f_3^*)$.
We say that a solution $(f_1^*, f_2,f_3^*)$ 
is  a {\it reduced polynomial solution} if each of $f_1, f_2, f_3$ is
a nonconstant polynomial in $s_1,s_2,s_3$  not divisible by $s_3$.

\vsk.2>

For example,  all triples in \eqref{Xe1}-\eqref{Xe3} are reduced polynomial solutions.

\begin{thm}
\label{thm 4.14i}

Let $(a,b,c)$ be a Markov triple,  $0<a < b$, $0< c < b$, $6\leq b$.
Then there exists a unique reduced polynomial solution  $(f_1^*,f_2 , f_3^*)\in \GI$
representing $(a,b,c)$.
\end{thm}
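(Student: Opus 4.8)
The plan is to prove existence and uniqueness simultaneously by induction along the Markov tree, using Vieta descent on the arithmetic side and the braid generators $\tau_1,\tau_2$ on the polynomial side, while the maps $\mu_{i,j}$, the permutations, and the sign changes serve only to renormalize. The backbone is the classical Markov theorem together with the epimorphism $\Ga_M\to(\text{Markov group})$: under it $\tau_1,\tau_2$, permutations and signs project to the corresponding symmetries of \eqref{clMe0i}, while each $\mu_{i,j}$ projects to the identity. In particular $\mu_{i,j}$ does not change the Markov triple represented by a solution, since it acts trivially at $s_3=1$, so it is free to adjust powers of $s_3$. For a triple $(a,b,c)$ with $b$ the strict maximum and $b\ge 6$, the Vieta move $j_2\colon(a,b,c)\mapsto(a,\,ac-b,\,c)$ strictly decreases the maximum, and iterating it yields the unique descending path in the Markov tree down to the smallest such triple $(3,6,3)$. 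The base case is \eqref{Xe1}: one checks directly that $(s_1^*,\,s_1^2-s_2,\,s_1^*)\in\GI$ is reduced polynomial, evaluates to $(3,6,3)$, and is the unique reduced polynomial solution doing so. (Triples with $b<6$ are excluded, as only $(3,3,3)$ remains and it has no strict maximum.)

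For the inductive step (existence), let $(a,b,c)$ have maximum $b$ and parent $(a,\,b'\!,c)$ with $b'=ac-b$, and assume by induction the parent carries a reduced polynomial solution $(f_1^*,f_2,f_3^*)\in\GI$. Applying the braid generator realizing $j_2$ gives, for instance,
\[
\tau_2(f_1^*,f_2,f_3^*)=\bigl(f_2^*,\;f_1-f_2 f_3^*,\;-f_3\bigr),
\]
whose evaluation is a signed permutation of $(a,b,c)$ with the new maximum in the middle. I then renormalize: a permutation to realize the prescribed order $(a,b,c)$, two sign changes to make all coordinates positive (the number of negative entries produced by $\tau_i$ is even, so this is always possible), and finally the unique $\mu_{i,j}$ clearing the $s_3$-denominators. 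Writing $(A,B,C)$ for the permuted, sign-fixed triple and noting that $\mu_{i,j}$ shifts the $s_3$-valuations by $(v_1,v_2,v_3)\mapsto(v_1+i,\,v_2-i-j,\,v_3+j)$, the reduced polynomial conditions force
\[
i=v_{s_3}(A^*),\qquad j=v_{s_3}(C^*),\qquad i+j=v_{s_3}(B).
\]
A consistent $(i,j)$ exists precisely when the identity $v_{s_3}(A^*)+v_{s_3}(C^*)=v_{s_3}(B)$ holds.

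The main obstacle is exactly this consistency identity for the extreme $s_3$-powers. Homogeneity of \eqref{mm} guarantees that every entry of every solution in $\GI$ is $z$-homogeneous and that the three $z$-degrees sum to $0$, but degree and $s_3$-valuation are distinct invariants, so this alone does not yield the valuation identity. I would therefore carry, as an auxiliary part of the induction hypothesis, control of the top and bottom $s_3$-monomials of $f_1,f_2,f_3$ (equivalently the extreme values over monomials of the linear form measuring the $s_1,s_2$-content), show that these leading and trailing terms do not cancel under the braid step $\tau_2$, and deduce the valuation identity. This bookkeeping is the technical heart of the argument; everything surrounding it is formal.

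For uniqueness I would prove injectivity of evaluation on reduced polynomial solutions in $\GI$. The descent is a \emph{well-defined function on reduced polynomial solutions}, independent of any $\Ga_M$-word producing them: apply the inverse braid $\tau_2^{-1}$ and renormalize, where the renormalizing permutation is forced by ``maximum in the middle,'' the signs by positivity, and $\mu_{i,j}$ by the valuation conditions, all uniquely. Given two reduced polynomial solutions with the same evaluation $(a,b,c)$, descent carries both to a reduced polynomial solution of the parent $(a,\,b'\!,c)$; by inductive uniqueness these coincide, and since the ascending step is itself a deterministic function of the parent solution and the target order, the two original solutions coincide. Combined with the existence step and the verified base case $(3,6,3)$, this establishes the theorem.
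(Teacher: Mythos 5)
Your existence argument is essentially the paper's: induction along the Markov tree, with the braid/Vi\`ete move followed by a forced renormalization in $\langle G_1,G_2,G_4\rangle$. The bookkeeping you defer as ``the technical heart'' is in fact short, and it is worth seeing why: setting $d=\deg f$ and $\mu(f):=s_3^{\,d}f(s_2/s_3,\,s_1/s_3,\,1/s_3)$, one has $f^*=\mu(f)/s_3^{\,d}$ with $\mu(f)$ again a polynomial of degree $d$ not divisible by $s_3$ (Lemma \ref{lem degg}). Hence after the braid step the new middle entry is $\pm\bigl(f_2\,\mu(f_3)-s_3^{d_3}f_1\bigr)/s_3^{d_3}$, and your valuation-consistency identity is automatic because $s_3$ is prime: $f_2\,\mu(f_3)$ has $s_3$-valuation $0$ while $s_3^{d_3}f_1$ has valuation at least $1$, so no cancellation at the bottom can occur, and no control of top $s_3$-monomials is needed at all. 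This is exactly the content of the paper's transformations \eqref{pp1}, \eqref{pp2} and its left/right moves $Lf$, $Rf$ on admissible triples; your auxiliary induction hypothesis collapses to ``each $f_i$ is a polynomial not divisible by $s_3$'' plus the degrees $d_i$.

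The genuine gap is in uniqueness, where you take a different route (injectivity of evaluation via descent) and it does not close. Two steps fail. First, the base case: uniqueness of the reduced solution over $(3,6,3)$ cannot be ``checked directly,'' since $\GI$ is an infinite orbit and one must rule out that \emph{some} group element produces a second reduced solution with the same evaluation; that is a structural statement about the orbit, not a finite computation. Second, well-definedness of descent: given a second reduced solution $(h_1^*,h_2,h_3^*)$ with evaluation $(a,b,c)$, after applying the inverse braid move the unique $\mu_{i,j}$ normalizing the two outer slots need not make the middle slot a polynomial prime to $s_3$ --- the valuation identity for \emph{that} triple is precisely what you do not yet know. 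Your auxiliary hypothesis is a property of the constructed chain, and assuming the unknown solution satisfies it begs the question: it is tantamount to the uniqueness being proved. The paper closes both holes with group theory instead of descent: from the semidirect decomposition $\smg=\svg\rtimes\langle G_1,G_2,G_4\rangle$ (Corollary \ref{decv124}, resting on El'-Huti's Theorem \ref{EH} that the Vi\`ete group is freely generated) and the free Vi\`ete action on positive Markov triples, Theorem \ref{thm stat} shows any element of $\GI$ evaluating to a signed permutation of $(a,b,c)$ differs from the distinguished representative by an element of $\langle G_1,G_2,G_4\rangle$; then a nontrivial $g_4$ destroys non-divisibility by $s_3$, a nontrivial $g_1$ destroys positivity, and a nontrivial $g_2$ is excluded because $a,b,c$ are pairwise distinct except for $(3,6,3)$, whose presentation $(s_1^*,\,s_1^2-s_2,\,s_1^*)$ is itself symmetric. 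Without an input of this kind --- or an independent proof that every reduced solution with a given evaluation is ``admissible'' in your sense --- your uniqueness induction has no valid first step and no valid inductive step.
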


See Theorem \ref{thm 4.14}.

\subsubsection{}
Let $f(s_1,s_2,s_3)$ be a polynomial. We consider
 two degrees of $f$:\ the 
{\it homogeneous degree} $d:={\rm deg} f $ with respect to weights $(1,1,1)$ 
and the {\it quasi-homogeneous degree} $q:={\rm Deg} f$  with respect to weights $(1,2,3)$.
For example,
${\rm deg} (s_1^{a_1} s_2^{a_2}s_3^{a_3}) = a_1+a_2+a_3$,
${\rm Deg }(s_1^{a_1} s_2^{a_2}s_3^{a_3}) = a_1+2a_2+3a_3$.

\vsk.2>

Let $f(s_1,s_2,s_3)$ be a polynomial of homogeneous degree $d$ not divisible by $s_3$, then
\bean
\label{fgi}
g(s_1,s_2,s_3):=s_3^{d} \,f\Big(\frac{s_2}{s_3}, \frac{s_1}{s_3}, \frac{1}{s_3}\Big)
\eean
is a polynomial of homogeneous degree $d$ not divisible by $s_3$. If additionally
$f(s_1,s_2,s_3)$ is a quasi-homogeneous polynomial of quasi-homogeneous degree $q$, then
$g(s_1,s_2,s_3)$ is a quasi-homogeneous polynomial of quasi-homogeneous degree $3d-q$.

The polynomial $g$ is  denoted by $\mu (f)$.

\subsubsection{}
\label{1.4.3}

The assignment to a quasi-homogeneous polynomial $f$ its bi-degree vector $(d,q)$ could be seen as a
``de-quatization'' in the following sense. Let 
\bea
s_1= c_1e^{\alpha + \beta}, 
\qquad 
s_2=c_2e^{\alpha + 2\beta},
\qquad
s_3=c_3e^{\alpha + 3\beta},
\eea
where $\al, \beta$ are real parameters which tend to $ +\infty$ and
$c_1,c_2,c_3$ are fixed generic real numbers.   
If $f(s_1,s_2,s_3)$  is a quasi-homogeneous polynomial of bi-degree $(d,q)$, then
\bea
\ln f(c_1e^{\alpha + \beta}, c_1e^{\alpha + \beta},c_1e^{\alpha + \beta})
\eea
has {\it leading term}  $d\al + q\beta$ independent of the choice of $c_1,c_2,c_3$, which may be considered as a vector
$(d,q)$.

\subsubsection{}
We say that a polynomial $P(s_1,s_2,s_3)$ is a {\it $*$-Markov polynomial} if 
there exists a Markov triple $(a,b,c)$,  $0<a < b$, $0< c < b$, $6\leq b$, with  reduced polynomial
presentation $(f_1^*,f_2,f_3^*)\in \Ga_MI$, such that $P=f_2$. 
The polynomial $s_2$ will also be called a {\it $*$-Markov polynomial}.
\vsk.2>

We say that a polynomial $Q(s_1,s_2,s_3)$ is a {\it dual $*$-Markov polynomial} if 
$Q$ is not divisible by $s_3$ and $\mu( Q)$ is a $*$-Markov polynomial. 
\vsk.2>

For example, $s_1^2-s_2$, $s_2(s_1^2-s_2)-s_3s_1$ are $*$-Markov polynomials, 
since they appear as the middle terms in the reduced polynomial presentations in 
\eqref{Xe1} and \eqref{Xe11} and $s_2^2-s_1s_3$,  $s_1(s_2^2-s_1s_3) - s_3s_2$ are
the corresponding dual $*$-Markov polynomials.

\begin{thm}
\label{thm 4.14i}
Let $(f_1^*,f_2 , f_3^*)\in \GI$ be the reduced polynomial presentation of
a Markov triple $(a,b,c)$,  $0<a < b$, $0< c < b$, $6\leq b$.
Then each of $f_1,f_3$ is either a $*$-Markov polynomial or a dual $*$-Markov polynomial.
If $f_1,f_2,f_3$ have bi-degree vectors $(d_1,q_1), (d_2,q_2), (d_3,q_3)$, then
\bea
(d_2,q_2) = (d_1,q_1) +(d_3,q_3).
\eea

\end{thm}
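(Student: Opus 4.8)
The plan is to induct on the largest Markov number $b$ along the Markov tree, extracting both assertions from the explicit way a single reduced presentation transforms under one braid mutation. The two inputs are: the existence and uniqueness of the reduced presentation $(f_1^*,f_2,f_3^*)\in\GI$ of a given triple (Theorem~\ref{thm 4.14}); and the classical Markov tree, by which every triple with $b>6$ is the unique child of a strictly smaller triple, obtained by mutating one of its two non-maximal entries (reversing the reduction of the maximum). It therefore suffices to track how the reduced presentation of a triple propagates to the reduced presentation of such a child, and to organize the induction so that quasi-homogeneity of the entries, the two degree additivities, and the $*$-Markov/dual dichotomy all propagate simultaneously from a base case.

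\emph{Key computation.} Writing the parent presentation as $(a,b,c)=(f_1^*,f_2,f_3^*)$, so that $a^*=f_1$ and $c^*=f_3$, I apply $\tau_2$:
\[
\tau_2(f_1^*,f_2,f_3^*)=\bigl(f_2^*,\ f_1-f_2f_3^*,\ -f_3\bigr),
\]
whose middle slot already carries the newly created maximum $bc-a$. Using $f^*=\mu(f)/s_3^{{\rm deg}f}$, that $\mu$ is an involution, and $d_3={\rm deg}f_3$, I normalize by $\mu_{0,-d_3}$ and a sign change to
\[
\bigl((f_2)^*,\ \pm\bigl(s_3^{\,d_3}f_1-f_2\,\mu(f_3)\bigr),\ (\mu(f_3))^*\bigr).
\]
This is again a reduced polynomial solution: its outer polynomials are $g_1=f_2$ and $g_3=\mu(f_3)$, both nonconstant, and its middle $g_2=\pm(s_3^{d_3}f_1-f_2\mu(f_3))$ is nonconstant and not divisible by $s_3$, since neither $f_2$ nor $\mu(f_3)$ is, so $g_2\equiv\mp f_2\mu(f_3)\not\equiv0\pmod{s_3}$. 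By the uniqueness in Theorem~\ref{thm 4.14} this is the child's reduced presentation (a permutation selects the other child, coming from mutating $c$ instead of $a$). Hence the outer polynomials of any child are the parent's middle $f_2$ — a $*$-Markov polynomial by definition, as the parent has maximum $\ge6$ — and $\mu$ of a parent's outer polynomial; since $\mu$ is an involution and, by the inductive hypothesis, each parent outer is $*$-Markov or dual $*$-Markov, each child outer is again $*$-Markov or dual. The base case is $(3,6,3)$ with presentation $(s_1^*,s_1^2-s_2,s_1^*)$, where the outer $s_1$ is dual because $\mu(s_1)=s_2$ is a $*$-Markov polynomial by fiat; this seeds the Markov number $3$.

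\emph{Bidegrees.} The same normalized formula gives the additivity $(d_2,q_2)=(d_1,q_1)+(d_3,q_3)$ for the child, in the notation where $g_1,g_2,g_3$ have these bi-degrees. For the homogeneous degree, the inductive additivity $d_2=d_1+d_3$ for the parent gives $d_2\ge d_1$ (as $d_3\ge1$), and since the top-degree monomials of $f_2\mu(f_3)$ are free of $s_3$ while $s_3^{d_3}f_1$ is divisible by $s_3^{d_3}$, no top-degree cancellation occurs and ${\rm deg}\,g_2=d_2+d_3={\rm deg}\,g_1+{\rm deg}\,g_3$. For the quasi-homogeneous degree I would argue directly that $\Zs$ is graded by ${\rm Deg}$ and that $*$ reverses this grading (it sends $s_1,s_2,s_3$ to elements of ${\rm Deg}$ equal to $-1,-2,-3$); thus in $f_1f_1^*+f_2f_2^*+f_3f_3^*-f_1^*f_2f_3^*=R$ the three norm terms and $R$ are ${\rm Deg}$-homogeneous of degree $0$, whereas the cross term $f_1^*f_2f_3^*$ has ${\rm Deg}=q_2-q_1-q_3$; being a nonzero element of a domain it cannot cancel against the degree-$0$ part, forcing $q_2=q_1+q_3$. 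This same relation is precisely what renders $g_2=\pm(s_3^{d_3}f_1-f_2\mu(f_3))$ quasi-homogeneous (the two summands then share ${\rm Deg}=3d_3+q_1$), so quasi-homogeneity of the entries and the identity $q$-additivity for $g_1,g_2,g_3$ propagate together.

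\emph{Main obstacle.} The crux is the normalization–uniqueness step: confirming that the $\tau_2$-image, after the specific $\mu_{i,j}$ and sign adjustments, is a genuine reduced polynomial solution (entries nonconstant, none divisible by $s_3$), so that Theorem~\ref{thm 4.14} forces it to coincide with the child's canonical reduced presentation. Around this sit the bookkeeping tasks of choosing the permutation that selects the intended child and keeps the maximal entry in the middle, and of isolating the finitely many small triples near the root — where $R$, of homogeneous degree $2$ and quasi-degree $0$, competes with the leading terms — as explicit base cases. The degree count then goes through cleanly because $d_3\ge1$ and $s_3\nmid f_2$ together preclude any top-degree cancellation.
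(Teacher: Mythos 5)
Your proposal is correct and follows essentially the paper's own route: the paper proves Theorem \ref{thm 4.14} by the same induction along the Markov tree, its transformations \eqref{pp1}--\eqref{pp2} are exactly your ``$\tau_2$ plus $\mu_{0,-d_3}$ and sign change'' move producing the child presentation $(f_2^*,\,f_2\mu(f_3)-s_3^{d_3}f_1,\,\mu(f_3)^*)$, and Corollary \ref{cor coord} then reads off the dichotomy and the bi-degree additivity from the fact that the child's outer entries are $f_2$ and $\mu$ of a parent outer entry. Two inessential remarks: your claim that the top-degree monomials of $f_2\mu(f_3)$ are free of $s_3$ is unjustified (and false in general, e.g.\ for $s_2^3+s_1s_2s_3$), but also unnecessary, since $\deg(s_3^{d_3}f_1)=d_1+d_3=d_2<d_2+d_3=\deg(f_2\mu(f_3))$ already precludes cancellation; and your derivation of $q_2=q_1+q_3$ from the $*$-Markov equation, using that $*$ negates $\on{Deg}$ so the cross term $f_1^*f_2f_3^*$ must have degree $0$, is a nice alternative to the paper's direct computation $\on{Deg}(\mu(f_1)f_2)=3d_1+q_3=\on{Deg}(s_3^{d_1}f_3)$.
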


\begin{thm}
\label{thm bideg}
Let $f(s_1,s_2,s_3)$ be
a $*$-Markov polynomial or a dual $*$-Markov polynomial of bi-degree
$(d,q)$. Then $f(s_1,s_2,s_3)$ is a quasi-homogeneous polynomial with
respect to weights $(1,2,3)$. 
Moreover, $|2q-3d|=1$ if $d$ is odd and $|2q-3d|=2$ if $d$ is even.

\end{thm}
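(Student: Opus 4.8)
The plan is to prove the two assertions—quasi-homogeneity and the bound $|2q-3d|\in\{1,2\}$—after two preliminary reductions. Write $r(f):=2q-3d$. Since $r(f)=2q-3d\equiv d\pmod 2$, the parity dichotomy in the ``moreover'' part is automatic once one shows that $r(f)\in\{-2,-1,1,2\}$ for every $*$-Markov and dual $*$-Markov polynomial: then $|r|$ odd forces $d$ odd and $|r|$ even forces $d$ even. I would next reduce the dual case to the $*$-Markov case. If $Q$ is a dual $*$-Markov polynomial then $P:=\mu(Q)$ is $*$-Markov; using that $\mu$ is an involution on polynomials not divisible by $s_3$ and that it sends bi-degree $(d,q)$ to $(d,3d-q)$, one gets $\deg Q=\deg P=d$ and $r(Q)=2q-3d=-\bigl(2(3d-q)-3d\bigr)=-r(P)$. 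Hence $Q$ and $P$ have the same parity of $d$ and $|r(Q)|=|r(P)|$, so it suffices to treat $*$-Markov polynomials.

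For quasi-homogeneity I would argue in the $z$-variables, where ${\rm Deg}$ with weights $(1,2,3)$ is exactly the total degree in $z_1,z_2,z_3$. The claim is that every $(a,b,c)\in\GI$ has $z$-homogeneous components whose $z$-degrees $(e_a,e_b,e_c)$ sum to zero. This holds for the initial solution $I$, with degrees $(1,-2,1)$, and I would check it is preserved by each generator of the $*$-Markov group: permutations and sign changes are clear; the $\mu_{i,j}$ multiply components by powers of $s_3$ (degree $3$) and keep the sum zero; and for $\tau_1,\tau_2$ the relation $e_a+e_b+e_c=0$ is precisely what makes the mixed term (e.g.\ $b^*-ac$, whose pieces have $z$-degrees $-e_b$ and $e_a+e_c$) homogeneous, again with vanishing total degree. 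Thus each $f_2=b$ is $z$-homogeneous, i.e.\ quasi-homogeneous in $s$, and so are $f_1,f_3$ via $*$; the dual case follows because $\mu$ preserves quasi-homogeneity.

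The heart of the bound is a recursion along the Markov tree. Every reduced triple descends from $(s_1,s_1^2-s_2,s_1)$, representing $(3,6,3)$, by the mutations $(a,b,c)\mapsto(a,ab-c,b)$ and $(a,b,c)\mapsto(c,bc-a,b)$. Writing the $r$-values of a presentation $(f_1^*,f_2,f_3^*)$ as $(\rho_1,\rho_2,\rho_3)$, the bi-degree additivity $(d_2,q_2)=(d_1,q_1)+(d_3,q_3)$ of the previous theorem gives $\rho_2=\rho_1+\rho_3$. The key structural input, which I would extract from the explicit action of $\tau_1,\tau_2$ together with uniqueness of the reduced presentation, is that in passing to a child the polynomial attached to the parent's middle Markov number is inherited unchanged, whereas the polynomial attached to the surviving parent side is replaced by its $\mu$-dual. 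At the level of $r$-values this says: consuming the side $f_3$ produces a child with middle value $\rho_2-\rho_1=\rho_3$ and side values $\{\rho_2,-\rho_1\}$, and symmetrically consuming $f_1$ gives middle value $\rho_1$ with sides $\{\rho_2,-\rho_3\}$.

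With this recursion the bound follows by induction on the tree. The root $(3,6,3)$ has side-value pair $\{-1,-1\}$ and middle $-2$. I would then verify that the four unordered configurations
\[
A=\{-1,-1\},\qquad B=\{-2,1\},\qquad C=\{-1,2\},\qquad D=\{1,1\},
\]
with respective middles $-2,-1,1,2$, are closed under the two child-operations above: the children of $B$ are $A$ and $C$, the children of $C$ are $B$ and $D$, while $A$ and $D$ each have the single children $B$ and $C$. Since all middle and side values occurring in $A,B,C,D$ lie in $\{-2,-1,1,2\}$, every node middle—that is, every $*$-Markov polynomial—has $r\in\{-2,-1,1,2\}$; the extra generator $s_2$ has $r=1$ and $s_1$ has $r=-1$, both in range. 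Together with the reductions of the first paragraph this proves the theorem. The main obstacle is the structural input of the third paragraph—checking that the surviving side is precisely $\mu$-dualized while the middle is inherited—since this is exactly what keeps the additive recursion $\rho_2=\rho_1+\rho_3$ from leaving the band $|r|\le 2$; the remainder is bookkeeping.
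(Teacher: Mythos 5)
Your proposal is correct, and its core is the same argument as the paper's. The bound $|2q-3d|\in\{1,2\}$ is the paper's Theorem \ref{thm rebi}, which is proved exactly as you do: one tracks the quantity $2q-3d$ along the binary tree (the ``deviation tree'' of Section \ref{sec 7}), where the left and right moves act by $(w_1,w_2,w_3)\mapsto(-w_1,-w_1+w_2,w_2)$ and $(w_1,w_2,w_3)\mapsto(w_2,w_2-w_3,-w_3)$, and checks that the six ordered triples in $T^0=\{(1,-1,-2),\ (-1,1,2),\ (-2,-1,1),\ (2,1,-1),\ (1,2,1),\ (-1,-2,-1)\}$ --- which are precisely your unordered configurations $A,B,C,D$ --- form a closed set; the ``key structural input'' you single out (the middle polynomial is inherited, the surviving side is replaced by its $\mu$-dual) is exactly part (ii) of Theorem \ref{thm 4.14}, so it is available rather than an obstacle. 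The only genuine divergence is how quasi-homogeneity is obtained: the paper gets it as part of admissibility, propagated inductively along the tree in the proof of Theorem \ref{thm 4.14} via Lemma \ref{lem degg}, whereas you prove the stronger statement that every triple in the orbit $\Gamma_M I$ consists of $z$-homogeneous Laurent polynomials whose degrees sum to zero, by checking invariance under each generator of the $*$-Markov group (for $\tau_1,\tau_2$ the degree-sum-zero condition is precisely what makes $b^*-ac$ homogeneous, and the $\mu_{i,j}$ shift the degrees by $(3i,-3i-3j,3j)$). Your version is slightly more general --- it covers arbitrary orbit elements, not only reduced presentations --- and isolates the group-theoretic reason for quasi-homogeneity, at the cost of re-deriving what Theorem \ref{thm 4.14} already records; everything else, including the reduction of the dual case via $r(\mu(f))=-r(f)$ and the parity observation $2q-3d\equiv d\pmod 2$, is bookkeeping the paper leaves implicit.
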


See Theorems \ref{thm 4.14}, \ref{thm rebi}, and examples \eqref{Xe1}-\eqref{Xe3}.

\subsubsection{}
It is convenient to put Markov triples at the vertices of the infinite binary planar tree as in Figure \ref{introdis}
and obtain what is called the Markov tree.
Similarly, we may put at the vertices the triples of polynomials $(f_1,f_2,f_3)$, such that the triples
$(f_1^*,f_2,f_3^*)$ are the reduced polynomial presentations of the corresponding Markov triples.
In that way we would put in Figure \ref{introdis}  the triple  $(f_1,f_2,f_3)$ shown in \eqref{Xe11} instead of (3,15,6), the triple
 $(f_1,f_2,f_3)$ shown in  \eqref{Xe2} instead of (3,39,15), the triple
 $(f_1,f_2,f_3)$ shown in  \eqref{Xe3} instead of (15,87,6).
 Or we may put at the vertices
the triples  $(d_1,q_1), (d_2,q_2), (d_3,q_3)$ of bi-degree vectors of 
the triples $(f_1,f_2, f_3)$, or the triples $(d_1,d_2,d_3)$
 of degrees,  see  Figure \ref{introdis}, or we may
 even put at the vertices the triples of Newton polytopes
of the polynomials $(f_1,f_2, f_3)$, see Sections \ref{sec NPP} and \ref{sec dti}. 
\begin{figure}
\centering
\def\svgscale{.8}
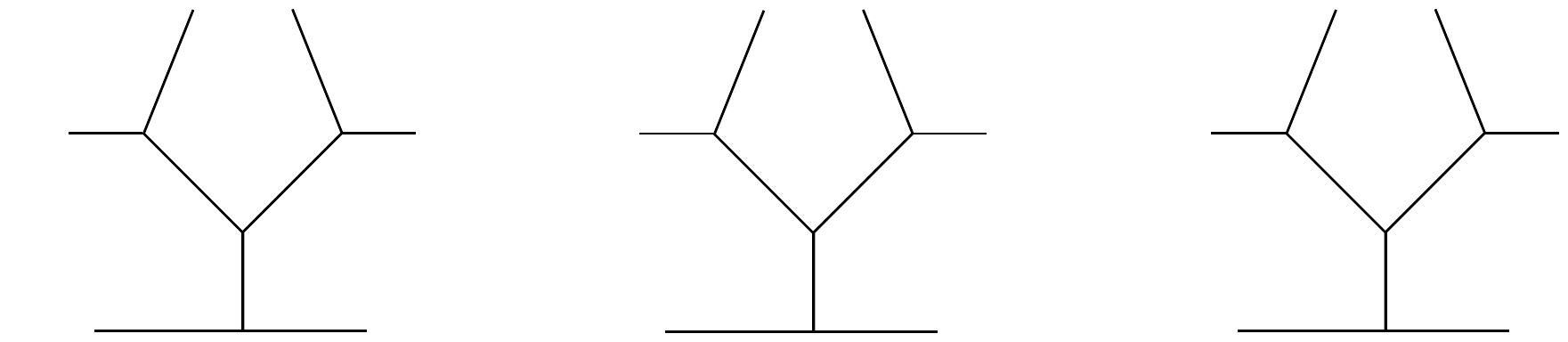
\caption{}
\label{introdis}
\end{figure}

\vsk.2>
These decorated trees have interesting interrelations consisting of ``quatizations'' and 
``de-quantizations'', see short discussion in Section \ref{sec OP}.

\vsk.2>
A compelling
problem is to study asymptotics of these decorations along the
  infinite paths going from the root of the tree to infinity, see remarks in Sections \ref{sec dti} and \ref{sec CS}.

\subsubsection{} It is well known that the Markov triples of the left branch
 of the Markov tree are composed of the odd Fibonacci numbers, multiplied by 3. These triples have the form 
 $(3, 3\phi_{2n+1}, 3\phi_{2n-1})$, where $\phi_{2n+1}$, $\phi_{2n-1}$ are odd Fibonacci numbers. 
 We describe the reduced polynomial presentations 
 $(g^*_{n-1}, F_{2n+1}, F_{2n-1}^*)$ of these Markov triples,
  where 
  $g_{n-1} = s_2$ if $n$ is even,  $g_{n-1}=s_1$ if $n$ is odd,
   and  $F_{2n+1}, F_{2n-1}$ are polynomial in $s_1,s_2,s_3$,
  called the odd $*$-Fibonacci polynomials.
 
 The first of them are 
 \begin{align*}
F_3(\bs s)&=s_1^2-s_2,\\
F_5(\bs s)&=s_1^2s_2 - s_1s_3-s_2^2,\\
F_7(\bs s)&= s_1^3s_2-2  s_1^2s_3- s_1s_2^2+s_2 s_3.
\end{align*}

 We describe the recurrence relations for the odd $*$-Fibonacci polynomials, explicit formulas for them, their Newton polytopes,
 the Binet formula, the Cassini identity, describe the 
 continued fractions for $F_{2n+3}/F_{2n+1}$ and the limit of this ratio as $n\to \infty$.

\subsubsection{}
It is  well known that the Markov triples of the right branch
 of the Markov tree are composed of the odd Pell numbers, multiplied by 3.
  We describe the reduced polynomial presentations of these Markov triples in terms of the polynomials,
  which we call the odd $*$-Pell
polynomials. We develop the properties of the odd $*$-Pell polynomials, which are
analogous to  properties of odd Pell numbers and  to properties
of the odd $*$-Fibonacci polynomials.

\subsubsection{}
The $q$-deformations of Fibonacci and Pell numbers is an active subject related to
several branches of combinatorics and number theory, see, e.g., 
\cite{Ca74, An86, Pa06, MO20}  and references therein.
It would be interesting to determine if these numerous $q$-deformations 
of Fibonacci and Pell numbers could be obtained by specifications of our $*$-deformation
depending on the three parameters $s_1,s_2,s_3$.

 \subsection{$*$-Analogs of the Dubrovin Poisson structure}

In \cite{Du96} Dubrovin  considered  $\C^3$ with coordinates $(a,b,c)$,\ the braid group $\mc B_3$ action
\eqref{Br a}, and  introduced a Poisson structure on $\C^3$,
\bea
\{a,b\}_H = 2c-ab,
\qquad
\{b,c\}_H = 2a-bc,
\qquad
\{c,a\}_H = 2b-ac.
\eea
which is braid group invariant and has the polynomial $a^2+b^2+c^2-abc$ as a Casimir element\footnote{Recall that a function $f$ is a Casimir element for a Poisson structure $\{\,,\,\}$ if
$\{f,g\}=0$ for any $g$.}. 

The second topic of this paper is a construction of a $*$-analog of the Dubrovin Poisson structure.
 Our Poisson structure is defined on $\C^6$, is anti-invariant
with respect to  the braid group $\mc B_3$ action
\eqref{tau1i}, 
is invariant with respect to the involution
\bea
(a,a^*, b,b^*,c,c^*) \mapsto (a^*,a,b^*,b,c^*, c),
\eea
 has the polynomials
\bea
&&
aa^*+bb^*+cc^*-abc ,
\\
&&
aa^*+bb^*+cc^*-a^*b^*c^*,
\eea
as Casimir elements, and is log-canonical, see 
Section \ref{sec NPo}. Here the word anti-invariant means that the Poisson structure is multiplied by $-1$ under the action
of generators of the braid group. 
Recall also that a Poisson structure on a space with coordinates $x_1,\dots,x_n$ is log-canonical if
$\{x_i,x_j\}= a_{ij} x_ix_j$ for all $i,j$, where $a_{i,j}$ are constants. Our log-canonical Poisson structure has 
$a_{i,j}=\pm 1, 0$.

\subsubsection{} 
The space $\C^3$ considered by Dubrovin is actually identified with the group $U_3$ of unipotent upper triangular matrices (the Stokes matrices of three dimensional Frobenius manifolds). Standing on such an identification, M.\,Ugaglia generalized the construction of Dubrovin's Poisson structure to all groups $U_n$, see \cite{Ug99} for the explicit equations. Remarkably enough, the same braid invariant Poisson structure on $U_n$ was found independently also in \cite{Bo01,Bon99} from two completely different perspectives. Let $B_\pm$ be the groups of upper and lower triangular $n\times n$ matrices. In \cite{Bo01}, P.\,Boalch proved that $U_n$ is the stable locus of a Poisson involution of the Poisson-Lie group $B_+*B_-$, and that the standard Poisson structure of $B_+*B_-$ induces the braid invariant Poisson structure on $U_n$. The construction in \cite{Bon99}, is based on the identification of the group $U_n$ with the space of Gram matrices $(\chi(E_i,E_j))_{i,j}$ for exceptional collections $(E_1,\dots, E_n)$ in triangulated categories\footnote{Notice that the two identification of $U_n$ as Stokes matrices or Gram matrices of the $\chi$-pairing should coincide, at least for quantum cohomologies, according to a conjecture of Dubrovin, see \cite{Du98,CDG}.}. A.\,Bondal discovered a symplectic groupoid whose space of objects is $U_n$: the existence of a braid invariant Poisson structure on $U_n$ is then deduced from the general theory of symplectic groupoids. The quantization of the Poisson structure on $U_n$ is also known as Nelson-Regge algebra in 2+1 quantum gravity \cite{NR89,NRZ90}, and as Fock-Rosly bracket in Chern-Simons theory \cite{FR97}. Furthermore, L.\,Chekhov and M.\,Mazzocco generalized the construction of the Dubrovin Poisson structures to the space of bilinear forms with block-upper-triangular Gram matrix, they also extensively studied the related Poisson algebras, their quantization and affinization, see \cite{CM11,CM13}. See very interesting short paper \cite{CF00} by L.O.\,Chekhov and V.V.\,Fock.

\vsk.2>

It would be interesting to see the $*$-analogs of these considerations.

\subsection{$*$-Analogs of the Horowitz theorem}

 In \cite{H}, R.D.\,Horowitz proved the following result, 
 characterizing the  Markov group as a subgroup of the group of ring automorphisms of $\Z[a,b,c]$.

\begin{thm}[{\cite[Theorem 2]{H}}]
\label{thm Hor-c}

The group 
 of ring automorphisms of $\Z[a,b,c]$, which preserve the polynomial
\[
H=a^2+b^2+c^2-abc,
\]
 is isomorphic to the  Markov group.
\end{thm}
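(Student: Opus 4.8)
\smallskip
\noindent\emph{Proof proposal.}
The plan is to identify the group $G$ of the statement with a group of polynomial automorphisms, dispose of the easy inclusion by a direct computation, and then establish the hard inclusion by a degree‑descent argument driven by the Vieta involutions. A ring automorphism $\phi$ of $\Z[a,b,c]$ is determined by the triple $(A,B,C)=(\phi(a),\phi(b),\phi(c))$, and invertibility is equivalent to $\tau=(A,B,C)$ being a polynomial automorphism of affine space $\mathbb A^3$; moreover $\phi=\tau^*$ is the pullback and $\phi(H)=H$ amounts to $H\circ\tau=H$. Thus $G$ is anti‑isomorphic, hence isomorphic, to the group $\widetilde G$ of polynomial automorphisms of $\mathbb A^3$ preserving $H$, and the Markov group is precisely the subgroup $\Gamma_0\subseteq\widetilde G$ generated by the permutations, the double sign changes, and the braid transformations $\tau_1,\tau_2$ of \eqref{Br a}. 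First I would record the easy inclusion $\Gamma_0\subseteq\widetilde G$: every generator preserves $H$, e.g. for $\tau_1$ one has $(-a)^2+c^2+(b-ac)^2-(-a)c(b-ac)=a^2+b^2+c^2-abc$. Everything then reduces to the reverse inclusion $\widetilde G\subseteq\Gamma_0$.

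To prove $\widetilde G\subseteq\Gamma_0$, I would fix $\phi\in\widetilde G$ and run a descent on the total degree $N(\phi):=\deg A+\deg B+\deg C$, minimizing over the coset $\Gamma_0\,\phi$. Since $A,B,C$ generate $\Z[a,b,c]$ and are algebraically independent, none is constant, so $N\ge 3$, with equality forcing all three to be affine‑linear. Suppose a minimizer $\psi$ had $N>3$; after a permutation assume $\deg A\le\deg B\le\deg C=:d$, so $d\ge 2$. Comparing leading forms in $A^2+B^2+C^2-ABC=H$ (all leading forms are nonzero, $\Z[a,b,c]$ being a domain): if $\deg A+\deg B>d$ the monomial $ABC$ is the unique top‑degree contribution, of degree $N>3$, and cannot cancel; if $\deg A+\deg B<d$ the square $C^2$ is the unique top‑degree contribution, of degree $2d>3$; both are impossible. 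In the remaining case $\deg A+\deg B=d$ the degree‑$2d$ part equals $\mathrm{lead}(C)\big(\mathrm{lead}(C)-\mathrm{lead}(AB)\big)$, which must vanish because $2d>3$, forcing $\mathrm{lead}(C)=\mathrm{lead}(AB)$; then the Vieta involution $C\mapsto AB-C$, which lies in $\Gamma_0$, strictly lowers $\deg C$ and hence $N$, contradicting minimality. Therefore $N(\psi)=3$ and $A,B,C$ are affine‑linear.

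It remains to treat the linear case. Matching the degree‑$3$ parts of $A^2+B^2+C^2-ABC=H$ gives $\ell_A\ell_B\ell_C=abc$ for the linear parts, so by unique factorization in $\Z[a,b,c]$ the triple $(\ell_A,\ell_B,\ell_C)$ is a signed permutation of $(a,b,c)$ with sign product $+1$, i.e. exactly an element $\gamma_0$ of the symmetry subgroup $\langle S_3,\ \text{double sign changes}\rangle\subseteq\Gamma_0$. Replacing $\psi$ by $\gamma_0^{-1}\psi$ I may assume $\psi(a)=a+u$, $\psi(b)=b+v$, $\psi(c)=c+w$ with $u,v,w\in\Z$; then the coefficients of $ab,\,ac,\,bc$ in $\psi(H)-H$ are $-w,-v,-u$, so $u=v=w=0$ and $\psi=\gamma_0\in\Gamma_0$. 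Hence $\phi\in\Gamma_0$, which gives $\widetilde G=\Gamma_0$ and $G\cong\Gamma_0$, the Markov group.

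The main obstacle is the balanced case $\deg A+\deg B=\deg C$, where genuine cancellation of the top‑degree term occurs: this is exactly the situation resolved by a Vieta move, and the one place where the Markov structure is indispensable. Here I would also need the standard but slightly fiddly fact that the Vieta involution $C\mapsto AB-C$ and its analogues are expressible through the braid generators $\tau_1,\tau_2$ together with permutations and sign changes, so that the reducing move genuinely lies in $\Gamma_0$.
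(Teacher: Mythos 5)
Your proof is correct and takes essentially the same route as the paper's (which proves the $*$-analog in Theorem \ref{thmhor} by adapting Horowitz's original argument and transfers it verbatim to this classical statement in Appendix \ref{appendix}): degree descent driven by Vi\`ete involutions, where leading-form comparison either gives an immediate contradiction or forces the balanced case $\deg C=\deg A+\deg B$ with $\mathrm{lead}(C)=\mathrm{lead}(AB)$, and the linear base case is settled by unique factorization plus elimination of the constant terms. The only differences are organizational: you minimize the total degree over a coset where the paper inducts on the maximal degree, and you spell out the three-case analysis that the paper compresses into the assertion ``this is possible if and only if $r=p+q$.''
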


As the third topic of this paper we develop $*$-analogs of the Horowitz theorem, see Section
\ref{sec *Hor} and Appendix \ref{appendix}.

\subsection{Exposition of material} 

In Section \ref{sec 2} we introduce the $*$-Markov equation and  evaluation morphism.
The $*$-Markov group and its subgroups, in particular,
 the important $*$-Vi\`ete subgroup, are defined in Section \ref{sec 3}. 
The Markov and extended Markov trees are introduced in Section \ref{sec 4}.
In Section \ref{sec 5} we introduce the notion of a distinguished representative of a Markov triple and 
show that the $*$-Vi\`ete group  acts freely and transitively on the set of distinguished representatives.

\vsk.2>
In Section \ref{sec 6} we introduce the notion of an admissible triple of Laurent polynomials and the notion
of a reduced polynomial presentation of a Markov triple. 
One of the main theorems of the paper, 
Theorem \ref{thm 4.14} says that a Markov triple has a unique reduced polynomial presentation. 
We also introduce the notion of a $*$-Markov polynomial.

\vsk.2>
In Section \ref{sec 7} six decorated  infinite planar binary trees are defined.
They are  the  $*$-Markov polynomial tree,  $2$-vector tree, 
 matrix tree,  deviation tree,  Markov tree,  Euclid tree.
We discuss the interrelations between the trees. An interesting problem is to study the 
asymptotics of the decorations along the infinite paths from the root of the tree to infinity.

\vsk.2>
In Sections \ref{sec Fib} and \ref{sec Pell}
we introduce the odd $*$-Fibonacci and odd $*$-Pell polynomials and discuss their properties.

\vsk.2>
In Section \ref{sec *M act} we construct actions of the $*$-Markov group on the spaces $\C^6$ and $\C^5$ and
a map $F:\C^6\to\C^5$ commuting with the actions.  Using these objects we construct
 equivariant Poisson structures on  $\C^6$ and $\C^5$ in Section \ref{sec NPo}.

\vsk.2>
In Section \ref{sec *Hor} we establish $*$-analogs of the Horowitz theorem on $\C^6$ and $\C^5$.
In Appendix \ref{appendix} we discuss more analogs of the Horowitz theorem.

\vsk.2>
In Appendix  \ref{app B} we  discuss briefly the $*$-equations for $\Bbb P^3$ and associated Poisson structures
on $\C^{12}$.

\vsk.4>

The authors thank P.\,Etingof, M.\,Mazzocco, V.\,Ovsienko, V.\,Rubtsov,  V.\,Schechtman,
M.\,Shapiro, L.\,Takhtajan, 
 A.\,Veselov,   A.\,Zorich for useful discussions.  The authors thank HIM and MPI in Bonn,
 Germany for hospitality
 in 2019.

\vsk.2>
The first author was supported by MPIM in Bonn, Germany, the EPSRC Research Grant EP/P021913/2, and by the FCT Project PTDC/MAT-PUR/ 30234/2017 ``Irregular connections on algebraic curves and Quantum Field Theory''.
The second author was supported in part by NSF grants DMS-1665239, DMS-1954266.

\section{$*$-Markov equation}
\label{sec 2}

\subsection{$*$-Involution}

Denote $\bm z = (z_1,z_2,z_3)$, $\bs s=(s_1,s_2,s_3)$. Let $\Zsym$ be the ring of 
symmetric Laurent polynomials in $\bm z$ with integer coefficients.
We define an isomorphism 
$\Zsym\cong\mathbb Z[s_1,s_2,s_3^{\pm 1}]$ by sending
\bea
(z_1+z_2+z_3,\
z_1z_2+z_1z_3+z_2z_3,\
z_1z_2z_3)\quad \to\quad (s_1,s_2,s_3).
\eea
Define the involution 
\beq\notag
(-)^*\colon \Zsym\to\Zsym,\quad f\mapsto f^*,
\eeq 
where
\beq\notag
f^*(\bm z):=
f\left( \frac{1}{z_1}, \frac 1{z_2},\frac1{z_3}\right),\quad f\in\Zsym.
\eeq
This induces a $*$-involution on $\Zs$.
\bea
f^*(s_1,s_2,s_3) =  f\left(\frac{s_2}{s_3}, \frac{s_1}{s_3}, \frac{1}{s_3}\right).
\eea
Denote
$\bs s_o:=(3,3,1)$.
Define the evaluation morphisms
\bea
&&
{\evo}\colon \Zs\to \Z, \qquad \quad \quad f(\bm s)\mapsto f(\bm s_o),
\\
&&
\Evo\colon \left(\Zs\right)^3\to\Z^3,\qquad (a,b,c)\mapsto \left(a(\bm s_o),b(\bm s_o),c(\bm s_o)\right).
\eea
The evaluation morphism corresponds to the evaluation of a Laurent polynomial $f(z_1,z_2,z_3)$ at
$z_1=z_2=z_3=1$.

\subsection{Evaluation morphism} 

The {\it $*$-Markov equation} is the equation
\beq
\label{ME}
aa^*+bb^*+cc^*-abc= \frac{3s_1s_2-s_1^3}{s_3},
\eeq
where $a,b,c\in\Zs$. The solution
\beq\label{ES}
I
=\left(s_1,\frac{s_1}{s_3},s_1\right) 
\eeq
is call the {\it initial solution}.
 
We have
\bea
\evo\left( \frac{3s_1s_2-s_1^3}{s_3} \right)=0\,.
\eea

\begin{prop}\label{Evo}
If $f=(f_1,f_2,f_3)$ is a solution of the $*$-Markov equation \eqref{ME}, then 
$\Evo (f)$   is a solution of the  Markov equation \eqref{Me}.
\qed
\end{prop}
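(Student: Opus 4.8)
The plan is to apply the evaluation homomorphism $\evo$ to both sides of the $*$-Markov equation \eqref{ME}, exploiting two elementary facts. Since $\evo$ is a ring homomorphism, the whole argument reduces to understanding how $\evo$ interacts with the $*$-involution and with the right-hand side of \eqref{ME}.

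First I would record the key observation that $\evo$ absorbs the $*$-involution, i.e. $\evo(g^*) = \evo(g)$ for every $g \in \Zs$. The point $\bs s_o = (3,3,1)$ corresponds to the specialization $z_1 = z_2 = z_3 = 1$ of the underlying symmetric Laurent polynomial, and this point is fixed by the coordinate inversion $z_i \mapsto 1/z_i$ that defines the involution. Equivalently, working directly in the $s$-coordinates,
\[
\evo(g^*) = g\Bigl(\tfrac{s_2}{s_3}, \tfrac{s_1}{s_3}, \tfrac{1}{s_3}\Bigr)\Big|_{\bs s = \bs s_o} = g(3,3,1) = \evo(g).
\]

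Next I would apply $\evo$ to \eqref{ME}. Writing $(a,b,c) = (f_1,f_2,f_3)$ and using multiplicativity of $\evo$ together with the identity above, each mixed term $f_i f_i^*$ evaluates to $\evo(f_i)^2$, so the left-hand side becomes
\[
\evo(f_1)^2 + \evo(f_2)^2 + \evo(f_3)^2 - \evo(f_1)\,\evo(f_2)\,\evo(f_3).
\]
For the right-hand side, the identity $\evo\bigl((3s_1s_2 - s_1^3)/s_3\bigr) = 0$, already recorded just before the proposition, shows that it evaluates to $0$. Since $\Evo(f) = (\evo(f_1), \evo(f_2), \evo(f_3))$ by definition, the resulting equation is precisely the Markov equation \eqref{Me}.

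There is essentially no obstacle here: the statement is a formal consequence of $\evo$ being a ring homomorphism that fixes the involution. The only point genuinely worth isolating is the fixed-point property $\evo \circ (-)^* = \evo$, since this is exactly what converts the three ``Hermitian'' terms $f_i f_i^*$ of the $*$-Markov equation into the squares $f_i^2$ of the classical Markov equation, while the cubic term $abc$ passes to the product of the evaluations unchanged.
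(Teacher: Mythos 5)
Your proof is correct and is exactly the argument the paper has in mind: the paper marks this proposition with \qed as immediate, having just recorded that $\evo\bigl((3s_1s_2-s_1^3)/s_3\bigr)=0$, and the only ingredient beyond ring-homomorphism formalities is the fixed-point identity $\evo(g^*)=\evo(g)$ coming from $g^*(3,3,1)=g(3,3,1)$, which you isolate correctly.
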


 For example, the evaluation of the  initial solution $I$  gives the triple (3,3,3).

\begin{rem}
The $*$-Markov equation \eqref{ME} can be studied  by looking for solutions $(a,b,c)$ 
in $\Bbb A^3$, where $\Bbb A$ is a ring  more general  than $\Zsym\cong\mathbb Z[s_1,s_2,s_3^{\pm 1}]$.
 
\vsk.2>
For instance, if we look for solutions of the form $(\al s_1,\beta\frac{s_1}{s_3},\ga s_1)$, where
$\al,\beta,\ga\in \C$, then
\bea
\al^2+\beta^2+\ga^2 = 3, \qquad \al\beta\ga =1.
\eea
This curve has infinitely many algebraic points, for example
\bea
\al =\sqrt{-1}\,,\qquad \beta=   \sqrt{2+\sqrt{5}}\,,\qquad 
\ga=
\frac{1}{\sqrt{-2-\sqrt{5}}}\, .
\eea
\end{rem}

\section{Groups of symmetries}
\label{sec 3}

\subsection{Symmetries of  Markov equation} 
\label{sec sym Me}

Consider the following three groups of transformations of $\Z^3$:
\vskip 2mm
{\bf Type I. } The group $\Gc{1}$ generated by transformations
\beq\notag
\lambda_{i,j}^c\colon (a,b,c)\mapsto \left((-1)^i a,(-1)^{i+j} b, (-1)^j c\right),\qquad i,j\in\mathbb Z_2.
\eeq
\vskip 2mm
{\bf Type II. } The group $\Gc{2}$ generated  by transformations
\beq\notag
\sigma_1^c:(a,b,c) \mapsto (b,a,c),\quad \si_2^c:(a,b,c)\mapsto (a,c,b).
\eeq
\vskip 2mm
{\bf Type III. }The group $\Gc{3}$ generated by  transformations 
\bea
&&
\tau_1^c\colon (a,b,c)\mapsto   (-a,\quad c,\quad b-ac),
\\
&&
\tau_2^c\colon (a,b,c)\mapsto (b,\quad a-bc,\quad -c).
\eea
We have  $\tau_1^c\tau_2^c\tau_1^c=\tau_2^c\tau_1^c\tau_2^c$.

\vsk.2>
In these notations the superscript $c$ stays for the word classical.

\begin{rem}
\label{clB3}
Let $\mathcal B_3$ be the braid group with three strands, and $\beta_1,\beta_2$  its 
standard generators (elementary braids) with $\beta_1\beta_2\beta_1=\beta_2\beta_1\beta_2$.
 There is a group epimorphism 
\beq
\notag
\pho\colon\mathcal B_3\to\ \!\! \Gc{3},\quad \beta_i\mapsto \tau_i^c,\quad i=1,2.
\eeq
The center $\mathcal Z(\mathcal B_3)=\langle(\beta_1\beta_2)^3\rangle$ is contained in $\ker\pho$. Thus, the group 
\beq
\notag
\mathcal B_3\big/\mathcal Z(\mathcal B_3)\cong \on{PSL}(2,\Z)
\eeq
acts on the set of solutions of \eqref{Me}.
\end{rem}

\begin{prop}
The set of nonzero Markov triples is invariant under the action of each of the groups $\Gc{1},\Gc{2},\Gc{3}$.\qed
\end{prop}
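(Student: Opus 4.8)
The plan is to verify invariance separately for each of the three groups by a direct computation, using the well-known fact that the classical Markov equation $a^2+b^2+c^2-abc=0$ is the object being preserved. For each generator it suffices to check two things: first, that the generator maps solutions of \eqref{Me} to solutions of \eqref{Me} (i.e. it preserves the equation), and second, that it preserves the nonvanishing condition, i.e. it sends a triple with all coordinates nonzero to another such triple. Since the set of nonzero Markov triples is by definition the set of solutions of \eqref{Me} with $abc\ne 0$, invariance of both the equation and the nonvanishing condition under each generator gives invariance under the group it generates.

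For $\Gc{1}$ and $\Gc{2}$ the verification is immediate. The sign-change generators $\lambda_{i,j}^c$ alter an even number of signs among $a,b,c$, so the quadratic terms $a^2,b^2,c^2$ are unchanged and the product $abc$ picks up a factor $(-1)^{2i}(-1)^{2j}=1$; hence $H=a^2+b^2+c^2-abc$ is literally invariant, and nonvanishing of coordinates is obviously preserved. The permutation generators $\sigma_1^c,\sigma_2^c$ permute the three variables, and since $H$ is symmetric in $a,b,c$ it is invariant, with nonvanishing again trivially preserved. The main content is therefore the braid generators in $\Gc{3}$. For $\tau_1^c\colon(a,b,c)\mapsto(-a,c,b-ac)$ I would substitute directly:
\[
(-a)^2+c^2+(b-ac)^2-(-a)\,c\,(b-ac)
= a^2+c^2+b^2-2abc+a^2c^2+abc-a^2c^2,
\]
which collapses to $a^2+b^2+c^2-abc$, so the equation is preserved. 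The computation for $\tau_2^c$ is entirely analogous by the symmetry of the construction.

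The one genuinely non-formal point is preservation of the nonvanishing condition under $\tau_1^c$ and $\tau_2^c$, since the third coordinate $b-ac$ could a priori vanish even when $a,b,c$ are all nonzero. Here I would argue using the Markov equation itself: if $(a,b,c)$ is a nonzero Markov triple with $b-ac=0$, then substituting $b=ac$ into $a^2+b^2+c^2-abc=0$ gives $a^2+a^2c^2+c^2-a^2c^2=a^2+c^2=0$, which over $\Z$ forces $a=c=0$, contradicting nonvanishing. The same argument rules out vanishing of the transformed coordinate for $\tau_2^c$. Thus each braid generator maps a nonzero solution to a nonzero solution. I expect this nonvanishing check to be the only step requiring an actual idea rather than bookkeeping; the rest is routine substitution, and since the groups are generated by these maps together with their inverses (each generator is invertible on $\Z^3$), invariance under the generators upgrades to invariance under the full groups.
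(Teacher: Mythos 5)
Your computation is correct and is exactly the direct verification the paper leaves to the reader (the proposition is stated there with no proof): the substitution check that $\tau_1^c$ preserves $H=a^2+b^2+c^2-abc$ and the use of the equation itself to exclude $b-ac=0$ are the two real points, and both are right. The nonvanishing step can even be streamlined: for any solution of \eqref{Me}, vanishing of one coordinate forces the sum of the other two squares to be zero, so the only solution with a zero coordinate is $(0,0,0)$; hence ``nonzero solution'' and ``all coordinates nonzero'' coincide, and the image of a nonzero solution under $\tau_1^c$, being a solution with first coordinate $-a\neq 0$, is automatically a nonzero solution.

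The one step that does not stand as written is your last sentence. The generators $\tau_1^c,\tau_2^c$ have infinite order, so $\Gc{3}$ genuinely contains their inverses, and for a bijection $g$ of an infinite set, $g(S)\subseteq S$ does \emph{not} by itself imply $g^{-1}(S)\subseteq S$ (take $S=\N\subset\Z$ and $g\colon x\mapsto x+1$). The gap is closed with what you already have: your substitution establishes the identity of polynomials $H\circ\tau_i^c=H$, and since $\tau_i^c$ is a polynomial bijection of $\Z^3$, composing with $(\tau_i^c)^{-1}$ gives $H\circ(\tau_i^c)^{-1}=H$ as well; moreover every generator, hence every element of each of the three groups, is a bijection fixing $(0,0,0)$. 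So every element of $\Gc{1},\Gc{2},\Gc{3}$ is a bijection of $\Z^3$ preserving $H$ and fixing the origin, and therefore maps the set of nonzero solutions onto itself. With that one line added, your argument is complete.
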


\subsection{Markov and Vi\`ete  groups}

Define the \emph{Markov group} $\mg$ as the group of transformations of $\Z^3$ generated by $\Gc{1},\ \!\!\Gc{2},\ \!\!\Gc{3}$,
\beq
\mg:=\langle \Gc{1},\ \!\! \Gc{2},\ \!\!\Gc{3}\rangle.
\eeq
Define the \emph{Vi\`ete involutions} $v_1^c,v_2^c,v_3^c\in\ \!\! \mg$ by the formulas
\begin{align}
\notag
v_1^c\colon(a,b,c)&\mapsto(bc-a,b,c),
\\
\notag
v_2^c\colon(a,b,c)&\mapsto(a,ac-b,c),
\\
\notag
v_3^c\colon(a,b,c)&\mapsto(a,b,ab-c).
\end{align}
Define the \emph{Vi\`ete group} $\vg$ as the group generated by the Vi\`ete
involutions $v_1^c,v_2^c,v_3^c$, 
\beq
\vg:=\langle v_1^c,v_2^c,v_3^c\rangle.
\eeq
We have
\bean
\label{vtau1cl}
v_1^c=\lambda_{1,1}^c\ \sigma_1^c\ \tau_2^c,\qquad
v_2^c=\lambda_{1,0}^c\ \sigma_2^c\ \tau_1^c,\qquad
v_3^c=\lambda_{1,1}^c\ \tau_1^c\ \sigma_2^c.
\eean

\begin{thm}[{\cite[Theorem 1]{EH}}]\label{EH}
The group $\vg$ is freely generated by $v_1^c, v_2^c, v_3^c$, that is,
 $\vg\cong\Z_2*\Z_2*\Z_2$.\qed
\end{thm}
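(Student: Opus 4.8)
The plan is to establish the isomorphism by the ping-pong (table-tennis) lemma for free products, applied to the action of $\vg$ on a suitable invariant subset of solutions. First I would record that each $v_i^c$ generates a copy of $\Z_2$: fixing the two coordinates other than the $i$-th in the Markov equation \eqref{Me}, the remaining coordinate $x_i$ satisfies a monic quadratic whose second root is exactly $x_i':=(\text{product of the other two})-x_i$, and Vi\`ete's formulas give $x_i+x_i'=(\text{product of the other two})$ and, crucially, $x_i x_i'=(\text{sum of squares of the other two})$. Thus $v_i^c$ interchanges the two roots, so $(v_i^c)^2=\mathrm{id}$, and $v_i^c\neq\mathrm{id}$ since, e.g., $v_3^c(3,6,3)=(3,6,15)$. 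Hence $\langle v_i^c\rangle\cong\Z_2$, and it remains only to prove that the canonical epimorphism $\Z_2*\Z_2*\Z_2\to\vg$ is injective, i.e. that no reduced alternating word $v_{i_1}^c\cdots v_{i_m}^c$ (with $i_k\neq i_{k+1}$) acts as the identity.

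For the ping-pong arena I would take the set $S^+$ of positive real points of the Markov cubic \eqref{Me}. Since each $v_i^c$ preserves \eqref{Me} and, by the relation $x_i x_i'=\sum_{k\neq i}x_k^2>0$, preserves positivity, $S^+$ is $\vg$-invariant. I then define three regions inside $S^+$ by dominance of a coordinate: $X_1=\{a^2>b^2+c^2\}$, $X_2=\{b^2>a^2+c^2\}$, $X_3=\{c^2>a^2+b^2\}$. They are pairwise disjoint, because $x_i^2>\sum_{k\neq i}x_k^2$ together with $x_j^2>\sum_{k\neq j}x_k^2$ would force $x_i^2>x_j^2>x_i^2$; and they are nonempty, containing the integer Markov triples $(6,3,3)$, $(3,6,3)$, $(3,3,6)$ respectively.

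The key step is the inclusion $v_i^c(X_j)\subseteq X_i$ for $j\neq i$. Given a point of $X_j$ with $j\neq i$, one has $x_j^2>\sum_{k\neq j}x_k^2\geq x_i^2$, hence $\sum_{k\neq i}x_k^2\geq x_j^2>x_i^2$; using $x_i x_i'=\sum_{k\neq i}x_k^2$ and positivity this yields $x_i'>x_i$ and then $(x_i')^2>x_i x_i'=\sum_{k\neq i}x_k^2$. Since $v_i^c$ leaves the other two coordinates unchanged, the image point satisfies precisely the defining inequality of $X_i$. With three disjoint regions in hand, the ping-pong lemma applies: for a reduced word $w=v_{i_1}^c\cdots v_{i_m}^c$ I choose $\ell\in\{1,2,3\}\setminus\{i_1,i_m\}$ (possible since there are three indices) and a point $p\in X_\ell$; walking the word from the right gives $v_{i_m}^c p\in X_{i_m}$, then $v_{i_{m-1}}^c v_{i_m}^c p\in X_{i_{m-1}}$, and finally $w\cdot p\in X_{i_1}$, which is disjoint from $X_\ell\ni p$, so $w\neq\mathrm{id}$. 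This proves injectivity and hence $\vg\cong\Z_2*\Z_2*\Z_2$.

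The main obstacle is choosing the regions $X_i$ so that they are simultaneously pairwise disjoint and satisfy the ping-pong inclusions. The correct choice is dictated by the Vi\`ete relation $x_i x_i'=\sum_{k\neq i}x_k^2$ extracted from \eqref{Me}; once this is in place, both the disjointness and the inclusions collapse to the single comparison of $x_i^2$ with the sum of the squares of the remaining two coordinates, and the rest is a routine application of the free-product ping-pong lemma.
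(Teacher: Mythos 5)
Your proof is correct, but note that the paper itself does not prove this statement at all: Theorem \ref{EH} is quoted verbatim from \`El-Huti \cite[Theorem 1]{EH}, whose argument goes through the birational geometry of cubic surfaces of Markov type. Your ping-pong argument is therefore a genuinely different, elementary and self-contained route, and it checks out: the Vi\`ete relation $x_i x_i' = \sum_{k\neq i}x_k^2$ on the Markov surface does double duty, giving both the invariance of the positive locus and the inclusions $v_i^c(X_j)\subseteq X_i$ for $j\neq i$; the three dominance regions are pairwise disjoint and nonempty; and your walk argument, which uses the availability of a third index $\ell\notin\{i_1,i_m\}$ (precisely the point where a two-generator ping-pong would be delicate), shows that no nonempty alternating word acts trivially, so the canonical epimorphism $\Z_2*\Z_2*\Z_2\to\vg$ is injective. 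One cosmetic simplification: since $\vg$ is by definition a group of transformations of $\Z^3$, it is cleanest to run the ping-pong on integer Markov triples rather than on the real surface $S^+$ --- your base points $(6,3,3)$, $(3,6,3)$, $(3,3,6)$ are integral and all $v_i^c$ preserve $\Z^3$, so the entire walk stays integral and the conclusion $w\cdot p\neq p$ is directly a statement about transformations of $\Z^3$; if you prefer to keep real points, you should add that a polynomial map equal to the identity on $\Z^3$ is the identity on $\R^3$ by Zariski density. As for what each approach buys: yours is short and uses nothing beyond Vi\`ete's formulas and positivity, while \`El-Huti's theorem applies to general cubic surfaces of Markov type and locates the Vi\`ete group inside the full group of birational automorphisms, which is why the authors can simply cite it.
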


\begin{prop}\label{identcl}
We have the following identities:
\bea
\si_1^c\la_{k,l}^c\si_1^c=\la_{k+l,l}^c,
\qquad
\si_2^c\la_{k,l}^c\si_2^c=\la_{l, k+l}^c,
\eea
\begin{alignat}{2}
\notag
\si_1^c v_1^c\si_1^c&=v_2^c,\qquad \si_2^c v_1^c\si_2^c&=v_1^c,\\
\notag
\si_1^c v_2^c\si_1^c&= v_1^c,\qquad \si_2^c v_2^c\si_2^c&=v_3^c,\\
\si_1^c v_3^c\si_1^c&=v_3^c,\qquad \si_2^c v_3^c\si_2^c&=v_2^c,
\notag
\end{alignat}
\beq
\notag
\la_{k,l}^c v_i^c\la_{k,l}^c =v_i^c,\qquad i=1,2,3.
\eeq
\qed
\end{prop}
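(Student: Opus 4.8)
The plan is to verify each listed identity by direct computation, evaluating both sides on a general triple $(a,b,c)$ viewed as a point of $\Z^3$ and composing the transformations as maps. No structural input is needed beyond the explicit formulas for $\la_{k,l}^c$, $\si_1^c$, $\si_2^c$, and $v_1^c,v_2^c,v_3^c$; the whole proposition is a bookkeeping verification, so I would organize it into the three natural families.

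First I would treat the two conjugation identities for the sign changes. Applying $\si_1^c\la_{k,l}^c\si_1^c$ to $(a,b,c)$ swaps the first two coordinates, scales by the sign pattern $\big((-1)^k,(-1)^{k+l},(-1)^l\big)$, then swaps back, producing $\big((-1)^{k+l}a,(-1)^k b,(-1)^l c\big)$. The key arithmetic point is that exponents live in $\mathbb Z_2$, so $(-1)^{k+2l}=(-1)^k$, and this matches $\la_{k+l,l}^c(a,b,c)$. The identity $\si_2^c\la_{k,l}^c\si_2^c=\la_{l,k+l}^c$ is verified the same way, now transposing the last two coordinates.

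Next I would check the six conjugation relations between the $\si_i^c$ and the Vi\`ete involutions. Each is a three-step composition: transpose two coordinates, apply a Vi\`ete move (which replaces one coordinate by the product of the other two minus itself), then transpose back. Since $v_j^c$ modifies only the $j$-th coordinate using the \emph{symmetric} product of the remaining two, conjugating by a transposition simply relabels which coordinate is modified; commutativity $bc=cb$ guarantees the formulas agree on the nose. For instance $\si_1^c v_1^c\si_1^c$ carries the ``modify the first coordinate'' rule to the ``modify the second coordinate'' rule, giving $v_2^c$, whereas $\si_2^c v_1^c\si_2^c$ fixes the first coordinate and merely swaps the roles of $b$ and $c$ in the product $bc$, leaving $v_1^c$ unchanged. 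The remaining four cases follow the same pattern.

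Finally I would verify that each $v_i^c$ commutes with $\la_{k,l}^c$. Here the only subtlety is the sign arithmetic: after applying $\la_{k,l}^c$, the product entering the Vi\`ete move acquires the product of the two corresponding signs, and the second application of $\la_{k,l}^c$ multiplies each coordinate by its sign once more. Using $(-1)^{2m}=1$ and checking (for $i=1$, say) that $(-1)^{k+l}\cdot(-1)^l=(-1)^{k}$ matches the sign already carried by the modified coordinate, one finds the net effect restores $v_i^c(a,b,c)$ exactly. The main obstacle — in truth the only thing to watch — is precisely this $\mathbb Z_2$ sign tracking in the last family, where one must confirm that the sign multiplying the subtracted product coincides with the sign of the coordinate being modified; no conceptual difficulty arises.
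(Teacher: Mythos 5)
Your overall approach coincides with the paper's: the proposition carries a \qed{} with no written proof, and its $*$-analog (Proposition \ref{ident}) is explicitly dismissed as ``straightforward computations,'' so a coordinatewise verification on a general triple $(a,b,c)$ is exactly what is intended. Your handling of the six relations $\si_i^c v_j^c\si_i^c$ (conjugation by a transposition relabels which coordinate the Vi\`ete move modifies, with commutativity $bc=cb$ absorbing the swap) and of $\la_{k,l}^c v_i^c\la_{k,l}^c=v_i^c$ is correct; in particular your sign check for $v_1^c$, namely $(-1)^{k+l}\cdot(-1)^l=(-1)^k$, is the right computation, and the analogous checks $(-1)^k(-1)^l=(-1)^{k+l}$ for $v_2^c$ and $(-1)^k(-1)^{k+l}=(-1)^l$ for $v_3^c$ go through the same way. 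The first sign-conjugation identity $\si_1^c\la_{k,l}^c\si_1^c=\la_{k+l,l}^c$ is also verified correctly.

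There is, however, a genuine defect, and it sits precisely at the one step you declined to compute. You assert that $\si_2^c\la_{k,l}^c\si_2^c=\la_{l,k+l}^c$ ``is verified the same way,'' but carrying out the computation gives
\[
(a,b,c)\mapsto(a,c,b)\mapsto\bigl((-1)^k a,\,(-1)^{k+l}c,\,(-1)^l b\bigr)\mapsto\bigl((-1)^k a,\,(-1)^l b,\,(-1)^{k+l}c\bigr),
\]
which is $\la_{k,k+l}^c(a,b,c)$, since the middle sign of $\la_{k,k+l}^c$ is $(-1)^{2k+l}=(-1)^l$. By contrast, $\la_{l,k+l}^c(a,b,c)=\bigl((-1)^l a,\,(-1)^k b,\,(-1)^{k+l}c\bigr)$, a different map whenever $k\neq l$ (take $k=1$, $l=0$: $\la_{1,1}^c$ versus $\la_{0,1}^c$). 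So the identity as printed in the statement is false; the correct right-hand side is $\la_{k,k+l}^c$, in agreement with the corresponding identity $\si_2\la_{k,l}\si_2=\la_{k,k+l}$ in the $*$-version, Proposition \ref{ident}. This is evidently a misprint in the paper rather than a conceptual issue, but a proof whose entire content is verification cannot certify a formula ``by symmetry with the previous case'': the two cases are not symmetric (the asymmetry of $\la_{i,j}^c$, whose middle sign is $(-1)^{i+j}$, breaks it), and the one identity you waved through is the one that fails as stated. You should either correct the index to $\la_{k,k+l}^c$ and verify it, or exhibit the computation above as a refutation of the printed formula.
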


\begin{cor}
We have $\mg=\langle\vg,\Gc{1},\Gc{2}\rangle$. Moreover, $\vg$ is a normal subgroup of $\mg$.
\end{cor}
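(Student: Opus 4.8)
The plan is to prove the two assertions separately, deriving everything from the defining relations \eqref{vtau1cl} and the conjugation identities of Proposition \ref{identcl}. For the equality $\mg=\langle\vg,\Gc{1},\Gc{2}\rangle$ I argue by mutual inclusion. The inclusion $\langle\vg,\Gc{1},\Gc{2}\rangle\subseteq\mg$ is immediate: by \eqref{vtau1cl} each Vi\`ete involution $v_i^c$ is a product of elements of $\Gc{1},\Gc{2},\Gc{3}$, hence lies in $\mg=\langle\Gc{1},\Gc{2},\Gc{3}\rangle$, while $\Gc{1}$ and $\Gc{2}$ are already among the generators of $\mg$; thus every generator of $\langle\vg,\Gc{1},\Gc{2}\rangle$ lies in $\mg$.

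For the reverse inclusion the key observation is that the relations \eqref{vtau1cl} can be solved for the braid generators $\tau_1^c,\tau_2^c$. Since $\sigma_1^c,\sigma_2^c$ and all $\lambda_{i,j}^c$ are involutions, from $v_1^c=\lambda_{1,1}^c\,\sigma_1^c\,\tau_2^c$ I obtain $\tau_2^c=\sigma_1^c\,\lambda_{1,1}^c\,v_1^c$, and from $v_2^c=\lambda_{1,0}^c\,\sigma_2^c\,\tau_1^c$ I obtain $\tau_1^c=\sigma_2^c\,\lambda_{1,0}^c\,v_2^c$. Both right-hand sides lie in $\langle\vg,\Gc{1},\Gc{2}\rangle$, so $\Gc{3}=\langle\tau_1^c,\tau_2^c\rangle$ is contained in this group; combined with $\Gc{1},\Gc{2}\subseteq\langle\vg,\Gc{1},\Gc{2}\rangle$ this yields $\mg\subseteq\langle\vg,\Gc{1},\Gc{2}\rangle$ and hence the desired equality.

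For normality, having established $\mg=\langle\vg,\Gc{1},\Gc{2}\rangle$, it suffices to verify that each generator drawn from $\vg$, $\Gc{1}$, $\Gc{2}$ conjugates $\vg$ into itself; and since $\vg=\langle v_1^c,v_2^c,v_3^c\rangle$, it is enough to track the action on the three generators $v_i^c$. Elements of $\vg$ normalize $\vg$ trivially. Proposition \ref{identcl} supplies the remaining cases directly: conjugation by $\sigma_1^c$ and $\sigma_2^c$ permutes the set $\{v_1^c,v_2^c,v_3^c\}$, and conjugation by each $\lambda_{k,l}^c$ fixes every $v_i^c$. As all of $\sigma_1^c,\sigma_2^c,\lambda_{k,l}^c$ are involutions, conjugation by each is an automorphism carrying $\vg$ onto $\vg$, so $\vg\trianglelefteq\mg$.

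The argument is essentially mechanical, and I do not anticipate a genuine obstacle, since both the generation statement and the normality statement reduce to the already-available identities \eqref{vtau1cl} and Proposition \ref{identcl}. The only point requiring care is the inversion of \eqref{vtau1cl}, where one must use that $\sigma_i^c$ and $\lambda_{i,j}^c$ equal their own inverses, so that, for instance, $(\lambda_{1,1}^c\sigma_1^c)^{-1}=\sigma_1^c\lambda_{1,1}^c$.
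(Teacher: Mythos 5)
Your proof is correct and follows the same route as the paper: the inclusion $\langle\vg,\Gc{1},\Gc{2}\rangle\subseteq\mg$ is immediate, the reverse inclusion comes from expressing $\tau_1^c,\tau_2^c$ via \eqref{vtau1cl} (which you make explicit by inverting those relations, using that $\sigma_i^c$ and $\lambda_{i,j}^c$ are involutions), and normality follows from the conjugation identities of Proposition \ref{identcl}. Your added care in noting that the generators are involutions, so conjugation carries $\vg$ onto itself, is a correct and slightly more complete rendering of the paper's one-line normality argument.
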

\proof
The inclusion $\mg\supseteq\langle\vg,\Gc{1},\Gc{2}\rangle$ is clear. 
We have $\Gc{3}\subseteq\langle\vg,\Gc{1},\Gc{2}\rangle$,
by equations \eqref{vtau1cl}. 
Hence $\mg=\langle\vg,\Gc{1},\Gc{2}\rangle$. 
We have  $gv_i^cg^{-1}\in\vg$ for any $g\in\Gc{1},\Gc{2}$ by 
Proposition \ref{identcl}.
\endproof

\begin{prop}\label{uniqcl}
We have $\vg\cap\langle\Gc{1},\Gc{2}\rangle=\left\{{\rm id}\right\}$.
\end{prop}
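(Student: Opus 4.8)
The plan is to separate the two groups by a degree invariant: every element of $\langle\Gc{1},\Gc{2}\rangle$ is a \emph{linear} transformation of $\Z^3$, whereas every nontrivial element of $\vg$ is a polynomial transformation at least one of whose coordinate functions has total degree $\ge 2$. Once both facts are established, an element $g$ lying in the intersection is simultaneously linear and, if $g\ne\mathrm{id}$, nonlinear, which is impossible; hence $g=\mathrm{id}$. Throughout I regard these transformations as polynomial maps $\Z^3\to\Z^3$, identifying a map with its triple of coordinate polynomials in $\Z[a,b,c]$; this is legitimate because $\Z^3$ is infinite, so two such maps agree as functions precisely when their polynomials coincide.

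The first fact is immediate: the generators $\la_{i,j}^c$ and $\si_1^c,\si_2^c$ each send every coordinate to $\pm$ a single coordinate, hence are linear, and therefore so is every product.

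The heart of the argument is the second fact, which I would prove by induction on the length of a reduced word. Since each $v_i^c$ is an involution generating $\vg$, any $g\ne\mathrm{id}$ in $\vg$ can be written, after cancelling adjacent equal generators, as a reduced word $w=v_{i_1}^c\cdots v_{i_k}^c$ with $i_\ell\ne i_{\ell+1}$; the word is nonempty (that is, $k\ge1$) precisely because $g\ne\mathrm{id}$, and its freeness from further relations is guaranteed by Theorem \ref{EH}, though it is not needed here. Applying $w$ to $(a,b,c)$ yields a triple $(P_1,P_2,P_3)$, and I track its degree vector $(d_1,d_2,d_3)$ with $d_j=\deg P_j$. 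The invariant I would carry through the induction is sharper than the conclusion: for $k\ge1$ all three degrees are positive, the maximum is attained \emph{strictly} at the position $i_1$ of the last-applied generator, and $d_{i_1}\ge 2$. For $k=1$ this is clear, since $v_i^c$ raises exactly the $i$-th coordinate to degree $2$ and leaves the others of degree $1$. For the inductive step write $w=v_{i_1}^c\,w'$ with $w'=v_{i_2}^c\cdots v_{i_k}^c$; by hypothesis $w'$ produces a triple with strict maximal degree at position $i_2$. Applying $v_{i_1}^c$ replaces the $i_1$-th coordinate by $P_aP_b-P_{i_1}$, where $\{a,b\}=\{1,2,3\}\setminus\{i_1\}$. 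As $i_1\ne i_2$, the index $i_2$ lies in $\{a,b\}$, so $\deg(P_aP_b)=d_{i_2}+d_{\mathrm{third}}>d_{i_2}>d_{i_1}$, where $\mathrm{third}$ is the remaining index. The new $i_1$-th degree therefore equals $d_{i_2}+d_{\mathrm{third}}\ge 3$ and strictly exceeds every old degree, while the other two coordinates are unchanged; this propagates the invariant.

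The one delicate point, and the only place the argument could fail, is the possibility of leading-term cancellation in $P_aP_b-P_{i_1}$. The strict-maximum clause of the inductive claim is designed precisely to preclude this: it forces $\deg(P_aP_b)>\deg(P_{i_1})$ at every step, so the difference keeps the full degree $\deg(P_aP_b)$. With the induction complete, any $g\in\vg\cap\langle\Gc{1},\Gc{2}\rangle$ different from the identity would be linear (from membership in $\langle\Gc{1},\Gc{2}\rangle$) yet possess a coordinate of degree $\ge 2$ (from membership in $\vg$), a contradiction. Hence $\vg\cap\langle\Gc{1},\Gc{2}\rangle=\{\mathrm{id}\}$.
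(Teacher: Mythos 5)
Your proof is correct, but it follows a genuinely different route from the paper's. The paper separates the two groups with a pair of fixed-point observations: every element of $\vg$ fixes the triple $(2,2,2)$, which forces any element of the intersection to be a pure permutation in $\Gc{2}$; then, extending the action to $\C^3$ and computing Jacobians at the common fixed point $(0,0,0)$, every element of $\vg$ has \emph{diagonal} Jacobian there (each $v_i^c$ does, and these multiply), while the only permutation representable by a diagonal matrix is the identity. You instead run a degree induction on reduced words in the Vi\`ete generators, showing that the coordinate of the last-applied generator strictly dominates in degree and is at least quadratic, so no nontrivial element of $\vg$ is linear, whereas $\langle\Gc{1},\Gc{2}\rangle$ consists of linear (signed-permutation) maps. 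Your handling of the one delicate point — possible leading-term cancellation in $P_aP_b-P_{i_1}$ — is exactly right: strict dominance of $d_{i_2}$ gives $\deg(P_aP_b)>d_{i_1}$, so the difference retains full degree, and identifying maps with their coordinate polynomials is legitimate since $\Z$ is infinite. As for what each approach buys: the paper's proof is shorter, essentially two one-line checks; your induction proves strictly more, since it shows that no nonempty reduced word in $v_1^c,v_2^c,v_3^c$ is the identity, i.e., it yields an independent proof of El-Huti's freeness theorem (Theorem \ref{EH}), which the paper only cites — and it is the same maximum-growth mechanism the paper itself deploys later in Lemma \ref{uv} and in the proof of Theorem \ref{thmemt}.
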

\proof
Any element of $\vg$ fixes the triple $(2,2,2)$. The only elements of $\langle\Gc{1},\Gc{2}\rangle$ which fix $(2,2,2)$ are the elements of $\Gc{2}$.

Extend the action of both $\vg$ and $\Gc{2}$ to the space $\C^3$. The point $(0,0,0)$ is a fixed point for both actions. The Jacobian matrices at $(0,0,0)$ of the Vi\`ete transformations $v_1^c,v_2^c,v_3^c$ are
\[
\begin{pmatrix}
-1&0&0\\
0&1&0\\
0&0&1
\end{pmatrix},\quad
\begin{pmatrix}
1&0&0\\
0&-1&0\\
0&0&1
\end{pmatrix},\quad
\begin{pmatrix}
1&0&0\\
0&1&0\\
0&0&-1
\end{pmatrix},
\]respectively. Hence, any element of $\vg$ has diagonal Jacobian matrix at $(0,0,0)$. The only transformation of $\Gc{2}$ which can be represented by a diagonal matrix is the identity.
\endproof

\begin{cor}\label{declv12}For any element $g\in\ \!\!\!\mg$, there exist unique $v\in\ \!\!\!\vg$ and $h\in \langle\Gc{1},\ \!\!\Gc{2}\rangle$ such that $g=vh$.
This implies that $\mg=\ \vg\rtimes \langle\Gc{1},\ \!\!\Gc{2}\rangle$.
\end{cor}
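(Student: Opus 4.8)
The plan is to invoke the standard recognition criterion for an internal semidirect product, whose three hypotheses have already been assembled: by the preceding Corollary we know $\vg$ is normal in $\mg$ and $\mg=\langle\vg,\Gc{1},\Gc{2}\rangle$, while Proposition \ref{uniqcl} gives the trivial intersection $\vg\cap\langle\Gc{1},\Gc{2}\rangle=\{{\rm id}\}$. The Corollary is then a formal consequence, and I would organize it as existence, then uniqueness, then the semidirect-product conclusion.

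First I would establish existence of the factorization $g=vh$. Since $\vg$ is normal in $\mg$, the product set $\vg\cdot\langle\Gc{1},\Gc{2}\rangle$ is a subgroup: normality yields $h v h^{-1}\in\vg$ for $h\in\langle\Gc{1},\Gc{2}\rangle$ and $v\in\vg$, so $hv=(hvh^{-1})h\in\vg\cdot\langle\Gc{1},\Gc{2}\rangle$, which makes the set closed under multiplication, and closure under inversion follows the same way from $(vh)^{-1}=h^{-1}v^{-1}=(h^{-1}v^{-1}h)h^{-1}$. This subgroup contains the generators $\vg$, $\Gc{1}$, $\Gc{2}$ of $\mg$, hence coincides with $\mg$ by the preceding Corollary. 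Therefore every $g\in\mg$ is of the form $g=vh$ with $v\in\vg$ and $h\in\langle\Gc{1},\Gc{2}\rangle$.

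Next I would prove uniqueness. Suppose $v_1h_1=v_2h_2$ with $v_1,v_2\in\vg$ and $h_1,h_2\in\langle\Gc{1},\Gc{2}\rangle$. Rearranging gives $v_2^{-1}v_1=h_2h_1^{-1}$, an element lying simultaneously in $\vg$ and in $\langle\Gc{1},\Gc{2}\rangle$. By Proposition \ref{uniqcl} this intersection is trivial, so $v_2^{-1}v_1={\rm id}$ and $h_2h_1^{-1}={\rm id}$, that is $v_1=v_2$ and $h_1=h_2$.

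Finally, the semidirect-product statement is immediate: the four conditions that $\vg$ is normal in $\mg$, that $\langle\Gc{1},\Gc{2}\rangle$ is a subgroup of $\mg$, that $\vg\cdot\langle\Gc{1},\Gc{2}\rangle=\mg$, and that $\vg\cap\langle\Gc{1},\Gc{2}\rangle=\{{\rm id}\}$ are exactly the hypotheses under which $\mg$ decomposes as the internal semidirect product $\vg\rtimes\langle\Gc{1},\Gc{2}\rangle$. I do not expect any genuine obstacle here: the entire mathematical content sits in the two preceding results (normality together with generation, and the trivial intersection), and this Corollary simply packages them via the semidirect-product recognition theorem.
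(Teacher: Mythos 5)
Your proof is correct, but your existence step takes a genuinely different route from the paper's. The paper argues constructively: since $\mg=\langle\vg,\Gc{1},\Gc{2}\rangle$, it writes $g$ as an alternating word $v_{i_1}a_{i_1}v_{i_2}a_{i_2}\cdots v_{i_k}a_{i_k}$ with $a_{i_j}\in\langle\Gc{1},\Gc{2}\rangle$ and then pushes all the $a_{i_j}$ to the right using the explicit commutation rules of Proposition \ref{identcl} (conjugation by $\si_1^c,\si_2^c,\la_{k,l}^c$ permutes the Vi\`ete generators), so the proof simultaneously gives an algorithm producing the factorization $g=vh$ and keeps track of how the Vi\`ete letters are relabelled. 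You instead invoke the abstract recognition criterion: normality of $\vg$ (already recorded in the preceding Corollary) makes the product set $\vg\cdot\langle\Gc{1},\Gc{2}\rangle$ a subgroup, which contains all generators and hence equals $\mg$. The two arguments rest on the same underlying facts --- the normality you cite was itself deduced from Proposition \ref{identcl} --- so neither is logically stronger; yours is shorter and purely formal, while the paper's word-rewriting form is the one it mirrors verbatim in the $*$-setting (Corollary \ref{decv124}) and feeds into the decomposition used in Proposition \ref{kerphiM}. Your uniqueness argument (putting $v_2^{-1}v_1=h_2h_1^{-1}$ in the intersection and applying Proposition \ref{uniqcl}) is exactly the paper's, just spelled out, and your final appeal to the internal semidirect product criterion matches the paper's conclusion.
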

\proof
Since $\mg=\langle \vg,\ \!\! \Gc{1},\ \!\!\Gc{2}\rangle$, any $g$ can be expressed as a product 
\beq
\notag
g=v_{i_1}a_{i_1}v_{i_2}a_{i_2}\dots v_{i_k}a_{i_k},\quad a_{i_j}\in\langle \Gc{1},\ \!\!\Gc{2}\rangle.
\eeq
We can factor $g$ as 
\beq
\label{decl}
g=v_{i_1}v_{i_2}\dots v_{i_k}a'_{i_1}a'_{i_2}\dots a'_{i_k},\quad a'_{i_j}\in\langle \Gc{1},\ \!\!\Gc{2}\rangle,
\eeq
by using the commutation rules described in Proposition \ref{identcl}.
The decomposition in \eqref{decl} is unique, by Proposition \ref{uniqcl}.
\endproof

\subsection{Symmetries of $*$-Markov equation} 
\label{sec sym *Me}

Consider the 
following four groups of transformations of the space
$\left(\Zs\right)^3$.

\vskip 2mm
{\bf Type I. } The group $\Gs{1}$ generated by transformations 
\beq\notag
\lambda_{i,j}\colon (a,b,c)\mapsto \left((-1)^i a,(-1)^{i+j} b, (-1)^j c\right),\qquad i,j\in\mathbb Z_2.
\eeq

\vskip 2mm
{\bf Type II. } The  group $\Gs{2}$ generated by transformations
\beq\notag
\sigma_1 :(a,b,c)\mapsto (b,a,c),\qquad \si_2: (a,b,c)\mapsto (a,c,b).
\eeq

{\bf Type III. } The  group $\Gs{3}$ generated by  transformations  
\bean
\label{tau1}
&&
\tau_1\colon (a,b,c)\mapsto   (-a^*,\quad c^*,\quad b^*-ac),
\\
\notag
&&
\tau_2\colon (a,b,c)\mapsto (b^*,\quad a^*-bc,\quad -c^*).
\eean
We have $\tau_1\tau_2\tau_1=\tau_2\tau_1\tau_2$.
\vsk.2>
{\bf Type IV. } The group $\Gs{4}$ generated by transformations
\beq
\notag
\mu_{i,j}\colon (a,b,c) \mapsto
(s_3^i a,  s_3^{-i-j} b,    s_3^j c),
\qquad i,j 
\in\mathbb Z.
\eeq

\begin{prop}
\label{prop inv}
The set of all solutions of the
$*$-Markov equation \eqref{ME} is invariant under the action of each of the groups $\Gs{1},\Gs{2},\Gs{3},\Gs{4}$.\qed
\end{prop}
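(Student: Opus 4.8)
The plan is to verify the invariance for each of the four generating groups $\Gs1,\Gs2,\Gs3,\Gs4$ separately by direct substitution into the $*$-Markov equation \eqref{ME}, exploiting the fact that the right-hand side $R:=(3s_1s_2-s_1^3)/s_3$ is a fixed Laurent polynomial. The key structural observation, which I would record first, is how the $*$-involution interacts with each operation: it is a ring involution fixing $R$ only up to the transformations I must control. Indeed $s_1^*=s_2/s_3$, $s_2^*=s_1/s_3$, $s_3^*=1/s_3$, so one computes $R^*=(3s_2s_1/s_3^2 - s_2^3/s_3^3)/(1/s_3) = (3s_1s_2 - s_2^3/s_3)/s_3$, which is \emph{not} equal to $R$ in general. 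This asymmetry is what I expect to be the main obstacle, so the Type III case must be handled with care rather than by a naive symmetry argument.

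\textbf{Types I, II, IV.} For $\Gs1$ the substitution $(a,b,c)\mapsto((-1)^ia,(-1)^{i+j}b,(-1)^jc)$ leaves each product $aa^*$, $bb^*$, $cc^*$ unchanged (the sign squares to $1$) and multiplies $abc$ by $(-1)^{i+(i+j)+j}=(-1)^{2i+2j}=1$, so the left-hand side is preserved and the equation holds. For $\Gs2$ I must use that $R$ is invariant under the transpositions realized by $\sigma_1,\sigma_2$: since $aa^*+bb^*+cc^*$ is symmetric in $a,b,c$ and $abc$ is symmetric, the left-hand side is invariant under any permutation, so it suffices to note $R$ itself is literally unchanged (it does not depend on $a,b,c$). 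For $\Gs4$ the map $\mu_{i,j}$ sends $(a,b,c)\mapsto(s_3^ia,s_3^{-i-j}b,s_3^jc)$; here I use that $(s_3^k f)^* = s_3^{-k} f^*$ because $s_3^*=1/s_3$. Hence $aa^*\mapsto s_3^i a\cdot s_3^{-i}a^* = aa^*$, similarly for $b,b^*$ and $c,c^*$, while $abc\mapsto s_3^{i}s_3^{-i-j}s_3^{j}\,abc = abc$. Again the left-hand side is unchanged, so $\Gs4$ preserves solutions.

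\textbf{Type III.} This is the substantive case. For $\tau_1\colon(a,b,c)\mapsto(-a^*,c^*,b^*-ac)$ I substitute and compute the new left-hand side
\[
(-a^*)(-a^*)^* + c^*(c^*)^* + (b^*-ac)(b^*-ac)^* - (-a^*)c^*(b^*-ac).
\]
Using $(f^*)^*=f$ this is $a^*a + c^*c + (b^*-ac)(b - a^*c^*) + a^*c^*(b^*-ac)$. The plan is to expand the two cross terms, $(b^*-ac)(b-a^*c^*) = b^*b - a^*b^*c^* - abc + a a^* c c^*$ and $a^*c^*(b^*-ac)=a^*b^*c^* - aa^*cc^*$, and observe that the terms $\pm a^*b^*c^*$ cancel and the terms $\pm aa^*cc^*$ cancel, leaving exactly $aa^*+bb^*+cc^*-abc$, which equals $R$ by hypothesis. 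Thus $\tau_1$ sends a solution to a solution. The analogous computation for $\tau_2$ is symmetric and I would either repeat it or deduce it from $\tau_2 = \sigma_1\tau_1\sigma_1$-type conjugation together with the already-established $\Gs2$-invariance. Since the four groups generate the full symmetry group, invariance under each generator of each group yields invariance of the solution set under the group they generate, completing the proof. The one point demanding genuine attention is confirming the miraculous cancellation in the $\tau_1$ computation, which is precisely where the definition \eqref{tau1} is engineered so that the non-$*$-symmetric right-hand side $R$ reappears unchanged.
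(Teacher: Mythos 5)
Your proof is correct and takes the same route as the paper, which states this proposition with no written proof (the \qed is attached to the statement itself, i.e.\ the authors treat it as a direct verification): your Type I, II, IV checks and the $\tau_1$ computation with the cancellation of the $\pm a^*b^*c^*$ and $\pm aa^*cc^*$ terms are exactly the intended check. One caution: the literal identity $\sigma_1\tau_1\sigma_1=\tau_2$ is false (one has instead $\tau_2=\pi\,\tau_1\,\pi'$ for suitable cyclic permutations $\pi,\pi'\in G_2$ applied on the two sides), so for $\tau_2$ you should carry out the direct computation you offer as the alternative, which goes through with the identical cancellation.
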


As in the  case of the Markov equation \eqref{Me}, we have the action of $\mc B_3\big/\mc Z(\mc B_3)\cong \on{PSL}(2,\Z)$ on the 
set of all solutions of the $*$-Markov equation \eqref{ME}. See Remark \ref{clB3}.

\subsection{$*$-Markov and $*$-Vi\`ete groups}

Define the  $*$-\emph{Markov group} $\smg$ as the group of transformations of
$\left(\Zs\right)^3$ generated by $\Gs{1},\ \!\!\Gs{2},\ \!\!\Gs{3},\ \!\!\Gs{4}$,
\beq
\label{*MG}
\smg:=\langle \Gs{1},\ \!\! \Gs{2},\ \!\!\Gs{3},\ \!\!\Gs{4}\rangle.
\eeq
Define the $*$-\emph{Vi\`ete involutions} $v_1,v_2,v_3\in\ \!\! \smg$ 
by the formulas
\bea
v_1\colon(a,b,c)&\mapsto(bc-a^*,b^*,c^*),\\
v_2\colon(a,b,c)&\mapsto(a^*,ac-b^*,c^*),\\
v_3\colon(a,b,c)&\mapsto(a^*,b^*,ab-c^*).
\eea
Define the $*$-\emph{Vi\`ete group} $\svg$  as the group generated by 
Vi\`ete involutions $v_1,v_2,v_3$, 
\beq\notag
\svg:=\langle v_1,v_2,v_3\rangle.
\eeq
We have
\bean
\label{vtau1}
v_1 =\lambda_{1,1} \sigma_1 \tau_2,\qquad
v_2=\lambda_{1,0} \sigma_2\tau_1,\qquad
v_3=\lambda_{1,1} \tau_1\sigma_2.
\eean

\begin{prop}
\label{ident}

We have the following identities,
\bea
\si_1\la_{k,l}\si_1=\la_{k+l,l},
\qquad
\si_2\la_{k,l}\si_2=\la_{k,k+l},
\eea
\begin{alignat}{2}
\notag
\si_1 v_1\si_1&=v_2,\qquad \si_2 v_1\si_2&=v_1,\\
\notag
\si_1 v_2\si_1&= v_1,\qquad \si_2 v_2\si_2&=v_3,\\
\notag
\si_1 v_3\si_1&=v_3,\qquad \si_2 v_3\si_2&=v_2,
\end{alignat}
\bea
\la_{k,l} v_i\la_{k,l} =v_i,\qquad i=1,2,3,
\eea
\begin{alignat}{2}
\notag
\la_{k,l}\,\mu_{i,j}\,\la_{k,l}&=\mu_{i,j}\,,\qquad k,l\in\Z_2,\qquad i,j \in\Z,
\\
\notag
\si_1\mu_{i,j}\si_1&=\mu_{-i-j,j}\,,\qquad \si_2\mu_{i,j}\si_2=\mu_{i,-i-j}\,,
\\
\notag
v_k\,\mu_{i,j}\,v_k&=\mu_{-i-j}\,,\qquad k=1,2,3.
\end{alignat}
\end{prop}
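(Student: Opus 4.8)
The plan is to verify every identity by direct computation, applying both sides to a generic triple $(a,b,c)\in(\Zs)^3$ and comparing the three resulting Laurent polynomials. The only structural input required is the behaviour of the $*$-involution: it is an involutive ring automorphism of $\Zs$ fixing $\Z$ pointwise, so that $(fg)^*=f^*g^*$, $(f^*)^*=f$ and $(-1)^*=-1$, together with the single extra fact $s_3^*=s_3^{-1}$. Combining these yields the rule $(s_3^m f)^*=s_3^{-m}f^*$ for all $m\in\Z$, and it is precisely this rule that governs the computations involving $\mu_{i,j}$.

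First I would dispose of the two sign identities $\si_1\la_{k,l}\si_1=\la_{k+l,l}$ and $\si_2\la_{k,l}\si_2=\la_{k,k+l}$. These involve no $*$ at all: tracking the three signs through the composition reduces to arithmetic in $\Z_2$, using $(-1)^{k+2l}=(-1)^k$. They are formally identical to the corresponding identities of Proposition \ref{identcl}, since the sign-change and permutation generators act on the coordinates in exactly the same way in the classical and in the starred setting.

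Next come the conjugation identities for the Vi\`ete involutions by $\si_i$ and $\la_{k,l}$, and the conjugations of $\mu_{i,j}$ by $\si_i$ and $\la_{k,l}$. The guiding observation is that the $*$-involution, acting entrywise on a triple, commutes with the coordinate permutations $\si_1,\si_2$ and with the sign changes $\la_{k,l}$ (the latter because $(-1)^*=-1$). Consequently each identity in the $\si_i v_j\si_i$ and $\la_{k,l}v_i\la_{k,l}$ blocks reduces to exactly the same coordinate bookkeeping as its classical counterpart in Proposition \ref{identcl}, with every polynomial replaced by its $*$-twisted version; for $\la_{k,l}v_i\la_{k,l}=v_i$ one checks that each coordinate of $v_i$ meets the signs an even number of times, so they cancel. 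The identities $\la_{k,l}\mu_{i,j}\la_{k,l}=\mu_{i,j}$ and $\si_1\mu_{i,j}\si_1=\mu_{-i-j,j}$, $\si_2\mu_{i,j}\si_2=\mu_{i,-i-j}$ are equally quick: $\la_{k,l}$ commutes with the diagonal scaling $\mu_{i,j}$ and squares to the identity, while $\si_1$ permutes the first two $s_3$-exponents $(i,-i-j)$ of $\mu_{i,j}$ and $\si_2$ permutes the last two $(-i-j,j)$, which reproduces the new indices.

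The main obstacle, and the only place where the interaction between $*$ and the $s_3$-scaling is genuinely used, is the final block $v_k\mu_{i,j}v_k$. Here $\mu_{i,j}$ does \emph{not} commute with $*$; instead $(s_3^m f)^*=s_3^{-m}f^*$ flips the sign of the exponent, so a cancellation has to be checked rather than read off. I would carry out $v_1\mu_{i,j}v_1$ explicitly: from $(a,b,c)$, apply $v_1$ to get $(bc-a^*,\,b^*,\,c^*)$, scale by $\mu_{i,j}$, and apply $v_1$ once more. In the first output coordinate the new second and third entries multiply to $s_3^{-i}b^*c^*$, while the subtracted term is $(s_3^i(bc-a^*))^*=s_3^{-i}(b^*c^*-a)$; the two copies of $s_3^{-i}b^*c^*$ cancel and leave $s_3^{-i}a$, while the other coordinates become $s_3^{i+j}b$ and $s_3^{-j}c$. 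This is exactly $\mu_{-i,-j}$, so the right-hand side index in the displayed identity should read $(-i,-j)$. The cases $k=2,3$ then follow either by the same cancellation or by conjugating the $k=1$ case with $\si_1,\si_2$ using the identities already established. The one thing to watch throughout is to apply $(s_3^m f)^*=s_3^{-m}f^*$ with the correct sign every time $*$ meets a power of $s_3$; once that rule is used consistently, each identity collapses to a short computation.
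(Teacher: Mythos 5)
Your proposal is correct and follows the same route as the paper, whose entire proof is that the identities follow by straightforward computation; your explicit verification of $v_1\mu_{i,j}v_1$, where the rule $(s_3^m f)^*=s_3^{-m}f^*$ produces the one genuine cancellation, is exactly the computation the paper leaves to the reader. Your conclusion that the last identity should read $v_k\,\mu_{i,j}\,v_k=\mu_{-i,-j}$ is also right: the paper's subscript $\mu_{-i-j}$ is a typo, as confirmed by the parallel identity $V_k{\rm M}_{i,j}V_k={\rm M}_{-i,-j}$ in Theorem \ref{identaut}.
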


\proof
These identities are proved by straightforward computations.
\endproof

\begin{cor}\label{dec124}
For any element $g\in\langle G_1,G_2,G_4\rangle$, there exist unique $g_1\in G_1, g_2\in G_2, g_4\in G_4$ such that
\beq\label{124}
g=g_4g_1g_2.
\eeq
\end{cor}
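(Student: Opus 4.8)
The plan is to identify $\langle G_1,G_2,G_4\rangle$ as an iterated semidirect product $G_4\rtimes(G_1\rtimes G_2)$; both the existence and the uniqueness of the factorization $g=g_4g_1g_2$ then follow formally. Every structural fact I need is already packaged in the commutation identities of Proposition \ref{ident}, so the argument is essentially bookkeeping once the normal subgroups and the transversalities are pinned down.

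First I would check that $G_4$ is normal in $\langle G_1,G_2,G_4\rangle$. The identity $\la_{k,l}\,\mu_{i,j}\,\la_{k,l}=\mu_{i,j}$ shows that $G_1$ centralizes $G_4$, and $\si_1\mu_{i,j}\si_1=\mu_{-i-j,j}$, $\si_2\mu_{i,j}\si_2=\mu_{i,-i-j}$ show that $G_2$ normalizes $G_4$. Since all generators of the ambient group normalize $G_4$, it is normal, and therefore $\langle G_1,G_2,G_4\rangle=G_4\cdot\langle G_1,G_2\rangle$. Inside $\langle G_1,G_2\rangle$ the relations $\si_1\la_{k,l}\si_1=\la_{k+l,l}$ and $\si_2\la_{k,l}\si_2=\la_{k,k+l}$ show that $G_2$ normalizes $G_1$, so $G_1$ is normal in $\langle G_1,G_2\rangle$ and $\langle G_1,G_2\rangle=G_1\cdot G_2$. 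Combining the two steps gives $\langle G_1,G_2,G_4\rangle=G_4\cdot G_1\cdot G_2$, which is exactly the asserted existence of a decomposition $g=g_4g_1g_2$.

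For uniqueness I would establish the two transversality conditions $G_4\cap\langle G_1,G_2\rangle=\{{\rm id}\}$ and $G_1\cap G_2=\{{\rm id}\}$. The first holds because a nontrivial $\mu_{i,j}$ rescales some coordinate by a nonzero power of $s_3$, whereas every element of $\langle G_1,G_2\rangle$ is a signed permutation of the coordinates and introduces no such scaling; hence only $\mu_{0,0}={\rm id}$ lies in the intersection. The second holds because a nontrivial element of $G_2$ genuinely permutes the coordinates while every $\la_{i,j}$ fixes each coordinate (acting only by a sign), so again only the identity is common. Given these, suppose $g_4g_1g_2=g_4'g_1'g_2'$. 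Then $(g_4')^{-1}g_4=g_1'g_2'g_2^{-1}g_1^{-1}$ lies in $G_4\cap\langle G_1,G_2\rangle=\{{\rm id}\}$, whence $g_4=g_4'$; cancelling and using $(g_1')^{-1}g_1=g_2'g_2^{-1}\in G_1\cap G_2=\{{\rm id}\}$ gives $g_1=g_1'$ and $g_2=g_2'$. I do not expect a genuine obstacle here: the only step needing a little care is the clean statement of the two transversality arguments, since everything else is an immediate consequence of Proposition \ref{ident}.
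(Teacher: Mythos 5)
Your proof is correct and follows essentially the same route as the paper: existence via the commutation relations of Proposition \ref{ident} (which you repackage as the semidirect-product structure $G_4\rtimes(G_1\rtimes G_2)$), and uniqueness via the two trivial intersections $G_4\cap\langle G_1,G_2\rangle=\{\rm id\}$ and $G_1\cap G_2=\{\rm id\}$, which the paper asserts and you additionally verify by the scaling-versus-signed-permutation argument.
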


\proof
Any $g\in\langle G_1,G_2,G_4\rangle$ can be put in the form \eqref{124} by Proposition \ref{ident}.
 The uniqueness follows from the identities
\[
\puqed
G_4\cap\langle G_1,G_2\rangle=\left\{\rm id\right\},\quad G_1\cap G_2=\left\{\rm id\right\}.\makeatletter\qed
\poqed
\]

\begin{cor} We have $\smg=\langle \svg,\ \!\!\Gs{1},\ \!\!\Gs{2},\ \!\!\Gs{4}\rangle$. Moreover 
$\svg$ is a normal subgroup of $\smg$.
\end{cor}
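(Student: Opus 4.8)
The plan is to mirror the proof of the classical statement ($\mg=\langle\vg,\Gc{1},\Gc{2}\rangle$ with $\vg$ normal), establishing first the asserted generation and then the normality. For the equality, the inclusion $\smg\supseteq\langle\svg,\Gs{1},\Gs{2},\Gs{4}\rangle$ is immediate: the subgroups $\Gs{1},\Gs{2},\Gs{4}$ appear in the definition \eqref{*MG} of $\smg$, and each Vi\`ete involution lies in $\smg$ by \eqref{vtau1}. For the reverse inclusion it suffices, again by \eqref{*MG}, to check $\Gs{3}\subseteq\langle\svg,\Gs{1},\Gs{2}\rangle$. I would obtain this by inverting \eqref{vtau1}: since $\lambda_{1,0},\lambda_{1,1},\sigma_1,\sigma_2$ are involutions, those relations rearrange to $\tau_2=\sigma_1\lambda_{1,1}v_1$ and $\tau_1=\sigma_2\lambda_{1,0}v_2$, so the two generators of $\Gs{3}$ already lie in $\langle\svg,\Gs{1},\Gs{2}\rangle$. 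Hence $\smg=\langle\svg,\Gs{1},\Gs{2},\Gs{4}\rangle$.

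For normality it is enough, since these four subgroups generate $\smg$, to verify that conjugation by each generator of $\Gs{1},\Gs{2},\Gs{4}$ sends every $v_i$ back into $\svg$ (conjugation by $\svg$ preserving $\svg$ trivially). The cases of $\Gs{1}$ and $\Gs{2}$ are read off directly from Proposition \ref{ident}: the identities $\lambda_{k,l}v_i\lambda_{k,l}=v_i$ show that $\Gs{1}$ centralizes each Vi\`ete involution, and the identities placing $\sigma_1v_i\sigma_1$ and $\sigma_2v_i\sigma_2$ among $\{v_1,v_2,v_3\}$ show that $\Gs{2}$ merely permutes them; in either case the conjugate stays in $\svg$.

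The heart of the argument---and the ingredient absent in the classical case, where Type IV does not occur---is conjugation by $\Gs{4}$. The relevant input is the last family of relations in Proposition \ref{ident}, $v_k\mu_{i,j}v_k=\mu_{-i,-j}$, recording how the Vi\`ete involutions interact with the $s_3$-scalings. I expect the main obstacle to be precisely the passage from this relation, which governs how $\svg$ conjugates $\Gs{4}$, to the required statement that $\Gs{4}$ normalizes $\svg$; this is the step demanding genuine care rather than the bookkeeping already packaged in Proposition \ref{ident}. Once it is in place, the three conjugation checks together show that $\langle\Gs{1},\Gs{2},\Gs{4}\rangle$ normalizes $\svg$, and combined with the generation statement this yields $\svg\trianglelefteq\smg$, completing the proof.
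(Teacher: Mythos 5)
Your generation argument is correct and coincides with the paper's: the paper also notes that $\smg\supseteq\langle\svg,\Gs{1},\Gs{2},\Gs{4}\rangle$ is clear and obtains $\Gs{3}\subseteq\langle\svg,\Gs{1},\Gs{2}\rangle$ from \eqref{vtau1}; your explicit inversions $\tau_2=\si_1\la_{1,1}v_1$ and $\tau_1=\si_2\la_{1,0}v_2$ check out, since $\la_{1,0},\la_{1,1},\si_1,\si_2$ are involutions. The $\Gs{1}$ and $\Gs{2}$ conjugation checks are also fine and are exactly what the paper's one-line appeal to Proposition \ref{ident} covers for those two subgroups.

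The step you flag as the heart of the matter --- conjugation of $\svg$ by $\Gs{4}$ --- is a genuine gap, and it cannot be closed: $\Gs{4}$ does \emph{not} normalize $\svg$. The relation in Proposition \ref{ident} (where $\mu_{-i-j}$ is a typo for $\mu_{-i,-j}$; one verifies directly that $v_k\,\mu_{i,j}\,v_k=\mu_{-i,-j}$) rewrites as $\mu_{i,j}v_k=v_k\,\mu_{-i,-j}$, whence
\be
\mu_{i,j}\,v_k\,\mu_{i,j}^{-1}\,=\,v_k\,\mu_{-2i,-2j}\,=\,\mu_{2i,2j}\,v_k\,,
\ee
which lies in $\svg$ only for $(i,j)=(0,0)$: if $\mu_{2i,2j}v_k\in\svg$ then $\mu_{2i,2j}\in\svg\cap\langle\Gs{1},\Gs{2},\Gs{4}\rangle=\{\on{id}\}$ by Proposition \ref{uniq}. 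Concretely, every element of $\svg$ fixes the constant triple $(2,2,2)$, while $\mu_{1,0}\,v_3\,\mu_{1,0}^{-1}$ sends $(2,2,2)$ to $(2s_3^2,\,2s_3^{-2},\,2)$. So your instinct was exactly right: the relation $v_k\mu_{i,j}v_k=\mu_{-i,-j}$ shows that $\svg$ normalizes $\Gs{4}$ (indeed $v\,\mu_{i,j}\,v^{-1}=\mu_{-i,-j}$), and this asymmetry is fatal in the other direction. Note that the paper's own proof asserts $gv_ig^{-1}\in\svg$ for $g\in\Gs{4}$ ``by Proposition \ref{ident}'', which the proposition does not deliver; the issue is invisible in the classical case, where $\Gs{4}$ has no analog and $\vg\trianglelefteq\mg$ does follow from the $\la$- and $\si$-identities alone. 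What survives, and what your method does prove: $\svg$ is normal in $\langle\svg,\Gs{1},\Gs{2}\rangle$; the product $\svg\,\Gs{4}$ is a subgroup (since $\mu\,v=v\,\mu'$) that is normal in $\smg$, with $\Gs{4}$ --- not $\svg$ --- normal inside it; and the unique factorization $g=vh$ of Corollary \ref{decv124} still holds, since it uses only the commutation rules and Proposition \ref{uniq}, although the resulting decomposition of $\smg$ is then an exact factorization (Zappa--Sz\'ep product) rather than the semidirect product $\svg\rtimes\langle\Gs{1},\Gs{2},\Gs{4}\rangle$.
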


\proof The inclusion $\smg\supseteq\langle\Gs{1},\ \!\!\Gs{2},\ \!\!\Gs{4},\ \!\!\svg\rangle$ is clear. 
We have $G_3\subseteq\langle\Gs{1},\ \!\!\Gs{2},\ \!\!\Gs{4},\ \!\!\svg\rangle$,
by equations \eqref{vtau1}.
Hence $\smg=\langle\Gs{1},\ \!\!\Gs{2},\ \!\!\Gs{4},\ \!\!\svg\rangle$.
It follows that $g v_i g^{-1}\in\ \!\!\!\svg$ for any $g\in\ \!\!\Gs{1},\Gs{2},\Gs{4}$, 
by Proposition \ref{ident}.
\endproof

\begin{prop}\label{uniq}
We have $\svg\cap \langle \Gs{1},\ \!\!\Gs{2},\ \!\!\Gs{4}\rangle=\left\{\rm id\right\}$.
\end{prop}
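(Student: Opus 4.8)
The plan is to follow the template of the classical Proposition~\ref{uniqcl}, but with an extra initial reduction to absorb the new group $\Gs4$ of $s_3$-scalings, which has no analogue in the classical picture. The argument will have three movements: first show that every element of $\svg$ fixes the constant triple $(2,2,2)$; then use this together with the normal-form decomposition of Corollary~\ref{dec124} to show that any $g\in\svg\cap\langle\Gs1,\Gs2,\Gs4\rangle$ must in fact be a pure permutation; and finally rule out nontrivial permutations inside $\svg$ by a linearization argument at the triple $(0,0,0)$.

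For the first movement I would simply check on generators: since $2^*=2$ and $2\cdot2-2=2$, each Vi\`ete involution fixes $(2,2,2)$, e.g. $v_1(2,2,2)=(2\cdot2-2^*,2^*,2^*)=(2,2,2)$, and likewise $v_2,v_3$; hence so does every element of $\svg$. For the second movement, write $g=g_4g_1g_2$ with $g_4=\mu_{i,j}$, $g_1=\lambda_{k,l}$, $g_2\in\Gs2$ as in Corollary~\ref{dec124}. A permutation fixes $(2,2,2)$, so evaluating $g$ on $(2,2,2)$ gives
\[
g(2,2,2)=\big((-1)^k s_3^{\,i}\,2,\ (-1)^{k+l}s_3^{\,-i-j}\,2,\ (-1)^l s_3^{\,j}\,2\big).
\]
Since $g\in\svg$ fixes $(2,2,2)$, this must equal $(2,2,2)$; comparing the (independent) sign and $s_3$-power in each slot forces $i=j=0$ and $k=l=0$, so $g_4=g_1={\rm id}$ and $g=g_2$ is a permutation. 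This is exactly where the extra group $\Gs4$ is eliminated: a nonzero power of $s_3$ can never rescale a constant back to itself.

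It remains to show that $\svg$ contains no nontrivial permutation. Here I would linearize at the common fixed triple $(0,0,0)$: in each $v_k$ the quadratic term ($bc$, $ac$, or $ab$) drops out, so the derivative of $v_1$ is $(a,b,c)\mapsto(-a^*,b^*,c^*)$, and similarly for $v_2,v_3$. Each such linear map is block-diagonal with respect to the three coordinate slots (its $k$-th output depends only on the $k$-th input), a property preserved under composition; hence the linearization of every element of $\svg$ is block-diagonal. But $g=g_2$ is already linear and equals its own linearization, and a nontrivial permutation mixes the slots, contradicting block-diagonality. Thus $g_2={\rm id}$ and $g={\rm id}$. (Alternatively, the last step can be carried out by pushing $g_2$ through the evaluation epimorphism $\smg\to\mg$ to land in $\vg\cap\Gc2\subseteq\vg\cap\langle\Gc1,\Gc2\rangle=\{{\rm id}\}$ via Proposition~\ref{uniqcl}.) The only genuinely new point, and the main thing to get right, is the interplay between $\Gs4$ and the $*$-involution: one must ensure the $s_3$-scalings are killed before the final linearization, and verify that the $*$ appearing in the derivatives of the $v_k$ does not destroy block-diagonality — which it does not, since $*$ acts within each coordinate slot.
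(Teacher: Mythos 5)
Your proof is correct, and it takes a genuinely different route from the paper's. The paper argues through the classical picture: for $g\in\svg\cap\langle\Gs{1},\Gs{2},\Gs{4}\rangle$ it applies the evaluation epimorphism $\phi_M\colon\smg\to\mg$, so that $\phi_M(g)\in\vg\cap\langle\Gc{1},\Gc{2}\rangle=\{{\rm id}\}$ by Proposition \ref{uniqcl}; together with the normal form $g=g_4g_1g_2$ of Corollary \ref{dec124} this forces $g\in \Gs{4}$, and only then is the fixed point $(2,2,2)$ invoked, to kill $\Gs{4}$. You stay entirely in the $*$-setting: the fixed point $(2,2,2)$ comes first and kills $\Gs{4}$ and $\Gs{1}$ simultaneously (no sign change combined with a nonzero power of $s_3$ fixes the constant $2$), and the residual permutation is then ruled out by transplanting the linearization-at-the-origin device --- which the paper uses only in the classical Proposition \ref{uniqcl} --- directly to the $*$-Vi\`ete involutions, with block-diagonal $*$-semilinear linear parts playing the role of diagonal Jacobian matrices. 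The paper's route buys brevity, reusing Proposition \ref{uniqcl} wholesale; your route buys logical cleanliness, since the paper's proof forward-references $\phi_M$, which is only constructed (Proposition \ref{surjmg}) after Proposition \ref{uniq}, whereas your main argument never needs it. The one point to nail down in your last step is that the linear part at $(0,0,0)$ is determined by the map itself, not by a chosen formula: a polynomial expression in $(a,a^*,b,b^*,c,c^*)$ inducing the zero map on $\left(\Zs\right)^3$ must have vanishing linear part. This follows by substituting $(ta,tb,tc)$, $t\in\Z$, to isolate the homogeneous parts, and then evaluating, say, at $a=1$ and $a=s_3$ (and likewise in the other slots); it is the $*$-analogue of the Zariski-density fact implicit in the paper's classical Jacobian computation. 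Your parenthetical alternative --- pushing the residual permutation through $\phi_M$ and invoking Proposition \ref{uniqcl} --- avoids this verification entirely and is, in effect, the paper's argument applied after your reduction.
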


\proof Let $g\in\svg\cap \langle \Gs{1},\ \!\!\Gs{2},\ \!\!\Gs{4}\rangle$. We have $g\in\ker\phi_M$
by Proposition \ref{uniqcl}. 
The only elements of the form \eqref{124} which are in $\ker\phi_M$ are the elements of $G_4$.
Any element of $\svg$ fixes the triple of constant polynomials $(2,2,2)$. The only element of $G_4$ which fixes $(2,2,2)$ is the identity. 
\endproof

\begin{cor}\label{decv124}For any element $g\in\ \!\!\!\smg$, there exist unique $v\in\ \!\!\!\svg$ and $h\in \langle\Gs{1},\ \!\!\Gs{2},\ \!\!\Gs{4}\rangle$ such that $g=vh$.
This implies that $\smg=\ \svg\rtimes \langle\Gs{1},\ \!\!\Gs{2},\ \!\!\Gs{4}\rangle$.
\end{cor}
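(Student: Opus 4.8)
The plan is to follow the template of the classical Corollary~\ref{declv12}, adapting it to accommodate the additional type~IV group $\Gs{4}$. The two ingredients are already in place: the previous corollary gives both the presentation $\smg=\langle\svg,\Gs{1},\Gs{2},\Gs{4}\rangle$ and the normality of $\svg$ in $\smg$, while Proposition~\ref{uniq} gives the trivial intersection $\svg\cap\langle\Gs{1},\Gs{2},\Gs{4}\rangle=\{\mathrm{id}\}$.

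For existence, I would start from the presentation and write an arbitrary $g\in\smg$ as an alternating word
\beq\notag
g=v_{i_1}a_{i_1}v_{i_2}a_{i_2}\cdots v_{i_k}a_{i_k},\qquad a_{i_j}\in\langle\Gs{1},\Gs{2},\Gs{4}\rangle,\ \ v_{i_j}\in\svg,
\eeq
and then collect all the Viète factors on the left. Since $\svg$ is normal, for each $a\in\langle\Gs{1},\Gs{2},\Gs{4}\rangle$ and each Viète involution $v$ one has $av=v'a$ for some $v'\in\svg$; concretely these moves are the conjugation identities of Proposition~\ref{ident}. The relations $\si_1 v_i\si_1$ and $\si_2 v_i\si_2$ permute $\{v_1,v_2,v_3\}$, the relations $\la_{k,l}v_i\la_{k,l}=v_i$ leave the Viète involutions fixed, and the relation $v_k\mu_{i,j}v_k=\mu_{-i-j}$ rearranges (using $v_k^2=\mathrm{id}$) to $\mu_{i,j}v_k=v_k\mu_{-i-j}$, showing how to push a Viète involution to the left past a type~IV generator at the cost of replacing the latter by another element of $\Gs{4}$. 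Applying these moves repeatedly collapses the word to the form $g=vh$ with $v\in\svg$ and $h\in\langle\Gs{1},\Gs{2},\Gs{4}\rangle$.

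For uniqueness, suppose $g=vh=v'h'$ with $v,v'\in\svg$ and $h,h'\in\langle\Gs{1},\Gs{2},\Gs{4}\rangle$. Then $(v')^{-1}v=h'h^{-1}$ lies in $\svg\cap\langle\Gs{1},\Gs{2},\Gs{4}\rangle$, which is trivial by Proposition~\ref{uniq}; hence $v=v'$ and $h=h'$. Together with the normality of $\svg$ and the fact that $\langle\Gs{1},\Gs{2},\Gs{4}\rangle$ is a complementary subgroup meeting $\svg$ only in the identity, this yields the internal semidirect product decomposition $\smg=\svg\rtimes\langle\Gs{1},\Gs{2},\Gs{4}\rangle$.

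The main obstacle is purely the bookkeeping in the existence step: one must be sure that the conjugation relations of Proposition~\ref{ident} genuinely suffice to normalize \emph{every} word, i.e.\ that conjugating any generator of $\langle\Gs{1},\Gs{2},\Gs{4}\rangle$ by any $v_i$ (and conversely) lands back in $\svg\cdot\langle\Gs{1},\Gs{2},\Gs{4}\rangle$. The genuinely new feature relative to the classical argument is the interaction of $\Gs{4}$ with the Viète involutions and with $\si_1,\si_2$; once the $\mu$-relations of Proposition~\ref{ident} are invoked, the argument is formally identical to that of Corollary~\ref{declv12}.
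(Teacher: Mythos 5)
Your proof is correct and is essentially the paper's own argument: the paper likewise writes $g$ as an alternating word $v_{i_1}a_{i_1}\cdots v_{i_k}a_{i_k}$, collects the Vi\`ete factors on the left via the commutation rules of Proposition~\ref{ident} (including the $\mu$-relations for $\Gs{4}$), and derives uniqueness from the trivial intersection $\svg\cap\langle\Gs{1},\Gs{2},\Gs{4}\rangle=\{\mathrm{id}\}$ of Proposition~\ref{uniq}. Your explicit unpacking of the uniqueness step, $(v')^{-1}v=h'h^{-1}$, is merely a spelled-out version of what the paper cites in one line, so no substantive difference remains.
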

\proof
Since $\smg=\langle \svg,\ \!\! \Gs{1},\ \!\!\Gs{2},\ \!\!\Gs{4}\rangle$, any $g$ can be expressed as a product 
\beq
\notag
g=v_{i_1}a_{i_1}v_{i_2}a_{i_2}\dots v_{i_k}a_{i_k},\quad a_{i_j}\in\langle \Gs{1},\ \!\!\Gs{2},\ \!\!\Gs{4}\rangle.
\eeq
We can factor $g$ as 
\beq\label{dec}
g=v_{i_1}v_{i_2}\dots v_{i_k}a'_{i_1}a'_{i_2}\dots a'_{i_k},\quad a'_{i_j}\in\langle \Gs{1},\ \!\!\Gs{2},\ \!\!\Gs{4}\rangle
\eeq
by using the commutation rules described in Proposition  \ref{ident}.
The decomposition in \eqref{dec} is unique, by Proposition \ref{uniq}.
\endproof

Let $g\in\smg$. Consider its restriction $g|_{\Z^3}$ to the subset $\Z^3\subseteq \left(\Zs\right)^3$. Define the transformation $\phi_M(g)\colon \Z^3\to \Z^3$ as the composition $\Evo\circ g|_{\Z^3}$, i.e.
\[
\xymatrix{
\Z^3\ar[rr]^{g|_{\Z^3}\quad}&&\left(\Zs\right)^3\ar[rr]^{\quad\Evo}&&\Z^3.
}
\]

\begin{prop}
\label{surjmg}
We have a group epimorphism
\beq 
\notag
\phi_M\colon\smg\twoheadrightarrow\mg,
\eeq 
which acts on the generators as 
\beq	\label{phim}
\lambda_{\al,\bt}\mapsto\la^c_{\al,\bt},\quad \si_i\mapsto\si_i^c,\quad \tau_i\mapsto \tau_i^c,\quad \mu_{\al,\bt}\mapsto {\rm id}.
\eeq
\end{prop}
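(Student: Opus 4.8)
The plan is to derive the whole statement from a single equivariance property of the evaluation map $\Evo$, which itself reduces to two special features of the point $\bs s_o=(3,3,1)$. First I would record these two facts. Since the $*$-involution is induced by the substitution $f^*(s_1,s_2,s_3)=f(s_2/s_3,s_1/s_3,1/s_3)$ and the triple $(s_2/s_3,s_1/s_3,1/s_3)$ returns $(3,3,1)$ when evaluated at $\bs s_o$, the point $\bs s_o$ is a fixed point of this substitution; hence
\[
\evo(f^*)=\evo(f)\qquad\text{for every }f\in\Zs .
\]
Second, $s_3(\bs s_o)=1$, so $\evo(s_3^{\>k}f)=\evo(f)$ for every $k\in\Z$. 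These are the only inputs genuinely special to the $*$-setting; the rest of the argument is formal.

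Next I would prove the key claim: for each generator $g$ of $\smg$ there is an element $\bar g\in\mg$, namely the one prescribed in \eqref{phim}, such that
\[
\Evo(g\cdot f)=\bar g\cdot\Evo(f)\qquad\text{for all }f\in\left(\Zs\right)^3 .
\]
This is a short check, one generator at a time. For $\lambda_{i,j}$ the signs $\pm1$ and for $\si_i$ the coordinate permutations commute with $\Evo$ verbatim, yielding $\bar\lambda_{i,j}=\la^c_{i,j}$ and $\bar\si_i=\si_i^c$. For $\tau_1,\tau_2$ I would apply $\Evo$ to the defining formulas \eqref{tau1} and use $\evo(f^*)=\evo(f)$ to erase every star; for instance the image of $\tau_1(a,b,c)=(-a^*,c^*,b^*-ac)$ is $\big(-a(\bs s_o),\,c(\bs s_o),\,b(\bs s_o)-a(\bs s_o)c(\bs s_o)\big)=\tau_1^c\big(\Evo(a,b,c)\big)$, and $\tau_2$ is identical. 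For $\mu_{i,j}$ the scalars $s_3^{\>i},s_3^{-i-j},s_3^{\>j}$ all evaluate to $1$, so $\Evo(\mu_{i,j}\cdot f)=\Evo(f)$ and $\bar\mu_{i,j}=\id$.

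Finally I would assemble these into the proposition. The equivariance propagates through composition: from $\Evo\circ g_1=\phi_M(g_1)\circ\Evo$ and $\Evo\circ g_2=\phi_M(g_2)\circ\Evo$ one gets $\Evo\circ(g_1g_2)=\phi_M(g_1)\phi_M(g_2)\circ\Evo$. Since $\Evo$ restricts to the identity on the constant triples $\Z^3\subseteq\left(\Zs\right)^3$, evaluating the equivariance on $f\in\Z^3$ shows that $\phi_M(g)=\Evo\circ g|_{\Z^3}$ equals the corresponding product of generator images; in particular $\phi_M$ agrees with the assignment \eqref{phim} on generators, all its values lie in $\mg$, and $\phi_M(g_1g_2)=\phi_M(g_1)\phi_M(g_2)$. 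Thus $\phi_M\colon\smg\to\mg$ is a group homomorphism, and since its image contains all the generators $\la^c_{i,j},\si_i^c,\tau_i^c$ of $\mg$, it is onto.

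The work is almost entirely bookkeeping; the one point deserving emphasis — conceptual rather than technical — is that the statement lives or dies by the two evaluation identities at $\bs s_o$. That $\bs s_o$ is a $*$-fixed point is exactly what erases the stars in $\tau_1,\tau_2$ and so sends the deformed braid relation $\tau_1\tau_2\tau_1=\tau_2\tau_1\tau_2$ onto its classical counterpart, and that $s_3(\bs s_o)=1$ is exactly what collapses the new subgroup $\Gs{4}$ to the identity. Once these are isolated, surjectivity and the homomorphism property are formal.
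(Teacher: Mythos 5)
Your proposal is correct and follows essentially the same route as the paper: the paper defines $\phi_M(g)=\Evo\circ g|_{\Z^3}$, checks the generator identities \eqref{phim}, and obtains multiplicativity from a commutative diagram whose content is exactly the equivariance $\Evo\circ g=\phi_M(g)\circ\Evo$ that you prove generator by generator (this equivariance is the paper's Lemma \ref{equivEvo}, stated immediately after the proposition). Your write-up merely makes explicit the two evaluation facts at $\bs s_o$ — that it is a $*$-fixed point and that $s_3(\bs s_o)=1$ — which the paper compresses into ``easily checked.''
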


\proof
Identities \eqref{phim} are easily checked. Let $g,h\in\smg$. From the commutative diagram
\[
\xymatrix{
\left(\Zs\right)^3\ar[r]^{g}&\left(\Zs\right)^3\ar[r]^h\ar@/^/[d]^{\Evo}&\left(\Zs\right)^3\ar[d]^{\Evo}\\
\Z^3\ar[u]^i\ar[r]_{\phi_M(g)}&\Z^3\ar@/^/[u]^i\ar[r]_{\phi_M(h)}&\Z^3
}
\]it readily follows that $\phi_M(hg)=\phi_M(h)\phi_M(g)$.
\endproof

\begin{prop}\label{kerphiM}
We have $\ker\phi_M=G_4$, so that $\mg\cong\smg/G_4$.
\end{prop}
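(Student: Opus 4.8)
The plan is to prove the two inclusions $G_4\subseteq\ker\phi_M$ and $\ker\phi_M\subseteq G_4$ separately, and then read off the isomorphism $\mg\cong\smg/G_4$ from the first isomorphism theorem applied to the epimorphism $\phi_M$ of Proposition \ref{surjmg}. The inclusion $G_4\subseteq\ker\phi_M$ is immediate: every generator $\mu_{i,j}$ of $G_4$ is sent to $\id$ by the formulas \eqref{phim}, so $\phi_M$ kills all of $G_4$.

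For the reverse inclusion I would exploit the two unique-factorization results already available. Let $g\in\ker\phi_M$. By Corollary \ref{decv124} write $g=vh$ uniquely with $v\in\svg$ and $h\in\langle G_1,G_2,G_4\rangle$, and by Corollary \ref{dec124} write $h=g_4g_1g_2$ uniquely with $g_4\in G_4$, $g_1\in G_1$, $g_2\in G_2$. Applying $\phi_M$ and using $\phi_M(g_4)=\id$ together with the generator formulas \eqref{phim}, one gets
\[
\id=\phi_M(g)=\phi_M(v)\,\phi_M(g_1)\,\phi_M(g_2),
\]
where $\phi_M(v)\in\vg$, $\phi_M(g_1)\in\Gc1$, and $\phi_M(g_2)\in\Gc2$. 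This is precisely a decomposition of the identity in the classical semidirect product $\mg=\vg\rtimes\langle\Gc1,\Gc2\rangle$ of Corollary \ref{declv12}, so its uniqueness forces $\phi_M(v)=\id$ and $\phi_M(g_1)\phi_M(g_2)=\id$. From the last relation $\phi_M(g_1)=\phi_M(g_2)^{-1}\in\Gc1\cap\Gc2=\{\id\}$, the evident classical analogue of the identity $G_1\cap G_2=\{\id\}$ used in Corollary \ref{dec124}, hence $\phi_M(g_1)=\phi_M(g_2)=\id$. Since $\phi_M$ restricts to isomorphisms $G_1\xrightarrow{\sim}\Gc1$ and $G_2\xrightarrow{\sim}\Gc2$ (the generator assignments $\lambda_{i,j}\mapsto\la^c_{i,j}$ and $\si_i\mapsto\si_i^c$ are bijections between the finite groups $\Z_2\times\Z_2$ and $S_3$), we conclude $g_1=g_2=\id$. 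It then remains only to deduce $v=\id$.

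The main obstacle, and the one genuinely new input, is the injectivity of $\phi_M$ on the $*$-Vi\`ete group $\svg$. I would establish this from the freeness of the classical Vi\`ete group. Since the $v_i$ are involutions (a direct computation gives $v_i^2=\id$), there is a surjective homomorphism $\pi\colon\Z_2*\Z_2*\Z_2\to\svg$, $t_i\mapsto v_i$, where $t_1,t_2,t_3$ are the standard involutive generators of the free product. Composing with $\phi_M$ yields the homomorphism $t_i\mapsto v_i^c$, which by Theorem \ref{EH} is an isomorphism onto $\vg\cong\Z_2*\Z_2*\Z_2$. Hence $\phi_M\circ\pi$ is injective, so $\pi$ is injective, hence an isomorphism, and therefore $\phi_M|_{\svg}\colon\svg\to\vg$ is an isomorphism as well; in particular $\phi_M(v)=\id$ forces $v=\id$. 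Concretely this is the observation that reducing any word in the $v_i$ by the relations $v_i^2=\id$ produces a word whose $\phi_M$-image is the corresponding reduced word in the free product $\vg$, which is trivial only when empty. Combining the three triviality conclusions gives $g=g_4\in G_4$, so $\ker\phi_M=G_4$, and $\phi_M$ induces the asserted isomorphism $\mg\cong\smg/G_4$.
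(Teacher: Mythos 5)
Your proof is correct and follows essentially the same route as the paper's: the unique decomposition $g=vg_4g_1g_2$ from Corollaries \ref{decv124} and \ref{dec124}, the classical uniqueness statement of Corollary \ref{declv12} together with $\Gc{1}\cap\Gc{2}=\{\mathrm{id}\}$ to force $g_1=g_2=\mathrm{id}$, and Theorem \ref{EH} to force $v=\mathrm{id}$. Your free-product argument showing that $\phi_M$ restricts to an isomorphism $\svg\to\vg$ is simply a more formal packaging of the paper's direct appeal to the freeness of $\vg$ (its ``since the $v_i^c$ freely generate $\vg$, we necessarily have $n=0$''), and if anything it makes explicit the implicit use of the relations $v_i^2=\mathrm{id}$ in $\svg$.
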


\proof
Let $g\in\ker\phi_M$. By Corollaries \ref{decv124} and \ref{dec124}, there exist unique elements $v\in\svg, g_1\in G_1,g_2\in G_2, g_4\in G_4$ such that
\[g=vg_4g_1g_2.
\]We have
\[\phi_M(g)=\phi_M(v)\phi_M(g_1)\phi_M(g_2)=\rm id.
\]
By Corollary \ref{declv12}, together with $G_1^c\cap G_2^c=\left\{\rm id\right\}$, we have 
\[\phi_M(v)={\rm id},\quad \phi_M(g_1)={\rm id},\quad \phi_M(g_2)={\rm id}.
\]
This clearly implies that $g_1=\rm id$ and $g_2=\rm id$.
The element $v$ is of the form $v=\prod_{j=1}^nv_{i_j}$ with $i_j\in\left\{1,2,3\right\}$, so that
\[{\rm id}=\phi_M(v)=\prod_{j=1}^n\phi_M(v_{i_j})=\prod_{j=1}^nv_{i_j}^c.
\]Since $v_i^c$ freely generate $\vg$ (by Theorem \ref{EH}), we necessarily have $n=0$, and $v={\rm id}$. This shows that $\ker\phi_M\subseteq G_4$. The opposite inclusion is obvious.
\endproof

\begin{lem}
The morphism $\phi_M$ defines isomorphisms between the group $G_i$ and $G_i^c$ for $i=1,2,3$, and between the group $\svg$ and $\vg$.\qed
\end{lem}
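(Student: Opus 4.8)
The statement is that $\phi_M$ restricts to isomorphisms $G_i\cong\Gc i$ for $i=1,2,3$ and $\svg\cong\vg$. Each restriction is already a homomorphism, so the plan is to verify surjectivity and injectivity separately. Because $\ker\phi_M=G_4$ by Proposition~\ref{kerphiM}, for any subgroup $H\le\smg$ one has $\ker(\phi_M|_H)=H\cap G_4$, so injectivity of $\phi_M$ on $H$ is equivalent to the single equality $H\cap G_4=\{\mathrm{id}\}$.

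Surjectivity is immediate from the description of $\phi_M$ on generators. By \eqref{phim} the generators $\lambda_{i,j},\sigma_i,\tau_i$ of $G_1,G_2,G_3$ are sent to the generators $\lambda^c_{i,j},\sigma^c_i,\tau^c_i$ of $\Gc1,\Gc2,\Gc3$, whence $\phi_M(G_i)=\Gc i$ for $i=1,2,3$. For the Vi\`ete groups I would combine \eqref{vtau1} and \eqref{vtau1cl}: applying $\phi_M$ to $v_1=\lambda_{1,1}\sigma_1\tau_2$ yields $\phi_M(v_1)=\lambda^c_{1,1}\sigma^c_1\tau^c_2=v_1^c$, and similarly $\phi_M(v_2)=v_2^c$, $\phi_M(v_3)=v_3^c$; hence $\phi_M(\svg)=\vg$.

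For injectivity it remains to prove $H\cap G_4=\{\mathrm{id}\}$ for $H=G_1,G_2,G_3,\svg$. I would first observe that all four subgroups lie in $\langle\svg,G_1,G_2\rangle$: this is trivial for $G_1,G_2,\svg$, and for $G_3$ one solves the relations \eqref{vtau1} for the generators, e.g. $\tau_2=\sigma_1\lambda_{1,1}v_1$ and $\tau_1=\sigma_2\lambda_{1,0}v_2$ (using that the $\sigma_i$ and $\lambda_{i,j}$ are involutions), so that $G_3=\langle\tau_1,\tau_2\rangle\subseteq\langle\svg,G_1,G_2\rangle$. It therefore suffices to show $\langle\svg,G_1,G_2\rangle\cap G_4=\{\mathrm{id}\}$. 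By Corollaries~\ref{decv124} and~\ref{dec124} every $g\in\smg$ has a unique normal form $g=v\,g_4\,g_1\,g_2$ with $v\in\svg$, $g_4\in G_4$, $g_1\in G_1$, $g_2\in G_2$. Using the commutation relations of Proposition~\ref{ident}---conjugating a $v_i$ by a $\sigma_j$ or a $\lambda_{k,l}$ produces an element of $\svg$, and $\sigma_j\lambda_{k,l}\sigma_j\in G_1$---any word in the generators of $\svg,G_1,G_2$ can be rewritten as $v\,g_1\,g_2$, i.e. with $g_4=\mathrm{id}$. Thus every element of $\langle\svg,G_1,G_2\rangle$ has trivial $G_4$-component, while every nontrivial element of $G_4$ has nontrivial $G_4$-component, so by uniqueness of the normal form the two subgroups meet only in the identity.

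Combining both steps, $\phi_M$ restricts to a bijective homomorphism in each case, hence an isomorphism. The routine parts are the surjectivity and the normal-form bookkeeping; the one genuinely delicate point is injectivity on $G_3$. Unlike $G_1$ and $G_2$, which are finite and therefore meet the torsion-free group $G_4\cong\Z^2$ trivially for cheap reasons, the group $G_3$ is infinite, so no torsion argument is available and one must instead exploit the uniqueness of the decomposition $\smg=\svg\rtimes\langle G_1,G_2,G_4\rangle$ as above.
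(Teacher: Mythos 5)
Your proof is correct and is essentially the argument the paper intends: the lemma is stated there without proof, as an immediate consequence of the surrounding results, namely surjectivity from the generator correspondence in \eqref{phim} (together with \eqref{vtau1} and \eqref{vtau1cl} for the Vi\`ete groups) and injectivity from $\ker\phi_M=G_4$ combined with the trivial intersections $G_i\cap G_4=\{\mathrm{id}\}$ and $\svg\cap\, G_4=\{\mathrm{id}\}$. You correctly identify and settle the only non-cheap case, $G_3\cap G_4=\{\mathrm{id}\}$, by observing $G_3\subseteq\langle\svg,G_1,G_2\rangle$ and invoking the uniqueness of the decomposition $g=v\,g_4\,g_1\,g_2$ from Corollaries \ref{decv124} and \ref{dec124}, which is exactly the machinery the paper develops for this purpose.
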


\begin{lem}\label{equivEvo}The evaluation morphism $\Evo$ is $\phi_M$-equivariant, i.e.
\[
\Evo(g\cdot \bm a)=\phi_M(g)\cdot \Evo(\bm a),\quad g\in\smg,\quad \bm a\in\left(\Zs\right)^3.
\makeatletter\displaymath@qed
\]
\end{lem}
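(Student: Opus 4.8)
The plan is to verify the identity on the generators of $\smg$ and then propagate it to arbitrary group elements using that $\phi_M$ is a homomorphism. The whole argument rests on two elementary properties of the evaluation map. First, since $\evo$ is a ring homomorphism $\Zs\to\Z$, it respects sums and products. Second, and crucially, the base point $\bs s_o=(3,3,1)$ is fixed by the $*$-substitution: because $f^*(s_1,s_2,s_3)=f(s_2/s_3,s_1/s_3,1/s_3)$, evaluating at $(3,3,1)$ gives $f^*(3,3,1)=f(3,3,1)$, so that
\[
\evo(f^*)=\evo(f),\qquad f\in\Zs.
\]
Together with $\evo(s_3)=1$, these two facts are what make every generator equivariant.

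For the inductive step I would argue as follows. Suppose $\Evo(g\cdot\bm a)=\phi_M(g)\cdot\Evo(\bm a)$ and $\Evo(h\cdot\bm a)=\phi_M(h)\cdot\Evo(\bm a)$ hold for all $\bm a\in(\Zs)^3$. Applying the first identity to the triple $h\cdot\bm a$ and then the second gives
\[
\Evo(gh\cdot\bm a)=\phi_M(g)\cdot\Evo(h\cdot\bm a)=\phi_M(g)\phi_M(h)\cdot\Evo(\bm a)=\phi_M(gh)\cdot\Evo(\bm a),
\]
where the last equality is Proposition \ref{surjmg}. Hence it suffices to treat the generators $\lambda_{i,j},\si_1,\si_2,\tau_1,\tau_2,\mu_{i,j}$.

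For $\lambda_{i,j}$ and $\si_1,\si_2$ the verification is immediate: these act by $\pm1$ scalings and by coordinate permutations, both of which commute with $\evo$ termwise, while $\phi_M$ sends them to $\la^c_{i,j},\si_1^c,\si_2^c$ acting in exactly the same way on $\Z^3$. For $\tau_1\colon(a,b,c)\mapsto(-a^*,c^*,b^*-ac)$, I would apply $\evo$ coordinatewise and use $\evo(f^*)=\evo(f)$ together with multiplicativity to obtain $(-\evo(a),\evo(c),\evo(b)-\evo(a)\evo(c))$, which is precisely $\tau_1^c\cdot\Evo(a,b,c)$; the case of $\tau_2$ is identical. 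Finally, for $\mu_{i,j}\colon(a,b,c)\mapsto(s_3^i a,s_3^{-i-j}b,s_3^j c)$ we have $\phi_M(\mu_{i,j})=\id$, and since $\evo(s_3)=1$ every factor of $s_3$ disappears under evaluation, giving $\Evo(\mu_{i,j}\cdot\bm a)=\Evo(\bm a)=\id\cdot\Evo(\bm a)$.

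There is no serious obstacle here; the only point requiring attention is the observation that $\bs s_o$ is a $*$-fixed point, which is exactly what forces $\evo$ to be blind to the $*$-involution and thereby reconciles the starred formulas \eqref{tau1} defining $\tau_1,\tau_2$ with their unstarred classical counterparts $\tau_1^c,\tau_2^c$. Everything else is formal bookkeeping built on the homomorphism property of $\phi_M$.
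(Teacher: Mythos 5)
Your proof is correct and takes essentially the same route as the paper, which states this lemma as an immediate consequence of the generator identities \eqref{phim} checked in Proposition \ref{surjmg}: your verification on $\lambda_{i,j},\si_1,\si_2,\tau_1,\tau_2,\mu_{i,j}$, resting on the two facts that $\bs s_o=(3,3,1)$ is a fixed point of the $*$-involution (so $\evo(f^*)=\evo(f)$) and that $\evo(s_3)=1$, is exactly the computation the paper leaves implicit, and the propagation along products via the homomorphism property of $\phi_M$ is the same formal step. The only cosmetic omission is that a word in $\smg$ may involve inverses of generators, but this costs nothing: equivariance for $g$ applied to $\bm b=g^{-1}\bm a$ gives $\Evo(g^{-1}\bm a)=\phi_M(g)^{-1}\Evo(\bm a)=\phi_M(g^{-1})\Evo(\bm a)$.
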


\section{Markov trees}
\label{sec 4}

\subsection{Decomposition of $\Mcl$}

The set $\mc M^c$ of all nonzero solutions of the  Markov equation \eqref{Me}
admits a partition in four subsets,
\beq
\Mcl=\Mclp\cup\Mclmone\cup\Mclmtwo\cup\Mclmthr,
\eeq
where $\Mclp$ consists of all 
positive triples and $\mc M_{ij}^c$ consists of all
triples with negative entries in the $i$-th and $j$-th position.

We have a  projection $\pi: \Mcl\to \Mclp$,  forgetting the minuses. 

\begin{lem}
The action of the Vi\`ete group $\vg$ on $\mc M^c$ preserves each of $\Mclp,\Mclmone,\Mclmtwo,\Mclmthr$. 
The action of the group $\langle \Gc{2}, \vg\rangle$ on $\Mcl$ preserves
the sets $\Mcl_{+}$ and $\Mclmone\cup\Mclmtwo\cup\Mclmthr$ and
commutes with the projection $\pi: \Mcl\to \Mclp$.
\qed
\end{lem}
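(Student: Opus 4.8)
The lemma asserts two facts about group actions on the set $\Mcl$ of nonzero Markov triples: first, that the Vi\`ete group $\vg$ preserves each of the four pieces $\Mclp,\Mclmone,\Mclmtwo,\Mclmthr$; and second, that the larger group $\langle\Gc2,\vg\rangle$ preserves the pair of sets $\Mclp$ and $\Mclmone\cup\Mclmtwo\cup\Mclmthr$, and commutes with the forgetful projection $\pi$. My plan is to verify these claims by examining the sign patterns directly on the generating transformations, since each group is given by explicit generators.

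\textbf{First claim.} It suffices to check that each Vi\`ete involution $v_1^c,v_2^c,v_3^c$ fixes the sign pattern of a triple, i.e. leaves each of the four subsets invariant. I would argue as follows. A triple in $\Mclp$ has all entries positive; a triple in $\Mcl_{ij}^c$ has negative entries exactly in positions $i,j$ and a positive entry in the remaining position. First I would observe that on a positive triple $(a,b,c)$ the Markov equation forces $2a<bc$ whenever $b,c$ are the two largest (more precisely, from $a^2-bca+(b^2+c^2)=0$ one gets that the two roots $a,\,bc-a$ of this quadratic in the first slot are both positive and multiply to $b^2+c^2>0$), so the Vi\`ete replacement $a\mapsto bc-a$ sends a positive entry to a positive number; hence $v_1^c$ maps $\Mclp$ into $\Mclp$, and being an involution, onto $\Mclp$. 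For the sets $\Mcl_{ij}^c$ the same quadratic identity shows the product of the two roots is $b^2+c^2>0$, so the two candidate values $a$ and $bc-a$ have the \emph{same} sign; thus applying $v_1^c$ never changes which coordinates are negative, and each $\Mcl_{ij}^c$ is preserved. The identical computation with the roles of the coordinates permuted handles $v_2^c$ and $v_3^c$. Since the generators preserve each piece, so does the whole group $\vg$.

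\textbf{Second claim.} The sign-change group $\Gc1$ does \emph{not} appear here; instead we use $\langle\Gc2,\vg\rangle$, where $\Gc2$ is generated by the coordinate transpositions $\sigma_1^c,\sigma_2^c$. A transposition permutes the coordinates, so it maps a positive triple to a positive triple, whence $\sigma_i^c$ preserves $\Mclp$; and it sends a triple with two negative entries to another triple with two negative entries, so it preserves $\Mclmone\cup\Mclmtwo\cup\Mclmthr$ as a whole (though it may move one $\Mcl_{ij}^c$ to another). Combined with the first claim, every generator of $\langle\Gc2,\vg\rangle$ preserves the sets $\Mclp$ and $\Mclmone\cup\Mclmtwo\cup\Mclmthr$, hence so does the group. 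For commutation with $\pi$, I note that $\pi$ forgets signs, i.e. it records only the underlying positive triple. The key point is that each generator acts \emph{compatibly} with forgetting signs: a transposition $\sigma_i$ on a signed triple projects to the same transposition applied to the forgotten positive triple, and a Vi\`ete move $v_i$ on a signed triple projects to $v_i^c$ applied to the positive triple. I would make this precise by checking, for instance, that $\pi\circ v_1^c=v_1^c\circ\pi$: on the left, $v_1^c$ replaces $a$ by $bc-a$ (preserving signs by the first claim) and then $\pi$ takes absolute values; on the right, $\pi$ takes absolute values first and then $v_1^c$ acts on the positive representative. These agree because the sign of the first entry is unchanged under $v_1^c$ and $bc$ depends only on the absolute values of $b,c$ together with the fixed sign product, which is unaffected. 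The same verification for $v_2^c,v_3^c,\sigma_1^c,\sigma_2^c$ completes the proof.

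\textbf{Main obstacle.} The one step requiring genuine care is the positivity statement $bc-a>0$ on positive triples used in the first claim; I expect this to be the crux, and I would derive it cleanly from the Markov quadratic $a^2-(bc)a+(b^2+c^2)=0$, noting its two roots are $a$ and $bc-a$ with positive product $b^2+c^2$ and positive sum $bc$, so both roots are positive. The bookkeeping for $\pi$-equivariance is routine but must be stated carefully, since $\pi$ is not a group homomorphism and the commutation is really a statement that the two actions descend compatibly through the sign-forgetting map; I would phrase it as an equality of maps $\pi\circ g=\phi(g)\circ\pi$ evaluated generator by generator.
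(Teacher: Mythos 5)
Your proof is correct. The paper states this lemma without proof (it is marked as self-evident), so there is no argument of the authors to compare against; your write-up supplies exactly the details being taken for granted, and the central device---viewing the Markov equation as the quadratic $x^2-(bc)\,x+(b^2+c^2)=0$ in the first slot, whose two roots $a$ and $bc-a$ have product $b^2+c^2>0$ and hence the same sign---is the natural one. Two spots could be tightened. The aside that ``$2a<bc$ whenever $b,c$ are the two largest'' is never used and should be dropped: the same-sign observation alone handles $\Mclp$ together with all the $\mc M_{ij}^c$. More importantly, ``the fixed sign product'' in your $\pi$-equivariance step deserves an explicit statement: for any nonzero solution one has $abc=a^2+b^2+c^2>0$, so $\sign(a)\sign(b)\sign(c)=+1$; combined with $\sign(bc-a)=\sign(a)$ this yields the identity $|bc-a|=|b|\,|c|-|a|$ that makes $\pi\circ v_1^c=v_1^c\circ\pi$ hold, and it is also the fact guaranteeing that every nonzero solution has zero or two negative entries, i.e., that $\Mclp,\Mclmone,\Mclmtwo,\Mclmthr$ really partition $\Mcl$.
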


\subsection{Markov tree} 
Solutions of the  Markov equation \eqref{Me} can be arranged in a graph, called the \emph{Markov tree}. 
\vskip 2mm

Define
 $ L:=\sigma_2^cv_3^c, \ R:=\sigma_1^cv_1^c \in\mg$.  Given $(x,y,z)\in\mc M^c_+$, we have
\beq
\label{optree}
\xymatrix@R-1pc{
(x,xy-z,y)&&(y,yz-x,z)\\
&(x,y,z)\ar[ul]^L\ar[ur]_R&
}
\eeq
The Markov tree $\mc T$ is the infinite graph obtained by iterating 
the operations \eqref{optree} starting from the initial solution $(3,3,3)$. 
{\small\[
\xymatrix@R-1.5pc@C-2pc{
&&&\dots&&&\\
(3, 102,39)&&(39, 582,15)&&(15,1299,87)&&(87, 507,6)\\
&(3,39,15)\ar[ul]\ar[ur]&&&&(15, 87,6)\ar[ul]\ar[ur]&\\
&&&(3,15,6)\ar[ull]\ar[urr]&&&\\
&&&(3,6,3)\ar[u]&&&\\
&&&(3,3,3)\ar[u]&&&\\
}
\]
}

\begin{thm}[{\cite{Mar1,Mar2}\cite[Theorem 3.3]{Aig}}]\label{thmAig}
Up to permutations in $G_2^c$, all the elements of $\mc M^c_+$ appear exactly once in the Markov tree $\mc T$. \qed
\end{thm}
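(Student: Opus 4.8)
The plan is to prove the classical Markov theorem (Theorem \ref{thmAig}) via the standard descent argument based on Vi\`ete jumping, which I will organize around the partial ordering of positive Markov triples by their largest entry. The heart of the matter is that every positive Markov triple other than the ``small'' ones admits a unique strictly decreasing Vi\`ete move, so that iterating descent terminates at the root $(3,3,3)$.

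First I would recall that a \emph{positive Markov triple} is an element of $\Mclp$, and note that after reordering by a permutation in $\Gc{2}$ we may always assume $x\le y\le z$. The key lemma is the following: if $(x,y,z)\in\Mclp$ with $x\le y\le z$ and $(x,y,z)\ne(3,3,3)$, then the Vi\`ete involution $v_3^c\colon(x,y,z)\mapsto(x,y,xy-z)$ strictly decreases the largest entry, i.e.\ $xy-z<z$, and the resulting triple $(x,y,xy-z)$ is again a positive Markov triple with the new maximum no larger than $z$. To see $xy-z<z$, view the Markov equation \eqref{clMe0i} as a quadratic in $z$: the two roots $z,z'$ satisfy $z+z'=3xy$ and $zz'=x^2+y^2$, so $z'=3xy-z=xy-z$ after the substitution appropriate to the normalization; one checks $z'>0$ from $zz'=x^2+y^2>0$ and that $z'<z$ precisely when $z>$ the smaller root, which holds whenever $z$ is the strict maximum. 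The only fixed point of this descent, where no strict decrease is possible, is the triple all of whose entries are equal, namely $(3,3,3)$.

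The second ingredient is that the descent is \emph{confluent}: the downward move from a triple with a strict maximum is unique (one decreases the largest coordinate), so the reverse tree obtained by iterating \eqref{optree} from $(3,3,3)$ hits every positive triple. Concretely, I would argue by strong induction on $z=\max(x,y,z)$: the base case is $(3,3,3)$ itself, sitting at the root; for the inductive step, given any positive triple with $z>3$, its unique downward neighbor $(x,y,xy-z)$ has a strictly smaller maximum, hence (by induction) already appears in $\mc T$, and the two operations $L=\si_2^c v_3^c$ and $R=\si_1^c v_1^c$ in \eqref{optree} produce exactly the two upward neighbors, one of which is our original triple up to a permutation in $\Gc{2}$. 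This establishes that every positive Markov triple appears in $\mc T$.

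For uniqueness — that each triple appears \emph{exactly once} up to $\Gc{2}$ — I would show that distinct vertices of $\mc T$ give distinct triples modulo permutation. This follows from the strict monotonicity established above together with the fact that at each internal vertex the two branches $L$ and $R$ genuinely diverge: the maxima strictly increase along any path away from the root, and the two children of a vertex $(x,y,z)$ with $x<y<z$ are distinguished by which of the two ``outer'' entries is retained. The main obstacle, and the step deserving the most care, is handling the boundary cases where two coordinates coincide (such as the singular triples $(3,3,3)$ and $(3,6,3)$): there the strict inequalities degenerate and one must verify that the permutation normalization in $\Gc{2}$ absorbs the resulting ambiguity so that no triple is double-counted. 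Once these degenerate cases are dispatched, the combination of surjectivity of the descent and the strict monotonicity of the maximum along edges yields both existence and uniqueness, completing the proof.
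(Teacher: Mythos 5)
Your proposal follows the standard Vi\`ete-jumping descent, which is also the skeleton the paper uses — though note the paper never proves Theorem \ref{thmAig} itself (it is quoted from Markov and Aigner with a \qed); the actual descent argument in the paper appears in Lemma \ref{uv} and Theorem \ref{thmemt} for the extended tree. Measured against that, your sketch has a genuine gap at its central step. You claim that $z' = xy - z < z$ ``holds whenever $z$ is the strict maximum,'' justified only by bookkeeping with the two roots of the quadratic. That implication is false for positive real solutions of \eqref{Me}: for any $2\sqrt{2} < x < 3$, set $z = \bigl(x^2 - x\sqrt{x^2-8}\bigr)/2$; then $(x,x,z)$ satisfies $x^2+x^2+z^2 = x^2 z$, one checks $z > x$, so $z$ is the strict maximum, and yet $x^2 - z$ is the \emph{larger} root, i.e.\ $xy - z > z$. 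So ``strict maximum $\Rightarrow$ larger root'' is not a formal consequence of positivity; it requires the arithmetic input that every entry of a positive Markov triple is at least $3$ in this normalization. That is exactly the ingredient the paper's proof of Lemma \ref{uv} supplies: assuming $yz-x \ge x$ forces $\phi(y) = 2y^2+z^2-y^2z \le 0$ via the inequality $3y^2 \le zy^2$ (i.e.\ via $z \ge 3$), contradicting $\phi>0$ below the smaller root. The same input is needed for your asserted uniqueness of the downward move (the paper: $xz-y > 3x-y > 2x > x$, again using the bound $3$). Since descent and its uniqueness are the entire content of the theorem, these cannot be left as assertions.

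Beyond that, the rest of the outline is sound but incomplete in the places you yourself flag: the degenerate vertices are precisely where your ``the two branches genuinely diverge'' claim fails — at $(3,3,3)$ both moves of \eqref{optree} give $(3,6,3)$, and at $(3,6,3)$ they give $(3,15,6)$ and $(6,15,3)$, equal up to $\Gc{2}$ — so the uniqueness count must be verified there explicitly (this is the content of Theorem \ref{lemAig}, which the paper also only cites). Finally, a small slip: you mix the normalizations \eqref{Me} and \eqref{clMe0i}, writing $z+z' = 3xy$ and then $z' = xy - z$; in the normalization of the tree $\mc T$ the trace of the quadratic is $xy$, and keeping the two forms straight matters exactly because the lower bound on entries ($3$ versus $1$) enters the descent inequality.
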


\begin{cor}\label{trancmg}
The  group $\mg$ acts transitively on the set $\Mcl$.
\end{cor}
\proof
Let $\bm x,\bm y\in\Mcl$. There exist $\gamma_1,\gamma_2\in\langle G_1^c, G_2^c\rangle$ such that $\gamma_1\bm x,\gamma_2\bm y$ are vertices of $\mc T$, by Theorem \ref{thmAig}. So, there exist $\delta_1,\delta_2\in\mg$ such that $\delta_1(3,3,3)=\gamma_1\bm x$ and $\delta_2(3,3,3)=\gamma_2\bm y$. 
We have
\[
\gamma_2^{-1}\delta_2\delta_1^{-1}\gamma_1\bm x=\bm y. \makeatletter\displaymath@qed
\]

\begin{thm}
[{\cite[Lemma 3.1]{Aig}}]
\label{lemAig}

The triples $(3,3,3)$ and $(3,6,3)$ are the only vertices of $\mc T$ with repeated numbers.
\qed
\end{thm}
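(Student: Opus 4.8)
The plan is to isolate a single structural fact about the shape of triples in $\mc T$ and then read off the theorem from a one-step analysis of the parent of any offending vertex. Concretely, I would first prove the following monotonicity lemma by induction along the tree: every vertex $(x,y,z)$ of $\mc T$ has all three entries $\ge 3$; every vertex other than the root $(3,3,3)$ has its middle entry as a \emph{strict} maximum, i.e.\ $x<y$ and $z<y$; and passing from any vertex to either of its children strictly increases this middle entry. The base case simply records that both children of the root equal $(3,6,3)$, where $6>3$ and all entries are $\ge 3$.

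For the inductive step, given a vertex $(x,y,z)$ with all entries $\ge 3$ and $y\ge x$, $y\ge z$, I would check directly that the new middle entries produced by $L$ and $R$ — namely $xy-z$ and $yz-x$ — exceed both surviving outer entries. For $L$ this amounts to $xy-z>y$ and $xy-z>x$, i.e.\ $y(x-1)>z$ and $x(y-1)>z$; for $R$ one gets the symmetric pair $y(z-1)>x$ and $z(y-1)>x$. All four inequalities hold because each entry is at least $3$ (so $x-1\ge 2$, $y-1\ge 2$, $z-1\ge 2$) and the middle entry weakly dominates the outer ones. This is the computational heart of the argument, but it is entirely elementary. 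The bound $x,z\ge 3$ — which also reflects that all Markov numbers in the normalization \eqref{Me} are multiples of $3$ — is exactly what forces the inequalities to be strict; note it is maintained automatically along the induction, since outer entries are inherited from the parent and the new middle entry exceeds the previous maximum. Thus every non-root vertex has a strictly dominant middle entry.

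With the lemma in hand the theorem is immediate. The root $(3,3,3)$ and its child $(3,6,3)$ manifestly have repeated entries, so it remains to show that no other vertex does. Let $w=(x,y,z)$ be any vertex other than the root; then $y$ is a strict maximum, so the only possible coincidence among the entries is $x=z$. Assume $x=z$ and pass to the unique parent $p$ of $w$. If $w=L(p)$, writing $p=(x',y',z')$ gives $w=(x',\,x'y'-z',\,y')$, so $x=x'$ and $z=y'$, whence $x=z$ forces $x'=y'$; but the lemma forbids $x'=y'$ at any non-root $p$, so $p$ must be the root and $w=L(3,3,3)=(3,6,3)$. The case $w=R(p)$ is symmetric: here $x=y'$ and $z=z'$, so $x=z$ forces $y'=z'$, again impossible unless $p$ is the root, giving $w=R(3,3,3)=(3,6,3)$. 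In either case $w=(3,6,3)$, so no vertex apart from $(3,3,3)$ and $(3,6,3)$ can have a repeated entry.

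The main obstacle is not any single computation but phrasing the inductive hypothesis so that the root — whose middle entry is only a \emph{weak} maximum — is correctly folded into the base/step while every descendant acquires the \emph{strict} inequality that the parent analysis relies on. Once the strict-maximum-in-the-middle property is secured for all non-root vertices, the parent argument is forced and leaves no room for a third vertex with equal coordinates.
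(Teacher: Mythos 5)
Your proof is correct, and it takes a genuinely different route from the paper, which in fact offers no argument at all: the statement is simply quoted from Aigner's book (\cite[Lemma 3.1]{Aig}), where the proof is arithmetic rather than combinatorial. In the paper's normalization that argument runs: if a solution of $a^2+b^2+c^2-abc=0$ has $a=b$, then $2a^2+c^2=a^2c$, so $c^2=a^2(c-2)$, hence $a\mid c$; writing $c=ma$ gives $m^2=ma-2$, so $m\mid 2$, and both $m=1$ and $m=2$ force $a=3$, yielding $(3,3,3)$ and $(3,3,6)$. That reasoning classifies repeated entries among \emph{all} solutions of the Markov equation, with no reference to the tree, and then Theorem \ref{thmAig} transfers it to vertices of $\mc T$. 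Your argument never touches the equation: it is a pure induction on the tree recursion — all entries are at least $3$, and the middle entry is a strict maximum at every non-root vertex — followed by a one-step parent analysis showing that a coincidence $x=z$ at a vertex forces two equal coordinates at its parent, hence forces the parent to be the root. What each approach buys: Aigner's gives the stronger arithmetic fact about arbitrary solutions, while yours is more elementary and self-contained for the statement as phrased (which concerns only vertices of $\mc T$), and it would apply to any tree generated by these $L,R$ operations from a seed with entries $\geq 3$, Markov or not. One step you state a little tersely: $x(y-1)>z$ does not follow from a single comparison with the dominance $y\geq z$, but it does follow from your hypotheses via $x(y-1)\geq 3(y-1)=y+(2y-3)>y\geq z$ (using $y\geq 3$), and the symmetric inequality $z(y-1)>x$ is handled the same way; the other two inequalities are immediate. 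With that chain spelled out, the induction and the parent analysis are airtight.
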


\subsection{Extended Markov tree}

Define the \emph{extended Markov graph} as the infinite graph $\mc {T}^{\on{ext}}$, with vertex set  $\Mclp$.
We connect two vertices $(a,b,c)$, $(a\rq{},b\rq{},c\rq{})$ of $\mc {T}^{\on{ext}}$  by an edge if
$(a,b,c)= v_i^c(a\rq{},b\rq{},c\rq{})$ for some $i\in\{1,2,3\}$, where $v_i^c$ are Vi\`ete involutions.

\[
\xymatrix@R-1.5pc@C-1pc{
&&\dots&&\\
&(15,3,6)&&(3,15,6)&\\
\iddots&&(3,3,6)\ar[ul]\ar[ur]&&\ddots\\
(6,15,3)&&(3,3,3)\ar[u]\ar[dl]\ar[dr]&&(15,6,3)\\
&(6,3,3)\ar[ul]\ar[d]&&(3,6,3)\ar[ur]\ar[d]&\\
\ddots&(6,3,15)&&(3,6,15)&\iddots\\
&&\dots&&
}
\]

\begin{thm}
\label{thmemt}

The Vi\`ete group acts freely on the vertex set $\Mclp$ of the extended Markov graph
with one orbit,  $\Mclp = \vg(3,3,3)$. Moreover $\mc {T}^{\on{ext}}$ is a tree.
\end{thm}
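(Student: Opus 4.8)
The plan is to isolate a single quantitative input from classical Markov theory---a strict-descent lemma for the coordinate sum---and then obtain transitivity, acyclicity and freeness by purely combinatorial arguments on $\mc T^{\on{ext}}$.

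\emph{Transitivity and one orbit.} First I would show $\Mclp=\vg(3,3,3)$ directly from the results already in hand. Given $\bm x\in\Mclp$, Corollary \ref{trancmg} provides $g\in\mg$ with $g(3,3,3)=\bm x$, and Corollary \ref{declv12} factors $g=vh$ with $v\in\vg$ and $h\in\langle\Gc{1},\Gc{2}\rangle$. Since permutations fix the symmetric triple $(3,3,3)$ and every $\lambda^c_{i,j}\in\Gc{1}$ flips an \emph{even} number of signs, $h(3,3,3)$ is either $(3,3,3)$ or a triple with exactly two negative entries, i.e.\ it lies in one of $\Mclp,\Mclmone,\Mclmtwo,\Mclmthr$. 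By the lemma that $\vg$ preserves each of these four sign classes, $v$ cannot change the class of $h(3,3,3)$; as $\bm x=v\bigl(h(3,3,3)\bigr)\in\Mclp$ we must have $h(3,3,3)=(3,3,3)$, whence $\bm x=v(3,3,3)$. This gives transitivity with a single orbit.

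\emph{The descent lemma (the crux).} For $\bm x=(a,b,c)\in\Mclp$ put $N(\bm x)=a+b+c$. I would prove: if $\bm x\neq(3,3,3)$ then exactly one index $i$ satisfies $N(v_i^c\bm x)<N(\bm x)$, the other two satisfy $N(v_j^c\bm x)>N(\bm x)$, and $N(v_i^c(3,3,3))>9$ for all $i$. The mechanism is Vieta jumping: $v_i^c$ replaces $x_i$ by the second root $x_i'$ of $t^2-(x_jx_k)t+(x_j^2+x_k^2)$, so $N(v_i^c\bm x)-N(\bm x)=x_i'-x_i=x_jx_k-2x_i$. Using that every entry of a positive Markov triple is $\geq 3$, one checks $x_jx_k-2x_i>0$ whenever $x_i$ is \emph{not} the strict maximum, and $x_jx_k-2x_i<0$ when $x_i$ \emph{is} the strict maximum; by Lemma \ref{lemAig} the maximum of $\bm x$ is attained at a unique coordinate unless $\bm x=(3,3,3)$ (the triples $(3,3,3)$ and $(3,6,3)$ being the only ones with repeated entries, and in $(3,6,3)$ the maximum $6$ is unique). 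I expect this lemma to be the main obstacle, and it is the only place where classical Markov theory (Theorems \ref{thmAig}, \ref{lemAig} and \cite{Aig}) is genuinely used; everything after it is formal.

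\emph{$\mc T^{\on{ext}}$ is a tree.} The descent lemma makes the three neighbors $v_1^c\bm x,v_2^c\bm x,v_3^c\bm x$ pairwise distinct and distinct from $\bm x$: two of them differ from $\bm x$ in $N$, and if two neighbors coincided, say $v_1^c\bm x=v_2^c\bm x$, then $bc=2a$ and $ac=2b$ force some entry equal to $2$, contradicting entries $\geq 3$. Thus $\mc T^{\on{ext}}$ is a simple $3$-regular graph. \emph{Connectedness} follows by iterating the unique descent from any $\bm x\neq(3,3,3)$: $N$ strictly decreases, so the process terminates at the unique $N$-minimal vertex, namely $(3,3,3)$. \emph{Acyclicity}: in any cycle pick a vertex $\bm x$ of maximal $N$; its two cycle-neighbors have $N\leq N(\bm x)$, hence (no neighbor shares the value $N(\bm x)$) strictly smaller $N$, but by the descent lemma $\bm x$ has at most one neighbor with smaller $N$, and in a simple graph the two cycle-edges reach two distinct vertices---a contradiction. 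A connected acyclic graph is a tree.

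\emph{Freeness.} With $\mc T^{\on{ext}}$ a tree, I would read off freeness from Theorem \ref{EH}, which presents $\vg\cong\Z_2*\Z_2*\Z_2$. A nontrivial element is a reduced word $w=v_{i_1}^c\cdots v_{i_k}^c$ with $k\geq 1$ and $i_t\neq i_{t+1}$. Applied to any $\bm x\in\Mclp$ it traces the walk $\bm x,\,v_{i_k}^c\bm x,\,v_{i_{k-1}}^cv_{i_k}^c\bm x,\dots,\,w\bm x$ along edges of $\mc T^{\on{ext}}$; because consecutive letters differ and the three neighbors of each vertex are distinct, the walk never reverses its last edge, i.e.\ it is non-backtracking. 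A non-backtracking walk of positive length in a tree cannot return to its start, so $w\bm x\neq\bm x$. Hence every stabilizer is trivial and $\vg$ acts freely. Combining the four steps yields the theorem.
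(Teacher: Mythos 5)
Your proof is correct, and its engine coincides with the paper's, but the assembly is genuinely different in two places, so a comparison is worthwhile. The crux is shared: your descent lemma is the paper's Lemma \ref{uv} with the height function $N=a+b+c$ in place of the maximal entry $m$, and both rest on the identical quadratic-root (Vieta) comparison via $\phi(t)=t^2+y^2+z^2-tyz$ together with the uniqueness of the maximum coming from Lemma \ref{lemAig} (which the paper uses only implicitly, in the assumption $z\leq y<x$, while you invoke it explicitly). The first divergence is transitivity: the paper gets $\Mclp=\vg(3,3,3)$ purely by iterating the descent, whereas your opening paragraph routes it through Corollary \ref{trancmg} and the sign-class lemma; that argument is valid but quietly imports the classical Markov theorem (Theorem \ref{thmAig}, on which Corollary \ref{trancmg} rests), and it is in fact redundant, since your own connectedness step re-proves transitivity from the descent alone. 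The second and more substantive divergence is freeness: the paper runs a self-contained word-cancellation argument --- if $v=\prod_k v_{i_k}^c$ fixes $(3,3,3)$, take the first prefix at which the height is maximal; the next letter must repeat the previous one, else the height would strictly exceed the maximum, so two letters cancel and induction empties the word --- and the same device rules out loops in $\mc{T}^{\on{ext}}$. You instead consume \`El-Huti's Theorem \ref{EH} to know that nontrivial elements of $\vg$ are reduced words, and then use non-backtracking walks in the tree, with the distinctness of the three neighbors (your $c=2$ computation) guaranteeing non-backtracking. There is no circularity, since Theorem \ref{EH} is quoted from \cite{EH}; but note the tradeoff: the paper's cancellation argument shows that a nonempty reduced word never fixes $(3,3,3)$, which combined with transitivity yields an independent proof of Theorem \ref{EH} as a byproduct, whereas your version makes Theorem \ref{thmemt} logically dependent on the external result. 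Your maximal-$N$-vertex-in-a-cycle argument for acyclicity is a clean graph-theoretic repackaging of the paper's cancellation idea and buys a more transparent picture of why cycles are impossible; the paper's version buys self-containedness.
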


The graph $\mc {T}^{\on{ext}}$  is called  the {\it extended Markov tree}.
\vskip1mm
The proof of Theorem \ref{thmemt} requires the following lemma.
Define the function $m\colon\Mclp\to\N$,
 which assigns to a triple $(x,y,z)$ its maximal entry. It is known that  
\beq\notag
\min m(\mc M^{c}_+)=3,
\eeq
and that such a minimum is achieved at $(3,3,3)$.

\begin{lem}\label{uv}
For any $(x,y,z)\in \mc M^{c}_+\setminus\{(3,3,3)\}$ there exists a \emph{unique} 
Vi\`ete transformation $v_i^c$, with $i\in\left\{1,2,3\right\}$, such that $m(v_i^c(x,y,z))<m(x,y,z)$.
\end{lem}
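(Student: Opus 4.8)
The plan is to sort the triple and show that exactly the Vi\`ete jump at the maximal coordinate decreases $m$, while the other two jumps increase it. First I would use the $S_3$-symmetry to relabel, writing the sorted triple as $p\le q\le M$, so that $m(x,y,z)=M$. Since $(x,y,z)\neq(3,3,3)$, Lemma \ref{lemAig} guarantees that the only triples with repeated entries are $(3,3,3)$ and $(3,6,3)$, and in the latter the repetition is among the two smallest entries; hence the maximum is always attained at a \emph{unique} coordinate, namely the one carrying $M$. Throughout I use the elementary fact that every entry of a positive Markov triple is $\ge 3$.

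The decrease step is the heart of the argument. The jump at the maximal coordinate replaces $M$ by $M':=pq-M$, the second root of $f(t)=t^2-pq\,t+(p^2+q^2)$, whose roots multiply to $p^2+q^2$; thus $M'=(p^2+q^2)/M$, and the inequality $M'<M$ is equivalent to $M^2>p^2+q^2$. I would prove this by showing that $M$ is the strictly larger root of $f$. Evaluating $f(q)=p^2+q^2(2-p)$ and using $p\ge 3$ together with $q\ge p$ gives $f(q)\le 0$, so $q$ lies between the two roots of $f$; since $M$ is a root with $M\ge q$ and $M\neq q$ (uniqueness of the maximum), $M$ must be the larger root and the discriminant is strictly positive. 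Consequently $M'<M$, and as $p,q<M$ the new triple $(p,q,M')$ has maximal entry strictly below $M$, so $m$ strictly decreases.

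The increase step is a direct estimate. Jumping the coordinate carrying $p$ replaces it by $qM-p$; since $p<M$ and $q\ge 2$ one gets $qM-p>qM-M=M(q-1)\ge M$, so the new maximal entry exceeds $M$ and $m$ strictly increases. The same computation with the roles of $p$ and $q$ interchanged handles the jump at the coordinate carrying $q$. Combining the two steps, the jump at the unique maximal coordinate is the only one of the three Vi\`ete transformations that lowers $m$, which is exactly the asserted existence and uniqueness.

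I expect the main obstacle to be the decrease step: the naive bounds coming only from $M\ge p,q$ do not suffice, since they can fail near the root of the tree (for instance when $p=q=3$), so one genuinely must invoke the Markov equation, the hypothesis $p\ge 3$, and the uniqueness of the maximum supplied by Lemma \ref{lemAig} to certify that $M$ is the larger root of its defining quadratic.
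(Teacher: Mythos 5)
Your proof is correct and takes essentially the same route as the paper: your quadratic $f(t)=t^2-pq\,t+(p^2+q^2)$ is the paper's $\phi(t)=t^2-tyz+y^2+z^2$, your evaluation $f(q)=p^2+q^2(2-p)\le 0$ is exactly the paper's computation $\phi(y)=2y^2+z^2-y^2z\le 0$ (both resting on the two smaller entries being $\ge 3$ and ordered), and the increasing jumps are dispatched by the same elementary bounds. The only differences are cosmetic: the paper phrases the root comparison as a contradiction (assuming $yz-x\ge x$) and leaves the uniqueness of the maximal coordinate implicit in the assumption $z\le y<x$, whereas you justify it explicitly via Theorem \ref{lemAig}.
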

\proof
We have 
\bea
v_1^c(x,y,z)=(yz-x,y,z),
\qquad
v_2^c(x,y,z)=(x,xz-y,z),
\qquad
v_3^c(x,y,z)=(x,y,xy-z).
\eea
We claim that if $m(x,y,z)=x$, then the transformation is $v_1^c$;\
if $m(x,y,z)=y$, then the transformation is $v_2^c$;\
if $m(x,y,z)=z$, then the transformation is $v_3^c$.

To prove the first case we need to show that $yz-x\le x$,
 $xz-y>x$, $xy-z>x$.
 
 We may  assume that $z\leq y<x$.
  Consider the function $\phi\colon\R\to\R$ defined by
\beq\notag
\phi(t):=t^2+y^2+z^2-tyz.
\eeq
We have $\phi(x)=\phi(yz-x)=0$, so that
\beq\notag
\phi(t)=(t-x)(t-(yz-x)).
\eeq
If $yz-x\geq x$, so that $\phi(t)>0$ for all $t<x$. Then on the one hand we have $y<x$, but
on the other hand we have
\begin{align*}
z^2\leq y^2\quad \Rightarrow \quad 2y^2+z^2\leq 3y^2\leq zy^2
\quad
\Rightarrow\quad \phi(y)=2y^2+z^2-y^2z\leq 0.
\end{align*}
This shows that the assumption $yz-x\geq x$ is contradictory. 
We also have 
\begin{align*}
xz-y>3x-y>2x>x,\\
xy-z>3x-z>2x>x.
\end{align*}
This completes the proof in the first case. The other two cases are proved similarly.
\endproof

\begin{cor}
Any $(x,y,z)\in\mc M^{c}_+$ can be transformed to $(3,3,3)$ by an element of the Vi\`ete group $\Gm_V^c$. 
Consequently, $\mc M^c_+ = \Ga^c_V(3,3,3)$.\qed
\end{cor}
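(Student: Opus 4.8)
The plan is to run a descent on the maximal-entry function $m\colon\Mclp\to\N$, using Lemma~\ref{uv} as the engine. Given $(x,y,z)\in\mc M^c_+$, if $(x,y,z)=(3,3,3)$ there is nothing to prove. Otherwise Lemma~\ref{uv} supplies a (unique) Vi\`ete involution $v_{i_1}^c$ with $m(v_{i_1}^c(x,y,z))<m(x,y,z)$; I would apply it and then repeat the same step on the resulting triple. Since $\vg$ preserves $\mc M^c_+$ (by the lemma at the beginning of this section), every triple produced along the way remains in $\mc M^c_+$, so Lemma~\ref{uv} is indeed applicable at each stage where we have not yet reached $(3,3,3)$.

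The crucial point is termination. Suppose, for contradiction, that this process never produces $(3,3,3)$. Then at every stage the current triple differs from $(3,3,3)$, Lemma~\ref{uv} applies, and we obtain an infinite strictly decreasing chain $m(x,y,z)>m(v_{i_1}^c(x,y,z))>m(v_{i_2}^cv_{i_1}^c(x,y,z))>\cdots$ of integers all bounded below by $3$ (recall $\min m(\mc M^c_+)=3$). This contradicts the well-ordering of $\N$. Hence the descent reaches $(3,3,3)$ after finitely many steps, and writing $v:=v_{i_k}^c\cdots v_{i_1}^c\in\vg$ for the composite of the involutions used, we have $v(x,y,z)=(3,3,3)$. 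This is the first assertion.

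For the consequence, since $\vg$ is a group we have $v^{-1}\in\vg$, so $(x,y,z)=v^{-1}(3,3,3)\in\vg(3,3,3)$; as $(x,y,z)\in\mc M^c_+$ was arbitrary, this gives $\mc M^c_+\subseteq\vg(3,3,3)$. The reverse inclusion $\vg(3,3,3)\subseteq\mc M^c_+$ is immediate from the fact that $\vg$ preserves $\mc M^c_+$. Combining the two inclusions yields $\mc M^c_+=\vg(3,3,3)$.

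The argument is essentially routine once Lemma~\ref{uv} is available, so I do not expect a genuine obstacle; the single point requiring care is the termination claim, namely that strict decrease of the integer-valued $m$ together with the lower bound $m\ge 3$ forces the descent to halt, and that by Lemma~\ref{uv} it can halt only at $(3,3,3)$. Everything else is bookkeeping about the group structure of $\vg$ and the invariance of $\mc M^c_+$.
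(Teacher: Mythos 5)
Your proof is correct and follows exactly the route the paper intends: the corollary is stated as an immediate consequence of Lemma~\ref{uv}, obtained by the standard descent on the maximal-entry function $m$, with termination forced by the well-ordering of $\N$ and the bound $m\ge 3$. The paper leaves this argument implicit (marking the corollary with \qed), and your write-up simply makes the same descent, its termination, and the group-theoretic bookkeeping explicit.
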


\proof[Proof of Theorem \ref{thmemt}]

It is sufficient to prove that if $v(3,3,3)=(3,3,3)$ for some $v\in\Ga_V^c$, 
 then $v=\rm id$. Any element $v\in\Gm_V^c$ is of the form
\beq
\label{genv}
v=\prod_{k=1}^nv_{i_{k}}^c\quad i_k=1,2,3.
\eeq 
Define
\[m_j:=m\Big[\Big(\prod_{k=1}^jv_{i_k}^c\Big)(3,3,3)\Big],\quad j=1,\dots,n.
\]
Define
\beq\notag
M:=\max_{j=1,\dots,n} m_j,\quad J:=\min\left\{j\colon m_j=M\right\}.
\eeq
We claim that $v^c_{i_{J+1}}=v^c_{i_J}$. Indeed, the assumption  $v_{i_{J+1}}\neq v_{i_J}$
would imply  that $m_{J+1}>m_J=M$, which is impossible. Hence, we can decrease 
the number of factors in \eqref{genv} by two. 
By repeating the argument, we prove that all the factors in \eqref{genv} cancel. 

The same argument shows that  the graph $\mc{T}^{\on{ext}}$ has no loops.
\endproof

\begin{cor}
For any $i,j$ the Vi\`ete group $\Gm_V^c$ acts freely on the set $\mc M^c_{ij}$ with one orbit.\qed
\end{cor}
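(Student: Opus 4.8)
The plan is to transport the free, transitive action of $\vg$ on $\Mclp$, established in Theorem \ref{thmemt}, across a sign-changing bijection onto each $\mc M^c_{ij}$. The key structural input is that the sign-change group $\Gc1$ commutes with the Vi\`ete group, which converts the single known case (positive triples) into all three cases at once.

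First I would record the effect of the generators of $\Gc1$. Since $\la_{1,0}^c\colon(a,b,c)\mapsto(-a,-b,c)$, $\la_{0,1}^c\colon(a,b,c)\mapsto(a,-b,-c)$, and $\la_{1,1}^c\colon(a,b,c)\mapsto(-a,b,-c)$, each nontrivial element of $\Gc1$ negates exactly two coordinates. Hence for each two-element subset $\{i,j\}\subset\{1,2,3\}$ there is a unique $\la\in\Gc1$ negating precisely the $i$th and $j$th coordinates, and this $\la$, being an involution, restricts to a bijection $\la\colon\Mclp\to\mc M^c_{ij}$. Next I would invoke Proposition \ref{identcl}, which gives $\la_{k,l}^c v_i^c\la_{k,l}^c=v_i^c$ for all $i$; as $\la_{k,l}^c$ is an involution this says that every generator of $\Gc1$ commutes with every Vi\`ete involution, and therefore with all of $\vg$.

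Consequently the chosen $\la$ is $\vg$-equivariant: for $v\in\vg$ and $\bm x\in\Mclp$ one has $v(\la\bm x)=\la(v\bm x)$. In particular $\vg$ maps $\mc M^c_{ij}=\la(\Mclp)$ into itself, so the action of $\vg$ on $\mc M^c_{ij}$ is well defined, and $\la$ is an isomorphism of $\vg$-sets $\Mclp\xrightarrow{\sim}\mc M^c_{ij}$. Since a $\vg$-equivariant bijection transports freeness and transitivity, and since $\vg$ acts freely on $\Mclp$ with the single orbit $\Mclp=\vg(3,3,3)$ by Theorem \ref{thmemt}, it follows that $\vg$ acts freely on $\mc M^c_{ij}$ with the single orbit $\vg\cdot\la(3,3,3)$.

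I expect no genuine obstacle here: the entire argument is transport of structure along an equivariant bijection. The only points requiring care are the bookkeeping that matches each pair $\{i,j\}$ to the correct nontrivial sign change, and the verification, via Proposition \ref{identcl}, that $\Gc1$ commutes with $\vg$ so that the equivariance holds; both are routine.
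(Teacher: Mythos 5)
Your proof is correct and takes essentially the same route as the paper, which leaves the corollary as an immediate consequence of Theorem \ref{thmemt} together with the lemma stating that the $\vg$-action preserves each $\mc M^c_{ij}$ and commutes with the sign-forgetting projection $\pi\colon\Mcl\to\Mclp$: your sign-change $\la\in\Gc1$ is precisely the inverse of $\pi|_{\mc M^c_{ij}}$, and Proposition \ref{identcl} supplies the same equivariance. Nothing further is needed.
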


\section{Distinguished representatives}
\label{sec 5}

\subsection{$*$-Markov group orbit of initial solution}

Let $\GI$ be the orbit of the initial solution $I$ of the
$*$-Markov equation \eqref{ME} under the action of the $*$-Markov group $\smg$.
  Any element of  $\GI$   is a solution of the $*$-Markov equation \eqref{ME}, 
  see Proposition \ref{prop inv}.

\begin{prop}

The evaluation morphism $\Evo$ maps the set $\GI$ 
onto the set  $\mc M^c$ of all nonzero solutions of the Markov equation \eqref{Me}.
\end{prop}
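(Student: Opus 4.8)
The goal is to show that $\Evo$ maps $\GI$, the $*$-Markov group orbit of the initial solution $I$, \emph{onto} the set $\mc M^c$ of all nonzero solutions of the classical Markov equation. The plan is to exploit the group epimorphism $\phi_M\colon \smg \twoheadrightarrow \mg$ from Proposition \ref{surjmg} together with the $\phi_M$-equivariance of $\Evo$ recorded in Lemma \ref{equivEvo}, and to combine these with the transitivity of the classical Markov group $\mg$ on $\mc M^c$ established in Corollary \ref{trancmg}.

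First I would prove the inclusion $\Evo(\GI)\subseteq\mc M^c$. Every element of $\GI$ is a solution of the $*$-Markov equation by Proposition \ref{prop inv}, and by Proposition \ref{Evo} the evaluation $\Evo$ sends any solution of the $*$-Markov equation to a solution of the Markov equation \eqref{Me}. It remains only to check that these solutions are \emph{nonzero}; this is immediate since $\Evo(I)=(3,3,3)$ and the Markov group $\mg$ preserves $\mc M^c$ (the set of nonzero solutions). More precisely, for $g\in\smg$ we have by Lemma \ref{equivEvo} that $\Evo(g\cdot I)=\phi_M(g)\cdot\Evo(I)=\phi_M(g)\cdot(3,3,3)$, which lies in $\mc M^c$ because $\phi_M(g)\in\mg$ and $\mg$ preserves $\mc M^c$.

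For the surjectivity (the reverse inclusion $\mc M^c\subseteq\Evo(\GI)$), I would argue as follows. Let $\bm x\in\mc M^c$ be any nonzero Markov solution. Since $\mg$ acts transitively on $\mc M^c$ by Corollary \ref{trancmg}, and $(3,3,3)=\Evo(I)\in\mc M^c$, there exists $h\in\mg$ with $h\cdot(3,3,3)=\bm x$. Using the epimorphism $\phi_M$, choose $g\in\smg$ with $\phi_M(g)=h$. Then the equivariance of Lemma \ref{equivEvo} gives
\[
\Evo(g\cdot I)=\phi_M(g)\cdot\Evo(I)=h\cdot(3,3,3)=\bm x,
\]
and since $g\cdot I\in\GI$, we conclude $\bm x\in\Evo(\GI)$. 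As $\bm x$ was an arbitrary element of $\mc M^c$, this establishes $\mc M^c\subseteq\Evo(\GI)$, and combined with the first inclusion proves $\Evo(\GI)=\mc M^c$.

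The two ingredients doing all the work are the surjectivity of $\phi_M$ and the transitivity of $\mg$ on $\mc M^c$, both already in hand. I do not anticipate a genuine obstacle here: the statement is essentially a formal consequence of equivariance plus transitivity, and the proof is a short diagram chase. The one point deserving a moment's care is the nonzero condition in the definition of $\mc M^c$, i.e.\ confirming that evaluations of orbit elements never produce the all-zero triple; this follows because every $\phi_M(g)$ lands in $\mg$, which by construction acts on the nonzero solutions $\mc M^c$ and thus cannot carry $(3,3,3)$ to $(0,0,0)$.
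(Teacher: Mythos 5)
Your proof is correct and follows essentially the same route as the paper: Proposition \ref{Evo} for the inclusion $\Evo(\GI)\subseteq\mc M^c$, and then transitivity of $\mg$ on $\mc M^c$ (Corollary \ref{trancmg}), surjectivity of $\phi_M$ (Proposition \ref{surjmg}), and $\phi_M$-equivariance of $\Evo$ (Lemma \ref{equivEvo}) for surjectivity. Your extra care in verifying the nonzero condition via $\Evo(g\cdot I)=\phi_M(g)\cdot(3,3,3)\in\mc M^c$ is a small refinement of a point the paper's proof passes over silently, but it does not change the argument.
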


\proof
If $\bm a\in\GI$, then $\Evo(\bm a)\in\Mcl$, by Proposition \ref{Evo}. We check surjectivity. Let $\bm x\in\Mcl$. There exists $\gamma\in\mg$ such that $\bm x=\gamma\cdot (3,3,3)$, by Corollary \ref{trancmg}. There exists $\tilde\gamma\in\smg$ such that $\phi_M(\tilde \gamma)=\gamma$, by Proposition \ref{surjmg}. We have that $\Evo(\tilde\gamma\cdot (s_1,s_2^*,s_1))=\bm x$, by Lemma \ref{equivEvo}.
\endproof

\subsection{Initial solution and $*$-Vi\`ete group}

Let $p=(a,b,c) \in \mc M^c_+$. Let $v^p \in \Ga_V^c$ be the unique element 
of the Vi\`ete group such that
$v^p(3,3,3) =(a,b,c)$. Define the {\it distinguished element } $f^p \in \GI$ by the formula
\bea
f^p = v^p I, 
\eea
where $v^p$ is considered as an element of the $*$-Vi\`ete group $\Ga_V$.  Notice that
$\evo(f^p) = (a,b,c)$.

\begin{lem}
For $p', p\in \mc M^c_+$,  let $v^{p',p}\in \Ga_V^c$ be the unique element such that
 $v^{p',p}p=p\rq{}$. Then
$ v^{p',p} f^p = f^{p'}$\,,
where $v^{p',p}$ is considered as an element of  $\Ga_V$.
\end{lem}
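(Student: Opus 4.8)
The plan is to reduce the identity to a relation between elements of the classical Vi\`ete group $\Ga_V^c$ and then transport that relation to $\Ga_V$ through the isomorphism $\phi_M|_{\Ga_V}\colon\Ga_V\xrightarrow{\sim}\Ga_V^c$ established above. Throughout, for $v\in\Ga_V^c$ I write $\tilde v\in\Ga_V$ for the element corresponding to $v$ under this isomorphism; this is precisely the meaning of ``$v$ considered as an element of $\Ga_V$'' used in the definition $f^p=\tilde{v^p}\,I$.

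First I would record the key group-theoretic relation
\[
v^{p'}=v^{p',p}\,v^p \qquad\text{in }\Ga_V^c.
\]
To see this, recall that $v^p(3,3,3)=p$ and $v^{p'}(3,3,3)=p'$ by definition of the distinguished elements, while $v^{p',p}p=p'$ by definition of $v^{p',p}$. Composing, $(v^{p',p}v^p)(3,3,3)=v^{p',p}p=p'=v^{p'}(3,3,3)$. Since $\Ga_V^c$ acts freely on $\mc M^c_+$ by Theorem \ref{thmemt}, any two elements of $\Ga_V^c$ that agree on the single point $(3,3,3)$ coincide, which yields the displayed relation.

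Next I would apply the isomorphism. Because $\phi_M$ restricts to a group isomorphism $\Ga_V\cong\Ga_V^c$, the assignment $v\mapsto\tilde v$ is a group homomorphism, so the relation above lifts verbatim to $\tilde{v^{p'}}=\tilde{v^{p',p}}\,\tilde{v^p}$ in $\Ga_V$. Applying both sides to the initial solution $I$ and using $f^q=\tilde{v^q}\,I$ gives
\[
f^{p'}=\tilde{v^{p'}}\,I=\tilde{v^{p',p}}\,\tilde{v^p}\,I=\tilde{v^{p',p}}\,f^p,
\]
which is exactly the claimed identity $v^{p',p}f^p=f^{p'}$ once $v^{p',p}$ is read as an element of $\Ga_V$.

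I expect the only point requiring care --- rather than a genuine obstacle --- to be the bookkeeping of the two identifications. The first is the freeness of the $\Ga_V^c$-action, which both guarantees the existence and uniqueness of $v^{p',p}$ and forces the composition relation $v^{p'}=v^{p',p}v^p$. The second is the fact that the passage $v\mapsto\tilde v$ from $\Ga_V^c$ to $\Ga_V$ is a homomorphism (this is the isomorphism induced by $\phi_M$), so that products and the action on $I$ are compatible. No computation with the explicit formulas for $v_1,v_2,v_3$ is needed.
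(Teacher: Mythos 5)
Your proof is correct and is essentially the paper's own argument: the paper's one-line proof $v^{p',p} f^p = v^{p',p} v^p I = v^{p'} I$ rests on exactly the two facts you make explicit, namely the relation $v^{p'}=v^{p',p}v^p$ in $\Ga_V^c$ (forced by the free action of Theorem \ref{thmemt}) and the transport of that relation to $\Ga_V$ via the isomorphism induced by $\phi_M$. You have merely spelled out the bookkeeping the paper leaves implicit.
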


\begin{proof} We have
$v^{p',p} f^p = v^{p',p} v^p I = v^{p\rq{}} I\,.$
\end{proof}

\begin{thm}
\label{thm perm}

Let $p=(a,b,c)$,\,  $ p\rq{}=(a\rq{},b\rq{},c\rq{})  \in \mc M^c_+$ be  such that the triple
$p\rq{}$ is a permutation of the triple $p$. Then
$f^{p\rq{}}$ is obtained from $f^p$ by 
 the same permutation of coordinates of $f^p$ composed with a transformation from
 the  group $G_4$.

\end{thm}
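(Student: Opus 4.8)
The plan is to reduce the statement to a single conjugation identity for Viète elements together with one explicit computation involving the initial solution $I$. Write $p'=\sigma^c p$, where $\sigma^c\in\Gc2$ realizes the coordinate permutation carrying $p$ to $p'$, and let $\sigma\in\Gs2$ be its $*$-lift (literally the same permutation of the three coordinates). Recall that $f^p=v^pI$ and $f^{p'}=v^{p'}I$, where $v^p,v^{p'}\in\svg$ are the images under the isomorphism $\phi_M\colon\svg\to\vg$ of the unique classical Viète elements sending $(3,3,3)$ to $p$ and to $p'$. The target is to produce $\mu\in\Gs4$ with $f^{p'}=\mu\,\sigma\,f^p$.

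First I would establish the conjugation relation $v^{p'}=\sigma v^p\sigma^{-1}$ inside $\svg$. On the classical side the identities of Proposition \ref{identcl} show that $\Gc2$ normalizes $\vg$, so $\sigma^c v^p(\sigma^c)^{-1}\in\vg$; since $\sigma^c$ fixes $(3,3,3)$, this element sends $(3,3,3)$ to $\sigma^c p=p'$, and freeness of the classical Viète action (Theorem \ref{thmemt}) forces $\sigma^c v^p(\sigma^c)^{-1}=v^{p'}$. On the $*$-side, $\sigma v^p\sigma^{-1}\in\svg$ because $\svg$ is normal in $\smg$, and its image under $\phi_M$ is exactly $\sigma^c v^p(\sigma^c)^{-1}=v^{p'}$; as $\phi_M$ restricts to an isomorphism $\svg\to\vg$, we deduce $\sigma v^p\sigma^{-1}=v^{p'}$ in $\svg$.

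The second ingredient is the special structure of $I=(s_1,\tfrac{s_1}{s_3},s_1)$: every coordinate permutation of $I$ lies in the $\Gs4$-orbit of $I$. A direct check gives $\sigma_1 I=\mu_{-1,0}I$ and $\sigma_2 I=\mu_{0,-1}I$ (while the transposition of the outer coordinates fixes $I$), and propagating these along any word in $\sigma_1,\sigma_2$ by means of the commutation rules $\sigma_i\mu_{i,j}\sigma_i\in\Gs4$ of Proposition \ref{ident} yields, for every $\sigma\in\Gs2$, an element $\mu_\sigma\in\Gs4$ with $\sigma I=\mu_\sigma I$. Combining the two ingredients,
\[
f^{p'}=v^{p'}I=\sigma v^p\sigma^{-1}I=\sigma v^p\mu_{\sigma^{-1}}I .
\]
The relation $v_k\mu_{i,j}v_k\in\Gs4$ of Proposition \ref{ident} (conjugation by a Viète involution preserves $\Gs4$, acting as inversion) lets me push $v^p$ to the right past $\mu_{\sigma^{-1}}$, giving $f^{p'}=\sigma\,\mu'\,v^pI=\sigma\,\mu'\,f^p$ with $\mu'\in\Gs4$; a final use of $\sigma_i\mu_{i,j}\sigma_i\in\Gs4$ rewrites this as $f^{p'}=\mu''\,\sigma\,f^p$ with $\mu''\in\Gs4$, which is the claim.

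The transfer of the classical conjugation identity to the $*$-world is routine, resting only on $\phi_M$ being an isomorphism on $\svg$ and on $\svg\cap\langle\Gs1,\Gs2,\Gs4\rangle=\{\mathrm{id}\}$ (Proposition \ref{uniq}). I expect the genuinely load-bearing step, specific to the $*$-setting, to be the explicit verification that $\sigma_1 I$ and $\sigma_2 I$ are $\Gs4$-translates of $I$: this is where the particular form of the initial solution is used, and without it the bare permutation $\sigma$ would not close up inside $\langle\Gs2,\Gs4\rangle$. The remaining work is the index bookkeeping for the $\mu_{i,j}$ as they are conjugated through $\svg$ and $\Gs2$, which the commutation relations of Proposition \ref{ident} handle directly.
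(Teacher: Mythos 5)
Your proof is correct, and it reaches the conclusion by a somewhat different route than the paper. The paper never derives your conjugation identity $v^{p'}=\sigma v^p\sigma^{-1}$; instead it applies the inverse permutation $\om$ to $f^{p'}$, writes $\om f^{p'}=v\,g_4g_1g_2\,I$ using the normal form $\smg=\svg\rtimes\langle G_1,G_2,G_4\rangle$ (Corollaries \ref{dec124} and \ref{decv124}), kills $g_1$ by positivity of $(a,b,c)$, absorbs $g_4g_2I$ into $\tilde g_4 I$ by exactly the observation you single out as load-bearing (every coordinate permutation of $I$ is a $G_4$-translate of $I$), identifies the $\svg$-factor with $v^p$ via evaluation, freeness of the classical Vi\`ete action, and injectivity of $\phi_M$ on $\svg$, and finally commutes $v\tilde g_4=g_4'v$ by Proposition \ref{ident}. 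Your version replaces the normal-form decomposition and the positivity pruning with the explicit relation $v^{p'}=\sigma v^p\sigma^{-1}$, obtained as you say from Proposition \ref{identcl}, freeness on $\mc M^c_+$ (Theorem \ref{thmemt}), normality of $\svg$, and the isomorphism $\phi_M|_{\svg}$; the remaining steps are the same bookkeeping through Proposition \ref{ident}, and your explicit checks $\sigma_1 I=\mu_{-1,0}I$, $\sigma_2 I=\mu_{0,-1}I$ are right. What your route buys: it avoids the decomposition machinery, makes the $G_4$ factor in principle computable, namely $\mu''=\sigma v^p\mu_{\sigma^{-1}}(v^p)^{-1}\sigma^{-1}$, via the inversion rule for conjugating $\mu_{i,j}$ by Vi\`ete involutions (the paper misprints this as $v_k\mu_{i,j}v_k=\mu_{-i-j}$; your reading $\mu_{-i,-j}$ is the intended one), and it isolates the genuinely $*$-specific input. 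What the paper's route buys: the normal-form argument transfers unchanged to Theorem \ref{thm stat}, where sign changes from $G_1$ enter and a pure conjugation identity inside $\svg$ would no longer suffice.
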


\proof 

Let $f^p=(f^p_1, f^p_2,f^p_3)$ and $f^{p'}=(f^{p'}_1, f^{p'}_2,f^{p'}_3)$. Let $\om$
be the permutation such that the evaluation of $\om f^{p'}$ is $(a,b,c)$, the same as the evaluation of $f^p$. 

The triple $\om f^{p'}$ lies in the orbit $\Ga_MI$. So 
\bean
\label{1}
\om f^{p'} = vg_4g_1g_2 I = vg_4g_2I = v\tilde{g}_4I,
\eean
where $v\in \Ga_V$, $g_j\in G_j$, $\tilde g_4\in G_4$.
Here we may conclude that $g_1=1$, since $(a,b,c)$ are positive. 
We may also conclude that 
$g_4g_2I = \tilde{g}_4I$ for some 
$\tilde g_4\in G_4$ since any permutation of coordinates of the initial solution $(s_1,s_1/s_3,s_1)$ 
can be performed by a transformation from $G_4$.
On the other hand, we also have 
\bean
\label{2}
f^p = v I,
\eean
where $v$ in \eqref{2} is the same as in \eqref{1}. We also know that $v\tilde g_4 =  g_4'v$,  for some $g_4'\in G_4$, 
by Proposition \ref{ident}.
Hence
$\om f^{p'} = v \tilde g_4I =  g_4'vI=  g_4' f^p.$
This proves the theorem.
\endproof

\begin{thm}
\label{thm stat}
Let $p=(a,b,c) \in \mc M^c_+$. Let $p\rq{}=(a\rq{},b\rq{},c\rq{}) \in \mc M^c$ be such that
$(a\rq{},b\rq{},c\rq{})$ is obtained from $(a,b,c)$ by a permutation and possibly also by 
change of sign of two coordinates. Let $f\rq{} \in \GI$ be an element, whose evaluation is
$p\rq{}$.  Then $f\rq{}$ is obtained from $f^p$ by an element of $\langle G_1,G_2,G_4\rangle$.

\end{thm}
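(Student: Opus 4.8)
The plan is to show that $f'=g_\ast f^p$ for a suitable $g_\ast\in\langle G_1,G_2,G_4\rangle$, exploiting the semidirect product structure $\smg=\svg\rtimes\langle G_1,G_2,G_4\rangle$ (Corollary \ref{decv124}) together with the $\phi_M$-equivariance of $\Evo$ (Lemma \ref{equivEvo}). Since $f'\in\GI=\Ga_MI$, I would fix $g\in\smg$ with $f'=gI$ and decompose it, by Corollaries \ref{decv124} and \ref{dec124}, as $g=v\,g_4g_1g_2$ with $v\in\svg$ and $g_i\in G_i$, so that $f'=v\,g_4g_1g_2\,I$. Writing the transformation relating the two triples as $p'=\kappa\,p$ with $\kappa\in\langle G_1^c,G_2^c\rangle$, I factor $\kappa=\lambda\pi$ with $\lambda\in G_1^c$ and $\pi\in G_2^c$, using that $G_2^c$ normalizes $G_1^c$ (Proposition \ref{identcl}); the new feature compared with Theorem \ref{thm perm} is that $\lambda$ may be nontrivial, i.e.\ $p'$ may carry two negative entries.

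The crux is to pin down the Vi\`ete part $v$. Applying $\Evo$ and using $\phi_M(g_4)=\mathrm{id}$ (Proposition \ref{kerphiM}) and that $\phi_M(g_2)\in G_2^c$ fixes $(3,3,3)$, I obtain $p'=\phi_M(v)\,\phi_M(g_1)\,(3,3,3)$. Since $\phi_M(g_1)\in G_1^c$ commutes with the Vi\`ete group (Proposition \ref{identcl}) and $\vg$ preserves the sign pattern of each of $\Mclp,\mc M_{ij}^c$, the sign pattern of $p'$ equals that of $\phi_M(g_1)(3,3,3)$; comparing with $p'=\lambda\pi p$ forces $\phi_M(g_1)=\lambda$, hence $g_1=\tilde\lambda$, the lift of $\lambda$ in $G_1$. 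Then $p'=\phi_M(v)\lambda(3,3,3)$, while $p'=\lambda\pi v^p(3,3,3)=(\pi v^p\pi^{-1})\lambda(3,3,3)$, so $\phi_M(v)$ and $\pi v^p\pi^{-1}$ agree on the single point $\lambda(3,3,3)$. Because $\vg$ acts freely on $\Mclp$ (Theorem \ref{thmemt}) and on each $\mc M_{ij}^c$ (last corollary of Section \ref{sec 4}), this yields $\phi_M(v)=\pi v^p\pi^{-1}$, and since $\phi_M$ restricts to an isomorphism $\svg\to\vg$ I conclude $v=\tilde\pi v^p\tilde\pi^{-1}$, with $\tilde\pi\in G_2$ the lift of $\pi$.

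It then remains to rewrite $f'=\tilde\pi v^p\tilde\pi^{-1}g_4\tilde\lambda g_2\,I$ in the desired form. For this I would use two ingredients: every permutation of the initial solution $I=(s_1,s_1/s_3,s_1)$ is realized by an element of $G_4$ (so $g_2I=\mu I$ and $\tilde\pi^{-1}I=\mu'I$ for suitable $\mu,\mu'\in G_4$, exactly as in the proof of Theorem \ref{thm perm}); and the commutation relations of Proposition \ref{ident}, namely that elements of $G_2$ normalize both $G_1$ and $G_4$, that $G_1$ commutes with $G_4$ and with the Vi\`ete involutions, and that the Vi\`ete involutions normalize $G_4$. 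Applying these relations repeatedly lets me transport $v^p$ rightward until it acts directly on $I$, collecting all remaining factors into a single $g_\ast\in\langle G_1,G_2,G_4\rangle$, whence $f'=g_\ast(v^pI)=g_\ast f^p$.

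I expect the main obstacle to be the step determining $v$: one must simultaneously read off the sign-change part $\lambda$ (through the invariance of sign patterns under $\vg$) and the permutation part $\pi$ (through freeness of the Vi\`ete action on the individual components $\mc M_{ij}^c$), and then lift the identity $\phi_M(v)=\pi v^p\pi^{-1}$ from $\mg$ back to $\smg$ via the isomorphism $\phi_M|_{\svg}$. Once $v$ is identified, the remaining manipulations are routine group-theoretic bookkeeping with Proposition \ref{ident}, entirely parallel to the final commutation step in the proof of Theorem \ref{thm perm}.
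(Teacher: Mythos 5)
Your proof is correct: every ingredient you invoke is available in the paper (Corollaries \ref{dec124} and \ref{decv124}, Propositions \ref{identcl}, \ref{ident} and \ref{kerphiM}, Lemma \ref{equivEvo}, and the freeness statements of Theorem \ref{thmemt} and the last corollary of Section \ref{sec 4}), and the final bookkeeping does close up — conjugating $g_4,\tilde\lambda,g_2$ past $\tilde\pi^{-1}$, absorbing permutations of $I$ into $G_4$, and pushing $v^p$ rightward through $G_1$- and $G_4$-factors leaves $f'=g_\ast v^p I=g_\ast f^p$. The difference from the paper is the orientation of the semidirect-product decomposition, and it matters for economy. The paper writes the element carrying $I$ to $f'$ with the Vi\`ete factor on the \emph{right}: $f'=g\,vI$ with $g\in\langle G_1,G_2,G_4\rangle$ and $v\in\Ga_V$. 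Then $\Evo(vI)=\phi_M(v)(3,3,3)$ is a positive triple differing from $p$ only by a permutation and sign changes, hence equals $\si(a,b,c)$ for a pure permutation $\si$, so $vI$ is literally the distinguished element $f^{\si(a,b,c)}$; Theorem \ref{thm perm} converts this into $\mu\si f^p$ with $\mu\in G_4$, and the proof ends in four lines. Your left-handed decomposition $f'=v\,g_4g_1g_2\,I$ forces you to compute $v$ itself: you read off the sign factor $g_1=\tilde\lambda$ from the invariance of the four sign classes under $\vg$, pin down $\phi_M(v)=\pi v^p\pi^{-1}$ by freeness, lift through the isomorphism $\phi_M|_{\svg}$, and then reproduce by hand the commutation argument that the paper outsources to Theorem \ref{thm perm}. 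What your route buys is sharper information — you identify $v$ exactly as the conjugate $\tilde\pi\tilde v^p\tilde\pi^{-1}$, rather than merely concluding that $f'$ and $f^p$ differ by an element of $\langle G_1,G_2,G_4\rangle$; what it costs is length, since the conjugation analysis and the closing bookkeeping essentially inline the content of Theorem \ref{thm perm} instead of citing its statement.
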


\proof We have $f\rq{} = g v I$, where $g\in \langle G_1,G_2,G_4\rangle$
and $v\in \Ga_V$. The evaluation of $vI$ has to be a permutation 
of $(a,b,c)$, 
\bea
\Evo(vI) = \si (a,b,c), \qquad \si \in G_2\,.
\eea
Hence $vI=f^{\si(a,b,c)}$. By Theorem \ref{thm perm},
$f^{\si(a,b,c)} =  \mu \si f^{(a,b,c)}$, $\mu\in G_4$. 
Hence
$f\rq{} = gvI = g \mu \si  f^{(a,b,c)}\,,$
that proves the theorem.
\endproof

\section{Reduced polynomials solutions and $*$-Markov polynomials}
\label{sec 6}

\subsection{Degrees of a polynomial}
\label{sec def f}

Let $f(s_1,s_2,s_3)$ be a polynomial. We consider
 two degrees of $f$:\ the 
{\it homogeneous degree} $d:={\rm deg} f $ with respect to weights $(1,1,1)$ 
and the {\it quasi-homogeneous degree} $q:={\rm Deg} f$  with respect to weights $(1,2,3)$.

\begin{lem}
\label{lem degg}  
Let $f(s_1,s_2,s_3)$ be a polynomial of homogeneous degree $d$ not divisible by $s_3$, then
\bean
\label{fg}
g(s_1,s_2,s_3):=s_3^{d} \,f\Big(\frac{s_2}{s_3}, \frac{s_1}{s_3}, \frac{1}{s_3}\Big)
\eean
is a polynomial of homogeneous degree $d$ not divisible by $s_3$. If additionally
$f(s_1,s_2,s_3)$ is a quasi-homogeneous polynomial of quasi-homogeneous degree $q$, then
$g(s_1,s_2,s_3)$ is a quasi-homogeneous polynomial of quasi-homogeneous degree $3d-q$.

\end{lem}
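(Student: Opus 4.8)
The plan is to reduce everything to a direct bookkeeping on monomials, since the assignment $f\mapsto g$ is linear. Writing $f=\sum_{a_1,a_2,a_3} c_{a_1,a_2,a_3}\,s_1^{a_1}s_2^{a_2}s_3^{a_3}$, every monomial appearing satisfies $a_1+a_2+a_3\le d$, with equality exactly for the top-degree monomials (at least one of which exists since ${\rm deg}\,f=d$). First I would substitute $s_1\mapsto s_2/s_3$, $s_2\mapsto s_1/s_3$, $s_3\mapsto 1/s_3$ into a single monomial and multiply by $s_3^{d}$, obtaining
\[
s_3^{d}\,\Big(\tfrac{s_2}{s_3}\Big)^{a_1}\Big(\tfrac{s_1}{s_3}\Big)^{a_2}\Big(\tfrac{1}{s_3}\Big)^{a_3}
= s_1^{a_2}\,s_2^{a_1}\,s_3^{\,d-a_1-a_2-a_3}.
\]
Because $a_1+a_2+a_3\le d$, the exponent of $s_3$ is nonnegative, so the image is an honest monomial; summing over the monomials of $f$ shows that $g$ is a polynomial.

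Next I would track the two degrees. The image monomial has homogeneous degree $a_1+a_2+(d-a_1-a_2-a_3)=d-a_3\le d$, so ${\rm deg}\,g\le d$; the value $d$ is attained precisely by the monomials of $f$ with $a_3=0$, which exist because $s_3\nmid f$. Likewise, the image monomial is coprime to $s_3$ iff its $s_3$-exponent vanishes, i.e.\ iff $a_1+a_2+a_3=d$, and such monomials of $f$ exist because ${\rm deg}\,f=d$. To conclude ${\rm deg}\,g=d$ and $s_3\nmid g$ I must rule out cancellation, and here the key observation is that the exponent map $(a_1,a_2,a_3)\mapsto(a_2,a_1,d-a_1-a_2-a_3)$ is injective — indeed an involution of the simplex $\{a_i\ge 0,\ \sum a_i\le d\}$, since applying it twice returns $(a_1,a_2,a_3)$. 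Thus distinct monomials of $f$ produce distinct monomials of $g$, so no cancellation occurs among the degree-$d$ terms, nor among the $s_3$-free terms.

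Finally, for the quasi-homogeneous statement I would simply compute the weight of the image monomial under the weights $(1,2,3)$:
\[
1\cdot a_2+2\cdot a_1+3\,(d-a_1-a_2-a_3)=3d-(a_1+2a_2+3a_3).
\]
If $f$ is quasi-homogeneous of quasi-homogeneous degree $q$, then every monomial of $f$ satisfies $a_1+2a_2+3a_3=q$, so every image monomial has quasi-homogeneous degree $3d-q$; hence $g$ is quasi-homogeneous of quasi-homogeneous degree $3d-q$.

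I expect the only genuine point requiring care to be the no-cancellation argument of the middle paragraph: the homogeneous-degree and $s_3$-divisibility claims assert that the actual top-degree and $s_3$-free parts of $g$ are \emph{nonzero}, not merely bounded, and this is exactly what the injectivity (equivalently, the involutivity) of the exponent map secures. Everything else is a one-line substitution.
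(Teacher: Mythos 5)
Your proof is correct and follows essentially the same route as the paper: the same monomial-by-monomial substitution $s_1^{a_1}s_2^{a_2}s_3^{a_3}\mapsto s_1^{a_2}s_2^{a_1}s_3^{\,d-a_1-a_2-a_3}$, the same identification of which monomials witness $\deg g=d$ and $s_3\nmid g$, and the same weight computation giving $3d-q$. The only difference is that you make explicit the no-cancellation point via the involutivity of the exponent map, which the paper's proof leaves implicit when it asserts that image monomials enter $g$ with nonzero coefficient; this is a welcome bit of extra care, not a divergence in method.
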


\begin{proof}
If $s_1^{a_1} s_2^{a_2}s_3^{a_3}$     is a monomial entering the polynomial $f$ with a nonzero coefficient, then
$s_2^{a_1} s_1^{a_2}s_3^{d-(a_1+a_2+a_3)}$ is a monomial entering  $g$ with a nonzero coefficient.
Hence $g$ is a polynomial.

The homogeneous degree of $s_2^{a_1} s_1^{a_2} s_3^{d-(a_1+a_2+a_3)}$ equals $d-a_3$. Hence
${\rm deg}\, g\leq d$. 

Since $f$ is not divisible by $s_3$,  there is a monomial $s_1^{a_1} s_2^{a_2}$ entering $f$. Hence the monomial
$s_2^{a_1} s_1^{a_2}s_3^{d-(a_1+a_2)}$ enters $g$ and has homogeneous degree $d$. Hence
${\rm deg}\, g = d$. 

Since ${\rm deg}\, f=d$, there is a monomial $s_1^{a_1} s_2^{a_2}s_3^{a_3}$ entering $f$ such that 
$a_1+a_2+a_3=d$. Then the monomial
$s_2^{a_1} s_1^{a_2}s_3^{d-(a_1+a_2+a_3)}= s_2^{a_1} s_1^{a_2}$
enters $g$ and hence $g$ is not divisible by $s_3$.

If additionally all monomials $s_1^{a_1} s_2^{a_2}s_3^{a_3}$ of $f$ have the property
$a_1+2a_2+3a_3 = q$, then the corresponding monomials 
$s_2^{a_1} s_1^{a_2}s_3^{d-(a_1+a_2+a_3)}$ of $g$ have the property $2a_1+a_2 + 3(d -(a_1+a_2+a_3)) = 3d - q$.
\end{proof}

The polynomial $g$ will be denoted by $\mu (f)$. Clearly
\bean
\label{barbar}
\evo(f) = \evo(g),
\qquad
\mu^2(f) = f .
\eean
The polynomials $f, g$ are called {\it dual}. The {\it bi-degree vectors} of dual polynomials are
\bean
\label{bid}
(d,q), \qquad (d, 3d-q).
\eean
The linear transformation
\bea
 \Z^2\to \Z^2,\qquad (d,q) \mapsto (d, 3d-q),
\eea
is an involution with invariant vector $(2,3)$ and anti-invariant vector $(0,1)$.

\vsk.2>
It is convenient to assign to the polynomial $f$ the {\it  $2\times 2$ degree matrix}
\bean
\label{mat}
M_f  =  \begin{pmatrix}
          d   & d   \\
           q  & 3d-q 
                             \end{pmatrix} ,
\eean                    
whose  columns are the bi-degrees of $f$ and  $\mu(f)$.
Then
\bean
\label{mat}
M_{\mu(f)}  =  \begin{pmatrix}
 d   & d   
 \\
  3d-q  & q 
\end{pmatrix} 
=
 \begin{pmatrix}
          d   & d   \\
           q  & 3d-q 
                             \end{pmatrix}
 \begin{pmatrix}
          0   & 1   \\
           1  &  0 
                             \end{pmatrix} = M_f P,
\eean                    
where $P$ is the permutation matrix.

\subsection{Transformations of triples of polynomials}

Consider a triple $f=(f_1,f_2,f_3)$ of polynomials $f_1,f_2,f_3$
 in $s_1,s_2,s_3$ such that
\begin{enumerate}
\item each  $f_j$ is not divisible by $s_3$,
\item
each $f_j$ is a quasi-homogeneous polynomial with respect to weights (1,2,3),

\item denote by $(d_j,q_j)$ the bi-degree vector of $f_j$,
 then
\bean
\label{ad bi}
(d_1,q_1) +(d_3,q_3) = (d_2,q_2).
\eean

\end{enumerate}
Such a triple $(f_1,f_2,f_3)$ is called an {\it admissible} triple.
\vskip2mm
Equation \eqref{ad bi} is equivalent to the equation
\bean
\label{ad mat}
M_{f_1} + M_{f_3} = M_{f_2}\,.
\eean

\vsk.2>
Define new triples 
\begin{align}
\label{L tra}
Lf = (\mu(f_1),  \mu( f_1)f_2 - s_3^{d_1}f_3, f_2),
\\
\label{R tra}
Rf = (f_2,  f_2\mu( f_3) - s_3^{d_3}f_1, \mu(f_3)).
\end{align}
The transformation $f\mapsto Lf$ is called the {\it left} transformation of an admissible triple $f$, 
because of the new first and second terms of $Lf$ are
on the left from the surviving term $f_2$. Similarly the transformation
 $f\mapsto Rf$ is called the {\it right} transformation, because of
the new second and third  terms of $Rf$ are
on the right from the surviving term $f_2$.

\begin{thm}
Let $f=(f_1,f_2,f_3)$ be an admissible triple of polynomials with bi-degree vectors
$((d_1,q_1), (d_2,q_2), (d_3,q_3))$. Then the triples
$Lf$ and $Rf$ are admissible. The bi-degree vectors of $Lf$ are
\bean
\label{beL}
((d_1,3d_1-q_1), (d_1+d_2,3d_1-q_1+q_2), (d_2,q_2))
\eean
and the bi-degree vectors of $Rf$ are
\bean
\label{beL}
((d_2,q_2), (d_2+d_3,q_2+3d_3-q_3), (d_3,3d_3-q_3)).
\eean
\end{thm}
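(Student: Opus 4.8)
The plan is to verify the admissibility of $Lf$ and $Rf$ directly, by checking the three defining conditions (non-divisibility by $s_3$, quasi-homogeneity, and the additivity relation \eqref{ad bi}) and computing the bi-degree vectors along the way. Because admissibility is equivalent to the matrix identity \eqref{ad mat}, the cleanest route is to track the degree matrices $M_{f_j}$ and use the relation $M_{\mu(f)} = M_f P$ from \eqref{mat}. I will treat $Lf$ in full detail; the case of $Rf$ is entirely symmetric (indeed it follows from $Lf$ by the reflection exchanging the roles of the first and third coordinates, which is the conjugation by $\sigma_1$ appearing in Proposition \ref{ident}), so I would only indicate the analogous computation.

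First I would examine each component of $Lf = (\mu(f_1),\ \mu(f_1)f_2 - s_3^{d_1}f_3,\ f_2)$. By Lemma \ref{lem degg}, $\mu(f_1)$ is a polynomial of homogeneous degree $d_1$, not divisible by $s_3$, and quasi-homogeneous of quasi-homogeneous degree $3d_1 - q_1$; thus its bi-degree vector is $(d_1, 3d_1 - q_1)$, matching the first entry of \eqref{beL}. The third component is simply $f_2$, with bi-degree $(d_2,q_2)$, which is the last entry of \eqref{beL} and is admissible-by-hypothesis (not divisible by $s_3$, quasi-homogeneous). The real work is the middle component $g_2 := \mu(f_1)f_2 - s_3^{d_1}f_3$.

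For the middle term I would first confirm quasi-homogeneity: the product $\mu(f_1)f_2$ has quasi-homogeneous degree $(3d_1 - q_1) + q_2$, while $s_3^{d_1}f_3$ has quasi-homogeneous degree $3d_1 + q_3$. Using the hypothesis \eqref{ad bi}, namely $q_2 = q_1 + q_3$, both terms share the degree $3d_1 - q_1 + q_2 = 3d_1 + q_3$, so $g_2$ is quasi-homogeneous of that quasi-homogeneous degree. For the homogeneous degree I would compute $\deg(\mu(f_1)f_2) = d_1 + d_2$ and $\deg(s_3^{d_1}f_3) = d_1 + d_3$, which coincide because $d_2 = d_1 + d_3$ by \eqref{ad bi}; hence $g_2$ is homogeneous of degree $d_1 + d_2$, giving bi-degree vector $(d_1 + d_2,\ 3d_1 - q_1 + q_2)$ as claimed in \eqref{beL}. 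Finally the additivity $(d_1, 3d_1 - q_1) + (d_2, q_2) = (d_1 + d_2,\ 3d_1 - q_1 + q_2)$ is immediate, so condition \eqref{ad bi} holds for $Lf$.

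The step I expect to be the main obstacle is the remaining requirement that the middle component $g_2$ be \emph{not divisible by $s_3$}. Quasi-homogeneity and the degree count are forced by the hypotheses, but non-divisibility by $s_3$ is a genuine cancellation question: one must rule out that the $s_3$-free parts of $\mu(f_1)f_2$ and $s_3^{d_1}f_3$ conspire to make every monomial of $g_2$ carry a factor of $s_3$. Since $s_3^{d_1}f_3$ is divisible by $s_3$ (as $d_1 \geq 1$ for a nonconstant $f_1$), the $s_3$-free part of $g_2$ equals the $s_3$-free part of $\mu(f_1)f_2$, i.e. the reduction modulo $s_3$ of $\mu(f_1)f_2$; since $\Z[s_1,s_2,s_3]/(s_3) \cong \Z[s_1,s_2]$ is an integral domain and neither $\mu(f_1)$ nor $f_2$ is divisible by $s_3$, their images are nonzero and the product is nonzero, so $g_2 \not\equiv 0 \pmod{s_3}$. (If $d_1 = 0$ is permitted one argues instead by comparing the $s_3$-free parts directly, but in the admissible setting $f_1$ is nonconstant.) This completes the verification that $Lf$ is admissible with the stated bi-degrees, and the identical argument with the roles of $f_1, f_3$ and the formula \eqref{R tra} establishes the claim for $Rf$.
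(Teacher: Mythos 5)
Your overall strategy is the same as the paper's: verify the three admissibility conditions for $Lf$ directly from Lemma \ref{lem degg} and the additivity hypothesis \eqref{ad bi}, read off the bi-degrees, and invoke symmetry for $Rf$. One genuine improvement: the paper disposes of the claim that $\mu(f_1)f_2 - s_3^{d_1}f_3$ is nonconstant and not divisible by $s_3$ with the single word ``Clearly,'' whereas you actually prove it, by reducing modulo $s_3$, noting that $s_3^{d_1}f_3\equiv 0$ because $d_1\geq 1$, and using that $\Z[s_1,s_2,s_3]/(s_3)\cong\Z[s_1,s_2]$ is an integral domain, so the image of $\mu(f_1)f_2$ is a product of two nonzero elements and hence nonzero. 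That argument is correct, and your identification of this as the one non-formal point in the proof is accurate.

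However, your computation of the homogeneous degree of the middle term contains a false statement. You assert that $\deg(\mu(f_1)f_2)=d_1+d_2$ and $\deg(s_3^{d_1}f_3)=d_1+d_3$ ``coincide because $d_2=d_1+d_3$.'' They do not: the relation $d_2=d_1+d_3$ gives $\deg(s_3^{d_1}f_3)=d_2$, which differs from $d_1+d_2$ by $d_1\geq 1$. Worse, even if the two degrees did coincide, your conclusion $\deg(g_2)=d_1+d_2$ would not follow, because the top-degree parts of the two terms could then cancel --- this is exactly what can happen in the degenerate case $d_1=0$ that you set aside (take $f_1=1$, $f_2=f_3$, so $g_2=0$). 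The correct argument is the opposite of what you wrote, and is easier: since $\deg(s_3^{d_1}f_3)=d_1+d_3=d_2<d_1+d_2=\deg(\mu(f_1)f_2)$, the second term cannot interfere with the top total-degree part of the first, so $\deg(g_2)=d_1+d_2$ exactly, with no cancellation to rule out. (Also note $g_2$ is not homogeneous in the $(1,1,1)$-grading; only its total degree equals $d_1+d_2$.) With this one step repaired --- and it is a one-line repair --- your proof is complete and coincides with the paper's, modulo your added justification of the ``Clearly.''
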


\proof
Clearly the polynomials 
$\mu( f_1)f_2 - s_3^{d_1}f_3$, $ f_2\mu( f_3) - s_3^{d_3}f_1$ are nonconstant and 
are not divisible by $s_3$.
The homogeneous degrees of $Lf$ are $(d_1,d_1+d_2,d_2)$. This follows from Lemma \ref{lem degg} and admissibility of the triple $f$. For the quasi-homogeneous degrees we have
\bea
{\rm Deg} (\mu( f_1)f_2) = 3d_1- q_1 +q_2= 3d_1+q_3 = {\rm Deg} (s_3^{d_1} f_2),
\eea
by Lemma \ref{lem degg}.
Hence $\mu( f_1)f_2 - s_3^{d_1}f_3$ is a quasi-homogeneous polynomial of quasi-homogeneous
degree $3d_1- q_1 +q_2$. The quasi-homogeneous degree of $\mu( f_1)$ is  $ 3d_1-q_1$.
 This proves the statement for $Lf$. The argument for $Rf$ is similar.
\endproof

\begin{cor}
\label{cor 2x_2}
Let $f=(f_1,f_2,f_3)$ be an admissible triple of polynomials with degree matrices
$(M_1, M_2, M_3)$. Then the degree matrices of 
$Lf$ and $Rf$ are 
\bean
\label{22ad}
(M_1P, \,M_1P+M_2,\, M_2),\qquad
(M_2,\, M_2 + M_3P,\, M_3P),
\eean
where  $P$ is the permutation matrix.
\qed

\end{cor}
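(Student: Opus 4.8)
The plan is to recognize the corollary as nothing more than the matrix translation of the bi-degree computation carried out in the preceding theorem. The key structural observation I would record first is that the assignment $f\mapsto M_f$ linearizes all the relevant data: a degree matrix $M_f=\begin{pmatrix} d & d \\ q & 3d-q\end{pmatrix}$ is completely determined by its first column $(d,q)^{\mathsf T}$, the set of all degree matrices is a two-dimensional linear subspace of $2\times 2$ matrices closed under addition (since $3(d+d')-(q+q')=(3d-q)+(3d'-q')$), and right multiplication by $P$ acts on first columns by the involution $(d,q)\mapsto(d,3d-q)$, which by Lemma~\ref{lem degg} is exactly the bi-degree of $\mu(f)$; this is the identity $M_{\mu(f)}=M_fP$.

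With this dictionary in hand, I would treat $Lf$ using its definition \eqref{L tra}, namely $Lf=(\mu(f_1),\,\mu(f_1)f_2-s_3^{d_1}f_3,\,f_2)$. The first entry $\mu(f_1)$ has degree matrix $M_{\mu(f_1)}=M_1P$ and the third entry $f_2$ has degree matrix $M_2$, so these two columns of the answer are immediate. For the middle entry I would note that $M_1P+M_2$ is again a degree matrix (by closure under addition) whose first column is $(d_1+d_2,\,3d_1-q_1+q_2)^{\mathsf T}$; since the preceding theorem records precisely this vector as the bi-degree of the middle component of $Lf$, and since a degree matrix is pinned down by its first column, the middle degree matrix must equal $M_1P+M_2$. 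This yields the triple $(M_1P,\,M_1P+M_2,\,M_2)$.

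The computation for $Rf$ is entirely parallel. Starting from \eqref{R tra}, the outer entries $f_2$ and $\mu(f_3)$ contribute the degree matrices $M_2$ and $M_3P$, while the middle entry has first column equal to the bi-degree $(d_2+d_3,\,q_2+3d_3-q_3)^{\mathsf T}$ supplied by the theorem, which agrees with the first column of $M_2+M_3P$; hence the middle degree matrix is $M_2+M_3P$ and the answer is $(M_2,\,M_2+M_3P,\,M_3P)$.

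I expect no genuine obstacle here: the only point requiring care is the bookkeeping that guarantees the second columns take care of themselves, so that matching first columns suffices. This is automatic because degree matrices form a linear subspace that is preserved by both $P$-multiplication and addition; the corollary is therefore just a repackaging of the preceding theorem once $f\mapsto M_f$ is seen to linearize the bi-degree data and to convert the duality $\mu$ into right multiplication by $P$.
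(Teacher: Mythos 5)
Your proof is correct and matches the paper's (implicit) argument: the corollary is stated with no separate proof precisely because it is the direct translation of the preceding theorem's bi-degree formulas via the identity $M_{\mu(f)}=M_fP$ and the fact that degree matrices add componentwise according to bi-degrees. Your observation that a degree matrix is determined by its first column, so that matching bi-degrees suffices, is exactly the bookkeeping the paper takes for granted.
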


\subsection{Reduced polynomial solutions}

Any solution of the $*$-Markov equation \eqref{ME} can be written in the form $(f_1^*, f_2,f_3^*)$, where 
$f_1,f_2,f_3$ are Laurent polynomials. For any ${m_1},m_3\in \Z$, the triple
\bea
((s_3^{m_1}f_1)^*, s_3^{m_1+m_3}f_2, (s_3^{m_3}f_3)^*)
\eea
 is also a solution. 
Given a solution $(f_1^*,f_2,f_3^*)$ there exist unique  $m_1,m_3\in\Z$ such that  
$s_3^{m_1}f_1$, $s_3^{m_3}f_3$ are polynomials and 
each of $s_3^{m_1}f_1$, $s_3^{m_3}f_3$ is not divisible by $s_3$.

\vsk.2>

A solution $(f_1^*, f_2,f_3^*)$ of \eqref{ME} is called a {\it reduced polynomial solution} if each of $f_1, f_2, f_3$ is
a nonconstant polynomial in $s_1,s_2,s_3$  not divisible by $s_3$.
In this case we say that $(f_1^*, f_2,f_3^*)$ is a {\it reduced polynomial presentation of the Markov triple}
$(a,b,c):= \Evo(f_1^*, f_2,f_3^*)$.

\vsk.2>
For example,
\bean
\label{exa sol}
(s_1^*,s_1^2-s_2, s_1^*) ,
\qquad
(s_2^*,  s_2(s_1^2-s_2)-s_3s_1, (s_1^2-s_2)^*)
\eean
are reduced polynomial presentations of the Markov triples $(3,6,3)$ 
and $(3,15,6)$.

\begin{thm}
\label{thm 4.14}

${}$

\begin{enumerate}
\item[(i)]

Let $(a,b,c)$ be a Markov triple,  $0<a < b$, $0< c < b$, $6\leq b$.
Then there exists a \emph{unique} reduced polynomial solution  $(f_1^*,f_2 , f_3^*)\in \GI$, 
such that $\Evo(f_1^*, f_2, f_3^*) = (a,b,c)$. Moreover, for that
reduced polynomial solution $(f_1^*,f_2 , f_3^*)$ the triple $f=(f_1,f_2,f_3)$ is admissible.

\item[(ii)]

Let  $(f_1^*,f_2 , f_3^*)$ be the reduced polynomial presentation of  a Markov triple  $(a, b, c)$ with
$0<a < b$, $0< c < b$, $6\leq b$. Denote $f=(f_1,f_2,f_3)$. Let 
$Lf = (\mu( f_1),  \mu( f_1)f_2 - s_3^{d_1}f_3, f_2)$ and
$Rf = (f_2,  f_2\mu( f_3) - s_3^{d_3}f_1, \mu( f_3))$ be the left and right transformations of $f$.
Then
\bean
\label{lrt}
(\mu( f_1)^*, \, \mu( f_1)f_2 - s_3^{d_1}f_3, \,f_2^*)
\eean
is the reduced polynomial presentation of the Markov triple $(a, ab-c, b)$
and 
\bean
\label{rrt}
(f_2^*, \, f_2\mu( f_3) - s_3^{d_3}f_1,\, \mu( f_3)^*)
\eean
is the reduced polynomial presentation of the Markov triple $(b, bc-a, c)$.

\end{enumerate}

\end{thm}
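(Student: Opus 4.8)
The plan is to prove both parts together by induction along the Markov tree $\mc T$, with part (ii) furnishing the inductive step. The base of the induction is the triple $(3,6,3)$, whose reduced polynomial presentation $(s_1^*,s_1^2-s_2,s_1^*)\in\GI$ appears in \eqref{Xe1} and \eqref{exa sol}; one checks directly that $(s_1,s_1^2-s_2,s_1)$ is admissible, with bi-degree vectors $(1,1),(2,2),(1,1)$. Every Markov triple $(a,b,c)$ with $0<a<b$, $0<c<b$, $6\le b$ is, up to the swap $a\leftrightarrow c$, a descendant of $(3,6,3)$ in $\mc T$ (Theorem \ref{thmAig}), and the tree operations $L,R$ of \eqref{optree} send such a vertex to its two children $(a,ab-c,b)$ and $(b,bc-a,c)$. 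So it suffices to lift $L,R$ to the polynomial level compatibly with $\Evo$.

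The crux is that the polynomial transformations \eqref{L tra}, \eqref{R tra} are realizations of $*$-Markov group elements. Using the relation $f^*=s_3^{-d}\mu(f)$ with $d=\deg f$ (immediate from \eqref{fg}) together with $\mu^2=\id$, I would verify directly that
\[
\mu_{-d_1,0}\,\sigma_2\,v_3\,(f_1^*,f_2,f_3^*)=\bigl(\mu(f_1)^*,\ \mu(f_1)f_2-s_3^{d_1}f_3,\ f_2^*\bigr),
\]
\[
\mu_{0,-d_3}\,\sigma_1\,v_1\,(f_1^*,f_2,f_3^*)=\bigl(f_2^*,\ f_2\mu(f_3)-s_3^{d_3}f_1,\ \mu(f_3)^*\bigr),
\]
so the right-hand sides are exactly \eqref{lrt} and \eqref{rrt}. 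Since $\mu_{i,j},\sigma_i,v_i\in\smg$, this immediately places \eqref{lrt} and \eqref{rrt} in $\GI$. That they are \emph{reduced} polynomial presentations — each entry a nonconstant polynomial not divisible by $s_3$ — follows from the preceding theorem on $L$- and $R$-transformations of admissible triples, which moreover keeps $Lf,Rf$ admissible; this both propagates admissibility through the induction and establishes the admissibility claim of (i).

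It then remains to match evaluations and to settle uniqueness. Applying $\phi_M$ and \eqref{phim} gives $\phi_M(\mu_{-d_1,0}\sigma_2 v_3)=\sigma_2^c v_3^c=L$ and $\phi_M(\mu_{0,-d_3}\sigma_1 v_1)=\sigma_1^c v_1^c=R$, so by the $\phi_M$-equivariance of $\Evo$ (Lemma \ref{equivEvo}) the evaluations of \eqref{lrt}, \eqref{rrt} are $(a,ab-c,b)$ and $(b,bc-a,c)$, as required for (ii) and for the inductive step of existence in (i). For uniqueness, suppose $(g_1^*,g_2,g_3^*)\in\GI$ is a reduced polynomial solution with $\Evo=(a,b,c)$. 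By Theorem \ref{thm stat} it equals $g\cdot f^{(a,b,c)}$ for some $g\in\langle G_1,G_2,G_4\rangle$; writing $g=g_4g_1g_2$ as in Corollary \ref{dec124}, positivity of $(a,b,c)$ forces $g_1=\id$, the requirement that $b$ sit in the middle as the strict maximum forces $g_2=\id$ (for $(3,6,3)$, where $a=c$, the residual swap either fixes the symmetric presentation $(s_1^*,s_1^2-s_2,s_1^*)$ outright or is absorbed via Theorem \ref{thm perm}), and the normalization that no entry be divisible by $s_3$ forces $g_4=\id$. The swapped triples $(c,b,a)$ are then handled by Theorem \ref{thm perm}.

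The main obstacle I expect is the uniqueness normalization: pinning down the three factors $g_1,g_2,g_4$ simultaneously from positivity (to kill signs), the max-in-the-middle ordering (to kill the permutation, with the symmetric case $a=c$ treated separately), and non-divisibility by $s_3$ (to remove the $G_4$-ambiguity of powers of $s_3$). Everything else reduces to the two explicit group identities displayed above, whose verification is routine once the relation $f^*=s_3^{-d}\mu(f)$ is in hand.
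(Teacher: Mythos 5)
Your proposal is correct and is essentially the paper's own argument: the same induction along the Markov tree from the base $(3,6,3)$, with your group elements $\mu_{-d_1,0}\,\sigma_2\,v_3$ and $\mu_{0,-d_3}\,\sigma_1\,v_1$ being exactly the chains of moves \eqref{pp1} and \eqref{pp2} (projecting under $\phi_M$ to $L=\sigma_2^c v_3^c$ and $R=\sigma_1^c v_1^c$), admissibility propagated by the theorem on $L$- and $R$-transformations of admissible triples, and uniqueness via Theorem \ref{thm stat} with the same three normalizations (signs, ordering with the symmetric case $(3,6,3)$, and $s_3$-divisibility). One small repair: since the distinguished element $f^{(a,b,c)}$ need not itself be reduced, you cannot conclude $g_4=\mathrm{id}$ by comparing a reduced presentation to $f^{(a,b,c)}$; instead, as in the paper, compare two reduced presentations to each other, so that the element $\mu_{i,j}\in G_4$ relating them must preserve reducedness, forcing $i=j=0$.
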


\proof 

First we prove the existence. The proof is by induction on the distance in the Markov tree from
$(a,b,c)$ to $(3,3,3)$.

Let us find the reduced polynomial presentations
in $\Ga_MI$ for the Markov triples $(3,6,3)$, $(3,15,6)$.
We  transform the initial solution as follows,
\bea
&&
I=(s_1, s_2^*,s_1) \mapsto
(s_1^*, s_1^2-s_2, s_1^*) = (s_2/s_3, s_1^2-s_2, s_1^*)
\\
&&
\phantom{aaa}
\mapsto (s_2, s_1^2-s_2, (s_3s_1)^*) \mapsto
(s_2^*, s_2(s_1^2-s_2)-s_3s_1,(s_1^2-s_2)^*).
\eea
The triples
\bean
\label{rpp}
(s_1^*,s_1^2-s_2, s_1^*) ,
\qquad
(s_2^*,  s_2(s_1^2-s_2)-s_3s_1, (s_1^2-s_2)^*)
\eean
are desired reduced polynomial presentations of $(3,6,3)$ 
and $(3,15,6)$. For example, the polynomials 
$s_2, s_2(s_1^2-s_2)-s_3s_1,s_1^2-s_2$ are quasi-homogeneous of quasi-homogeneous 
degrees $(2, 4, 2)$ with $2+2=4$ as predicted and of homogeneous degrees $(1,3,2)$ with $1+2=3$.
These three polynomials form an admissible triple.

\vsk.2>
Now assume that a Markov triple 
$(a,b,c)$,  $0<a < b$, $0< c < b$, $6\leq b$,
 has a reduced polynomial presentation
$(f_1^*,f_2 , f_3^*)$, where $(f_1,f_2,f_3)$ is an admissible triple.
Then
\bean
\label{pp1}
(f_1^*,f_2 , f_3^*)= (\mu( f_1)/s_3^{d_1}, f_2,f_3^*) 
\mapsto
(\mu( f_1), f_2, (s_3^{d_1}f_3)^*) \mapsto 
(\mu( f_1)^*,  \mu( f_1)f_2-s_3^{d_1}f_3, f_2^*)
\phantom{aaa}
\eean
and
\bean
\label{pp2}
(f_1^*,f_2 , f_3^*)= (f_1^*, f_2, \mu( f_3)/s_3^{d_3}) \mapsto
((s_3^{d_3}f_1)^*, f_2, \mu( f_3)) \mapsto 
(f_2^*,  \mu( f_3)f_2 -s_3^{d_3}f_1, \mu( f_3)^*)
\phantom{aaa}
\eean
are transformations by elements of the Markov group $\Ga_M$.
The triple $(\mu( f_1)^*,  \mu( f_1)f_2-s_3^{d_1}f_3, f_2^*)$ presents the Markov triple $(a, ab-c, b)$, and
the triple $(f_2^*,  \mu( f_3)f_2 -s_3^{d_3}f_1, \mu( f_3)^*)$ presents the Markov triple
$(b, bc-a, c)$.  
These two triples satisfy the requirements of part (ii) of the theorem.

\vsk.2> 
Let us prove the uniqueness. Let $(f_1^*, f_2, f_3^*)$ and
$(h_1^*,h_2, h_3^*)$ be two reduced polynomial presentations of 
a Markov triple $(a,b,c)$ with   $0<a < b$, $0< c < b$, $6\leq b$.
By Theorem \ref{thm stat} $(h_1^*,h_2, h_3^*)$ is obtained from $(f_1^*,f_2, f_3^*)$ by  a transformation
of the form $g_2g_1g_4$, where $g_i\in G_i$. It is clear that a transformation $g_4$ cannot be used because it will destroy the property of $f_1f_2f_3$ to be not divisible by $s_3$. We also cannot use $g_1$ because it will destroy the fact that
 $(f_1^*,f_2, f_3^*)$  represents a positive triple $(a,b,c)$. If the numbers $a,b,c$ are all distinct,
 we cannot use $g_2$.  
If $(a,b,c)=(3,6,3)$, the presentation $(s_1^*,s_1^2-s_2, s_1^*)$ is symmetric with respect 
to the permutation of the first and third coordinates. The theorem is proved.
\endproof

\subsection{$*$-Markov polynomials}

We say that a polynomial $P(s_1,s_2,s_3)$ is a {\it $*$-Markov polynomial} if 
there exists a Markov triple $(a,b,c)$,  $0<a < b$, $0< c < b$, $6\leq b$, with reduced polynomial
presentation $(f_1^*,f_2,f_3^*)\in \Ga_MI$, such that $P=f_2$. 

\vsk.2>
In particular, this means that $P$ is quasi-homogeneous and is not divisible by $s_3$.

\vsk.2>
The polynomial $s_2$ will also be called a {\it $*$-Markov polynomial}.

\vsk.2>
We say that a polynomial $Q(s_1,s_2,s_3)$ is a {\it dual $*$-Markov polynomial} if 
$Q$ is not divisible by $s_3$ and $\mu( Q)$ is a $*$-Markov polynomial. 
\vsk.2>

In particular this means that $Q$ is quasi-homogeneous.

\vsk.2>

For example, $s_1^2-s_2$, $s_2(s_1^2-s_2)-s_3s_1$ are $*$-Markov polynomials, 
since they appear as the middle terms in the reduced polynomial presentations in 
\eqref{exa sol} and $s_2^2-s_1s_3$,  $s_1(s_2^2-s_1s_3) - s_3s_2$ are
the corresponding dual $*$-Markov polynomials.

\begin{cor}
\label{cor coord}
Let $(a,b,c)$ be a Markov triple with $0<a < b$, $0< c < b$, $6\leq b$.
Let $(f_1^*,f_2,f_3^*)\in \Ga_MI$ be the reduced polynomial presentation of 
$(a,b,c)$. Then each of $f_1, f_3$ is either a $*$-Markov polynomial
or a dual $*$-Markov polynomial.
Moreover, if $(g_1,g_2,g_3) \in \Ga_MI$ is any presentation of $(a,b,c)$, then
\bean
\label{any g}
g_1 = s_3^{k_1} \mu(f_1),
\qquad
g_2 = s_3^{k_2} f_2,
\qquad
g_3 = s_3^{k_3} \mu(f_3),
\eean
for some $k_1,k_2,k_3\in\Z$,
and hence each of $g_1,g_3$ is either a $*$-Markov polynomial
or a dual $*$-Markov polynomial multiplied by a power of $s_3$, and
$g_2$ is a $*$-Markov polynomial multiplied by a power of $s_3$.

\end{cor}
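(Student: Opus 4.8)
The statement packages two claims: that the outer terms $f_1,f_3$ of the reduced presentation are each a $*$-Markov or a dual $*$-Markov polynomial, and the rigidity formula \eqref{any g} describing an arbitrary presentation $(g_1,g_2,g_3)\in\GI$ of $(a,b,c)$. The plan is to prove the first claim by induction along the Markov tree $\mc T$ using the recursion of Theorem \ref{thm 4.14}(ii), then to deduce \eqref{any g} from Theorem \ref{thm stat}, and finally to combine the two to read off the closing assertions.

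Before the induction I would record three bookkeeping facts, all immediate from \eqref{fg}, \eqref{barbar}, and Lemma \ref{lem degg}: for a polynomial $f$ of homogeneous degree $d$ not divisible by $s_3$ one has $f^*=s_3^{-d}\mu(f)$; the operation $\mu$ preserves non-divisibility by $s_3$ and squares to the identity; and consequently $\mu$ interchanges the classes of $*$-Markov and dual $*$-Markov polynomials. The induction then starts at the root $(3,6,3)$, whose reduced presentation $(s_1^*,s_1^2-s_2,s_1^*)$ has $f_1=f_3=s_1$; since $\mu(s_1)=s_2$ is a $*$-Markov polynomial by convention, $s_1$ is dual $*$-Markov. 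For the inductive step, Theorem \ref{thm 4.14}(ii) tells me that the two children $(a,ab-c,b)$ and $(b,bc-a,c)$ have reduced presentations whose outer polynomials are $\{\mu(f_1),f_2\}$ and $\{f_2,\mu(f_3)\}$. Here the parent's middle term $f_2$ is a $*$-Markov polynomial directly by definition (as $6\le b$), while $\mu(f_1)$ and $\mu(f_3)$ are $*$-Markov or dual $*$-Markov by the inductive hypothesis and the interchange property. Since iterating the left and right transformations from $(3,6,3)$ reaches every Markov triple with $6\le b$ (Theorem \ref{thmAig}), and every child has middle entry exceeding $6$ so the hypotheses persist, this closes the induction.

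To obtain \eqref{any g}, given any $(g_1,g_2,g_3)\in\GI$ with $\Evo(g_1,g_2,g_3)=(a,b,c)$, I would invoke Theorem \ref{thm stat}: both this triple and the reduced presentation $(f_1^*,f_2,f_3^*)$ lie in $\GI$ and evaluate to $(a,b,c)$, so they differ by some $w\in\langle G_1,G_2,G_4\rangle$. Writing $w=g_4g_1'g_2'$ by Corollary \ref{dec124} and applying $\Evo$, the $\phi_M$-equivariance of Lemma \ref{equivEvo} together with $\phi_M(G_4)=\{\mathrm{id}\}$ shows that a two-coordinate sign change composed with a permutation fixes the positive triple $(a,b,c)$. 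Positivity forces the sign change to be trivial ($g_1'=\mathrm{id}$) and uniqueness of the maximal entry forces the permutation to be trivial as well, so $w$ acts as some $\mu_{i,j}\in G_4$; substituting $f_1^*=s_3^{-d_1}\mu(f_1)$ and $f_3^*=s_3^{-d_3}\mu(f_3)$ then produces \eqref{any g}. The final sentence of the corollary is then immediate: $g_2$ is $f_2$ (a $*$-Markov polynomial) times a power of $s_3$, and each of $g_1,g_3$ is $\mu(f_1)$ or $\mu(f_3)$ (a $*$-Markov or dual $*$-Markov polynomial) times a power of $s_3$.

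The one place I expect to need genuine care is the single symmetric triple $(3,6,3)$, where the maximal entry is not unique: there the permutation fixing $(a,b,c)$ can be the transposition of the first and third coordinates rather than the identity. The point to verify is that this transposition fixes the symmetric presentation $(s_1^*,s_1^2-s_2,s_1^*)$, so that $w$ still reduces to a pure $G_4$-transformation and \eqref{any g} survives. A secondary point, to be kept straight throughout the induction, is that exactly one outer term of each child is literally the parent's middle term $f_2$ — hence $*$-Markov by fiat — while the other is the $\mu$-dual of a parent outer term.
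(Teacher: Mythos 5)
Your proposal is correct and takes essentially the same route as the paper: the paper's own proof is a one-line citation of Theorem~\ref{thm 4.14} (for the claim about $f_1,f_3$) and Theorem~\ref{thm stat} (for formula~\eqref{any g}), and your write-up—the induction along the tree via Theorem~\ref{thm 4.14}(ii) with $\mu$ interchanging the $*$-Markov and dual $*$-Markov classes, plus the $\langle G_1,G_2,G_4\rangle$ rigidity argument with positivity and distinctness of entries killing $G_1$ and $G_2$—is exactly what those citations compress. Your flagged subtlety at the symmetric triple $(3,6,3)$ is genuine and is resolved just as you indicate, by the invariance of $(s_1^*,s_1^2-s_2,s_1^*)$ under the transposition of its outer coordinates.
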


\begin{proof}
The first statement follows from Theorem \ref{thm 4.14} and the second statement follows from
Theorem \ref{thm stat}.
\end{proof}

\begin{rem}
Consider the three  versions of the $*$-Markov equation,
\bean
\label{1e}
&&
aa^*+bb^*+cc^*-abc = \frac{3s_1s_2-s_1^3}{s_3},
\\
\label{2e}
&&
aa^*+bb^*+cc^*-ab^*c = \frac{3s_1s_2-s_1^3}{s_3},
\\
\label{3e}
&&
aa^*+bb^*+cc^*-a^*bc^* = \frac{3s_1s_2-s_1^3}{s_3}.
\eean
The first of them is the $*$-Markov equation \eqref{ME}, the second  was considered in the introduction,
see \eqref{mme} and \cite{CV}. The third  is a new one. 
All of the equations are obtained  one from another
by an obvious change of variables.
For example, the third equation is obtained from the first  by the change $(a,b,c) \to (a^*,b,c^*)$.
That is, if $(f_1^*,f_2,f_3^*)$ is a solution of the $*$-Markov equation \eqref{1e}, then
$(f_1,f_2,f_3)$ is a solution of equation \eqref{3e}. 

Hence, by Theorem \ref{thm 4.14},
for any Markov triple $(a,b,c)$,  $0<a < b$, $0< c < b$, $6\leq b$, there exists a polynomial solution
$(f_1,f_2,f_3)$  of equation \eqref{3e},
 such that $\Evo(f_1,f_2,f_3) = (a,b,c)$.

\end{rem}

\section{Decorated planar binary trees}
\label{sec 7}

\subsection{Sets with involution and transformations}
\label{sec sets}

A {\it set with involution and transformations} is a set $S$   with an involution
$\tau:S\to S$, $\tau^2=\id_S$, a subset $T\subset S\times  S\times S$ with a marked point 
$t^0=(t^0_1,t^0_2,t^0_3)\in T$
and two maps
\bean
\label{lr def}
&&
L : T\to T, \qquad (t_1,t_2,t_3) \mapsto (\tau (t_1), L_2(t_1,t_2,t_3), t_2),
\\
\notag
&&
R:T\to T,\qquad
 (t_1,t_2,t_3) \mapsto (t_2, R_2(t_1,t_2,t_3), \tau(t_3)),
\eean
where $L_2, R_2 : T\to S$ are some functions.

\vsk.2>
A {\it morphism} $\phi : (S,T, t^0,\tau, L, R)\to (S',T',t^{0'}, \tau', L', R')$ is a map $S\to S'$, which
commutes with involutions and  induces a map
$(T, t^0)\to (T', t^{0'})$ commuting with transformations.

\vsk.2>
Here are  examples.

\subsubsection{}
\label{exa1}

Let $ S$ be the set of all polynomials in $\Z[s_1,s_2,s_3]$ not divisible by $s_3$
and $T\subset S^3$ the subset of all admissible triples. 
Let
\bean
\label{t01}
t^0 
&=&
(s_2,  s_2(s_1^2-s_2)-s_3s_1, s_1^2-s_2),
\\
\label{ex_1t}
\tau 
&:&
 S\to S, \qquad  f\mapsto \mu( f),
\\
\label{ex_1tL}
L 
&:&
T\to T,\qquad (f_1,f_2,f_3) \mapsto (\mu( f_1),  \mu( f_1)f_2 - s_3^{d_1}f_3, f_2),
\\
\label{ex_1tR}
R
&:&
T\to T,\qquad (f_1,f_2,f_3) \mapsto  (f_2,  f_2\mu( f_3) - s_3^{d_3}f_1, \mu( f_3)),
\eean
where $\mu$ is defined in Section \ref{sec def f}.

\subsubsection{}
\label{exa2}

Let $ S=\C^2$ 
and $T\subset \C^2\times  \C^2\times \C^2$ 
the subset of all triples of vectors $w_1,w_2,w_3$ such that $w_1+w_3=w_2$.
Let
\bean
\label{t02}
t^{0} 
&=&
((1,2),  (3,4), (2,2)),
\\
\label{ex_2t}
\tau 
&:&
 \C^2\to \C^2, \qquad  (d,q)\mapsto (d, 3d-q),
\\
L 
&:&
T\to T,\qquad (w_1,w_2,w_3) \mapsto (\tau(w_1),  \tau(w_1)+w_2, w_2),
\\
R
&:&
T\to T,\qquad (w_1,w_2,w_3) \mapsto (w_2,  w_2+\tau(w_3), \tau(w_3)).
\eean

\subsubsection{}
\label{exa2'}

Let $ S$ be the set $\on{Mat}(2,\C)$ of all $2\times 2$-matrices with complex entries
and $T\subset \on{Mat}(2,\C)^3$ 
the subset of all triples of matrices $M_1,M_2,M_3$ such that $M_1+M_3=M_2$.
Let
\bean
\label{t02'}
t^{0} 
&=&
\Big(
\begin{pmatrix}
 1   & 1   
 \\
 2  &  1
\end{pmatrix} ,
\
\begin{pmatrix}
 3   & 3   
 \\
 4  &  5
\end{pmatrix} ,
\
\begin{pmatrix}
 2   & 2   
 \\
 2  &  4
\end{pmatrix}\Big),
\\
\label{ex_2t'}
\tau 
&:&
  \on{Mat}(2,\C) \to  \on{Mat}(2,\C), \qquad  M\mapsto MP,
\\
L 
&:&
T\to T,\qquad 
(M_1,M_2,M_3) \mapsto (M_1P,  M_1P+M_2, M_2),
\\
R
&:&
T\to T,\qquad 
(M_1,M_2,M_3) \mapsto (M_2, M_2+ M_3P, M_3P).
\eean

\subsubsection{}
\label{exa2''}

Let $ S=\C$ 
and $T\subset \C^3$ 
the subset of all triples  $(w_1,w_2,w_3)$ such that $w_1+w_3=w_2$.
Let
\bean
\label{t02''}
t^{0} 
&=&
(1,-1,-2),
\\
\label{ex_2t''}
\tau 
&:&
\C\to \C,\qquad w\mapsto-w,
\\
\label{Ll}
L 
&:&
T\to T,\qquad 
(w_1,w_2,w_3)  \mapsto (-w_1,-w_1+w_2,w_2),
\\
\label{Rr}
R
&:&
T\to T,\qquad 
(w_1,w_2,w_3) \mapsto 
(w_2,w_2-w_3,-w_3) .
\eean

\subsubsection{}
\label{exa3}

Let $ S=\C$  and $T = \C^3$.
Let
\bean
\label{t03}
t^{0} 
&=&
(3,15,6),
\\
\label{ex_3t}
\tau 
&=&
\id_\C
\\
L
&:&
T\to T,\qquad (a,b,c) \mapsto (a,ab-c,b),
\\
R
&:&
T\to T,\qquad (a,b,c) \mapsto (b,bc-a,c).
\eean

\subsubsection{}
\label{exa4}

Let $ S=\C$  and $T = \C^3$.
Let
\bean
\label{t04}
t^{0} 
&=&
(1,3,2),
\\
\label{ex_4t}
\tau 
&=&
\id_\C
\\
L
&:&
T\to T,\qquad (a,b,c) \mapsto (a,a+b,b),
\\
R
&:&
T\to T,\qquad (a,b,c) \mapsto (b,b+c,c).
\eean

\subsubsection{De-quantization}
\label{exa5} Let $S$ be the set with involution and transformations
in Example \ref{exa1} and $S'$ the set with involution and transformations
in Example \ref{exa2}.
The map 
\bea
\phi : S\to S', \qquad f\mapsto (\deg(f), \on{Deg} (f)),
\eea
defines a morphism of the sets with involution and transformations.

\vsk.2>

We may think of  that $\phi : S\to S'$ is a {\it de-quantization} of the set $S$ with 
involution and transformations  as explained in Section \ref{1.4.3}. Namely,
Let $s_1= c_1e^{\alpha + \beta}$,  $s_2=c_2e^{\alpha + 2\beta}$,
$s_3=c_3e^{\alpha + 3\beta}$, 
where $\al, \beta$ are real parameters which tend to $ +\infty$ and
$c_1,c_2,c_3$ are fixed generic real numbers.   
If $f(s_1,s_2,s_3)$  is a quasi-homogeneous polynomial of bi-degree $(d,q)$, then
$\ln f(c_1e^{\alpha + \beta}, c_1e^{\alpha + \beta},c_1e^{\alpha + \beta})$
has {\it leading term}  $d\al + q\beta$ independent of the choice of $c_1,c_2,c_3$, which may be considered as a vector
$(d,q)$.

\vsk.2>
Taking the leading terms of all quasi-homogeneous polynomials in formulas of Example \ref{exa1}
 we obtain the 2-vectors in formulas of Example \ref{exa2}.
  For instance, the triple of leading terms
of the triple $(s_2,  s_2(s_1^2-s_2)-s_3s_1, s_1^2-s_2)$ is the triple
$(\al+2\beta, \, 3\al+4\beta,\, 2\al+2\beta)$, cf. \eqref{t01} and \eqref{t02}.

\subsubsection{}
\label{exa6}

Let $S$ be the set  with involution and transformations
 in Example \ref{exa1} and $S'$ the set
 with involution and transformations in Example \ref{exa2'}.
The map 
\bea
\phi : S\to S', \qquad f\mapsto M_f,
\eea
where $M_f$ see in \eqref{mat},
defines a morphism of the sets with involution and transformations.

\subsubsection{}
\label{exa6'}

Let $S$ be the set  with involution and transformations
 in Example \ref{exa2'} and $S'$ the set
 with involution and transformations in Example \ref{exa2''}.
The map 
\bea
\phi : S\to S', \qquad \begin{pmatrix}
 a_{11}   & a_{12}   
 \\
 a_{21}  &  a_{22}
\end{pmatrix} \mapsto a_{21}-a_{22},
\eea
defines a morphism of the sets with involution and transformations.

\subsubsection{}
\label{exa6''}

Let $S$ be the set  with involution and transformations
 in Example \ref{exa1} and $S'$ the set
 with involution and transformations in Example \ref{exa3}.
The map 
\bea
\phi : S\to S', \qquad f\mapsto \evo(f),
\eea
defines a morphism of the sets with involution and transformations.

\subsubsection{}
\label{exa7}
Let $S$ be the set  with involution and transformations
 in Example \ref{exa1} and $S'$ the set
 with involution and transformations in Example \ref{exa4}.
The map 
\bea
\phi : S\to S', \qquad f\mapsto \deg (f),
\eea
defines a morphism of the sets with involution and transformations.

\subsection{Planar binary tree}
Consider the {\it oriented binary planar tree}, growing from floor,
 and the domains of its complement, see Figure \ref{fig1&2}.
The boundary of any domain of the complement has a distinguished vertex 
with shortest number of steps to the root along the tree.

\vsk.2> 
There are two initial domains, which touch the floor. In Figure \ref{fig1&2} they are 
$D_1$ and $D_3$. The root of the tree is the distinguished
vertex of the two initial  domains.

\begin{figure}
\centering
\def\svgscale{.7}
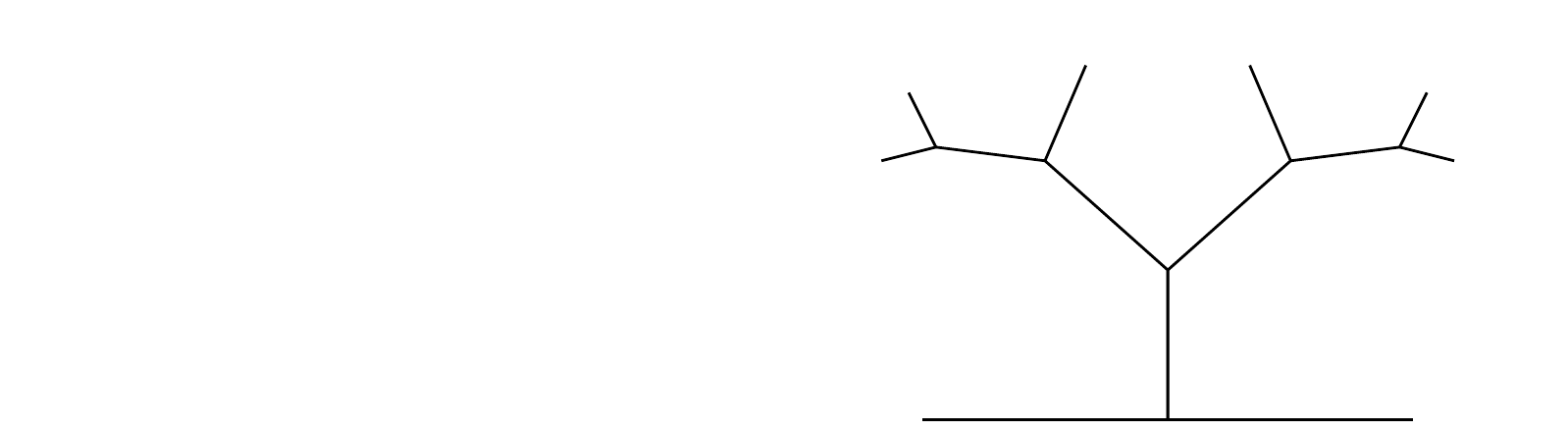
\caption{}
\label{fig1&2}
\end{figure}

\vsk.2>
The boundary  of the left initial domain  $D_1$  consists of the left half-floor
and the infinite sequence of edges $l_1, l_2, \dots$, see Figure \ref{fig1&2}.
In the notation
 $l_k$, the letter $l$ means that the domain $D_1$ is on the left from the edge,
 when we move from the root to this edge along the tree,
and  $k$
means that it is the $k$-th edge counted from the root of the tree.

The boundary  of the right initial domain  $D_3$  consists of the right half-floor
and the infinite sequence of edges $r_1, r_2, \dots$, see Figure \ref{fig1&2}.

 The boundary of any other domain  consists of two infinite sequences
of edges $r_1, r_2, \dots$ and $l_1, l_2, \dots$, see Figure \ref{fig3&4}.

\begin{figure}
\centering
\def\svgscale{1}
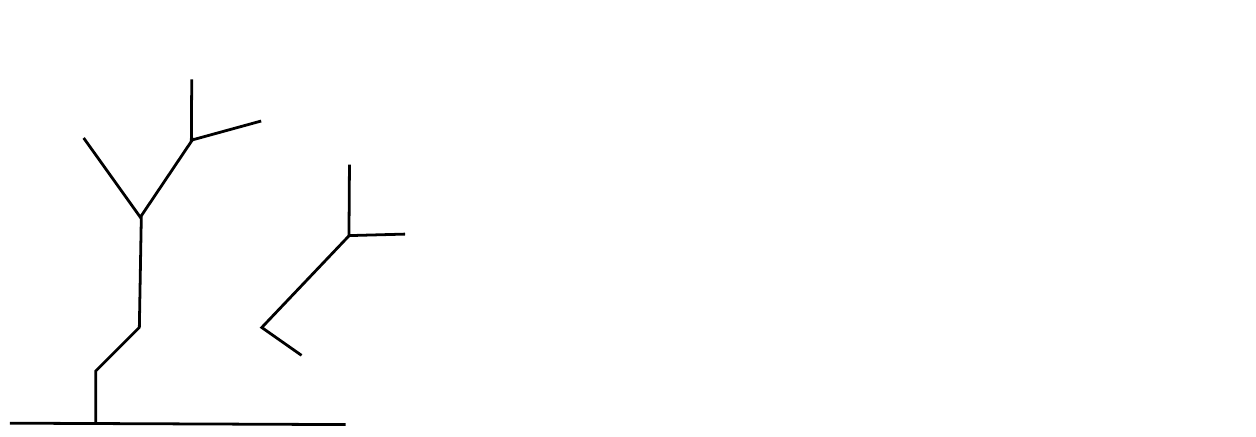
\caption{}
\label{fig3&4}
\end{figure}

 \vsk.2>
Every edge of the tree gets two labels, a label $l_a$ from the left and a label 
$r_b$ from the right. We denote such an edge with labels by $l_a|r_b$.
The first edge of the tree has labels $l_1|r_1$. All other edges of the tree have labels
\bea
l_1|r_k \qquad \on{or} \qquad l_k|r_1\qquad \on{with} \qquad k>1,
\eea
see Figure \ref{fig3&4}.

\subsection{Decorations} 

Let $(S,T, t^0,\tau, L, R)$ be a set with 
involution and transformations.  First we assign an element of the set $T$ to every vertex of the
planar binary tree different from the root vertex, and then assign an element of the 
set $S$ to every domain of the complement. Thus the decoration procedure consists of two step.

\vsk.2>
Denote by $v_1$ the vertex of the tree  surrounded by
the domains $D_1,D_2,D_3$ in Figure \ref{fig1&2}.
We assign  to the vertex $v_1$ the marked triple $t^0=(t^0_1, t^0_2, t^0_3)$. 

 Let $v_2$ be any other vertex of the tree different from the root.
Let $p$ be the path connecting $v_1$ and $v_2$ in the tree. The path is a sequence of turns
$p_np_{n-1}\dots p_2p_1$, where $p_j$ is the turn to the left or right on the way from $v_1$ to $v_2$.
We assign to $v_2$ the element $t\in T$ obtained from $t^0$ by the
 application of the sequence of transformations $L$ and $R$, where we apply 
$L$ if $p_j$ is the turn to the left and apply $R$ if $p_j$
is the turn  to the right. For example, the element $LRt^0$ is 
assigned to the vertex  $v_2$ in Figure \ref{fig5&6}.
\vsk.2>

This is the end of the first step of the decoration.

\begin{figure}
\centering 
\def\svgscale{1}
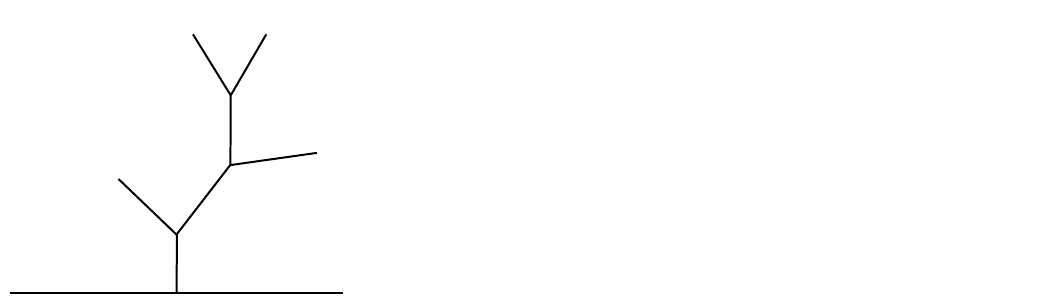
\caption{}
\label{fig5&6}
\end{figure}

\vsk.2>

At the second step we
 assign to the initial  domains $D_1, D_3$ in Figure \ref{fig1&2}  
 the elements $t^0_1,$ $ t^0_3$, respectively, where $t^0_1$, $t^0_3$
 are the first and third coordinates  of the initial triple $(t^0_1, t^0_2, t^0_3)$.

\vsk.2>
Let $C$ be any domain of the complement different from $D_1$, $D_3$.
Let $v$ be the distinguished vertex of the domain $C$,  and $t=(t_1,t_2,t_3)$ the element of $T$ assigned to $v$.
We assign to  $C$  the element $t_2$.

\vsk.2>
For example we assign the element $t^0_2$ to the domain $D_2$ in Figure \ref{fig1&2}.

\vsk.2>
This is the end of the decoration procedure.

\vsk.2>
The decoration associated with $(S,T, t^0,\tau, L, R)$ is functorial with respect to morphisms of sets with 
involution and transformations.

\vsk.2>
Let us describe how to recover the element of $T$ assigned to a vertex from the elements of $S$ assigned to the domains
of the complement.

\begin{thm}
\label{thm StoT}
Let $v$ be a vertex surrounded by domains $C_1,C_2,C_3$ as in Figure \ref{fig5&6}. Let $t_1,t_2,t_3$ be elements of $S$ assigned to
$C_1,C_2,C_3$, respectively, at the second step of the decoration. Let the edge entering the vertex $v$ has labels $l_a|r_b$. 
Then the element  $(\tau^{a-1}(t_1), t_2,\tau^{b-1}(t_3))$ is an element of the set\, $T$ and that element 
was assigned to $v$ at the first step of the decoration.

\end{thm}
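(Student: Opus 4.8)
The plan is to argue by induction on the number of edges in the path joining $v_1$ to $v$, that is, on the distance of $v$ from the root. For a vertex $w$ write $t^w=(t^w_1,t^w_2,t^w_3)\in T$ for the triple attached to $w$ at the first step. I would first observe that the \emph{middle} coordinate of the claim is automatic: by the second step of the decoration the top domain of any vertex $w$ (the domain whose distinguished vertex is $w$) receives precisely the middle entry $t^w_2$, so the assertion about the second coordinate $t_2$ holds by definition. The entire content therefore lies in the first and third coordinates, where the powers of $\tau$ enter. The base case $v=v_1$ is immediate: the incoming edge is the trunk $l_1\mid r_1$, so $a=b=1$ and $\tau^{a-1}=\tau^{b-1}=\id_S$, while $D_1,D_2,D_3$ carry $t^0_1,t^0_2,t^0_3$ and $v_1$ carries $t^0$.

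For the inductive step I would record two bookkeeping facts, read off from Figures \ref{fig1&2}--\ref{fig5&6}. \textbf{(a) Label rule.} If the edge entering $w$ is $l_{a'}\mid r_{b'}$, then the edge entering its left child is $l_{a'+1}\mid r_1$ and the edge entering its right child is $l_1\mid r_{b'+1}$. Indeed, the left-child edge is flanked on the left by the left domain of $w$, for which it is the next $l$-edge after the incoming edge of $w$, hence $l_{a'+1}$; and on the right by the top domain of $w$, for which it is the first $r$-edge, hence $r_1$. The right child is symmetric. \textbf{(b) Inheritance rule.} The left child of $w$ shares its left domain with $w$ and has the top domain of $w$ as its right domain; the right child has the top domain of $w$ as its left domain and shares its right domain with $w$.

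Granting (a) and (b), each step is a one-line check against \eqref{lr def}. Suppose the formula holds at $w$, with incoming label $l_{a'}\mid r_{b'}$ and left, top, right domains carrying the $S$-values $x_1,x_2,x_3$, so that $t^w=(\tau^{a'-1}x_1,\,x_2,\,\tau^{b'-1}x_3)$. For the left child $v=Lw$, definition \eqref{lr def} gives $t^v=\bigl(\tau(\tau^{a'-1}x_1),\,L_2(t^w),\,x_2\bigr)=\bigl(\tau^{a'}x_1,\,L_2(t^w),\,x_2\bigr)$. By (b) the left, top, and right domains of $v$ carry $x_1$, $L_2(t^w)$, $x_2$, and by (a) the incoming label of $v$ is $l_{a'+1}\mid r_1$, i.e.\ $(a,b)=(a'+1,1)$. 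Hence $t^v=\bigl(\tau^{a-1}x_1,\,L_2(t^w),\,\tau^{b-1}x_2\bigr)$, which is exactly $(\tau^{a-1}t_1,t_2,\tau^{b-1}t_3)$ for $v$, using $\tau^2=\id_S$ on the first coordinate and $b=1$ on the third. The right child is handled identically with $R$ in place of $L$ and $(a,b)=(1,b'+1)$. Finally, since this displayed triple equals the first-step label of $v$, it lies in $T$ automatically.

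The transformation computations are routine; the genuine heart of the proof, and the only delicate point, is the geometric bookkeeping in (a) and (b)—correctly matching the three domains surrounding a child to those surrounding its parent and incrementing the left/right edge counters without conflating the two conventions for $l$- and $r$-edges.
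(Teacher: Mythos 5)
Your proof is correct and follows exactly the paper's route: the paper's own proof is just the one-line "induction on the distance from $v$ to the root," and your argument is that same induction with the label/domain bookkeeping (your facts (a) and (b)) made explicit. The only nitpick is that invoking $\tau^2=\id_S$ in the left-child step is unnecessary, since $\tau(\tau^{a'-1}x_1)=\tau^{a'}x_1=\tau^{a-1}x_1$ directly from $a=a'+1$.
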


\begin{proof} 
The proof is by induction on the distance from $v$ to the root.
\end{proof}

\subsection{Examples}
\subsubsection{}

Let $(S,T, t^0,\tau, L, R)$ be the set of Example \ref{exa1}. Then the domains of the complement to the binary tree are labeled by
$*$-Markov polynomials. The resulting decorated tree is  called the {\it $*$-Markov polynomial tree}, see Figure \ref{fig8}.

\begin{figure}[H]
\centering
\def\svgscale{1}
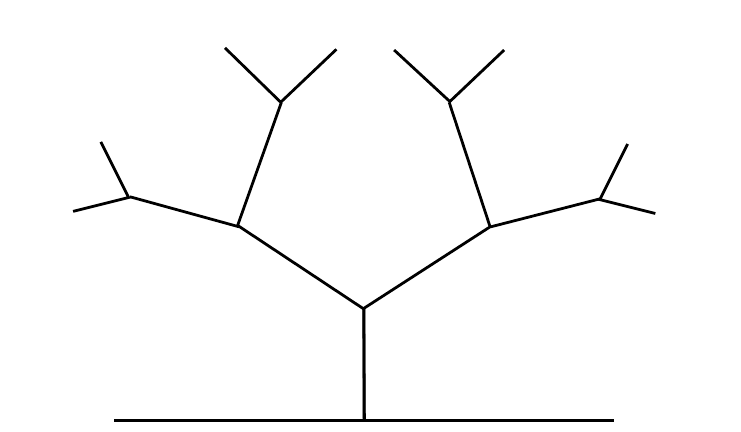
\caption{}
\label{fig8}
\end{figure}

The polynomials $A_i(\bm s)$ are given by the formulas
\begin{align*}
A_1(\bm s)&=s_2,\\
 A_2(\bm s)&=s_2 (s_1^2 - s_2) - s_1 s_3,\\
A_3(\bm s)&=s_1^2 - s_2,\\
 A_4(\bm s)&=s_1^3 s_2 - s_1 s_2^2 - 2 s_1^2 s_3 + s_2 s_3,\\
A_5(\bm s)&=s_1^2 s_2^3 - s_2^4 - s_1^3 s_2 s_3 + s_1^2 s_3^2 - s_2 s_3^2,\\
A_6(\bm s)&=s_1^3 s_2^2 - s_1 s_2^3 - 3 s_1^2 s_2 s_3 + 2 s_2^2 s_3 + s_1 s_3^2,\\
 A_7(\bm s)&=s_1^4 s_2^3 - 2 s_1^2 s_2^4 + s_2^5 - s_1^5 s_2 s_3 + s_1^3 s_2^2 s_3 + 
 s_1^4 s_3^2 - 3 s_1^2 s_2 s_3^2 + 2 s_2^2 s_3^2 + s_1 s_3^3,\\
A_8(\bm s)&=s_1^4 s_2^3 - s_1^2 s_2^4 - s_1^5 s_2 s_3 - 2 s_1^3 s_2^2 s_3 + 2 s_1 s_2^3 s_3 + 
 2 s_1^4 s_3^2 + s_1^2 s_2 s_3^2 - s_2^2 s_3^2 - s_1 s_3^3,\\
     A_9(\bm s)&=s_1^3 s_2^5 - s_1 s_2^6 - 2 s_1^4 s_2^3 s_3 + s_2^5 s_3 + s_1^5 s_2 s_3^2 + 
 2 s_1^3 s_2^2 s_3^2 - s_1 s_2^3 s_3^2 - s_1^4 s_3^3 + s_1 s_3^4.
\end{align*}

Let $v$ be any vertex. It enters the boundary of three domains, which we denote by
 $C_1,C_2,C_3$ as in Figure \ref{fig5&6}. Let $f_1,f_2,f_3$ be 
the $*$-Markov polynomials, assigned to the domains
$C_1,C_2,C_3$, respectively at the second step of the decoration. Let the edge entering the vertex $v$ have labels $l_a|r_b$. 
Then the triple of polynomials  $(\tau^{a-1}(f_1), f_2,\tau^{b-1}(f_3))$ is  assigned to $v$ at the first step
 of decoration, and the triple
of polynomials
\bea
((\tau^{a-1}(f_1))^*, f_2,(\tau^{b-1}(f_3))^*)
\eea
is a reduced polynomial solution of the $*$-Markov equation \eqref{ME}.

\subsubsection{}
 Let $(S,T, t^0,\tau, L, R)$ be the set of Example \ref{exa2}. Then the domains of 
 the complement to the binary tree are labeled by
2-vectors with positive integer coordinates. The resulting decorated tree is called the {\it 2-vector  tree}, see Figure \ref{tree1}.

\subsubsection{}
 Let $(S,T, t^0,\tau, L, R)$ be the set of Example \ref{exa2'}. Then the domains of 
 the complement to the binary tree are labeled by
$2\times 2$-matrices with positive integer coordinates. The resulting decorated tree is called the {\it matrix  tree}, see Figure \ref{matree}.

\subsubsection{}
 Let $(S,T, t^0,\tau, L, R)$ be the set of Example \ref{exa2''}. Then the domains of 
 the complement to the binary tree are labeled by integers. 
 The resulting decorated tree is called the {\it deviation  tree}, see Figure \ref{tree1}.

\subsubsection{}
 Let $(S,T, t^0,\tau, L, R)$ be the set of Example \ref{exa3}. Then the domains of 
 the complement to the binary tree are labeled by Markov numbers. The resulting decorated tree is  called
 the {\it Markov tree}, see the left picture in Figure \ref{tree2}.

 \subsubsection{}
 Let $(S,T, t^0,\tau, L, R)$ be the set of Example \ref{exa4}. Then the domains of 
 the complement to the binary tree are labeled by positive integers. 
 The resulting decorated tree is called the {\it Euclid tree}, see the right picture in Figure \ref{tree2}.

\begin{figure}
\centering
\def\svgscale{1}
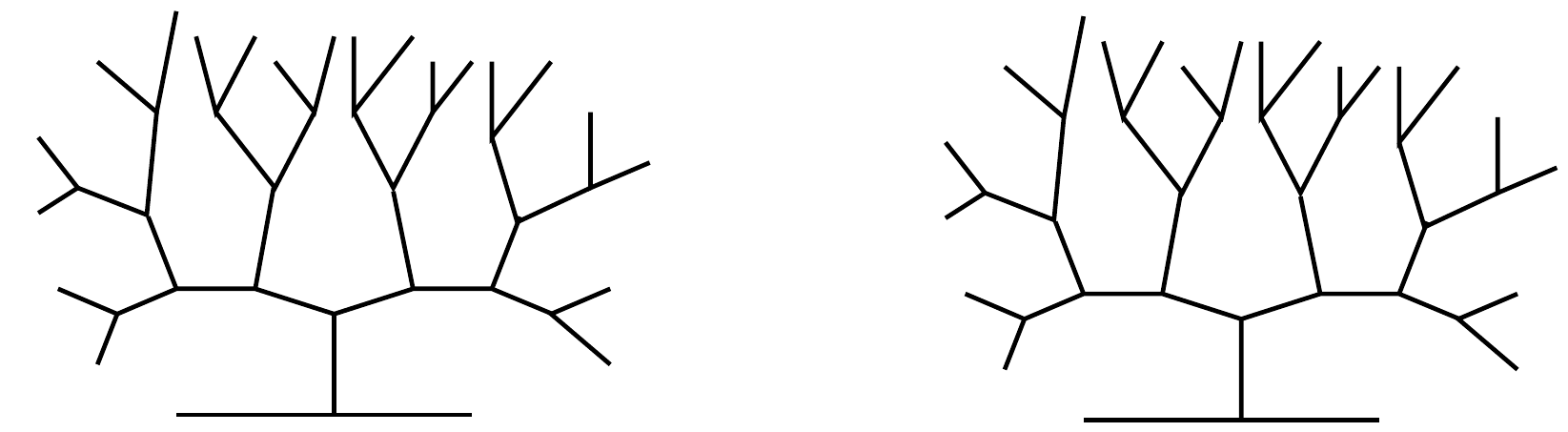
\caption{}
\label{tree1}
\end{figure}

\begin{figure}
\centering
\def\svgscale{.8}
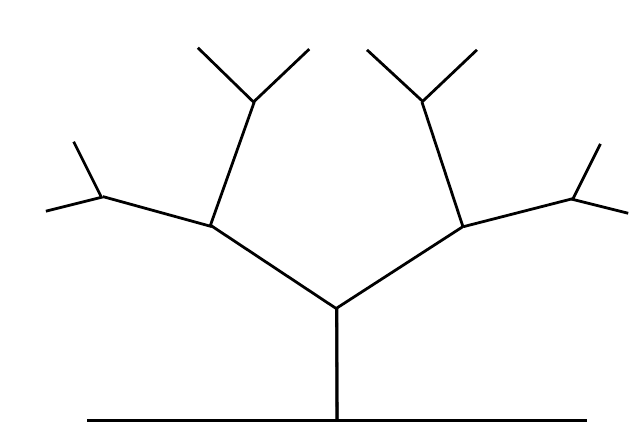
\caption{}
\label{matree}
\end{figure}

\begin{figure}
\centering
\def\svgscale{1}
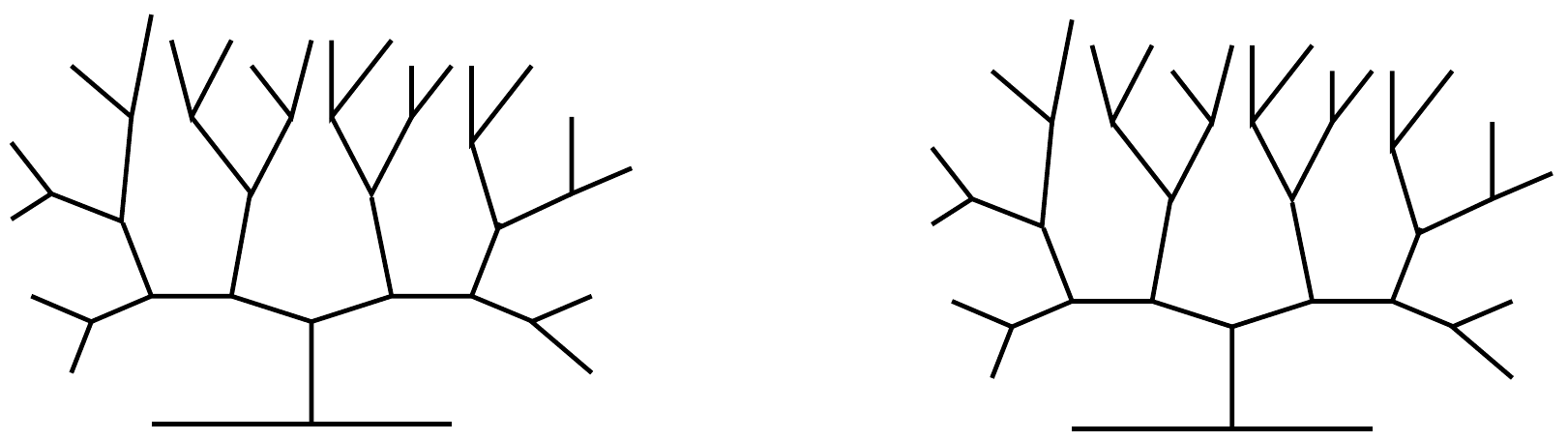
\caption{}
\label{tree2}
\end{figure}

\vsk.2>
 The decorated trees in Figures \ref{tree1}-\ref{tree2} can be obtained 
 from the $*$-Markov polynomial tree in Figure \ref{fig8}. Namely the 2-vector tree is obtained by taking
  the bi-degree vectors of $*$-Markov polynomials;
   the matrix tree is obtained by taking
  the degree matrices  of $*$-Markov polynomials;
   the deviation tree is obtained by assigning to a $*$-Markov polynomial with bi-degree $(d,q)$
     the number 
\bea
q-(3d-q)=2q-3d\,;
\eea
     the Markov tree is obtained by applying the evaluation map $\evo$; 
   the Euclid tree is obtained by taking the homogeneous degrees of  $*$-Markov polynomials.
 
 \subsection{Do asymptotics exist?} 
 \label{sec OP}

 Having a decorated tree it  would 
be interesting to study asymptotics of the triples assigned to vertices along the 
infinite paths in the tree going from  root to infinity. 
In \cite{SoV19, SpV17, SpV18} the Markov and Euclid trees were considered. For any such a
path the Lyapunov exponent was defined. The Lyapunov function on the space of paths was studied.
Relations with hyperbolic dynamics were established.

\vsk.2>
The interrelations of the triples assigned to vertices of the Markov and Euclid  trees 
were analyzed in \cite{Za82} to study the growth of Markov numbers 
ordered in the increasing order. More precisely, if $(u,v,w)$ is a Euclid triple with $u+w=v$,
then the triple
\bean
\label{cosh f}
a=2\on{cosh}\,u,
\qquad 
b=2\on{cosh}\,v,
\qquad
c=2\on{cosh}\,w
\eean
is a solution of the modification of the Markov equation 
\bean
\label{mMS}
a^2+b^2+c^2-abc = 4,
\eean
considered by Mordell \cite{Mo53}. This observation was used in \cite{Za82} to evaluate 
asymptotics of Markov numbers in terms of asymptotics of Euclid numbers, see \cite{SpV17}.

\vsk.2>
Combining these remarks we observe a full circle of relations. We started 
with Markov triples and upgraded them to triples of $*$-Markov polynomials;
taking the homogeneous degrees of  $*$-Markov polynomials we obtained the Euclid triples;
formulas \eqref{cosh f} send us to triples solving  the modified Markov
equation \eqref{mMS}; and the triples solving  equation \eqref{mMS}
approximate the true Markov triples. 
This circle of relations is a combination of ``quantizations'' and ``de-quatizations''.

\subsection{Values of $2q-3d$}

\begin{thm}
\label{thm rebi}
Let  $P$ be a $*$-Markov polynomial of bi-degree $(d,q)$. Then
$|2q-3d|=1$ if $d$ is odd and $|2q-3d|=2$ if $d$ is even. Moreover,
 the  only triples of integers attached to vertices of the deviation tree are the elements of the set
\bea
T^0=\big\{(1,-1,-2), \ (-1,1,2), \ (-2,-1,1),\ (2,1,-1),
\
(1,2,1), \ (-1,-2,-1)\big\}\,.
\eea

\end{thm}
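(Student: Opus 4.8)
Both assertions are really statements about the deviation tree of Example~\ref{exa2''}. Indeed, the number $2q-3d$ attached to a quasi-homogeneous polynomial $f$ of bi-degree $(d,q)$ is the image of $f$ under the composite of the morphisms of Examples~\ref{exa6} and~\ref{exa6'}, namely the map $\phi\colon f\mapsto 2\,{\rm Deg}(f)-3\deg(f)$ that carries a $*$-Markov polynomial to the corresponding label of the deviation tree. My plan is to reduce the bi-degree claim to showing $2q-3d\in\{-2,-1,1,2\}$ by a parity remark, then to identify the values $2q-3d$ with domain labels of the deviation tree, and finally to finish by a short orbit computation. The parity remark is immediate: since $2q\equiv 0$ and $3d\equiv d \pmod 2$, we have $2q-3d\equiv d\pmod 2$; hence, once $|2q-3d|\in\{1,2\}$ is known, the value $1$ forces $d$ odd and the value $2$ forces $d$ even, which is exactly the claimed dichotomy.

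Next I would verify that $\phi$ is a morphism of sets with involution and transformations and then transport the tree decoration along it. By Lemma~\ref{lem degg}, $\mu(f)$ has bi-degree $(d,3d-q)$, so $\phi(\mu(f))=2(3d-q)-3d=-(2q-3d)=-\phi(f)$, matching $\tau\colon w\mapsto-w$ in Example~\ref{exa2''}; that $\phi$ intertwines $L,R$ follows from the degree-matrix rule of Corollary~\ref{cor 2x_2} (the entry $a_{21}-a_{22}$ of $M_f$ equals $2q-3d$). Finally $\phi$ sends the marked triple $(s_2,\;s_2(s_1^2-s_2)-s_1s_3,\;s_1^2-s_2)$ to $(1,-1,-2)=t^0$. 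By functoriality of the decoration, the deviation tree is the $\phi$-image of the $*$-Markov polynomial tree, so each domain label of the former is $\phi$ of the corresponding $*$-Markov polynomial. By Theorem~\ref{thm 4.14} and Corollary~\ref{cor coord}, every $*$-Markov polynomial occurs as such a domain label (with $s_2$ and $s_1^2-s_2$ labelling the two floor domains $D_1,D_3$), so $2q-3d$ is one of the domain labels of the deviation tree.

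The core of the proof is the orbit of $t^0=(1,-1,-2)$ under $L\colon(w_1,w_2,w_3)\mapsto(-w_1,-w_1+w_2,w_2)$ and $R\colon(w_1,w_2,w_3)\mapsto(w_2,w_2-w_3,-w_3)$, which I would compute directly. Applying $L$ and $R$ repeatedly produces the six triples
\[
(1,-1,-2),\ (-1,-2,-1),\ (-1,1,2),\ (-2,-1,1),\ (1,2,1),\ (2,1,-1),
\]
and evaluating the twelve images of these six triples shows the set is closed under $L$ and $R$. Hence the orbit of $t^0$ is exactly $T^0$, which proves the ``Moreover'' statement. The middle coordinates of these triples form the set $\{-2,-1,1,2\}$, and the two floor labels are $t^0_1=1$ and $t^0_3=-2$; therefore every domain label, and so every value $2q-3d$ of a $*$-Markov polynomial, lies in $\{-2,-1,1,2\}$. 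With the parity remark this gives $|2q-3d|=1$ for odd $d$ and $|2q-3d|=2$ for even $d$.

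The computation itself is routine and self-closing; the part that needs care---rather than a deep obstacle---is the bookkeeping in the middle step: confirming that the abstract class of $*$-Markov polynomials coincides with the domain labels of the tree, in particular that the two base triples $(3,6,3)$ and $(3,15,6)$, whose middle terms $s_1^2-s_2$ and $s_2(s_1^2-s_2)-s_1s_3$ seed the tree, are handled correctly. Once $\phi$ and this identification are in place, the orbit computation together with the one-line parity observation complete the proof.
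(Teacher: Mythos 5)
Your proposal is correct and follows essentially the same route as the paper: the paper's proof consists precisely of checking that the initial triple $t^0=(1,-1,-2)$ satisfies the claim and that the set $T^0$ is closed under the $L$ and $R$ transformations \eqref{Ll}, \eqref{Rr}. Your additional bookkeeping (the morphism $f\mapsto 2\,{\rm Deg}(f)-3\deg f$, functoriality of the decoration, and the parity observation $2q-3d\equiv d \pmod 2$) just makes explicit the reduction that the paper leaves implicit via Section \ref{sec 7}.
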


\proof 
The statement is true for the triple $t^0=(1,-1,-2)$ assigned to the first vertex  of the deviation tree,
see \eqref{t02''}.
It is easy to check that the set
$T^0$ is preserved by the $L$ and $R$ transformations in formulas \eqref{Ll} and \eqref{Rr}.
This proves the theorem.
\endproof

\begin{cor}
\label{cor ldq}
We have
\bean
\label{ldq}
 q = \frac{3d}2 + \mc O (1)\quad\on{as}\quad d\to\infty.
\eean
\qed
\end{cor}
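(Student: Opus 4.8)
The plan is to read off the estimate directly from Theorem \ref{thm rebi}, which has already pinned down the quantity $2q-3d$ to a finite set of values. First I would invoke that theorem: for a $*$-Markov polynomial of bi-degree $(d,q)$ we have $|2q-3d|=1$ when $d$ is odd and $|2q-3d|=2$ when $d$ is even. In either case this gives the uniform bound $|2q-3d|\le 2$, valid for all bi-degrees $(d,q)$ occurring, with no dependence on the size of $d$.

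The next step is purely arithmetic. Dividing the bound by $2$ yields
\bea
\Big|\>q-\frac{3d}{2}\>\Big| = \frac{|2q-3d|}{2} \le 1 ,
\eea
so the deviation of $q$ from $\frac{3d}{2}$ is bounded by the absolute constant $1$, uniformly in $d$. This is exactly the assertion that $q=\frac{3d}{2}+\mc O(1)$ as $d\to\infty$, since the implied constant may be taken to be $1$.

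I do not anticipate any genuine obstacle here: the content of the corollary is entirely supplied by Theorem \ref{thm rebi}, and the only work is to observe that the exact congruence-type identity $|2q-3d|\in\{1,2\}$ immediately implies the coarser boundedness statement. If one wished to cover dual $*$-Markov polynomials as well, the same argument applies verbatim, since Theorem \ref{thm bideg} records the identical bound $|2q-3d|\le 2$ in that case; the $\mc O(1)$ constant remains $1$ throughout.
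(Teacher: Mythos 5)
Your proof is correct and is exactly the paper's argument: the corollary is stated with an immediate \qed precisely because it follows from Theorem \ref{thm rebi} by dividing the uniform bound $|2q-3d|\le 2$ by two, which is what you do.
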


\subsection{Newton polygons}
\label{sec NPP}

Let $P$ be a $*$-Markov polynomial  of bi-degree $(d,q)$.
Let $N_P$ be the {\it Newton} polytope of $P$.
Recall that for  each monomial $s_1^{a_1}s_2^{a_2}s_3^{a_3}$,
 entering $P$ with nonzero coefficient, we
mark the point $(a_1,a_2,a_3)\in \R^3$, and  the Newton polytope is the convex hull of  marked points.

\vsk.2>
Since $P$ is a quasi-homogeneous polynomial of degree $q$, the Newton polytope is  a two-dimensional
convex polygon, lying inside  the {\it bounding polygon} $N_{d,q}$\,,
\bean
\label{qua}
N_{d,q} = \{ (a_1,a_2,a_3)\in \R^3\ |\ a_1+2a_2+3a_3=q; \  0 \leq a_1,a_2,a_3\leq d\}.
\eean
We divide all coordinates by $d$ and obtain the {\it normalized Newton} polygon $\bar N_P$ inside
the {\it normalized bounding} polygon  $\bar N_{d,q}$,
\bean
\label{nqua}
\bar N_{d,q} = \{ (a_1,a_2,a_3)\in \R^3\ |\ a_1+2a_2+3a_3=q/d; \  0 \leq a_1,a_2,a_3\leq 1\}.
\eean
It is convenient to project  the  polygons $\bar N_P$ and
 $\bar N_{d,q}$ along  the $a_3$-axis to $\R^2$ with coordinates $a_1,a_2$
and obtain the {\it projected normalized Newton polygon} $\tilde N_P$ inside the
{\it projected normalized bounding polygon} $\tilde N_{d,q}$.

\subsection{Limit $d\to \infty$}
\label{sec dti}

The Euclid tree shows the distribution of the
homogeneous degrees of $*$-Markov polynomials.
The homogeneous degree $d$ tends to infinity along the paths of the planar binary tree 
from  root to infinity.
Along these paths we have $q\to 3d/2$. 
In this limit the 
 normalized bounding polygon $\bar N_{d,q}$ turns into the {\it  quadrilateral} $\bar N_\infty$,
\bean
\label{iqua}
\bar N_\infty = \{ (a_1,a_2,a_3)\in \R^3\ |\ a_1+2a_2+3a_3=3/2; \  0 \leq a_1,a_2,a_3\leq 1\},
\eean
and the projected normalized bounding polygon $\tilde N_{d,q}$ turns into the {\it projected quadrilateral }
$\tilde N_\infty$,  the convex quadrilateral with vertices
$(0,0), (3/4,0), (1/2, 1/2), (0,3/4)$. See the pictures 
of $\bar N_\infty$ and $\tilde N_\infty$ in Figure \ref{quad}.
\begin{figure}
\centering
\def\svgscale{1}
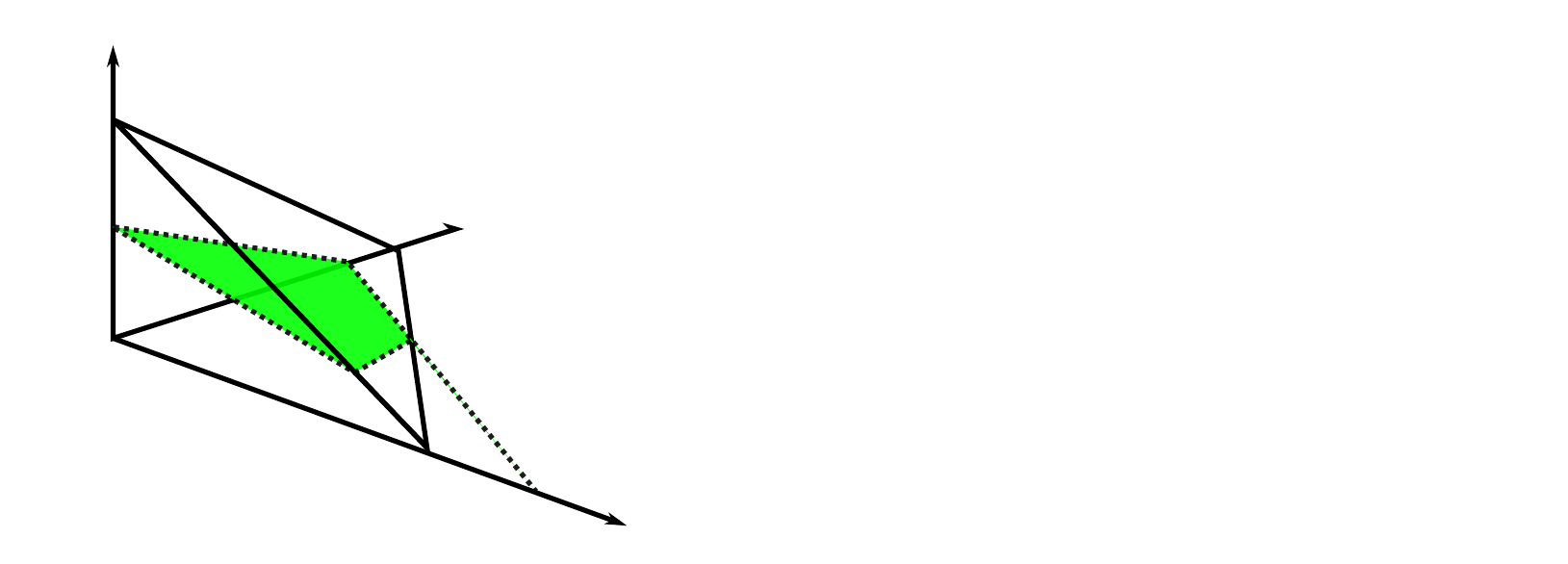
\caption{}
\label{quad}
\end{figure}

\medskip
\noindent
{\bf Question.}  Could it be that for any infinite path  from root to infinity, the projected normalized 
Newton polygon $\tilde N_P$ tends in an appropriate sense to a limiting shape inside the projected quadrilateral $\tilde N_\infty$?
\medskip

We show that this is indeed  so in the two examples of the left and right paths of the planar binary tree, 
which are related to the $*$-Fibonacci and  $*$-Pell polynomials discussed in Sections \ref{sec Fib} and \ref{sec Pell}.
In the first case the limiting shape is the interval with vertices $(0,0)$, $(1/2,1/2)$, see Section \ref{sec NP}.
 In the second case the limiting shape is 
the whole  projected quadrilateral $\tilde N_\infty$, see Section \ref{sec NPe}.

\subsubsection{}  Any $*$-Markov polynomial $P$ has a monomial  
of the form $s_1s_3^{a_3}$ or $s_2s_3^{a_3}$ entering $P$ with a nonzero coefficient and has no monomials of the form
$s_3^{a_3}$. 
This  easily follows by induction. Hence for any infinite path from root to infinity the point $(0,0)$ is a limiting point of the 
  projected normalized  Newton polygon $\tilde N_P$.  

\subsubsection{}
Elementary computer experiments show that the expected limiting shape of the polygon $\tilde N_P$
 along an infinite path is a 6-gon   like in Figure \ref{quad2},  with width monotonically increasing from 0,
 for the $*$-Fibonacci polynomials, to the maximal value, for the $*$-Pell polynomials, when the path
 changes from the leftmost 
 to the rightmost. This $6$-gon is  symmetric with respect to the diagonal
 $a_1=a_2$, and hence its width completely determines the 6-gon. It looks like the speed of convergence to the limiting shape
 increases if the path has many changes of direction from left to right and back.

\begin{figure}[H]
\centering
\def\svgscale{1}
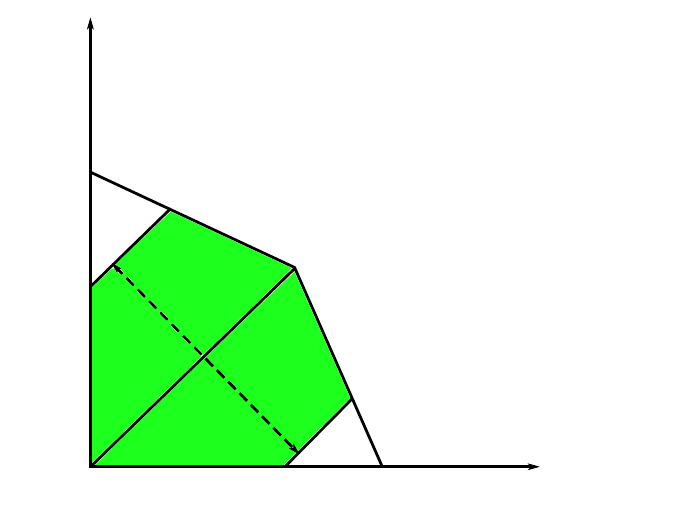
\caption{}
\label{quad2}
\end{figure}

\subsection{Planar binary tree decorated by convex sets}
\label{sec CS}

The study of the limiting shapes of Newton polygons is closely
related to the following decorated planar binary tree.
\vsk.2>

\subsubsection{}
Consider $\R^2$ with coordinates $a_1,a_2$. For a subset $A\subset \R^2$ we denote by 
$\on{conv}[A]$ the convex hull of $A$. We denote by $\mu(A)$ the subset  $A$  reflected with respect to the diagonal
$a_1=a_2$. For $d\in\R_{>0}$, we denote 
\bea
dA = \{ dv\ |\ v\in A\}.
\eea
For subsets $A, B\subset \R^2$ we denote by $A+B$ the Minkowski sum,
\bea
A+B = \{a +b\ |\ a\in A,\ b\in B\}.
\eea

\subsubsection{}

Define a set $(S,T, t^0,\tau, L, R)$ with involution and transformations.  Let $S$ be the set of all pairs
$(A,d)$, where $A$ is a convex subset of $\R^2$
 and $d$ a positive number.
Define the involution $\tau$ by the formula
\bea
\tau : S \to S, \qquad (A, d) \mapsto (\mu(A), d).
\eea
Let $T \subset S^3$ be the subset of all triples
$((A_1,d_1), (A_2,d_2), (A_3,d_3))$ such that $d_2=d_1+d_3$.
We fix the initial triple 
\bean
\label{t0co}
t^0=((A_1^0,1), (A_2^0,3), (A_3^0, 2))\in T,
\eean
where  $A^0_1$ is the point $(0,1)$, $A^0_3$ is the interval with vertices $(1,0), (0,1/2)$,
and $A^0_2$ is the triangle  with vertices $(1/3,0),(2/3,1/3),(0,2/3)$.

Define the left and right transformations by the formulas
\bean
\label{ex_1tLc}
L 
&:&
T\to T,\ ((A_1,d_1), (A_2, d_2), (A_3, d_3)) \mapsto ((\mu( A_1), d_1), (L_2, d_1+d_2), (A_2,d_2)),
\phantom{aaa}
\\
\label{ex_1tRc}
R
&:&
T\to T,\ ((A_1,d_1), (A_2,d_2), (A_3,d_3)) \mapsto  ((A_2,d_2),  (R_2, d_2+d_3), (\mu( A_3),d_3)),
\phantom{aaa}
\eean
where
\bean
\label{LRc}
L_2 &=& \on{conv}\left[\left(\frac {d_1}{d_1+d_2}\mu(A_1) + \frac {d_2}{d_1+d_2}A_2\right)\cup 
\frac {d_3}{d_1+d_2}A_3   \right],
\\
\notag
R_2 &=& \on{conv}\left[\left(\frac {d_2}{d_2+d_3}A_2+\frac {d_3}{d_2+d_3}\mu(A_3)\right)\cup
 \frac {d_1}{d_2+d_3}A_1   \right].
\eean
Cf. \eqref{ex_1tL} and \eqref{ex_1tR}.

\vsk.2>
Having this set with involution and  transformations we may consider the associated decorated planar binary tree.
The problem is to study the asymptotics of  triples of convex subsets of 
$\R^2$ 
along the paths of the tree. Such asymptotics reflect the asymptotics of the
Newton polygons of the  $*$-Markov polynomials.

\begin{rem}
The triple of convex sets in \eqref{t0co} are  the projected normalized Newton polygons of the triple
$(s_2,  s_2(s_1^2-s_2)-s_3s_1, s_1^2-s_2)$. 

The $L$ and $R$ transformations in \eqref{ex_1tLc} and \eqref{ex_1tRc} are just the reformulations  of the 
$L$ and $R$ transformations of polynomials in \eqref{ex_1tL} and \eqref{ex_1tR} in the language 
of the their projected normalized  Newton polygons.

\end{rem}

\section{Odd $*$-Fibonacci Polynomials}
\label{sec Fib}

\subsection{Definition of odd $*$-Fibonacci polynomials} 

The left boundary path of the Markov tree
corresponds to the sequence of Markov triples 
$(3, 15, 6), (3,39,15), (3, 102, 19)$, \dots\,,  with general term
$(3, 3\phi_{2n+1}, 3\phi_{2n-1})$,
where $\phi_{2n+1}, \phi_{2n-1}$ are odd Fibonacci numbers, 
 \bea
\phi_1=1,\quad \phi_3=2,\quad \phi_5=5, \quad \phi_7=13,\quad \phi_9 = 34,\quad
 \dots \quad,
\eea
with  recurrence relation
\bean
\label{reF}
\phi_{2n+3}= 3 \phi_{2n+1} - \phi_{2n-1}.
\eean
We define the {\it odd $*$-Fibonacci polynomials}  recursively by the formula
\bean
\label{recF}
F_1(\bs s) 
&=&
s_1, \quad \quad \
F_3(\bs s)=s_1^2-s_2,
\\
\label{ORR}
 F_{2n+3}(\bs s) 
 &=&
  g_n F_{2n+1}(\bs s) - s_3 F_{2n-1}(\bs s),
\eean
where $g_n=s_2$ if $n$ is odd, and $g_n=s_1$ if $n$ is even. In other words we have
\bean
\label{RecR}
&&
F_{4n+3} = s_1 F_{4n+1} - s_3 F_{4n-1}\,,
\\
\notag
&&
F_{4n+5} = s_2 F_{4n+3} - s_3 F_{4n+1}\,.
\eean
\begin{lem}
We have  $\evo(F_{2n+1}) = 3\phi_{2n+1}$.
\qed
\end{lem}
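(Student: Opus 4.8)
The plan is to argue by induction on $n$, exploiting that $\evo\colon\Zs\to\Z$ is a ring homomorphism, so it commutes with the polynomial operations appearing in the recurrence \eqref{ORR}. First I would record the three evaluations $\evo(s_1)=3$, $\evo(s_2)=3$, and $\evo(s_3)=1$, all immediate from $\bm s_o=(3,3,1)$. The crucial consequence is that $\evo(g_n)=3$ for \emph{every} $n$, independently of the parity that decides whether $g_n=s_1$ or $g_n=s_2$; this is precisely what makes the recurrence collapse to a single numerical recurrence after evaluation.

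For the base of the induction I would check, using \eqref{recF} together with $\phi_1=1$ and $\phi_3=2$, that $\evo(F_1)=\evo(s_1)=3=3\phi_1$ and $\evo(F_3)=\evo(s_1^2-s_2)=9-3=6=3\phi_3$. For the inductive step, assuming $\evo(F_{2n+1})=3\phi_{2n+1}$ and $\evo(F_{2n-1})=3\phi_{2n-1}$, I would apply $\evo$ to \eqref{ORR} to obtain
\[
\evo(F_{2n+3})=\evo(g_n)\,\evo(F_{2n+1})-\evo(s_3)\,\evo(F_{2n-1})
=3\cdot 3\phi_{2n+1}-3\phi_{2n-1}.
\]
Factoring out the $3$ and invoking the Fibonacci recurrence \eqref{reF}, namely $\phi_{2n+3}=3\phi_{2n+1}-\phi_{2n-1}$, this equals $3(3\phi_{2n+1}-\phi_{2n-1})=3\phi_{2n+3}$, which closes the induction.

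There is no genuine obstacle here: once one observes that both candidate values of $g_n$ evaluate to $3$ and that $s_3$ evaluates to $1$, the claim is a one-line consequence of matching the evaluated recurrence against \eqref{reF}. The only point deserving a moment's care is confirming that the parity-dependent alternation $g_n\in\{s_1,s_2\}$ is invisible to $\evo$, so that after evaluation one is simply solving the uniform scalar recurrence $x_{n+1}=3x_n-x_{n-1}$ with the common factor $3$ carried along.
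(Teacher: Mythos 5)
Your proof is correct and is exactly the argument the paper intends: the lemma is stated with no proof (marked immediate), and the natural justification is precisely your induction, using that $\evo$ is a ring homomorphism with $\evo(s_1)=\evo(s_2)=3$, $\evo(s_3)=1$, so the recurrence \eqref{ORR} evaluates to the Fibonacci recurrence \eqref{reF}. The base cases $F_1$, $F_3$ and the parity-insensitivity of $\evo(g_n)$ are handled properly, so nothing is missing.
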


The first odd $*$-Fibonacci polynomials are
\begin{align*}
F_1(\bs s)&=s_1,\\
F_3(\bs s)&=s_1^2-s_2,\\
F_5(\bs s)&=s_1^2s_2 - s_1s_3-s_2^2,\\
F_7(\bs s)&= s_1^3s_2-2  s_1^2s_3- s_1s_2^2+s_2 s_3,\\
F_9(\bs s)&=s_1^3s_2^2 -3 s_1^2s_2 s_3 -s_1s_2^3 + s_1s_3^2+2 s_2^2 s_3,\\
F_{11}(\bs s)&=s_1^4s_2^2 -4  s_1^3s_2 s_3-s_1^2s_2^3 +3  s_1^2s_3^2+3  s_1s_2^2 s_3-s_2 s_3^2,\\
F_{13}(\bs s)&=s_1^4s_2^3 -5 s_1^3s_2^2 s_3 -s_1^2s_2^4 +6 s_1^2s_2 s_3^2 -s_1s_3^3 +4 s_1s_2^3 s_3 -3 s_2^2 s_3^2\,.
\end{align*}

\begin{thm}
\label{thm Fib} 

For $n>1$ the triple
\bean
\label{Fib 3}
(g_{n-1}^*, F_{2n+1}, F_{2n-1}^*)
\eean
is the reduced polynomial presentation of the Markov triple 
$(3, 3\phi_{2n+1}, 3\phi_{2n-1})$.

\end{thm}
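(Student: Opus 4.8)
The plan is to induct on $n$, using the recursive description of reduced polynomial presentations along the left branch of the Markov tree furnished by Theorem~\ref{thm 4.14}(ii). The underlying combinatorial fact is that the left branch consists precisely of the Markov triples obtained by iterating the left move $L\colon(a,b,c)\mapsto(a,ab-c,b)$, and on the Fibonacci triples this move reads
$$(3,3\phi_{2n+1},3\phi_{2n-1})\ \longmapsto\ \bigl(3,\ 3(3\phi_{2n+1}-\phi_{2n-1}),\ 3\phi_{2n+1}\bigr)=(3,3\phi_{2n+3},3\phi_{2n+1})$$
by the Fibonacci recurrence \eqref{reF}. Thus applying $L$ advances one rung up the $*$-Fibonacci ladder, and the whole argument amounts to checking that the induced transformation of presentations reproduces the defining recursion \eqref{ORR}.

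For the base case I would take the triple $(3,6,3)$, i.e.\ $n=1$, and verify directly that $(g_0^*,F_3,F_1^*)=(s_1^*,s_1^2-s_2,s_1^*)$ is its reduced polynomial presentation; this computation already appears in the proof of Theorem~\ref{thm 4.14}, where the underlying triple $(s_1,s_1^2-s_2,s_1)$ is seen to be admissible with bi-degrees $(1,1),(2,2),(1,1)$. (One may equally well start from $(3,15,6)$, also computed there.) Since $b=3\phi_{2n+1}\geq 6$ for all $n\geq 1$, and $0<3<3\phi_{2n+1}$, $0<3\phi_{2n-1}<3\phi_{2n+1}$, the hypotheses of Theorem~\ref{thm 4.14} are met at every rung, so the uniqueness in part~(i) applies throughout.

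The inductive step is a coordinate-by-coordinate matching of formulas. Assume $(g_{n-1}^*,F_{2n+1},F_{2n-1}^*)$ is the reduced polynomial presentation of $(3,3\phi_{2n+1},3\phi_{2n-1})$. By Theorem~\ref{thm 4.14}(ii), the reduced polynomial presentation of the $L$-image $(3,3\phi_{2n+3},3\phi_{2n+1})$ is
$$\bigl(\mu(g_{n-1})^*,\ \mu(g_{n-1})\,F_{2n+1}-s_3^{\,d_1}F_{2n-1},\ F_{2n+1}^*\bigr),\qquad d_1={\rm deg}\,g_{n-1}=1.$$
The third entry already matches $(g_n^*,F_{2n+3},F_{2n+1}^*)$. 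For the first entry I use that $\mu$ interchanges $s_1$ and $s_2$, which is immediate from \eqref{fg}: $\mu(s_1)=s_3\cdot(s_2/s_3)=s_2$ and $\mu(s_2)=s_1$; combined with the parity rule for $g_k$ this gives $\mu(g_{n-1})=g_n$. For the middle entry, since $d_1=1$ the expression collapses to $g_n F_{2n+1}-s_3 F_{2n-1}$, which is exactly $F_{2n+3}$ by \eqref{ORR}. Uniqueness, Theorem~\ref{thm 4.14}(i), then identifies this presentation with $(g_n^*,F_{2n+3},F_{2n+1}^*)$ and closes the induction.

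The step demanding the most care is the parity bookkeeping in the first coordinate: one must confirm that $g_{n-1}=s_2$ exactly when $n$ is even (so that $n-1$ is odd) while $g_n=s_1$ in that case, and symmetrically for $n$ odd, so that the swap $s_1\leftrightarrow s_2$ effected by $\mu$ carries $g_{n-1}$ onto $g_n$ at every stage. Once this alternation is pinned down, the remaining checks---that ${\rm deg}\,g_{n-1}=1$ so that the exponent of $s_3$ equals $1$, and that the middle polynomial is literally the right-hand side of \eqref{ORR}---are immediate.
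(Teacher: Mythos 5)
Your proof is correct and follows essentially the same route as the paper's: induction via Theorem~\ref{thm 4.14}(ii), with the left transformation $Lf$ collapsing to the recursion \eqref{ORR} because $\mu(g_{n-1})=g_n$ and $d_1=1$. The only cosmetic difference is that you anchor the induction at $(3,6,3)$ (with the admissibility and hypothesis checks spelled out) whereas the paper starts at $(3,15,6)$; both bases are valid.
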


\begin{proof} The proof is by induction on $n$.
The statement is true for $n=2$, since
\bea
(g_{1}^*, F_{5}, F_{3}^*) = (s_2^*,  s_2(s_1^2-s_2)-s_3s_1, (s_1^2-s_2)^*)
\eea
is the reduced polynomial presentation of the Markov triple $(3,15,6)$,
see \eqref{exa sol}.

Assume that $(g_{n-1}^*, F_{2n+1}, F_{2n-1}^*)$
is the reduced polynomial presentation of the Markov triple 
$(3, 3\phi_{2n+1}, 3\phi_{2n-1})$. Denote 
$f=(f_1,f_2,f_3):=(g_{n-1}, F_{2n+1}, F_{2n-1})$. Let $Lf$ be the triple
defined in Theorem \ref{thm 4.14}. By Theorem \ref{thm 4.14}
the triple
\bea
Lf 
&=& (\mu(g_{n-1}), \mu(g_{n-1}) F_{2n+1} -s_3 F_{2n-1}, F_{2n+1})
\\
&=& (g_{n}, g_{n} F_{2n+1} -s_3 F_{2n-1}, F_{2n+1})
\\
&=&
(g_{n}, F_{2n+3}, F_{2n+1})
\eea
is such that the triple $(g_{n}^*, F_{2n+3}, F_{2n+1}^*)$
is the reduced polynomial presentation of the Markov
triple 
\bea
(3,9\phi_{2n+1} - 3\phi_{2n-1}, 3\phi_{2n+1})
= (3, 3( 3\phi_{2n+1} - \phi_{2n-1}), 3\phi_{2n+1})=
(3, 3\phi_{2n+3}, 3\phi_{2n+1}).
\eea
This proves the theorem.
\end{proof}

\begin{cor}
The odd $*$-Fibonacci polynomials are $*$-Markov polynomials.
\qed
\end{cor}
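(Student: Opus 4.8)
The plan is to read the statement straight off Theorem~\ref{thm Fib} together with the definition of a $*$-Markov polynomial, so that the only real work is checking the hypotheses of that definition. Recall that $P(s_1,s_2,s_3)$ is a $*$-Markov polynomial exactly when it occurs as the middle coordinate $f_2$ of a reduced polynomial presentation $(f_1^*,f_2,f_3^*)\in\GI$ of a Markov triple $(a,b,c)$ with $0<a<b$, $0<c<b$, $6\le b$. By Theorem~\ref{thm Fib}, for $n>1$ the polynomial $F_{2n+1}$ is precisely the middle coordinate of the reduced polynomial presentation $(g_{n-1}^*,F_{2n+1},F_{2n-1}^*)$ of the Markov triple $(3,3\phi_{2n+1},3\phi_{2n-1})$. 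Thus the whole task collapses to verifying that this particular triple satisfies the three inequalities in the definition.

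So the first concrete step is to control the odd Fibonacci numbers. The recurrence \eqref{reF} together with positivity gives the strictly increasing chain $\phi_1=1<\phi_3=2<\phi_5<\cdots$, whence for $n>1$ one has $\phi_{2n-1}\ge\phi_3=2$ and $\phi_{2n+1}>\phi_{2n-1}$. For $(a,b,c)=(3,3\phi_{2n+1},3\phi_{2n-1})$ this yields $a=3<3\phi_{2n+1}=b$, then $c=3\phi_{2n-1}<3\phi_{2n+1}=b$, and finally $b=3\phi_{2n+1}\ge 15\ge 6$; all three conditions hold, so $F_{2n+1}$ is a $*$-Markov polynomial for every $n>1$. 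The base index $n=1$ is not covered by the range of Theorem~\ref{thm Fib}, so I would treat it directly: the triple is $(3,3\phi_3,3\phi_1)=(3,6,3)$, and the presentation $(s_1^*,\,s_1^2-s_2,\,s_1^*)$ in \eqref{exa sol} exhibits $F_3=s_1^2-s_2$ as the middle coordinate of a reduced presentation of $(3,6,3)$, a triple satisfying $0<3<6$ and $6\le 6$; hence $F_3$ is a $*$-Markov polynomial as well. This settles the claim for all $F_{2n+1}$ with $n\ge 1$.

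I do not expect a genuine obstacle: once Theorem~\ref{thm Fib} identifies $F_{2n+1}$ as a central coordinate, the entire content is the elementary monotonicity of the odd Fibonacci numbers certifying $0<a<b$, $0<c<b$, $6\le b$. The single point demanding care is the boundary bookkeeping of the recursion. In particular the seed $F_1=s_1$ is the degenerate case: it never appears as a middle coordinate but only as an outer one in the presentation of $(3,6,3)$, and since $\mu(s_1)=s_2$ it is in fact a \emph{dual} $*$-Markov polynomial rather than a $*$-Markov polynomial. The assertion is therefore to be understood for the nontrivial odd $*$-Fibonacci polynomials $F_{2n+1}$ with $n\ge 1$, and for these the argument above is complete.
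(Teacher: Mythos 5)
Your proposal is correct and follows the paper's own route: the corollary is read off directly from Theorem \ref{thm Fib} together with the definition of a $*$-Markov polynomial, which is why the paper records it with no written proof. Your extra bookkeeping — verifying the inequalities $0<a<b$, $0<c<b$, $6\le b$ via monotonicity of the odd Fibonacci numbers, covering $n=1$ through the presentation \eqref{exa sol} of $(3,6,3)$, and flagging that the seed $F_1=s_1$ is only a \emph{dual} $*$-Markov polynomial so the statement should be read for $n\ge 1$ — supplies details the paper leaves implicit.
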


\begin{rem}
There are many $q$-deformations of (odd) Fibonacci numbers. For example,
 S.\,Morier-Genoud and V.\,Ovsienko \cite{MO20} consider
the odd Fibonacci polynomials $f_{2k+1}(q)$,
defined by the  relations
\bean
\label{OvI}
f_1(q) 
&=&
q^{-1}, 
\qquad \ 
f_3(q)=1+q,
\\
\label{OvR}
f_{2n+3} 
&=&
 (1+q+q^2)f_{2n+1} - q^2f_{2n-1}.
\eean
As  V.\,Ovsienko informed us,  our recurrence relation \eqref{ORR}
turns into  relation \eqref{OvR} under the specification
$s_1=s_2=1 + q+q^2, s_3=q^2$. Our initial conditions \eqref{recF}
turn into $1+q+q^2$, $(1+q+q^2)(q+q^2)$. Hence for any $k$
the odd $*$-Fibonacci polynomials $F_{2k+1}(\bs s)$ evaluated at
$s_1=s_2=1 + q+q^2, s_3=q^2$ equals 
$(q+q^2) f_{2k+1}(q)$.

\end{rem}


\subsection{Formula for odd $*$-Fibonacci polynomials}

\begin{thm}
For $n\geq 0$, we have
\bean
\label{F_{4k+1}}
&&
{}
\\
&&
\notag
F_{4n+1}(\bs s) = 
s_1\sum_{i=0}^n \binom{2n-i}{i}(-s_3)^i(s_1s_2)^{n-i}
-s_2\sum_{i=0}^{n-1} \binom{2n-1-i}{i}(-s_3)^is_1^{n-1-i}s_2^{n-i}\,,
\phantom{aaaa}
\eean
\bean
\label{F_{4k+3}}
&&
{}
\\
&&
\notag
F_{4n+3}(\bs s)= 
s_1\sum_{i=0}^n \binom{2n+1-i}{i}(-s_3)^is_1^{n+1-i}s_2^{n-i}
-s_2\sum_{i=0}^{n} \binom{2n-i}{i}(-s_3)^i(s_1s_2)^{n-i}\,.
\phantom{aaaa}
\eean

\end{thm}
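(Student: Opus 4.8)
The plan is to establish both identities \eqref{F_{4k+1}} and \eqref{F_{4k+3}} simultaneously by induction on $n$, driven by the two three-term recurrences in \eqref{RecR}. Since $F_{2m+1}$ is determined by $F_{2m-1}$ and $F_{2m-3}$, this is really strong induction on the odd index; the only structural subtlety is that the closed form alternates in shape, formula \eqref{F_{4k+1}} governing indices $\equiv 1 \pmod 4$ and formula \eqref{F_{4k+3}} governing indices $\equiv 3\pmod 4$. First I would dispose of the base cases: at $n=0$ formula \eqref{F_{4k+1}} reads $F_1=s_1\binom{0}{0}(s_1s_2)^0=s_1$ (the second sum being empty), and formula \eqref{F_{4k+3}} reads $F_3=s_1\binom{1}{0}s_1-s_2\binom{0}{0}=s_1^2-s_2$, both matching \eqref{recF}.

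For the inductive step there are two parallel computations, one per recurrence. Consider first $F_{4n+3}=s_1F_{4n+1}-s_3F_{4n-1}$, where $F_{4n-1}=F_{4(n-1)+3}$. Substituting the inductively known expression \eqref{F_{4k+1}} for $F_{4n+1}$ and \eqref{F_{4k+3}} at level $n-1$ for $F_{4n-1}$, I would multiply through by $s_1$ and by $-s_3$ and organize the outcome as $s_1\cdot X-s_2\cdot Y$, collecting separately the monomials carrying an overall factor $s_1$ and those carrying an overall factor $s_2$. Multiplication by $-s_3$ shifts the summation index $i\mapsto i+1$ and converts $(-s_3)^i$ into $-(-s_3)^{i+1}$; after reindexing, the $s_1$-part $X$ becomes
\[
\sum_{i=0}^{n}\binom{2n-i}{i}(-s_3)^i s_1^{n+1-i}s_2^{n-i}
+\sum_{i=1}^{n}\binom{2n-i}{i-1}(-s_3)^i s_1^{n+1-i}s_2^{n-i},
\]
and Pascal's rule $\binom{2n-i}{i}+\binom{2n-i}{i-1}=\binom{2n+1-i}{i}$ collapses this to the first sum in \eqref{F_{4k+3}}. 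The same bookkeeping applied to $Y$, this time through $\binom{2n-1-i}{i}+\binom{2n-1-i}{i-1}=\binom{2n-i}{i}$, reproduces the second sum of \eqref{F_{4k+3}}, establishing that identity at level $n$.

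The second computation, $F_{4n+5}=s_2F_{4n+3}-s_3F_{4n+1}$, is entirely analogous and promotes \eqref{F_{4k+1}} from level $n$ to level $n+1$: multiplying \eqref{F_{4k+3}} by $s_2$ and \eqref{F_{4k+1}} by $-s_3$, regrouping into an $s_1$-part and an $s_2$-part, reindexing the terms coming from the $-s_3(\cdots)$ factor, and applying $\binom{2n+1-i}{i}+\binom{2n+1-i}{i-1}=\binom{2n+2-i}{i}$ (respectively $\binom{2n-i}{i}+\binom{2n-i}{i-1}=\binom{2n+1-i}{i}$ on the $s_2$-part) yields exactly the two sums defining $F_{4(n+1)+1}$. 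Together the two steps advance the pair of identities by one full period, so the induction closes.

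The routine but error-prone part, and the only genuine obstacle, is the index bookkeeping at the two ends of each sum: one must verify that after the shift $i\mapsto i+1$ the extreme terms match those of the target, namely the $i=0$ term (where $\binom{2n}{0}=\binom{2n+1}{0}=1$) and the top term $i=n$ or $i=n+1$ (where one of the two binomials either vanishes or equals $1$), so that Pascal's rule may be applied uniformly on the overlapping range. Once these endpoints are checked, the interior merging is automatic and no combinatorial input beyond Pascal's identity is required.
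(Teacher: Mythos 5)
Your proposal is correct and follows essentially the same route as the paper's own proof: induction on $n$ using the recurrences $F_{4n+3}=s_1F_{4n+1}-s_3F_{4n-1}$ and $F_{4n+5}=s_2F_{4n+3}-s_3F_{4n+1}$, splitting each result into an $s_1$-part and an $s_2$-part and merging the shifted sums (the paper does this merging implicitly, which amounts to exactly the Pascal identities you cite). The only cosmetic difference is that the paper writes out the first recurrence in full and dismisses the second with ``the other identity is proved similarly,'' whereas you spell out both; your endpoint checks ($i=0$ and the top index) are the same bookkeeping the paper leaves to the reader.
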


\proof
The  proof is by induction. The  formulas correctly reproduce $F_1, F_3$.
Then  $s_1 F_{4n+1} - s_3 F_{4n-1}$ equals
\bea
&&
s_1\Big(s_1\sum_{i=0}^n \binom{2n-i}{i}(-s_3)^i(s_1s_2)^{n-i}
\,-\,s_2\sum_{i=0}^{n-1} \binom{2n-1-i}{i}(-s_3)^is_1^{n-1-i}s_2^{n-i}\Big)
\\
&&
-s_3\Big(s_1\sum_{i=0}^{n-1} \binom{2n-1-i}{i}(-s_3)^is_1^{n-i}s_2^{n-1-i}
\,-\,s_2\sum_{i=0}^{n-1} \binom{2n-2-i}{i}(-s_3)^i(s_1s_2)^{n-1-i}\Big).
\eea
We have
\bea
&&
s_1^2\sum_{i=0}^n \binom{2n-i}{i}(-s_3)^i(s_1s_2)^{n-i}
-s_3s_1\sum_{i=0}^{n-1} \binom{2n-1-i}{i}(-s_3)^is_1^{n-i}s_2^{n-1-i}
\\
&&
=\,s_1\sum_{i=0}^n \binom{2n+1-i}{i}(-s_3)^is_1^{n+1-i}s_2^{n-i}
\eea
and
\bea
&&
s_1 s_2\sum_{i=0}^{n-1} \binom{2n-1-i}{i}(-s_3)^is_1^{n-1-i}s_2^{n-i}
-s_3s_2\sum_{i=0}^{n-1} \binom{2n-2-i}{i}(-s_3)^i(s_1s_2)^{n-1-i}
\\
&&
=\,s_2\sum_{i=0}^{n} \binom{2n-i}{i}(-s_3)^i(s_1s_2)^{n-i}\,.
\eea
Hence,  $s_1 F_{4n+1} - s_3 F_{4n-1}= F_{4n+3}$. The other identity is proved similarly.
\endproof

\begin{cor} For the ordinary Fibonacci integers we have formulae

\begin{align}
\label{F_{4k+1}i}
\phi_{4n+1}\,
=&
\, 
\sum_{i=0}^n \binom{2n-i}{i} (-1)^i 3^{2n-2i}
\,-\,\sum_{i=0}^{n-1} \binom{2n-1-i}{i}(-1)^i 3^{2n-1-2i}\,,\\
\label{F_{4k+2}i}
\phi_{4n+2}\,
=&
\, 
\sum_{i=0}^n \binom{2n-i}{i}(-1)^i3^{2n+1-2i}\,-\,2\sum_{i=0}^{n} \binom{2n-i}{i}(-1)^i 3^{2n-2i}\,,\\
\label{F_{4k+3}i}
\phi_{4n+3}\,
=&
\, 
\sum_{i=0}^n \binom{2n+1-i}{i}(-1)^i3^{2n+1-2i}
\,-\,\sum_{i=0}^{n} \binom{2n-i}{i}(-1)^i 3^{2n-2i}\,,\\
\label{F_{4k+4}i}
\phi_{4n+4}\,=&\,
\sum_{i=0}^n\binom{2n+1-i}{i}(-1)^i3^{2n+1-2i}\,.
\end{align}
\end{cor}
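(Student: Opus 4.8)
The plan is to split the four identities into the two ``odd'' formulas \eqref{F_{4k+1}i}, \eqref{F_{4k+3}i}, which fall out of the polynomial formulas by a single substitution, and the two ``even'' formulas \eqref{F_{4k+2}i}, \eqref{F_{4k+4}i}, which I would reduce to one three–term recurrence for the binomial sums involved. First I would apply the evaluation morphism $\evo$ (that is, set $s_1=s_2=3$, $s_3=1$) to the formula \eqref{F_{4k+1}} of the preceding theorem. Then $(s_1s_2)^{n-i}\mapsto 3^{2n-2i}$ and $s_1^{n-1-i}s_2^{n-i}\mapsto 3^{2n-1-2i}$, while the outer factors $s_1,s_2$ each become $3$, so that
\[
\evo(F_{4n+1})=3\Big(\sum_{i=0}^n\binom{2n-i}{i}(-1)^i3^{2n-2i}-\sum_{i=0}^{n-1}\binom{2n-1-i}{i}(-1)^i3^{2n-1-2i}\Big).
\]
Dividing by $3$ and invoking $\evo(F_{4n+1})=3\phi_{4n+1}$ gives \eqref{F_{4k+1}i}; the same computation applied verbatim to \eqref{F_{4k+3}} yields \eqref{F_{4k+3}i}.

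For the even cases the key step is to introduce the auxiliary sums $G_m:=\sum_{i\ge 0}\binom{m-i}{i}(-1)^i3^{m-2i}$ (with the convention $\binom{a}{b}=0$ outside the usual range). I would observe that, after factoring out a power of $3$, the right–hand side of \eqref{F_{4k+2}i} is $3G_{2n}-2G_{2n}=G_{2n}$, and that of \eqref{F_{4k+4}i} is exactly $G_{2n+1}$; likewise \eqref{F_{4k+1}i} reads $G_{2n}-G_{2n-1}$ and \eqref{F_{4k+3}i} reads $G_{2n+1}-G_{2n}$. The plan is then to prove the recurrence $G_m=3G_{m-1}-G_{m-2}$ for $m\ge 2$, with $G_0=1$ and $G_1=3$. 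This is where the only genuine calculation lies: reindexing $G_{m-2}$ by $i\mapsto i-1$ and using Pascal's rule $\binom{m-i}{i}=\binom{m-1-i}{i}+\binom{m-1-i}{i-1}$ collapses $3G_{m-1}-G_{m-2}$ term by term into $G_m$.

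Having the recurrence, I would note that the even-indexed Fibonacci numbers satisfy the same relation $\phi_{2m+2}=3\phi_{2m}-\phi_{2m-2}$ (an immediate consequence of $\phi_{k+1}=\phi_k+\phi_{k-1}$, and the even analogue of \eqref{reF}) with $\phi_2=1$, $\phi_4=3$, so by induction $G_m=\phi_{2m+2}$. Then \eqref{F_{4k+2}i} becomes $\phi_{4n+2}=G_{2n}=\phi_{4n+2}$ and \eqref{F_{4k+4}i} becomes $\phi_{4n+4}=G_{2n+1}=\phi_{4n+4}$, while the two odd identities are confirmed consistently, since $G_{2n}-G_{2n-1}=\phi_{4n+2}-\phi_{4n}=\phi_{4n+1}$ and $G_{2n+1}-G_{2n}=\phi_{4n+4}-\phi_{4n+2}=\phi_{4n+3}$. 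The hard part is thus isolated in the Pascal-rule verification of the recurrence for $G_m$; everything else is substitution into the theorem, the relation $\evo(F_{2n+1})=3\phi_{2n+1}$, and the elementary Fibonacci identity $\phi_{m+1}=\phi_m+\phi_{m-1}$.
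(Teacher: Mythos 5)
Your proof is correct. The odd-index part --- getting \eqref{F_{4k+1}i} and \eqref{F_{4k+3}i} by evaluating the polynomial formulas \eqref{F_{4k+1}} and \eqref{F_{4k+3}} at $\bs s_o=(3,3,1)$ and dividing by $3$ via $\evo(F_{2n+1})=3\phi_{2n+1}$ --- is exactly the paper's argument. For the even-index formulas the paper instead invokes the Fibonacci identities $\phi_{4n+2}=\phi_{4n+3}-\phi_{4n+1}$ and $\phi_{4n+4}=\phi_{4n+3}+\phi_{4n+2}$ and says \eqref{F_{4k+2}i}, \eqref{F_{4k+4}i} ``easily follow''; unpacking that still requires showing the difference of the two proven binomial sums collapses to the claimed one, which in your notation is exactly the identity $G_{2n+1}+G_{2n-1}=3G_{2n}$, i.e.\ the same Pascal-rule computation you perform. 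Your route makes this explicit and self-contained: you prove the recurrence $G_m=3G_{m-1}-G_{m-2}$ once, match the initial values $G_0=1=\phi_2$, $G_1=3=\phi_4$ to get $G_m=\phi_{2m+2}$, and then all four right-hand sides read off as $G_{2n}-G_{2n-1}$, $G_{2n}$, $G_{2n+1}-G_{2n}$, $G_{2n+1}$, so the even formulas are proved without passing through the odd ones (and the odd ones are reconfirmed). In substance both proofs rest on the same single binomial identity; what your organization buys is a clean unifying statement ($G_m=\phi_{2m+2}$) and independence of the even case from the odd case, at the small cost of bookkeeping with the convention $\binom{a}{b}=0$ outside the usual range (needed so that Pascal's rule holds term by term at the boundary indices, which it does here since all three terms vanish simultaneously).
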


\proof
Formulae \eqref{F_{4k+1}i} and \eqref{F_{4k+3}i} follow from \eqref{F_{4k+1}} and \eqref{F_{4k+3}}. Formulae \eqref{F_{4k+2}i} and \eqref{F_{4k+4}i} easily follow from the identities 
\[
\phi_{4n+2}=\phi_{4n+3}-\phi_{4n+1},\quad \phi_{4n+4}=\phi_{4n+3}+\phi_{4n+2}.\makeatletter\displaymath@qed
\]

\subsection{Newton polygons of odd $*$-Fibonacci polynomials}
\label{sec NP}

\begin{lem}

The odd $*$-Fibonacci polynomials  $F_{4n+1}$ and $F_{4n+3}$ are of bi-degree
$(2n+1, 3n+1)$ and $(2n+2, 3n+2)$, respectively.
\qed

\end{lem}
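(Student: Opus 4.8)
The plan is to prove both assertions simultaneously by induction on $n$, using the split recursions \eqref{RecR} rather than the closed forms. Write the bi-degree of a polynomial as $(d,q)$, with $d$ its homogeneous degree (weights $(1,1,1)$) and $q$ its quasi-homogeneous degree (weights $(1,2,3)$). Viewed as monomials in $s_1,s_2,s_3$, the generators have bi-degrees $(1,1)$, $(1,2)$, $(1,3)$, and both degrees are additive under multiplication; I will use this repeatedly. The two statements to establish are: \textbf{(A)} $F_{4n+1}$ has bi-degree $(2n+1,3n+1)$, and \textbf{(B)} $F_{4n+3}$ has bi-degree $(2n+2,3n+2)$, for all $n\ge 0$. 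Note that \emph{quasi-homogeneity} of each $F$ is part of what the argument produces, not an input.

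The base cases are immediate from \eqref{recF}: $F_1=s_1$ has bi-degree $(1,1)$ and $F_3=s_1^2-s_2$ has bi-degree $(2,2)$, matching (A) and (B) at $n=0$. For the inductive step at level $n\ge 1$ I first treat (A) through $F_{4n+1}=s_2F_{4n-1}-s_3F_{4n-3}$, where $F_{4n-1}=F_{4(n-1)+3}$ and $F_{4n-3}=F_{4(n-1)+1}$: by (B) and (A) at $n-1$ the two summands $s_2F_{4n-1}$ and $s_3F_{4n-3}$ have bi-degrees $(2n+1,3n+1)$ and $(2n,3n+1)$. Their quasi-degrees coincide, so the difference is quasi-homogeneous of quasi-degree $3n+1$; their homogeneous degrees differ by exactly one, so the top homogeneous component is inherited solely from $s_2F_{4n-1}$ and cannot cancel, giving $d=2n+1$. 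I then treat (B) through $F_{4n+3}=s_1F_{4n+1}-s_3F_{4n-1}$: by (A) at $n$ (just proved) and (B) at $n-1$ the summands $s_1F_{4n+1}$ and $s_3F_{4n-1}$ have bi-degrees $(2n+2,3n+2)$ and $(2n+1,3n+2)$, and the same reasoning yields quasi-degree $3n+2$ and homogeneous degree $2n+2$. Since proving (A) at $n$ uses only level $n-1$ data and proving (B) at $n$ uses (A) at $n$ together with level $n-1$ data, the induction is well founded.

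The only point needing care is the claim that no cancellation occurs in the top homogeneous degree, which is what pins down $d$. This is automatic here: in each recursion the two summands have homogeneous degrees differing by exactly one, so the leading component is the single nonzero term carried over from the higher-degree summand. Concretely that leading monomial is $s_1^{n+1}s_2^n$ for $F_{4n+1}$ and $s_1^{n+2}s_2^n$ for $F_{4n+3}$, so there is no genuine obstacle. As an alternative route, one may simply read both conclusions off the explicit formulas \eqref{F_{4k+1}} and \eqref{F_{4k+3}}: every monomial appearing there has quasi-degree $3n+1$, respectively $3n+2$ (confirming quasi-homogeneity and the value of $q$), and the unique monomial of maximal total degree, obtained at $i=0$ in the first sum, is $s_1^{n+1}s_2^n$, respectively $s_1^{n+2}s_2^n$ (confirming $d$).
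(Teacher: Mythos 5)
Your proof is correct. Every step checks out: the base cases $(1,1)$ and $(2,2)$ match, the bookkeeping in the inductive step is right ($s_2F_{4n-1}$ and $s_3F_{4n-3}$ have bi-degrees $(2n+1,3n+1)$ and $(2n,3n+1)$; $s_1F_{4n+1}$ and $s_3F_{4n-1}$ have bi-degrees $(2n+2,3n+2)$ and $(2n+1,3n+2)$), and the observation that the two summands in each recursion have equal quasi-degree but homogeneous degrees differing by one settles both quasi-homogeneity and the non-cancellation of the top component in one stroke. The induction is well founded as you note.

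The paper itself supplies no argument — the lemma is stated with an immediate \qed, placed directly after the subsection establishing the closed formulas \eqref{F_{4k+1}} and \eqref{F_{4k+3}}, so the intended justification is evidently the one you mention only as an alternative at the end: every monomial in those sums has quasi-degree $3n+1$ (resp.\ $3n+2$), and the unique monomial of maximal total degree, at $i=0$ in the first sum, is $s_1^{n+1}s_2^n$ (resp.\ $s_1^{n+2}s_2^n$). Your primary route via the split recursions \eqref{RecR} is logically independent of those closed formulas, which is a modest advantage: it proves the bi-degrees (and quasi-homogeneity) directly from the definition of the polynomials, so the lemma does not inherit any dependence on the combinatorial identity-checking behind the explicit formulas. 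The formula-reading route is shorter but only available after that theorem; either is acceptable here.
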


The Newton polygon $N_{F_{4n+1}}$  of $F_{4n+1}$  is the convex hull
of four points   $(n+1,n, 0), (1,0,n),(n-1,n+1, 0), (0,2,n-1)$. The projected normalized
Newton polygon $\tilde N_{F_{4n+1}}$ is the convex hull of four points 
\bea
\Big(\frac{n+1}{2n+1},\frac{n}{2n+1}\Big), \Big(\frac{1}{2n+1},0\Big),\Big(\frac{n-1}{2n+1},
\frac{n+1}{2n+1}\Big), \Big(0,\frac{2}{2n+1}\Big) .
\eea
The limit  of  $\tilde N_{F_{4n+1}}$ as $n\to\infty$
is the interval with vertices $(0,0)$ and $(1/2,1/2)$.

\vsk.2>
The Newton polygon $N_{F_{4n+3}}$ is the convex hull
of four points 
 $(n+2,n, 0), (2,0,n), (n,n+1, 0), (0,1,n)$. The projected normalized
Newton polygon $\tilde N_{F_{4n+3}}$ is the convex hull of four points 
\bea
\Big(\frac{n+2}{2n+2},\frac{n}{2n+2}\Big), \Big(\frac{2}{2n+2},0\Big),\Big(\frac{n}{2n+2},
\frac{n+1}{2n+2}\Big), \Big(0,\frac{1}{2n+2}\Big) .
\eea
The limit  of  $\tilde N_{F_{4n+3}}$ as $n\to\infty$
is the interval with vertices $(0,0)$ and $(1/2,1/2)$, see Section \ref{sec dti}.

\subsection{Generating function}
Introduce the generating power series of odd $*$-Fibonacci polynomials,
\bean
\mathcal F(\bs s, t):=\sum_{n=0}^\infty F_{2n+1}(\bs s)t^{2n+1}.
\eean

\begin{thm}
We have
\bean\label{genF}
\mathcal F(\bs s, t)=\frac{s_2 s_3 t^7+t^5 \left(s_2^2-s_1 s_3\right)+t^3 \left(s_2-s_1^2\right)-s_1 t}{-s_3^2t^8-(2s_3-s_1s_2)t^4-1}.
\eean
\end{thm}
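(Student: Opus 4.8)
The plan is to exploit the period-two structure of the recurrence \eqref{ORR} by splitting the generating series according to the residue of the index modulo $4$. I would introduce the two partial series
\[
A(t) := \sum_{n\ge 0} F_{4n+1}(\bs s)\,t^{4n+1},
\qquad
B(t) := \sum_{n\ge 0} F_{4n+3}(\bs s)\,t^{4n+3},
\]
so that $\mathcal F(\bs s,t) = A(t) + B(t)$. The point of this splitting is that within each subsequence the recurrence has \emph{constant} coefficients: along $4n+1$ the relevant relation is the second line of \eqref{RecR}, reindexed as $F_{4m+1} = s_2 F_{4(m-1)+3} - s_3 F_{4(m-1)+1}$ for $m\ge 1$, while along $4n+3$ it is the first line, $F_{4n+3} = s_1 F_{4n+1} - s_3 F_{4(n-1)+3}$ for $n\ge 1$.

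First I would translate each recurrence into a functional equation. Multiplying $F_{4m+1} = s_2 F_{4(m-1)+3} - s_3 F_{4(m-1)+1}$ by $t^{4m+1}$ and summing over $m\ge 1$, using $t^{4m+1} = t^{2}\,t^{4(m-1)+3} = t^{4}\,t^{4(m-1)+1}$ and isolating the $n=0$ term $F_1 t = s_1 t$ of $A$, gives
\[
A\,(1 + s_3 t^4) = s_1 t + s_2 t^2 B .
\]
Treating the other subsequence the same way, with the initial value $F_3 = s_1^2 - s_2$ from \eqref{recF} absorbing the boundary term, yields
\[
B\,(1 + s_3 t^4) = -s_2 t^3 + s_1 t^2 A .
\]

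Next I would solve this linear $2\times 2$ system. Writing $D := 1 + s_3 t^4$ and eliminating one unknown produces $A\,(D^2 - s_1 s_2 t^4) = s_1 t\,D - s_2^2 t^5$ and $B\,(D^2 - s_1 s_2 t^4) = (s_1^2 - s_2) t^3 - s_2 s_3 t^7$, where
\[
D^2 - s_1 s_2 t^4 = 1 + (2 s_3 - s_1 s_2)\,t^4 + s_3^2 t^8 .
\]
Adding $A + B$ then gives $\mathcal F$ over this common denominator, with numerator $s_1 t + (s_1^2 - s_2) t^3 + (s_1 s_3 - s_2^2) t^5 - s_2 s_3 t^7$; multiplying numerator and denominator by $-1$ recovers \eqref{genF} verbatim.

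I expect no genuine obstacle here: the argument is a direct generating-function computation. The only points demanding care are the correct handling of the two boundary terms ($F_1$ for $A$ and $F_3$ for $B$), since these are exactly the initial data in \eqref{recF} that are \emph{not} output by the recurrence, and the bookkeeping of powers of $t$ when shifting an index by one step of size $4$ (which accounts for the factors $t^2$ versus $t^4$ above). As a final consistency check I would verify the low-order coefficients against the explicit list, e.g. that the coefficient of $t^5$ in the expansion equals $F_5 = s_1^2 s_2 - s_1 s_3 - s_2^2$, which also pins down the overall sign.
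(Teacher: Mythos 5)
Your proof is correct and follows essentially the same route as the paper: both split $\mathcal F$ into the subseries over indices $4n+1$ and $4n+3$, convert the period-two recurrence into a $2\times 2$ linear system with determinant $(1+s_3t^4)^2 - s_1s_2t^4$, solve, and add. The only cosmetic difference is that the paper packages the boundary data via $F_{-1}=s_2/s_3$ on the right-hand side of the system, whereas you carry the initial values $F_1$ and $F_3$ explicitly; the resulting system and the final formula are identical.
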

\proof
Split the series $\mathcal F(\bs s,t)$ as follows
\bean\label{splitF}
\mathcal F(\bs s, t)=\underbrace{\sum_{k=0}^\infty F_{4k+1}(\bs s)t^{4k+1}}_{\mathcal F_1(\bs s, t)}+\underbrace{\sum_{n=0}^\infty F_{4k+3}(\bs s)t^{4k+3}}_{\mathcal F_2(\bs s, t)}.
\eean
From the recursive relation \eqref{recF}, we deduce
\bea
\underbrace{(1+s_3 t^4)\mathcal F_1(\bs s,t)}_{*}-t^2s_1\mathcal F_1(\bs s,t)+(1+s_3 t^4)\mathcal F_2(\bs s,t)\underbrace{-t^2s_2\mathcal F_2(\bs s,t)}_{*}=\underbrace{F_1t}_{*}-F_{-1}s_3t^3,
\eea
where $F_{-1}=s_2/s_3,\ F_1=s_1$. 
The terms marked by $*$ have only powers $t^{4k+1}$, with $k\geq 0$. The remaining terms have only powers $t^{4k+3}$, with $k\geq 0$. We have a linear system
\bean
\begin{pmatrix}
1+s_3t^4&-s_2t^2\\
-s_1t^2&1+s_3t^4
\end{pmatrix}
\begin{pmatrix}
\mathcal F_1(\bs s,t)\\
\mathcal F_2(\bs s,t)
\end{pmatrix}=
\begin{pmatrix}
F_1t\\
-F_{-1}s_3t^3
\end{pmatrix}.
\eean
Hence
\bean\label{F1F2}
\mathcal F_1(\bs s,t)=
\frac{(s_1 s_3-s_2^2) t^5+s_1t}{\left(s_3 t^4+1\right)^2-s_1 s_2 t^4},
\quad 
\mathcal F_2(\bs s,t)=\frac{s_2s_3t^7+(s_2-s_1^2)t^3}{s_1 s_2 t^4-\left(s_3 t^4+1\right)^2}.
\eean
Equation \eqref{genF} follows from \eqref{splitF} and \eqref{F1F2}.
\endproof

\begin{cor}
For any $n\geq 0$, we have
\bean
F_{2n+1}(\bs s)=\frac{1}{(2n+1)!}\left.\frac{\partial^{2n+1}}{\partial t^{2n+1}}\right|_{t=0}\mathcal F(\bs s,t) .
\eean
\qed
\end{cor}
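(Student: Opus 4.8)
The plan is to recognize that this corollary is simply the extraction of a Taylor coefficient, so all the genuine content is already supplied by the definition of $\mathcal F$ together with the regularity at $t=0$ of the rational expression furnished by the preceding theorem. There is no substantial computation to carry out.

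First I would record that, by definition, $\mathcal F(\bs s,t)=\sum_{n=0}^\infty F_{2n+1}(\bs s)\,t^{2n+1}$ is a power series in $t$ with coefficients in $\Z[s_1,s_2,s_3]$ in which only odd powers of $t$ occur. Writing $\mathcal F(\bs s,t)=\sum_{m=0}^\infty c_m(\bs s)\,t^m$, this means $c_{2n+1}=F_{2n+1}$ and $c_{2n}=0$ for all $n\geq 0$.

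The only point deserving a remark is that the right-hand side of \eqref{genF} is a genuine analytic function of $t$ near $t=0$ for fixed $\bs s$ (equivalently, a unit in $\C[[t]]$): its denominator $-s_3^2t^8-(2s_3-s_1s_2)t^4-1$ takes the value $-1$ at $t=0$ and is therefore invertible in a neighborhood of the origin. Hence $\mathcal F$ converges on a disk about $t=0$ and agrees there with the Maclaurin expansion of that analytic function, so termwise differentiation is legitimate.

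It then suffices to invoke the standard Maclaurin coefficient formula $c_m=\frac{1}{m!}\left.\frac{\partial^m}{\partial t^m}\right|_{t=0}\mathcal F(\bs s,t)$, obtained by differentiating the power series $m$ times and evaluating at $t=0$. Specializing to the odd index $m=2n+1$ and using $c_{2n+1}=F_{2n+1}$ gives the claim. I expect no real obstacle: the argument is purely formal, and the only thing to keep in mind is that one works over a coefficient ring containing $\Q$ so that the division by $(2n+1)!$ makes sense, which is automatic since $F_{2n+1}\in\Z[s_1,s_2,s_3]\subset\C[s_1,s_2,s_3]$.
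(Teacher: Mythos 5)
Your proof is correct and matches the paper's treatment: the paper states this corollary with no written proof (just \qed), precisely because it is the immediate Taylor-coefficient extraction you describe, with the only non-formal point being that the rational function in \eqref{genF} is regular at $t=0$ (its denominator equals $-1$ there), which you address.
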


\begin{rem}
At $\bs s=\bs s_o$, the generating function $\mathcal F(\bs s,t)$ reduces to
\bea
\mathcal F(\bs s_o,t)=\frac{3(t-t^3)}{t^4-3t^2+1}=3\left(t+2 t^3+5 t^5+13 t^7+34 t^9+\dots\right),
\eea
 the generating function of odd Fibonacci numbers multiplied by 3.
\end{rem}

\subsection{Binet formula for odd $*$-Fibonacci polynomials}
In this section we consider the generating function $\mathcal F(\bs s,t)$ as a rational function of $t$
with coefficients depending on the parameters $\bs s$ varying in a neighborhood of 
$\bs s_o$.

\vsk.2>
The poles of  $\mathcal F(\bs s,t)$ 
are  the roots of the polynomial  $s_3^2t^8+(2s_3-s_1s_2)t^4+1$.

We will use the roots of $s_3^2t^2+(2s_3-s_1s_2)t+1$,
\bean
a_\pm(\bs s):=\frac{s_1s_2-2s_3\pm\left(s_1^2s_2^2-4s_1s_2s_3\right)^\frac{1}{2}}{2s_3^2}
\eean
with $a_\pm(\bs s_0)=\frac{7\pm 3\sqrt{5}}{2}$.

\vsk.2>
Introduce $\al_0,\al_1,\beta_0,\beta_1, \ga_1,\ga_2$ by the formulas
\bea
&&
t((s_1 s_3-s_2^2) t^4+s_1) = \al_1t^5+\al_0t,
\\
&&
t^3 (-s_2+s_1^2-s_2 s_3 t^4) = \beta_1t^7 + \beta_0t^3,
\\
&&
s_3^2t^8+(2s_3-s_1s_2)t^4+1 = \ga_2 t^8 + \ga_1t^4+1.
\eea
Then
\bea
\mc F_1(\bs s,t) = \frac{\al_1t^5+\al_0t}{\ga_2 t^8 + \ga_1t^4+1},
\qquad
\mc F_2(\bs s,t) = \frac{\beta_1t^7 + \beta_0t^3}{\ga_2 t^8 + \ga_1t^4+1}.
\eea

\begin{thm}
\label{thm Binet}
We have
\bean
&&
\mc F_1(\bs s,t) = 
-\sum_{k=0}^\infty\Big(
\frac{\al_1a_++\al_0}{a_+^{k+1}(2\ga_2 a_+ + \ga_1)}
+
\frac{\al_1a_-+\al_0}{a_-^{k+1}(2\ga_2 a_- + \ga_1)}\Big)t^{4k+1},
\phantom{aaaaaaaaaaa}
\\
\notag
&&
\mc F_2(\bs s,t) = 
-\sum_{k=0}^\infty\Big(
\frac{\beta_1a_++\beta_0}{a_+^{k+1}(2\ga_2 a_+ + \ga_1)}
+
\frac{\beta_1a_-+\beta_0}{a_-^{k+1}(2\ga_2 a_- + \ga_1)}\Big)t^{4k+3}.
\phantom{aaaaaaaaaaa}
\eean

\end{thm}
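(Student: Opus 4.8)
The plan is to read off both formulas from a partial-fraction decomposition of the rational functions $\mc F_1$ and $\mc F_2$, treating $u:=t^4$ as the natural variable. The expressions
\[
\mc F_1(\bs s,t) = \frac{\al_1t^5+\al_0t}{\ga_2 t^8 + \ga_1t^4+1},
\qquad
\mc F_2(\bs s,t) = \frac{\beta_1t^7 + \beta_0t^3}{\ga_2 t^8 + \ga_1t^4+1}
\]
are already available, so the whole computation is elementary.

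First I would observe that the two numerators are $\al_1t^5+\al_0t = t(\al_1 u+\al_0)$ and $\beta_1t^7+\beta_0t^3 = t^3(\beta_1 u+\beta_0)$, while the common denominator is $\ga_2t^8+\ga_1t^4+1 = \ga_2 u^2+\ga_1 u+1$. Since $\ga_2=s_3^2$ and $\ga_1=2s_3-s_1s_2$, and $a_\pm(\bs s)$ are by definition the roots of $s_3^2\tau^2+(2s_3-s_1s_2)\tau+1$, I can factor the denominator as $\ga_2 u^2+\ga_1 u+1 = \ga_2(u-a_+)(u-a_-)$. For $\bs s$ near $\bs s_o$ the discriminant $s_1^2s_2^2-4s_1s_2s_3$ is positive (it equals $45$ at $\bs s_o$), so the two roots are distinct and nonzero and the poles in $u$ are simple.

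Next I would carry out the partial-fraction expansion. Writing $D(u)=\ga_2 u^2+\ga_1 u+1$, the simple-pole residue formula gives
\[
\frac{\al_1 u+\al_0}{D(u)} = \frac{\al_1 a_++\al_0}{D'(a_+)}\cdot\frac{1}{u-a_+} + \frac{\al_1 a_-+\al_0}{D'(a_-)}\cdot\frac{1}{u-a_-},
\]
with $D'(a_\pm)=2\ga_2 a_\pm+\ga_1$, exactly the denominators appearing in the statement. (Equivalently, by Vieta $a_++a_-=-\ga_1/\ga_2$, so $2\ga_2 a_\pm+\ga_1=\ga_2(a_\pm-a_\mp)$, the usual factor from differentiating the factored denominator.) The identical identity with $\al$ replaced by $\beta$ handles $\mc F_2$.

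Finally I would expand each simple fraction as a geometric series in $u$: for $|u|<|a_\pm|$,
\[
\frac{1}{u-a_\pm} = -\frac{1}{a_\pm}\sum_{k=0}^\infty\Big(\frac{u}{a_\pm}\Big)^k = -\sum_{k=0}^\infty\frac{u^k}{a_\pm^{k+1}}.
\]
Substituting $u=t^4$ back and restoring the prefactors $t$ (for $\mc F_1$) and $t^3$ (for $\mc F_2$) turns $u^k$ into $t^{4k+1}$ and $t^{4k+3}$, producing precisely the two series in the statement; the overall minus sign in front of each sum is the one coming from the expansion above. I do not expect any genuine obstacle: the only point requiring care is the bookkeeping identification of the residue denominator $D'(a_\pm)$ with $2\ga_2 a_\pm+\ga_1$, together with the remark that near $\bs s_o$ one has genuine simple poles, so that both the residue formula and the geometric expansion (valid for $t$ in a neighborhood of $0$, or equivalently as an identity of formal power series in $t$) are legitimate.
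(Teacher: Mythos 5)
Your proof is correct, and it takes a cleaner route than the paper's, though both are at bottom partial fractions plus geometric series. The paper works directly in the variable $t$: it factors the denominator $\ga_2 t^8+\ga_1 t^4+1$ through its eight roots $b_n=\om^n a_\pm^{1/4}$ (fourth roots of $a_\pm$ times fourth roots of unity), computes the residue $A_n$ at each, expands each $A_n/(t-b_n)$ geometrically, and then must re-sum $-\sum_m\sum_{n=1}^8 A_n\,b_n^{-(m+1)}t^m$; at that last step the sum over the four fourth roots of unity kills every power $t^m$ with $m\not\equiv 1\pmod 4$ and reassembles the survivors into the stated coefficients in $a_\pm$ --- a cancellation the paper's write-up leaves implicit. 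Your substitution $u=t^4$ makes this four-periodicity manifest from the start: the denominator becomes the quadratic $\ga_2 u^2+\ga_1 u+1=\ga_2(u-a_+)(u-a_-)$ with only two simple poles, the residue denominators $D'(a_\pm)=2\ga_2 a_\pm+\ga_1$ appear immediately, and the geometric expansion in $u$ lands directly on the powers $t^{4k+1}$ (respectively $t^{4k+3}$ after the prefactor $t^3$). The two points you flag --- distinctness and nonvanishing of $a_\pm$ near $\bs s_o$ (discriminant $s_1^2s_2^2-4s_1s_2s_3=45$ at $\bs s_o$, product of roots $1/\ga_2\ne 0$) and validity of the expansion near $t=0$ or as formal power series --- are exactly the hypotheses needed, so nothing is missing; what your route buys is the complete elimination of the eighth-root bookkeeping and the root-of-unity cancellation.
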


\begin{proof}
We prove the first formula. The  proof of the second is similar.

 The roots of $s_3^2t^8+(2s_3-s_1s_2)t^4+1$ are
\begin{align}
b_n(\bs s): =\om^na_+(\bs s)^\frac{1}{4},\quad 
b_{4+n}(\bs s):=\om^n a_-(\bs s)^\frac{1}{4},\quad n=1,2,3,4,
\end{align}
where $\om = e^{\pi\sqrt{-1}/2}$. We have
$\mathcal F_1(\bs s, t)=\sum_{n=1}^8\frac{A_n(\bs s)}{t-b_n(\bs s)}$,
where
\bea
A_n(\bs s)={\rm res}_{t=b_n(\bs s)}\mathcal F_1(\bs s,t)
=\frac{\al_1b_n(\bs s)^4+\al_0}{b_n(\bs s)^{2}(8\ga_2 b_n(\bs s)^4 + 4\ga_1)}\,.
\eea
Hence
\bea
\label{F1res}
&&
\mathcal F_1(\bs s,t)=\sum_{n=1}^8\frac{A_n(\bs s)}{t-b_n(\bs s)}=-\sum_{n=1}^8\frac{A_n(\bs s)}{b_n(\bs s)}\frac{1}{1-\frac{t}{b_n(\bs s)}}
\\
&&
\phantom{aaaaaa}
=
-\sum_{m=0}^\infty\sum_{n=1}^8\frac{A_n(\bs s)}{b_n(\bs s)^{m+1}}t^{m}
 = -\sum_{m=0}^\infty\sum_{n=1}^8
\frac{\al_1b_n(\bs s)^4+\al_0}{b_n(\bs s)^{m+3}(8\ga_2 b_n(\bs s)^4 + 4\ga_1)}t^{m}
\\
&&
\phantom{aaaaaa}
=
-\sum_{k=0}^\infty\Big(
\frac{\al_1a_+(\bs s)+\al_0}{a_+(\bs s)^{k+1}(2\ga_2 a_+(\bs s) + \ga_1)}
+
\frac{\al_1a_-(\bs s)+\al_0}{a_-(\bs s)^{k+1}(2\ga_2 a_-(\bs s) + \ga_1)}\Big)t^{4k+1}.
\eea
\end{proof}

\begin{cor}
\label{cor BiF}
We have
\bean
&&
F_{4k+1}(\bs s) = 
-
\frac{\al_1a_++\al_0}{a_+^{k+1}(2\ga_2 a_+ + \ga_1)}
-
\frac{\al_1a_-+\al_0}{a_-^{k+1}(2\ga_2 a_- + \ga_1)}\,,
\phantom{aaaaaaaaaaa}
\\
\notag
&&
F_{4k+3}(\bs s) = 
-
\frac{\beta_1a_++\beta_0}{a_+^{k+1}(2\ga_2 a_+ + \ga_1)}
-
\frac{\beta_1a_-+\beta_0}{a_-^{k+1}(2\ga_2 a_- + \ga_1)}.
\phantom{aaaaaaaaaaa}
\eean
If $\bs s$ is in a small neighborhood of $\bs s_o$, then $|a_+(\bs s)| > |a_-(\bs s)|$ and
\bean
\label{as F}
&&
F_{4k+1}(\bs s) \sim
-
\frac{\al_1a_-+\al_0}{a_-^{k+1}(2\ga_2 a_- + \ga_1)},
\quad
F_{4k+3}(\bs s) \sim - \frac{\beta_1a_-+\beta_0}{a_-^{k+1}(2\ga_2 a_- + \ga_1)},
\\
\label{as F/F}
&&
s_2\,\frac{F_{4k+3}(\bs s)}{F_{4k+1}(\bs s)} \sim
s_2\,\frac{\beta_1a_-+\beta_0}{\al_1a_-+\al_0},
\quad\phantom{aaa}
s_1\,\frac{F_{4k+5}(\bs s)}{F_{4k+3}(\bs s)} \sim
s_1\,\frac{\al_1a_-+\al_0}{a_-(\beta_1a_-+\beta_0)},
\eean
as $n\to\infty$.
\end{cor}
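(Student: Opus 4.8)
The plan is to read the corollary off the two series expansions furnished by Theorem \ref{thm Binet}, and then to separate the two geometric contributions by comparing their moduli. First I would recall the splitting \eqref{splitF}, which says $\mc F_1(\bs s,t)=\sum_{k=0}^\infty F_{4k+1}(\bs s)\,t^{4k+1}$ and $\mc F_2(\bs s,t)=\sum_{k=0}^\infty F_{4k+3}(\bs s)\,t^{4k+3}$. The closed expressions for $F_{4k+1}(\bs s)$ and $F_{4k+3}(\bs s)$ are then obtained simply by matching the coefficient of $t^{4k+1}$ in the first identity of Theorem \ref{thm Binet} and the coefficient of $t^{4k+3}$ in the second; no further computation is needed here.

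Next I would establish the inequality $|a_+(\bs s)|>|a_-(\bs s)|$ in a neighborhood of $\bs s_o$. At the point $\bs s_o$ one has the explicit values $a_\pm(\bs s_o)=\tfrac{7\pm 3\sqrt5}2$, with product $a_+(\bs s_o)\,a_-(\bs s_o)=1/s_3^2=1$, so that $a_+(\bs s_o)>1>a_-(\bs s_o)>0$ and in particular $|a_+(\bs s_o)|>|a_-(\bs s_o)|$. Since the roots $a_\pm$ depend continuously on $\bs s$, this strict inequality persists on a whole neighborhood of $\bs s_o$. Along the way I would record that the quantities $2\ga_2 a_\pm+\ga_1$, $\al_1 a_-+\al_0$, and $\beta_1 a_-+\beta_0$ are nonzero at $\bs s_o$, hence nonzero on a possibly smaller neighborhood; this guarantees that every denominator appearing in the asymptotic formulas is well defined.

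Finally I would deduce the asymptotics by isolating the dominant geometric factor. Because $|a_+|>|a_-|$, the factor $a_+^{-(k+1)}$ is negligible relative to $a_-^{-(k+1)}$ as $k\to\infty$, so each of $F_{4k+1}$ and $F_{4k+3}$ is asymptotic to its single $a_-$-term, which is exactly \eqref{as F}. The ratio estimates \eqref{as F/F} follow by dividing these leading terms: in $s_2\,F_{4k+3}/F_{4k+1}$ the common factor $a_-^{-(k+1)}(2\ga_2 a_-+\ga_1)^{-1}$ cancels, leaving $s_2(\beta_1 a_-+\beta_0)/(\al_1 a_-+\al_0)$, whereas in $s_1\,F_{4k+5}/F_{4k+3}$ (with $F_{4k+5}=F_{4(k+1)+1}$) one extra power of $a_-$ survives in the denominator, yielding $s_1(\al_1 a_-+\al_0)/\bigl(a_-(\beta_1 a_-+\beta_0)\bigr)$. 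This corollary is in essence bookkeeping atop Theorem \ref{thm Binet}; the only genuinely substantive points are the strict domination $|a_+|>|a_-|$ near $\bs s_o$ and the non-vanishing of the relevant denominators, which together are what make the one-term asymptotics and the ratio limits meaningful.
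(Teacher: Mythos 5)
Your proposal is correct and coincides with the paper's intended argument: the closed formulas are exactly the coefficients of $t^{4k+1}$ and $t^{4k+3}$ read off from the series in Theorem \ref{thm Binet}, and the asymptotics follow from the strict inequality $|a_+(\bs s)|>|a_-(\bs s)|$ near $\bs s_o$ (where $a_\pm(\bs s_o)=\tfrac{7\pm3\sqrt5}{2}$), which makes the $a_-$-terms dominate. Your explicit verification that $2\ga_2 a_\pm+\ga_1$, $\al_1 a_-+\al_0$, and $\beta_1 a_-+\beta_0$ are nonzero at $\bs s_o$ is a worthwhile detail the paper leaves implicit.
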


\begin{lem}
\label{lem 10.2}
We have
\bean
\label{1=1}
s_2\,\frac{\beta_1a_\pm+\beta_0}{\al_1a_\pm+\al_0}
=
s_1\,\frac{\al_1a_\pm+\al_0}{a_\pm(\beta_1a_\pm+\beta_0)}.
\eean

\end{lem}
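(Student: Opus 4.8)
The plan is to clear denominators and reduce \eqref{1=1} to a single polynomial identity satisfied by $a_\pm$, and then to exploit that $a_\pm$ are precisely the roots of a known quadratic. For $\bs s$ in a small neighborhood of $\bs s_o$ the quantities $\al_1 a_\pm+\al_0$ and $\beta_1 a_\pm+\beta_0$ are the (nonzero) leading coefficients appearing in the Binet expansions of $F_{4k+1}$ and $F_{4k+3}$ in Corollary \ref{cor BiF}, and $a_\pm\neq 0$ since $a_+a_-=1/s_3^2$. Hence all denominators are invertible, and after cross-multiplication the claimed equality \eqref{1=1} is equivalent to
\[
s_1\,(\al_1 a_\pm+\al_0)^2 \;=\; s_2\,a_\pm\,(\beta_1 a_\pm+\beta_0)^2 .
\]
First I would substitute the explicit values read off from the defining relations, namely $\al_1=s_1s_3-s_2^2$, $\al_0=s_1$, $\beta_1=-s_2s_3$, $\beta_0=s_1^2-s_2$, and introduce $P(t)=s_3^2t^2+(2s_3-s_1s_2)t+1=\ga_2 t^2+\ga_1 t+1$, whose roots are exactly $a_\pm$.

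Next I would form the cubic
\[
D(a) \;:=\; s_1\,(\al_1 a+\al_0)^2 - s_2\,a\,(\beta_1 a+\beta_0)^2 \;\in\; \Z[s_1,s_2,s_3][a],
\]
and perform Euclidean division of $D$ by $P$. The key assertion, to be verified by a direct expansion, is the factorization
\[
D(a) \;=\; (s_1^3 - s_2^3\,a)\,P(a).
\]
Granting this, the right-hand side vanishes whenever $P(a)=0$, so $D(a_+)=D(a_-)=0$, which is precisely the identity above; dividing back through by the nonzero factors $\al_1 a_\pm+\al_0$ and $a_\pm(\beta_1 a_\pm+\beta_0)$ recovers \eqref{1=1}. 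Handling both roots simultaneously via divisibility by $P$ avoids any case distinction between the $+$ and $-$ signs.

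I expect the only delicate point to be the bookkeeping in checking $D=(s_1^3-s_2^3a)P$. The division proceeds in two steps: the quotient term $-s_2^3 a$ removes the cubic term of $D$, and after collecting one is left with a quadratic whose $a^2$- and $a^1$-coefficients, despite their bulk, collapse (through substantial cancellation) to $s_1^3s_3^2$ and $s_1^3(2s_3-s_1s_2)$ respectively, while the constant term is $s_1^3$; this residual quadratic is therefore exactly $s_1^3\,P(a)$, yielding the quotient $s_1^3-s_2^3a$ and confirming $P\mid D$. I would record these two coefficient computations explicitly, since that is where care is required, and leave the remaining expansions as routine.
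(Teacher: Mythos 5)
Your proof is correct and takes essentially the same route as the paper, whose entire proof is ``by direct verification'': your argument is exactly such a verification, organized through the factorization $D(a)=s_1(\al_1 a+\al_0)^2-s_2\,a\,(\beta_1 a+\beta_0)^2=(s_1^3-s_2^3\,a)\,P(a)$ with $P(a)=s_3^2a^2+(2s_3-s_1s_2)a+1$, which I have checked coefficient by coefficient and which does hold (both sides equal $-s_2^3s_3^2a^3+(s_1^3s_3^2+s_1s_2^4-2s_2^3s_3)a^2+(2s_1^3s_3-s_1^4s_2-s_2^3)a+s_1^3$). Using divisibility by $P$ to treat $a_+$ and $a_-$ simultaneously, together with the (easily checked, e.g.\ at $\bs s=\bs s_o$) nonvanishing of the denominators, is a clean way to carry out the paper's unstated computation.
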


\begin{proof} 
The proof is by direct verification.

\end{proof}

\subsection{Odd $*$-Fibonacci polynomials with negative indices}
The relations
\bea
&&
F_{4n+3} = s_1 F_{4n+1} - s_3 F_{4n-1}\,,
\\
&&
F_{4n+5} = s_2 F_{4n+3} - s_3 F_{4n+1}\,.
\eea
can be reversed and written as
\bea
&&
F_{-(4n+3)}\, = \,\frac{s_2}{s_3}\, F_{-(4n+1)}\, -\, \frac 1{s_3} \,F_{-(4n-1)}\,,
\\
&&
F_{-(4n+5)} \,=\, \frac{s_1}{s_3}\, F_{-(4n+3)}\, -\, \frac1{s_3} \,F_{-(4n+1)}\,.
\eea
This allows us to define the $*$-Fibonacci Laurent polynomials with negative indices.

\begin{thm}
For any $n$ we have
$F_{-2n-1}= (F_{2n+1})^*\,.$
\qed

\end{thm}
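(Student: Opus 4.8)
The plan is to treat the whole two-sided family $\{F_{2n+1}\}_{n\in\Z}$ as being governed by a single three-term recurrence. First I would record that the two ``reversed'' relations defining the negative-index Laurent polynomials are nothing but the relation
\[
F_{2n+3}=g_n\,F_{2n+1}-s_3\,F_{2n-1},\qquad g_n=\begin{cases} s_2,& n\ \text{odd},\\ s_1,& n\ \text{even},\end{cases}
\]
solved for its lowest-index term. Indeed, solving for $F_{2n-1}$ gives $F_{2n-1}=\tfrac{g_n}{s_3}F_{2n+1}-\tfrac1{s_3}F_{2n+3}$, and substituting $n\mapsto -n-1$ together with the parity of $g_{-n-1}$ reproduces exactly the relations for $F_{-(4n+3)}$ and $F_{-(4n+5)}$ stated just before the theorem. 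Hence the displayed recurrence holds for \emph{all} $n\in\Z$ and pins down the entire bi-infinite sequence from any two consecutive members.

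The second ingredient is that $(-)^*$ is a ring automorphism of $\Zs$, an involution with $s_1^*=s_2/s_3$, $s_2^*=s_1/s_3$, $s_3^*=1/s_3$. Applying it termwise to the recurrence yields $(F_{2n+3})^*=g_n^*\,(F_{2n+1})^*-\tfrac1{s_3}(F_{2n-1})^*$. The key computation, which I expect to be the main (and essentially only) obstacle, is the parity bookkeeping behind the identity
\[
s_3\,g_n^*=g_{-n-1}\qquad\text{for all }n\in\Z .
\]
This is checked in two cases: for $n$ even, $g_n^*=s_1^*=s_2/s_3$ while $g_{-n-1}=s_2$ (since $-n-1$ is odd); for $n$ odd, $g_n^*=s_2^*=s_1/s_3$ while $g_{-n-1}=s_1$. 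With this identity, the $*$-image of the recurrence at index $n$ becomes precisely the master recurrence, solved for its lowest term, at the reflected index $-n-1$; that is, the right-hand side equals $\tfrac{g_{-n-1}}{s_3}F_{-(2n+1)}-\tfrac1{s_3}F_{-(2n-1)}=F_{-(2n+3)}$ once the inductive hypothesis is inserted.

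I would then finish by induction on $n\ge 0$. The base cases are direct evaluations: $(F_1)^*=s_1^*=s_2/s_3=F_{-1}$ and $(F_3)^*=(s_1^2-s_2)^*=s_2^2/s_3^2-s_1/s_3=F_{-3}$. For the inductive step, assuming $(F_{2n+1})^*=F_{-(2n+1)}$ and $(F_{2n-1})^*=F_{-(2n-1)}$, I apply $*$ to the recurrence and use $s_3\,g_n^*=g_{-n-1}$ to conclude $(F_{2n+3})^*=F_{-(2n+3)}$, establishing the claim for every $n\ge 0$. Finally, to cover negative odd indices (the statement is phrased for ``any $n$''), I apply $*$ to the identity just proved and use $(\cdot)^{**}=\mathrm{id}$: from $(F_{2n+1})^*=F_{-(2n+1)}$ one obtains $(F_{-(2n+1)})^*=F_{2n+1}$, which is the same relation with $2n+1$ replaced by the negative odd integer $-(2n+1)$. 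This proves $F_{-2n-1}=(F_{2n+1})^*$ for all $n$.
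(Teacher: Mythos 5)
Your proof is correct, and it fills in exactly the argument the paper leaves implicit: the theorem is stated there with its proof omitted (just \qed), the intended verification being precisely an induction on the reversed recurrence. Your key bookkeeping identity $s_3\,g_n^*=g_{-n-1}$, the base cases $(F_1)^*=F_{-1}$, $(F_3)^*=F_{-3}$, and the passage to negative indices via $(\cdot)^{**}=\mathrm{id}$ are all sound and constitute essentially the same approach.
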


For example,  $F_1=s_1$, $F_{-1}=(F_1)^*=\frac{s_2}{s_3}\,.$

\subsection{Cassini identity for odd $*$-Fibonacci polynomials}

The odd $*$-Fibonacci numbers satisfy the following identities:
\bea
\phi_{2n+3}^2 - \phi_{2n+5}\phi_{2n+1} = 
\phi_{2n+1}^2 - \phi_{2n+3}\phi_{2n-1} = \dots = -1\,.
\eea
Indeed, we have
\bea
&&
\phi_{2n+3}^2 - \phi_{2n+5}\phi_{2n+1} = 
\phi_{2n+3}(3\phi_{2n+1}-\phi_{2n-1}) - \phi_{2n+5}\phi_{2n+1} 
\\
&&
\phantom{aa}
=\phi_{2n+1}(3\phi_{2n+3}- \phi_{2n+5}) -\phi_{2n+3}\phi_{2n-1}
=\phi_{2n+1}^2 - \phi_{2n+3}\phi_{2n-1}\,.
\eea

\begin{thm}
The odd $*$-Fibonacci polynomials satisfy the following  identities:
\bea
g_{n+1}F_{2n+3}^2 - g_n F_{2n+5}F_{2n+1} = 
s_3(g_{n}F_{2n+1}^2 - g_{n-1} F_{2n+3}F_{2n-1}) =
s_3^{n-1} ( s_1^3s_3+s_2^3-s_1^2s_2^2)\,.
\eea

\end{thm}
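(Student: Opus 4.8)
The statement is a Cassini-type identity for the odd $*$-Fibonacci polynomials. Writing $C_n := g_{n+1}F_{2n+3}^2 - g_n F_{2n+5}F_{2n+1}$, the claim has two parts: first that $C_n = s_3(g_n F_{2n+1}^2 - g_{n-1}F_{2n+3}F_{2n-1}) = s_3 C_{n-1}$, and second that this common recursion bottoms out at $C_n = s_3^{n-1}(s_1^3 s_3 + s_2^3 - s_1^2 s_2^2)$. The natural strategy is to mimic the telescoping computation for the ordinary odd Fibonacci numbers shown just above the theorem, using the recurrence \eqref{ORR}, $F_{2n+3} = g_n F_{2n+1} - s_3 F_{2n-1}$, together with the parity bookkeeping $g_n g_{n+1} = s_1 s_2$ and $g_{n+1} = g_{n-1}$ (since $g$ alternates between $s_1$ and $s_2$).

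\textbf{The key recursive step.} The plan is to prove $C_n = s_3 C_{n-1}$ by substituting the recurrence. First I would replace $F_{2n+5} = g_{n+1}F_{2n+3} - s_3 F_{2n+1}$ in the second term of $C_n$, giving
\[
C_n = g_{n+1}F_{2n+3}^2 - g_n F_{2n+1}\big(g_{n+1}F_{2n+3} - s_3 F_{2n+1}\big)
= g_{n+1}F_{2n+3}\big(F_{2n+3} - g_n F_{2n+1}\big) + s_3 g_n F_{2n+1}^2.
\]
Now $F_{2n+3} - g_n F_{2n+1} = -s_3 F_{2n-1}$ by the recurrence, so the first piece becomes $-s_3 g_{n+1} F_{2n+3} F_{2n-1}$. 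Using $g_{n+1} = g_{n-1}$, I obtain
\[
C_n = s_3\big(g_n F_{2n+1}^2 - g_{n-1} F_{2n+3} F_{2n-1}\big) = s_3 C_{n-1},
\]
which is exactly the middle equality asserted in the theorem. This is the heart of the argument and I expect it to go through cleanly; the only subtlety to watch is the alternation of the $g_n$, i.e. that the index shift in the factor $g_{n+1}$ matches $g_{n-1}$, which is precisely why the factor $g$ reappears correctly rather than spoiling the telescoping.

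\textbf{Closing the induction.} Iterating $C_n = s_3 C_{n-1}$ down to the base case gives $C_n = s_3^{\,n-1} C_1$, so it remains to compute the initial value $C_1 = g_2 F_5^2 - g_1 F_7 F_3$. Here I would simply evaluate using the explicit polynomials from the list after \eqref{recF}: with $g_1 = s_2$, $g_2 = s_1$, $F_3 = s_1^2 - s_2$, $F_5 = s_1^2 s_2 - s_1 s_3 - s_2^2$, and $F_7 = s_1^3 s_2 - 2 s_1^2 s_3 - s_1 s_2^2 + s_2 s_3$, a direct expansion should yield $s_1^3 s_3 + s_2^3 - s_1^2 s_2^2$. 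The main obstacle, such as it is, is bookkeeping rather than conceptual: one must verify the base case by an honest polynomial expansion, and one must be careful that the parity convention for $g_n$ is applied consistently at every index (in particular that $C_1$ corresponds to $n=1$ so that $s_3^{n-1} = s_3^0 = 1$ matches the stated right-hand side). Once the recursion and the single base-case computation are in hand, the identity follows by a one-line induction.
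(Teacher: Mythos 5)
Your telescoping identity $C_n = s_3\,C_{n-1}$ is correct, and it is essentially the paper's own computation: the paper substitutes the recurrence into one factor of $F_{2n+3}^2$, you substitute it into $F_{2n+5}$, and both arguments then use $F_{2n+3} - g_n F_{2n+1} = -s_3 F_{2n-1}$ together with $g_{n+1} = g_{n-1}$. So the first equality of the theorem is established exactly as in the paper.

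The genuine gap is the base case, which you assert rather than compute, and the assertion is false. An honest expansion gives
\[
C_1 \;=\; g_2F_5^2 - g_1F_7F_3
\;=\; s_1\bigl(s_1^2s_2 - s_1s_3 - s_2^2\bigr)^2 - s_2\bigl(s_1^3s_2 - 2s_1^2s_3 - s_1s_2^2 + s_2s_3\bigr)\bigl(s_1^2 - s_2\bigr)
\;=\; s_1^3s_3^2 - s_1^2s_2^2s_3 + s_2^3s_3,
\]
that is, $C_1 = s_3\,(s_1^3s_3 + s_2^3 - s_1^2s_2^2)$, not $s_1^3s_3 + s_2^3 - s_1^2s_2^2$: the monomials $s_1^5s_2^2$, $s_1s_2^4$, $-2s_1^4s_2s_3$, $-2s_1^3s_2^3$ cancel, and every surviving monomial carries a factor of $s_3$. (Evaluation at $(s_1,s_2,s_3)=(3,3,1)$ cannot detect this, which is presumably why the slip is easy to make.) Consequently your induction, carried out correctly, yields $C_n = s_3^{\,n-1}C_1 = s_3^{\,n}(s_1^3s_3+s_2^3-s_1^2s_2^2)$ for the leftmost expression; the quantity that equals $s_3^{\,n-1}(s_1^3s_3+s_2^3-s_1^2s_2^2)$ is the parenthesized one, $D_n := g_nF_{2n+1}^2 - g_{n-1}F_{2n+3}F_{2n-1} = C_{n-1}$, one telescoping step lower. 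This is precisely how the paper anchors the recursion: it telescopes past $n=1$ down to $D_0 = g_0F_1^2 - g_{-1}F_{-1}F_3$, which requires the Laurent polynomial $F_{-1} = s_2/s_3$ and equals $s_3^{-1}(s_1^3s_3+s_2^3-s_1^2s_2^2)$, whence $D_n = s_3^{\,n-1}(\cdots)$. If you prefer to avoid negative indices, the correct anchor for the exponent $n-1$ is
\[
D_1 = g_1F_3^2 - g_0F_5F_1 = s_2(s_1^2-s_2)^2 - s_1^2\bigl(s_1^2s_2 - s_1s_3 - s_2^2\bigr) = s_1^3s_3 + s_2^3 - s_1^2s_2^2,
\]
not $C_1=D_2$. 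So keep your recursion, but verify the base case by an actual expansion and attach the power $s_3^{\,n-1}$ to the correct member of the chain; as you have set things up, the claimed constant for $C_1$ is simply wrong, and the proof does not close.
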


\proof
We have
\bea
&&
g_{n+1}F_{2n+3}^2 - g_n F_{2n+5}F_{2n+1}
= 
g_{n+1}F_{2n+3}(g_n F_{2n+1}-s_3F_{2n-1}) - g_nF_{2n+5}F_{2n+1} 
\\
&&
\phantom{aaa}
=g_n F_{2n+1}(g_{n+1} F_{2n+3} -  F_{2n+5}) -s_3g_{n+1}F_{2n+3}F_{2n-1}
\\
&&
\phantom{aaaaaa}
=s_3(g_n F_{2n+1}^2 - g_{n-1}F_{2n+3}F_{2n-1})\,
\eea
and
\[
g_0F_1^2-g_{-1}F_{-1}F_3 = s_3^{-1}( s_1^3s_3+s_2^3-s_1^2s_2^2)\,.\makeatletter\displaymath@qed
\]

\begin{cor}
\label{cor rat F}
We have
\bean
\label{F/F}
g_n\,\frac{F_{2n+5}}{F_{2n+3}} \,-\, g_{n-1}\,\frac{F_{2n+3}}{F_{2n+1}}
\, =\, s_3^{n-1}\,\frac{ s_1^2s_2^2- s_1^3s_3-s_2^3}{F_{2n+3}F_{2n+1}}\,.
\eean
\qed
\end{cor}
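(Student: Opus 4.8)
The plan is to clear denominators and reduce the identity to the Cassini identity of the preceding theorem. First I would put the left-hand side over the common denominator $F_{2n+3}F_{2n+1}$, which turns the claim into the assertion that the numerator satisfies
\[
g_n F_{2n+5}F_{2n+1} - g_{n-1}F_{2n+3}^2 = s_3^{n-1}\bigl(s_1^2 s_2^2 - s_1^3 s_3 - s_2^3\bigr).
\]
Indeed, combining the two fractions gives exactly this numerator over $F_{2n+3}F_{2n+1}$, so proving the displayed equality immediately yields the corollary.

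The only point worth flagging is that this numerator involves $g_{n-1}$, whereas the Cassini identity is phrased in terms of $g_{n+1}$ and $g_n$. Here I would invoke the parity definition $g_m = s_2$ for $m$ odd and $g_m = s_1$ for $m$ even: since $n-1$ and $n+1$ have the same parity, $g_{n-1} = g_{n+1}$, hence $g_{n-1}F_{2n+3}^2 = g_{n+1}F_{2n+3}^2$. The numerator then becomes $-\bigl(g_{n+1}F_{2n+3}^2 - g_n F_{2n+5}F_{2n+1}\bigr)$, which by the Cassini identity equals $-s_3^{n-1}\bigl(s_1^3 s_3 + s_2^3 - s_1^2 s_2^2\bigr) = s_3^{n-1}\bigl(s_1^2 s_2^2 - s_1^3 s_3 - s_2^3\bigr)$, exactly as required.

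Thus there is no substantive obstacle: the corollary is an immediate algebraic rearrangement of the Cassini identity, the sole subtlety being the replacement $g_{n-1} = g_{n+1}$ forced by the two-periodicity of the coefficients $g_m$ in the recurrence \eqref{ORR}. I would present the argument in two lines, displaying the common-denominator step and then citing the preceding theorem together with the parity remark.
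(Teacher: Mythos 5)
Your proof is correct and is exactly the argument the paper leaves implicit (the corollary carries no written proof, being an immediate consequence of the Cassini identity): clearing the denominator $F_{2n+3}F_{2n+1}$ reduces the claim to the negated Cassini identity. Your parity observation $g_{n-1}=g_{n+1}$, needed to reconcile the coefficient $g_{n-1}$ in the corollary with the $g_{n+1}$ in the theorem, is valid by the two-periodicity of $g_m$ and is the only subtlety.
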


 Identity \eqref{F/F}  evaluated at $\bs s=\bs s_o$ 
  takes the form
\bea
\frac{\phi_{2n+5}}{\phi_{2n+3}} - \frac{\phi_{2n+3}}{\phi_{2n+1}}
=\frac1{\phi_{2n+5}\phi_{2n+3}}\,.
\eea

If $\bs s$ lies in a small neighborhood of $\bs s_o$, then
the right-hand and left-hand sides of formula \eqref{F/F} tend to zero 
as $n\to\infty$, see \eqref{as F}, \eqref{as F/F}, \eqref{1=1}.

\subsection{Continued fractions for odd $*$-Fibonacci  polynomials}

Consider the field 
$\Q(\bs s)$ of rational functions in variables $\bs s$ with rational coefficients.
Consider  a continued fraction  of the following form
\bea
\Big[a_0;\frac{b_1}{a_1},\dots, \frac{b_5}{a_5}\Big]:=
a_0 + \frac{b_1}{a_1+\frac {b_2}{a_2+\frac {b_3}{a_3 + \frac {b_4}{a_4+\frac {b_5}{a_5}}}}}\ \in \ \Q(\bs s)\,,
\eea 
where $a_0,\dots,a_5 \in \Z[s_1,s_2, s_3]$ and each of $b_1,\dots, b_5$ is of the form
\bea
s_1^{a_1}s_2^{a_2}s_3^{a_3}, \qquad a_1,a_2\in\Z_{\geq 0}, \ a_3\in\Z\,.
\eea
Similarly we define continued fractions
$\Big[a_0;\frac{b_1}{a_1},\dots,\frac{b_n}{a_n}\Big]$\,
for any positive integer $n$.

\vsk.2>
For example,
\bea
\big[s_2;\frac{s_1}{s_2},\frac{s_1}{s_3}\big] = s_2 +\frac {s_1}{s_2 +\frac{s_1}{s_3}} = \frac{s_2^2s_3 +s_1s_2+s_1s_3}{s_2s_3+s_1}\,.
\eea

\begin{thm}
\label{thm 10.16}
For $n\geq 1$ we have
\bea
\frac{F_{2n+3}}{F_{2n+1}} = 
\big[g_n;\frac{-s_3}{g_{n-1}},\frac{-s_3}{g_{n-2}},\dots,
\frac{-s_3}{g_{1}},\frac{-s_3}{g_{0}},\frac{-s_2}{s_1}\big]\,.
\eea

\end{thm}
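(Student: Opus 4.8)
The plan is to recast the identity as a statement about the sequence of ratios $R_n := F_{2n+3}/F_{2n+1}$, viewed as elements of the field $\Q(\bs s)$, and then to recognize the defining recurrence of $R_n$ as exactly the unfolding rule of the claimed continued fraction. Thus the theorem is essentially a reformulation of the three-term recurrence \eqref{ORR}.

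First I would record that each $F_{2n+1}$ is a nonzero rational function: by the lemma preceding Theorem \ref{thm Fib} we have $\evo(F_{2n+1}) = 3\phi_{2n+1}\neq 0$, so $F_{2n+1}$ is not identically zero, and hence $R_n\in\Q(\bs s)$ is well-defined and nonzero for every $n\ge 0$. Dividing the recurrence \eqref{ORR}, namely $F_{2n+3}= g_n F_{2n+1} - s_3 F_{2n-1}$, by $F_{2n+1}$ gives, for all $n\ge 1$,
\begin{equation*}
R_n \,=\, g_n - \frac{s_3}{F_{2n+1}/F_{2n-1}} \,=\, g_n + \frac{-s_3}{R_{n-1}}.
\end{equation*}
For the base case, since $0$ is even we have $g_0=s_1$, and
\begin{equation*}
R_0 \,=\, \frac{F_3}{F_1} \,=\, \frac{s_1^2-s_2}{s_1} \,=\, s_1 + \frac{-s_2}{s_1} \,=\, \big[g_0;\tfrac{-s_2}{s_1}\big].
\end{equation*}

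Finally I would run an induction on $n\ge 1$, invoking once the recursive unfolding rule of the bracket notation, $[a_0;\tfrac{b_1}{a_1},\tfrac{b_2}{a_2},\dots] = a_0 + b_1/[a_1;\tfrac{b_2}{a_2},\dots]$. For $n=1$, substituting the base case into $R_1 = g_1 + (-s_3)/R_0$ produces $[g_1;\tfrac{-s_3}{g_0},\tfrac{-s_2}{s_1}]$, as desired. For the inductive step, assuming $R_{n-1} = [g_{n-1};\tfrac{-s_3}{g_{n-2}},\dots,\tfrac{-s_3}{g_0},\tfrac{-s_2}{s_1}]$, the relation $R_n = g_n + (-s_3)/R_{n-1}$ together with the unfolding rule yields precisely $[g_n;\tfrac{-s_3}{g_{n-1}},\tfrac{-s_3}{g_{n-2}},\dots,\tfrac{-s_3}{g_0},\tfrac{-s_2}{s_1}]$, completing the induction. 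There is essentially no analytic obstacle here; the only points requiring care are the well-definedness of the ratios in $\Q(\bs s)$, handled above by the nonvanishing of $F_{2n+1}$, and a clean formulation of the continued-fraction unfolding rule, which I would state once and then apply verbatim at each step.
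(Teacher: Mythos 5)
Your proof is correct and is essentially the paper's own argument spelled out in full: the paper's proof of Theorem \ref{thm 10.16} consists of the single sentence that the formula follows from the recurrence relations for the $*$-Fibonacci polynomials, which is exactly your induction via $R_n = g_n + \frac{-s_3}{R_{n-1}}$ obtained by dividing \eqref{ORR} by $F_{2n+1}$. Your added care about nonvanishing of the $F_{2n+1}$ in $\Q(\bs s)$ (via $\evo(F_{2n+1})=3\phi_{2n+1}\neq 0$) and the explicit unfolding rule are sensible details the paper leaves implicit.
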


\begin{proof}
The formula follows from the recurrence relations for the $*$-Fibonacci polynomials.
\end{proof}

For example,
\bea
&&
\frac{F_{5}}{F_{3}} = 
\big[g_1;\frac{-s_3}{g_0},\frac{-s_2}{s_1}\big]\, =\, g_1 -\frac{s_3}{g_0- \frac{s_2}{s_1}}\, =\,
s_2 -\frac{s_3}{s_1-\frac{s_2}{s_1}}\,,
\\
&&
\frac{F_{7}}{F_{5}} = 
\big[g_2;\frac{-s_3}{g_1}, \frac{-s_3}{g_0},\frac{-s_2}{s_1}\big]\, =\, 
g_2 -\frac{s_3}{g_1-\frac{s_3}{g_0- \frac{s_2}{s_1}}}\, 
=\,
s_1 -\frac{s_3}{s_2-\frac{s_3}{s_1- \frac{s_2}{s_1}}}\,.
\eea

\begin{rem}
Formulas \eqref{as F/F} show that the continued fraction of Theorem \ref{thm 10.16} converges
to a an element of a quadratic extension of the field 
$\Q(\bs s)$.

\end{rem}

\section{Odd $*$-Pell polynomials} 
\label{sec Pell}

\subsection{Definition of odd $*$-Pell polynomials}

The right boundary path of the Markov tree
corresponds to the sequence of Markov triples $(3,15, 6), (15, 87, 6), (87, 507, 6)$, \dots\,, with general term
$( 3\psi_{2n-1},3\psi_{2n+1}, 6)$, where $\psi_{2n+1}$, $\psi_{2n-1}$ are odd Pell numbers,
 \bea
\psi_1=1,\qquad \psi_3=5,\quad \psi_5=29, \quad \psi_7=169,\quad
 \dots \quad ,
\eea
with the recurrence relation
\bean
\label{reP}
\psi_{2n+3}= 6 \psi_{2n+1} - \psi_{2n-1}\,.
\eean

We define the {\it odd $*$-Pell polynomials}  recursively by the formula
\bean
\label{recP}
&&
P_1(\bs s)
=
s_2, \qquad
P_3(\bs s) =s_1^2s_2 - s_1s_3-s_2^2,
 \\
 \notag
&&  P_{2n+3} (\bs s)
  =
  h_n P_{2n+1}(\bs s) - s_3^2 P_{2n-1}(\bs s)\,,
\eean
where $h_n=s_2^2-s_1s_3$ if $n$ is odd and $h_n=s_1^2-s_2$ if $n$ is even. In other words we have
\bea
&&
P_{4n+3} = (s_1^2-s_2) P_{4n+1} - s_3^2 P_{4n-1}\,,
\\
&&
P_{4n+5} = (s_2^2-s_1s_3) P_{4n+3} - s_3^2 P_{4n+1}\,.
\eea
\begin{lem}
We have  $\evo(P_{2n+1}) = 3\psi_{2n+1}$.
\qed
\end{lem}

The first odd $*$-Pell polynomials are
\begin{align*}
P_1(\bs s)&=s_2,\\
P_3(\bs s)&=s_1^2s_2 - s_1s_3-s_2^2,\\
P_5(\bs s)&= s_1^2 s_2^3-s_1^3  s_2s_3-s_2s_3^2 +s_1^2 s_3^2-s_2^4,\\
P_7(\bs s)&=s_1^4s_2^3 + s_1^4s_3^2-s_1^5s_2 s_3 + s_1^3s_2^2 s_3-2  s_1^2s_2^4-3 s_1^2s_2 s_3^2 +s_1s_3^3 +s_2^5+2 s_2^2 s_3^2,\\
P_9(\bs s)&=s_1^4 s_2^5-2 s_1^2 s_2^6+s_2^7-s_1  s_2^5s_3+3 s_2^4s_3^2 +3 s_1^3 s_2^4s_3 -4 s_1^2 s_2^3s_3^2 -2 s_1^5s_2^3 s_3 \\
&\ \ \ -s_1 s_2^2s_3^3 +s_2s_3^4 +4 s_1^3 s_2s_3^3 +s_1^6 s_2s_3^2 -2 s_1^2 s_3^4-s_1^5 s_3^3\,.
\end{align*}

\begin{thm}
\label{thm Pe} 
For $n>0$ the triple 
\bean
\label{Pe 3}
(P_{2n-1}^*, P_{2n+1}, h_{n-1}^*)
\eean
is the reduced polynomial presentation of the Markov triple 
$(3\psi_{2n-1}, 3\psi_{2n+1}, 6)$.
\end{thm}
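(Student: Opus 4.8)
The plan is to argue by induction on $n$, exactly parallel to the proof of Theorem~\ref{thm Fib}, but using the right transformation $R$ in place of the left transformation $L$, because the odd $*$-Pell polynomials sit along the right boundary path of the Markov tree.

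For the base case $n=1$ I would observe that $(P_1^*,P_3,h_0^*)=(s_2^*,\,s_1^2s_2-s_1s_3-s_2^2,\,(s_1^2-s_2)^*)$, since $h_0=s_1^2-s_2$ ($0$ being even), and that $s_1^2s_2-s_1s_3-s_2^2=s_2(s_1^2-s_2)-s_3s_1$. This is precisely the reduced polynomial presentation of $(3,15,6)=(3\psi_1,3\psi_3,6)$ already recorded in \eqref{exa sol}, so the base case is immediate.

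For the inductive step I would assume that $(P_{2n-1}^*,P_{2n+1},h_{n-1}^*)$ is the reduced polynomial presentation of $(a,b,c)=(3\psi_{2n-1},3\psi_{2n+1},6)$ and set $f=(f_1,f_2,f_3)=(P_{2n-1},P_{2n+1},h_{n-1})$. Since the odd Pell numbers are strictly increasing with $\psi_3=5$, the triple satisfies $0<a<b$, $0<c<b$, $6\le b$, so Theorem~\ref{thm 4.14}(ii) applies and tells us that
\[
(f_2^*,\ f_2\,\mu(f_3)-s_3^{d_3}f_1,\ \mu(f_3)^*)
\]
is the reduced polynomial presentation of $(b,bc-a,c)=(3\psi_{2n+1},\,18\psi_{2n+1}-3\psi_{2n-1},\,6)$. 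It then remains to identify this triple with $(P_{2n+1}^*,P_{2n+3},h_n^*)$. I would note that $d_3=\deg h_{n-1}=2$ regardless of parity, so $s_3^{d_3}=s_3^2$; that the duality formula \eqref{fg} gives $\mu(s_1^2-s_2)=s_2^2-s_1s_3$ and $\mu(s_2^2-s_1s_3)=s_1^2-s_2$, whence $\mu(h_{n-1})=h_n$ in both parity cases; that the middle entry is then $P_{2n+1}h_n-s_3^2P_{2n-1}=P_{2n+3}$ by the recurrence \eqref{recP}; and that $18\psi_{2n+1}-3\psi_{2n-1}=3(6\psi_{2n+1}-\psi_{2n-1})=3\psi_{2n+3}$ by \eqref{reP}. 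Together these identify the $R$-transform with $(P_{2n+1}^*,P_{2n+3},h_n^*)$ as the reduced polynomial presentation of $(3\psi_{2n+1},3\psi_{2n+3},6)$, which closes the induction.

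I do not expect any genuine obstacle. The only two points needing care are checking that $(3\psi_{2n-1},3\psi_{2n+1},6)$ meets the size hypotheses of Theorem~\ref{thm 4.14}(ii) at every step, which follows at once from the monotonicity of the Pell numbers, and the short parity verification $\mu(h_{n-1})=h_n$ obtained directly from \eqref{fg}.
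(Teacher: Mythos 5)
Your proposal is correct and follows essentially the same route as the paper's proof: induction on $n$ with base case $(P_1^*,P_3,h_0^*)$ identified with the presentation of $(3,15,6)$ from \eqref{exa sol}, and inductive step via the right transformation $R$ of Theorem \ref{thm 4.14}(ii), using $\mu(h_{n-1})=h_n$, the recurrence \eqref{recP}, and the Pell recurrence \eqref{reP}. The only difference is that you spell out details the paper leaves implicit (the size hypotheses $0<a<b$, $0<c<b$, $6\le b$, the computation $d_3=2$, and the parity check $\mu(h_{n-1})=h_n$), which is fine.
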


\begin{proof} The proof is by induction on $n$.
The statement is true for $n=1$, since
\bea
(P_{1}^*, P_{3}, h_0^*) = (s_2^*,  s_2(s_1^2-s_2)-s_3s_1, (s_1^2-s_2)^*)
\eea
is the reduced polynomial presentation of the Markov triple $(3,15,6)$,
see \eqref{exa sol}.

Assume that 
$(P_{2n-1}^*, P_{2n+1}, h_{n-1}^*)$ 
is the reduced polynomial presentation of the Markov triple 
$(3\psi_{2n-1}, 3\psi_{2n+1}, 6)$.
 Denote 
$f=(f_1,f_2,f_3):=(P_{2n-1}, P_{2n+1}, h_{n-1})$. Let $Rf$ be the triple
defined in Theorem \ref{thm 4.14}. By Theorem \ref{thm 4.14}
the triple
\bea
Rf 
&=& (P_{2n+1},P_{2n+1}\mu(h_{n-1})-s_3^2P_{2n-1}, \mu(h_{n-1}))
\\
&=& 
(P_{2n+1},P_{2n+1}h_{n}-s_3^2P_{2n-1}, h_{n})
\\
&=&
(P_{2n+1},P_{2n+3}, h_{n})
\eea
is such that the triple $(P_{2n+1}^*, P_{2n+3}, h_{n}^*)$
is the reduced polynomial presentation of the Markov
triple 
\bea
(3\psi_{2n+1}, 18\psi_{2n+1}-3\psi_{2n-1}, 6)
=
(3\psi_{2n+1}, 3(6\psi_{2n+1}-\psi_{2n-1}), 6)
=
(3\psi_{2n+1}, 3\psi_{2n+3}, 6).
\eea
This proves the theorem.
\end{proof}

\begin{cor}
The odd $*$-Pell polynomials are $*$-Markov polynomials.
\qed
\end{cor}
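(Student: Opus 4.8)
The plan is to obtain the statement as an immediate consequence of Theorem~\ref{thm Pe}, in exactly the same way that the corresponding fact for the odd $*$-Fibonacci polynomials follows from Theorem~\ref{thm Fib}. First I would recall the definition: a polynomial is a $*$-Markov polynomial precisely when it occurs as the middle entry $f_2$ of the reduced polynomial presentation $(f_1^*,f_2,f_3^*)\in\GI$ of some Markov triple $(a,b,c)$ satisfying $0<a<b$, $0<c<b$, $6\le b$. Then I would invoke Theorem~\ref{thm Pe}, which asserts that for every $n>0$ the polynomial $P_{2n+1}$ is the middle entry of the reduced polynomial presentation $(P_{2n-1}^*,\,P_{2n+1},\,h_{n-1}^*)$ of the Markov triple $(3\psi_{2n-1},\,3\psi_{2n+1},\,6)$. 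This leaves only the task of checking that this particular Markov triple satisfies the three inequalities in the definition, with $b=3\psi_{2n+1}$ playing the role of the strict maximum.

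To verify the inequalities I would read them off from the elementary behaviour of the odd Pell numbers. The recurrence \eqref{reP} together with the initial values $\psi_1=1$, $\psi_3=5$ shows that the sequence $\psi_1,\psi_3,\psi_5,\dots$ is strictly increasing and positive; hence $0<3\psi_{2n-1}<3\psi_{2n+1}$, which gives $0<a<b$. Moreover $b=3\psi_{2n+1}\ge 3\psi_3=15>6=c>0$ for every $n\ge 1$, yielding simultaneously $0<c<b$ and $6\le b$. Thus $(3\psi_{2n-1},3\psi_{2n+1},6)$ meets all hypotheses of the definition, so $P_{2n+1}$ is a $*$-Markov polynomial for each $n\ge1$. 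The one remaining odd $*$-Pell polynomial, $P_1=s_2$, I would dispose of by the explicit convention adopted in the definition, where $s_2$ is declared a $*$-Markov polynomial.

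I expect no genuine obstacle here: the entire content is carried by Theorem~\ref{thm Pe}, and the corollary is just a matter of matching its conclusion against the definition. The only points requiring a moment's care are (i) confirming that the middle entry $3\psi_{2n+1}$ is the strict maximum of the triple, so that the ordering conditions $a<b$ and $c<b$ both hold, and (ii) treating the base case $P_1=s_2$ separately, since it is not produced by Theorem~\ref{thm Pe} (which starts at $n>0$) but is a $*$-Markov polynomial by convention.
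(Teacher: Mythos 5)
Your proposal is correct and is essentially the paper's own argument: the corollary is an immediate consequence of Theorem \ref{thm Pe}, combined with checking that the triples $(3\psi_{2n-1},3\psi_{2n+1},6)$ satisfy the inequalities in the definition of a $*$-Markov polynomial (which follows from the strict monotonicity and positivity of the odd Pell numbers) and the convention that $s_2=P_1$ is a $*$-Markov polynomial. Your explicit verification of the ordering conditions and the separate treatment of the base case $P_1$ are exactly the details the paper leaves implicit behind its \qed.
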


\subsection{Formula for odd $*$-Pell polynomials}

\begin{thm}
For $n\geq 0$, we have
\bea
P_{4n+1}(\bs s) = s_2\sum_{i=0}^n(-1)^i\binom{2n-i}{i}(h_0h_1)^{n-i}s_3^{2i}
-s_1\sum_{i=0}^{n-1}(-1)^i\binom{2n-1-i}{i}h_1(h_0h_1)^{n-i-1}s_3^{2i+1}\,,
\eea
\bea
P_{4n+3}(\bs s)
=s_2\sum_{i=0}^{n}(-1)^i\binom{2n+1-i}{i}h_0(h_0h_1)^{n-i}s_3^{2i}
-s_1\sum_{i=0}^n(-1)^i\binom{2n-i}{i}(h_0h_1)^{n-i}s_3^{2i+1}\,.
\eea

\end{thm}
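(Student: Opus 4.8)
The plan is to argue by induction on $n$, exactly parallel to the proof of the explicit formula for the odd $*$-Fibonacci polynomials. The two families $P_{4n+1}$ and $P_{4n+3}$ are intertwined by the two specialized recurrences coming from \eqref{recP}, namely $P_{4n+3} = h_0 P_{4n+1} - s_3^2 P_{4n-1}$ and $P_{4n+5} = h_1 P_{4n+3} - s_3^2 P_{4n+1}$, where $h_0 = s_1^2 - s_2$ and $h_1 = s_2^2 - s_1 s_3$. First I would check the base case $n=0$: the proposed formula for $P_{4\cdot 0 + 1}$ collapses to the single term $s_2$, and the formula for $P_{4\cdot 0 + 3}$ collapses to $s_2 h_0 - s_1 s_3 = s_1^2 s_2 - s_2^2 - s_1 s_3$, matching the initial data recorded in \eqref{recP}.

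For the inductive step I would establish the two formulas in tandem. Assuming the formulas for $P_{4n+1}$ and for $P_{4n-1} = P_{4(n-1)+3}$, I substitute them into $P_{4n+3} = h_0 P_{4n+1} - s_3^2 P_{4n-1}$ and verify the claimed expression for $P_{4n+3}$. The computation splits cleanly along the overall prefactors $s_2$ and $s_1$. In the $s_2$-part, multiplying the $P_{4n+1}$ sum by $h_0$ and the $P_{4n-1}$ sum by $-s_3^2$, then reindexing the latter by $i \mapsto i+1$, produces two sums over the same monomials $h_0(h_0 h_1)^{n-i} s_3^{2i}$, whose binomial coefficients combine via Pascal's rule $\binom{2n-i}{i} + \binom{2n-i}{i-1} = \binom{2n+1-i}{i}$ into precisely the coefficient in the target formula for $P_{4n+3}$. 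The $s_1$-part is handled identically, the relevant Pascal identity now being $\binom{2n-1-i}{i} + \binom{2n-1-i}{i-1} = \binom{2n-i}{i}$. Running the same manipulation on $P_{4n+5} = h_1 P_{4n+3} - s_3^2 P_{4n+1}$, with $h_0$ and $h_1$ interchanged and the two coefficient families relabeled, then yields the formula for $P_{4n+5} = P_{4(n+1)+1}$, advancing the joint induction.

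The computation has no conceptual obstacle, so the only real work is bookkeeping, and the main points requiring care are threefold. First, I must track the factor $h_0 h_1$ correctly: the $s_1$-sum of $P_{4n+1}$ carries an isolated $h_1$, and multiplying it by $h_0$ restores the symmetric power $(h_0 h_1)^{n-i}$, so the monomials align with those coming from $P_{4n-1}$. Second, I must keep straight which parity of the index forces $h_0$ versus $h_1$, since these enter the two recurrences asymmetrically. Third, I must handle the boundary terms of the sums — the top index $i=n$ and the degenerate binomials $\binom{2n-i}{i-1}$ at $i=0$ — so that the reindexed summation ranges match with no spurious or missing terms. Once these are checked, the two displayed identities follow by the same argument, completing the induction.
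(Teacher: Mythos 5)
Your proposal is correct and follows essentially the same route as the paper: induction on $n$ anchored at $P_1,P_3$, substituting the inductive formulas into the two interleaved recurrences $P_{4n+3}=h_0P_{4n+1}-s_3^2P_{4n-1}$ and $P_{4n+5}=h_1P_{4n+3}-s_3^2P_{4n+1}$, and combining coefficients after reindexing via exactly the Pascal identities you cite (the paper carries out the first recurrence in detail and dispatches the second "similarly"). Your bookkeeping points — the isolated $h_1$ factor being absorbed into $(h_0h_1)^{n-i}$, the parity of $h_0$ versus $h_1$, and the boundary terms $i=0$ and $i=n$ — are precisely the checks the paper's computation performs, so nothing is missing.
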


\proof
The  proof is by induction. First one checks that the formulas correctly reproduce  $P_1, P_3$. Then 
$h_0 P_{4n+1} - s_3^2 P_{4n-1}$ equals
{\small
\bea
&&
s_2\sum_{i=0}^n(-1)^i\binom{2n-i}{i}h_0(h_0h_1)^{n-i}s_3^{2i}
-s_1\sum_{i=0}^{n-1}(-1)^i\binom{2n-1-i}{i}
(h_0h_1)^{n-i}s_3^{2i+1}
\\
&&
-s_2\sum_{i=0}^{n-1}(-1)^i\binom{2n-1-i}{i}h_0(h_0h_1)^{n-i-1}s_3^{2i+2}
+s_1\sum_{i=0}^{n-1}(-1)^i\binom{2n-i-2}{i}(h_0h_1)^{n-i-1}s_3^{2i+3}.
\eea
}
We have
{\small
\bea
&&
s_2\left(\sum_{i=0}^n(-1)^i\binom{2n-i}{i}h_0(h_0h_1)^{n-i}
s_3^{2i}+\sum_{i=0}^{n-1}(-1)^{i+1}\binom{2n-1-i}{i}h_0(h_0h_1)^{n-i-1}s_3^{2i+2}\right)
\\
&&
=s_2\left(\sum_{i=0}^{n}(-1)^i\binom{2n+1-i}{i}h_0(h_0h_1)^{n-i}s_3^{2i}\right),
\eea
\bea
&&
s_1\left(\sum_{i=0}^{n-1}(-1)^{i+1}\binom{2n-1-i}{i}
(h_0h_1)^{n-i}s_3^{2i+1}+\sum_{i=0}^{n-1}(-1)^i\binom{2n-i-2}{i}(h_0h_1)^{n-i-1}s_3^{2i+3}\right)
\\
&&
=-s_1\left(\sum_{i=0}^n(-1)^i\binom{2n-i}{i}(h_0h_1)^{n-i}s_3^{2i+1}\right)\,.
\eea}
Hence, $h_0P_{4n+1}-s_3^2P_{4n-1}=P_{4n+3}$. The other identity is proved similarly.
\endproof

\begin{cor}
For the ordinary Pell numbers, we have
\begin{align}
\label{pe1}
\psi_{4n+1}=&\ \sum _{i=0}^n (-1)^i \binom{2 n-i}{i}6^{2 n-2 i} -\sum _{i=0}^{n-1} (-1)^{i} \binom{2 n-i-1}{i} 6^{2 n-2i-1},\\
\label{pe2}
\psi_{4n+2}=&\ \frac{1}{2} \sum _{i=0}^n (-1)^i \binom{2 n-i}{i} 6^{2 n-2i+1}-\sum _{i=0}^n (-1)^i  \binom{2 n-i}{i}6^{2 n-2 i},\\
\label{pe3}
\psi_{4n+3}=&\ \sum _{i=0}^n (-1)^i  \binom{2n-i+1}{i}6^{2 n-2i+1}-\sum _{i=0}^n (-1)^i  \binom{2 n-i}{i}6^{2 n-2 i},\\
\label{pe4}
\psi_{4n+4}=&\ 2 \sum _{i=0}^n (-1)^i \binom{2 n-i+1}{i}6^{2 n-2i+1} +\frac{1}{2} \sum _{i=0}^n (-1)^i \binom{2 n-i}{i}6^{2 n-2i+1} \\
\notag&\ -3 \sum _{i=0}^n (-1)^i\binom{2 n-i}{i} 6^{2 n-2 i}.
\end{align}
\end{cor}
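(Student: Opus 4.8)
The plan is to derive \eqref{pe1} and \eqref{pe3} by direct specialization of the explicit polynomial formulas, and then to obtain \eqref{pe2} and \eqref{pe4} from the ordinary Pell recurrence together with elementary binomial bookkeeping. This mirrors exactly the two-step structure used for the Fibonacci corollary, where \eqref{F_{4k+1}i} and \eqref{F_{4k+3}i} came straight from the polynomial formulas and \eqref{F_{4k+2}i}, \eqref{F_{4k+4}i} came from the scalar recurrences.

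First I would evaluate the two explicit formulas for $P_{4n+1}(\bs s)$ and $P_{4n+3}(\bs s)$ at $\bs s=\bs s_o=(3,3,1)$. At this point $h_0=s_1^2-s_2=6$ and $h_1=s_2^2-s_1s_3=6$, so $h_0h_1=36=6^2$, while $s_3=1$ and $s_1=s_2=3$. Substituting, the leading factor $s_1=s_2=3$ cancels the factor $3$ coming from $\evo(P_{2n+1})=3\psi_{2n+1}$, each $(h_0h_1)^{n-i}s_3^{2i}$ collapses to $6^{2n-2i}$, and the stray factors $h_0$, respectively $h_1\,s_3$, contribute exactly one additional power of $6$. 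This turns the two polynomial identities into \eqref{pe1} and \eqref{pe3} verbatim, with no rearrangement needed.

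For the even-index cases I would invoke the standard Pell relation $\psi_{k+1}=2\psi_k+\psi_{k-1}$, which specializes to $\psi_{4n+2}=\tfrac12(\psi_{4n+3}-\psi_{4n+1})$ and $\psi_{4n+4}=2\psi_{4n+3}+\psi_{4n+2}$. Substituting \eqref{pe1} and \eqref{pe3} into the first of these, the two $\binom{2n-i}{i}6^{2n-2i}$ contributions combine at once into the second sum of \eqref{pe2}; the surviving combination of the $\binom{2n+1-i}{i}6^{2n-2i+1}$ and $\binom{2n-1-i}{i}6^{2n-2i-1}$ sums then reduces to $\tfrac12\sum_{i=0}^n(-1)^i\binom{2n-i}{i}6^{2n-2i+1}$ after shifting the index of the second sum by one and applying Pascal's rule $\binom{2n+1-i}{i}=\binom{2n-i}{i}+\binom{2n-i}{i-1}$ (the boundary term $\binom{2n}{-1}=0$ at $i=0$ handles the missing summand). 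For \eqref{pe4} I would substitute \eqref{pe2} and \eqref{pe3} into $\psi_{4n+4}=2\psi_{4n+3}+\psi_{4n+2}$; here no binomial identity is required, as the three resulting sums already match the three terms of \eqref{pe4} term by term.

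The only step that is not pure bookkeeping is the single Pascal-rule collapse producing \eqref{pe2}; everything else is substitution and collection of like powers. I expect no genuine obstacle, though I would verify the base case $n=0$ and the boundary index $i=0$ to guard against off-by-one errors in the empty sums.
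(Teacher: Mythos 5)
Your proposal is correct and takes essentially the same route as the paper: \eqref{pe1} and \eqref{pe3} come from evaluating the explicit formulas for $P_{4n+1}$, $P_{4n+3}$ at $\bs s_o=(3,3,1)$ (where $h_0=h_1=6$, $s_3=1$) and dividing by $3$, while \eqref{pe2} and \eqref{pe4} come from the Pell recurrence $\psi_{k+1}=2\psi_k+\psi_{k-1}$. The paper's proof is just a terser version of this; your index-shift and Pascal-rule computation for \eqref{pe2} correctly supplies the bookkeeping the paper leaves implicit.
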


\proof
 Formulas \eqref{pe2}, \eqref{pe4} follow from 
the recurrence relation for Pell numbers
\[
\psi_{n+1}=2\psi_n+\psi_{n-1},\quad n\geq 1.\makeatletter\displaymath@qed
\]

\subsection{Limiting Newton polygons of odd $*$-Pell polynomials}
\label{sec NPe}

\begin{lem}

The odd $*$-Pell polynomials  $P_{4n+1}$ and $P_{4n+3}$ are of bi-degree
$(4n+1, 6n+2)$ and $(4n+3, 6n+4)$, respectively.
\qed

\end{lem}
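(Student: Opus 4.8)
The plan is to prove both statements by induction on $n$, using the two recurrences
\[
P_{4n+3}=(s_1^2-s_2)\,P_{4n+1}-s_3^2\,P_{4n-1},\qquad
P_{4n+5}=(s_2^2-s_1s_3)\,P_{4n+3}-s_3^2\,P_{4n+1},
\]
with base data $P_1=s_2$ and $P_3=s_1^2s_2-s_1s_3-s_2^2$ of bi-degrees $(1,2)$ and $(3,4)$. I would track the two components of the bi-degree separately, since they behave quite differently under these recurrences.

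First I would dispose of the quasi-homogeneous degree $q={\rm Deg}$. By Theorem~\ref{thm Pe} each $P_{2n+1}$ is the middle term of a reduced polynomial presentation, hence a $*$-Markov polynomial, and therefore quasi-homogeneous with respect to weights $(1,2,3)$; so $q$ is well defined and can be tracked monomial by monomial. Since $s_1^2-s_2$, $s_2^2-s_1s_3$, and $s_3^2$ are quasi-homogeneous of degrees $2$, $4$, and $6$, the inductive hypotheses ${\rm Deg}\,P_{4n+1}=6n+2$ and ${\rm Deg}\,P_{4n-1}=6n-2$ make every summand of the first recurrence of quasi-homogeneous degree $6n+4$, whence ${\rm Deg}\,P_{4n+3}=6n+4$; substituting into the second recurrence then yields ${\rm Deg}\,P_{4n+5}=6(n+1)+2$, closing this half.

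The homogeneous degree $d={\rm deg}$ is where the real work lies, because the multipliers are not homogeneous and one must rule out cancellation of the top total-degree monomials. The plan is to compare total degrees of the three terms in each recurrence: for $P_{4n+3}$ one has ${\rm deg}(s_3^2P_{4n-1})=4n+1$ and ${\rm deg}(s_2P_{4n+1})\le 4n+2$, both strictly below $4n+3$, so the entire degree-$(4n+3)$ part of $P_{4n+3}$ equals $s_1^2$ times the top homogeneous part of $P_{4n+1}$. Because $\Z[s_1,s_2,s_3]$ is an integral domain, multiplication by the nonzero polynomial $s_1^2$ is injective, so this top part survives and ${\rm deg}\,P_{4n+3}=4n+3$. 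The same bookkeeping for the second recurrence shows the degree-$(4n+5)$ part of $P_{4n+5}$ is $(s_2^2-s_1s_3)$ times the top part of $P_{4n+3}$, again nonzero by injectivity, giving ${\rm deg}\,P_{4n+5}=4(n+1)+1$. I expect this non-cancellation of leading terms to be the only genuine obstacle, and injectivity of multiplication by a nonzero polynomial is exactly what removes it.

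As a cross-check (and an alternative that bypasses the induction entirely), I would read both degrees straight off the explicit formulas of the preceding theorem: in each formula only the $i=0$ term of the first sum attains the maximal total degree, with top homogeneous part $s_2s_1^{2n}(s_2^2-s_1s_3)^n$ for $P_{4n+1}$ and $s_2s_1^{2n+2}(s_2^2-s_1s_3)^n$ for $P_{4n+3}$, both manifestly nonzero, while a direct degree count shows every monomial there carries quasi-homogeneous degree $6n+2$, respectively $6n+4$.
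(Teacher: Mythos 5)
Your proof is correct. The paper itself states this lemma with no proof (it is marked with \qed, being regarded as immediate), and your argument supplies exactly the natural justification: induction on the two recurrences, with the quasi-homogeneous degree tracked termwise and the homogeneous degree protected from cancellation because the top component of $P_{4n+3}$ (resp.\ $P_{4n+5}$) is $s_1^2$ (resp.\ $s_2^2-s_1s_3$) times the nonzero top component of the previous polynomial, in the integral domain $\Z[s_1,s_2,s_3]$. Your cross-check against the explicit formulas of the preceding theorem — identifying the unique maximal-degree term $s_2(h_0h_1)^n$, resp.\ $s_2h_0(h_0h_1)^n$, and verifying every monomial has quasi-degree $6n+2$, resp.\ $6n+4$ — is also correct and would by itself constitute a complete (non-inductive) proof.
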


The Newton polygon $N_{P_{4n+1}}$  of $P_{4n+1}$  contains
the points
$(0,1, 2n), (2n, 2n+1, 0), (0, 3n+1,0), (3n, 1, n)$.
Hence the 
 limit  of  $\tilde N_{P_{4n+1}}$ as $n\to\infty$
contains the points $(0,0)$, $(1/2, 1/2)$, $(0, 3/4)$, $(3/4,0)$.
Therefore the limit of $\tilde N_{P_{4n+1}}$  is the 
projected quadrilateral $\tilde N_\infty$.

\vsk.2>
Similarly one checks that the limit of $\tilde N_{P_{4n+3}}$ as $n\to\infty$ is the 
projected quadrilateral $\tilde N_\infty$, 
 see Section \ref{sec dti}.

\subsection{Generating function}
Introduce the generating power series of odd $*$-Pell polynomials
\bean
\mathcal P(\bs s,t):=\sum_{n=0}^\infty P_{2n+1}(\bs s)t^{2n+1}.
\eean

\begin{thm}
We have
\bean
\label{genP}
\mathcal P(\bs s, t)=\frac{-s_1 s_3^3 t^7+\left(\left(s_1^2+s_2\right) s_3^2-s_1 s_2^2 s_3\right) t^5+\left(\left(s_1^2-s_2\right) s_2-s_1 s_3\right) t^3+s_2 t}{s_3^4 t^8+\left(s_1^3s_3 -s_1s_2 s_3 +s_2^3-s_1^2 s_2^2+2 s_3^2\right) t^4+1}.
\eean
\end{thm}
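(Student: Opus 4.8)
The plan is to follow verbatim the strategy used for the odd $*$-Fibonacci generating function \eqref{genF}, splitting the series by the residue of the index modulo $4$. Write
\[
\mathcal P(\bs s,t)=\mathcal P_1(\bs s,t)+\mathcal P_2(\bs s,t),\qquad
\mathcal P_1:=\sum_{k\ge0}P_{4k+1}(\bs s)\,t^{4k+1},\qquad
\mathcal P_2:=\sum_{k\ge0}P_{4k+3}(\bs s)\,t^{4k+3}.
\]
First I would record the two four-step recurrences obtained from \eqref{recP}, namely $P_{4k+3}=h_0P_{4k+1}-s_3^2P_{4k-1}$ (valid for $k\ge1$) and $P_{4k+5}=h_1P_{4k+3}-s_3^2P_{4k+1}$ (valid for $k\ge0$), where $h_0=s_1^2-s_2$ and $h_1=s_2^2-s_1s_3$.

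To make the first recurrence hold also at $k=0$ I would set $P_{-1}:=s_1/s_3=(P_1)^*$; this is exactly the value forced by \eqref{recP}, since $h_0P_1-s_3^2P_{-1}=(s_1^2-s_2)s_2-s_1s_3=P_3$. Substituting each recurrence into the corresponding series, extracting the lowest-order term $P_1t=s_2t$ from $\mathcal P_1$, and reindexing the shifted sums, I obtain the linear system
\[
\begin{pmatrix}1+s_3^2t^4 & -h_1t^2\\ -h_0t^2 & 1+s_3^2t^4\end{pmatrix}
\begin{pmatrix}\mathcal P_1\\ \mathcal P_2\end{pmatrix}
=\begin{pmatrix}s_2t\\ -s_1s_3t^3\end{pmatrix},
\]
the exact $*$-Pell analogue of the system solved in the $*$-Fibonacci case.

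Solving by Cramer's rule, the determinant equals $(1+s_3^2t^4)^2-h_0h_1t^4$; expanding $h_0h_1=s_1^2s_2^2-s_1^3s_3-s_2^3+s_1s_2s_3$ gives the claimed denominator $s_3^4t^8+(s_1^3s_3-s_1s_2s_3+s_2^3-s_1^2s_2^2+2s_3^2)t^4+1$. The two numerators come out as $s_2t+(s_2s_3^2-s_1s_3h_1)t^5$ for $\mathcal P_1$ and $(s_2h_0-s_1s_3)t^3-s_1s_3^3t^7$ for $\mathcal P_2$; inserting $h_0,h_1$, using $s_1s_3h_1=s_1s_2^2s_3-s_1^2s_3^2$ and $s_2h_0=s_1^2s_2-s_2^2$, and adding $\mathcal P_1+\mathcal P_2$ reproduces the numerator of \eqref{genP} term by term.

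The only genuinely delicate point is the boundary bookkeeping in passing from the recurrences to the functional equations for $\mathcal P_1,\mathcal P_2$: one must pin down $P_{-1}$ so that exactly the single term $s_2t$ survives on the right-hand side while every shifted series telescopes correctly into $(1+s_3^2t^4)\mathcal P_i$. Once the $2\times2$ system is set up, the rest is a routine polynomial verification, structurally identical to the argument already given for \eqref{genF}, so no new ideas are needed.
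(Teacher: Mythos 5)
Your proposal is correct and takes essentially the same approach as the paper: the paper's own proof is stated only as ``similar to the proof of the corresponding theorem on the odd Fibonacci polynomials,'' and your argument — splitting $\mathcal P$ by index mod $4$, fixing $P_{-1}=s_1/s_3=(P_1)^*$ so the recurrences telescope, and solving the resulting $2\times 2$ system $\begin{pmatrix}1+s_3^2t^4 & -h_1t^2\\ -h_0t^2 & 1+s_3^2t^4\end{pmatrix}\begin{pmatrix}\mathcal P_1\\ \mathcal P_2\end{pmatrix}=\begin{pmatrix}s_2t\\ -s_1s_3t^3\end{pmatrix}$ by Cramer's rule — is exactly that analogy carried out in detail. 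All the polynomial identities (determinant giving the stated denominator via $h_0h_1=s_1^2s_2^2-s_1^3s_3-s_2^3+s_1s_2s_3$, and the two numerators) check out.
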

\proof
The proof is similar to the proof of the corresponding theorem on the odd Fibonacci polynomials.
\endproof

\begin{cor}
For any $n\geq 0$, we have
\bean
P_{2n+1}(\bs s)=\frac{1}{(2n+1)!}\left.\frac{\partial^{2n+1}}{\partial t^{2n+1}}\right|_{t=0}\mathcal P(\bs s,t),
\eean
where $\mc P(\bm s,t)$ is given by \eqref{genP}.
\end{cor}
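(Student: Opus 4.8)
The plan is to reproduce, in the Pell setting, the argument used to establish the generating function \eqref{genF} of the odd $*$-Fibonacci polynomials. First I would split $\mathcal P(\bs s,t)$ according to the residue of the index modulo $4$,
\[
\mathcal P(\bs s,t)=\mathcal P_1(\bs s,t)+\mathcal P_2(\bs s,t),\qquad
\mathcal P_1:=\sum_{k=0}^\infty P_{4k+1}(\bs s)\,t^{4k+1},\quad
\mathcal P_2:=\sum_{k=0}^\infty P_{4k+3}(\bs s)\,t^{4k+3}.
\]
This is the natural decomposition because the coefficient $h_n$ in the recursion $P_{2n+3}=h_nP_{2n+1}-s_3^2P_{2n-1}$ depends only on the parity of $n$: it equals $h_0=s_1^2-s_2$ on the class $4k+1$ and $h_1=s_2^2-s_1s_3$ on the class $4k+3$, giving the two period-two recurrences $P_{4k+3}=h_0P_{4k+1}-s_3^2P_{4k-1}$ and $P_{4k+5}=h_1P_{4k+3}-s_3^2P_{4k+1}$.

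Next I would feed these two recurrences into the series: multiplying the first by $t^{4k+3}$ and the second by $t^{4k+5}$ and summing over $k\ge 0$, each recurrence becomes, after the appropriate index shift, a linear relation between $\mathcal P_1$ and $\mathcal P_2$. The boundary of the shift in the first sum produces the term $P_{-1}$, and the shift in the second produces $P_1$, so the result is the system
\[
\begin{pmatrix}
1+s_3^2t^4 & -h_1t^2\\
-h_0t^2 & 1+s_3^2t^4
\end{pmatrix}
\begin{pmatrix}\mathcal P_1\\ \mathcal P_2\end{pmatrix}
=
\begin{pmatrix} P_1\,t\\ -s_3^2P_{-1}\,t^3\end{pmatrix}.
\]
Here I would use $P_1=s_2$ and $P_{-1}=s_1/s_3$; the latter value is forced by reversing the recursion at $n=0$, since $P_{-1}=(h_0P_1-P_3)/s_3^2=s_1s_3/s_3^2$, and it is consistent with the Pell analog of the identity $F_{-2n-1}=(F_{2n+1})^*$.

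Finally I would solve the system by Cramer's rule. The determinant is $D=(1+s_3^2t^4)^2-h_0h_1t^4$, and since $h_0h_1=(s_1^2-s_2)(s_2^2-s_1s_3)=s_1^2s_2^2-s_1^3s_3-s_2^3+s_1s_2s_3$, expanding $D$ reproduces exactly the denominator $s_3^4t^8+(s_1^3s_3-s_1s_2s_3+s_2^3-s_1^2s_2^2+2s_3^2)t^4+1$ of \eqref{genP}. Adding the two numerators, substituting $P_1=s_2$, $P_{-1}=s_1/s_3$, $h_0=s_1^2-s_2$, $h_1=s_2^2-s_1s_3$, and collecting powers of $t$ then yields the numerator of \eqref{genP}, the coefficients of $t,\,t^3,\,t^5,\,t^7$ matching $s_2$, $(s_1^2-s_2)s_2-s_1s_3$, $(s_1^2+s_2)s_3^2-s_1s_2^2s_3$, and $-s_1s_3^3$ respectively. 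The only real obstacle is bookkeeping rather than conceptual: one must track carefully the index shifts that generate the boundary terms (in particular the appearance of $P_{-1}$ on the class $4k+3$) and then carry out the polynomial simplification of the combined numerator without sign errors. The overall structure parallels the Fibonacci computation exactly, with the scalar coefficients $s_1,s_2,s_3$ of the Fibonacci matrix replaced here by $h_0,\,h_1,\,s_3^2$.
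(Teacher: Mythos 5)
Your proposal is correct and takes essentially the same approach as the paper: the paper's proof of the generating-function identity \eqref{genP} is declared to be ``similar to'' the odd $*$-Fibonacci case, and your mod-$4$ splitting, linear system with entries $1+s_3^2t^4$, $-h_0t^2$, $-h_1t^2$, boundary values $P_1=s_2$, $P_{-1}=s_1/s_3$, and Cramer's-rule solution is precisely that adaptation, with all coefficients matching \eqref{genP}. The corollary then follows, as you implicitly use, by reading off the Taylor coefficients at $t=0$ of the rational function \eqref{genP}, which is regular there since its denominator has constant term $1$.
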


\begin{rem}
At $\bs s=\bs s_o$, the generating function $\mathcal P(\bs s,t)$ reduces to
\bean
\mathcal P(\bs s_o,t)=\frac{3 \left(t-t^3\right)}{t^4-6 t^2+1}=3(t+5 t^3+29 t^5+169 t^7+985 t^9+\dots),
\eean
namely the generating series of odd Pell numbers multiplied by 3.
\end{rem}

\subsection{Other properties of $*$-Pell polynomials}

The $*$-Pell polynomials have properties similar to the properties of $*$-Fibonacci polynomials discussed in 
Section \ref{sec Fib}.  In particular one easily obtains a Binet-type formula 
like in Corollary \ref{cor BiF}. 

As examples of properties of $*$-Pell polynomials we formulate  the continued fraction property and an analog of the Cassini
identity.

\begin{thm}
For $n\geq 1$ we have
\bea
\frac{P_{2n+3}}{P_{2n+1}} = 
\big[h_n;\frac{-s_3^2}{h_{n-1}},\frac{-s^2_3}{h_{n-2}},\dots,
\frac{-s^2_3}{h_{1}},\frac{-s^2_3}{h_{0}},\frac{-s_1s_3}{s_2}\big]\,.
\eea
\qed

\end{thm}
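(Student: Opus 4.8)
The plan is to mirror the proof of Theorem \ref{thm 10.16} for the odd $*$-Fibonacci polynomials, reading the continued fraction directly off the defining three-term recurrence. Work in the field $\Q(\bs s)$ of rational functions, where each $P_{2n+1}(\bs s)$ is a nonzero polynomial so that the ratios below are well defined, and set $r_n := P_{2n+3}/P_{2n+1}$ for $n\geq 0$. First I would extract the one-step recursion for $r_n$: dividing the recurrence $P_{2n+3} = h_n P_{2n+1} - s_3^2 P_{2n-1}$ (valid for $n\geq 1$) by $P_{2n+1}$ gives
\[
r_n \;=\; h_n - s_3^2\,\frac{P_{2n-1}}{P_{2n+1}} \;=\; h_n + \frac{-s_3^2}{r_{n-1}},
\qquad n\geq 1,
\]
which is exactly the relation that peels off one layer $\frac{-s_3^2}{h_{n-1}}$ of the claimed continued fraction.

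Next I would compute the base case $r_0 = P_3/P_1$ from the initial data $P_1 = s_2$, $P_3 = s_1^2 s_2 - s_1 s_3 - s_2^2$. Since $h_0 = s_1^2 - s_2$,
\[
r_0 \;=\; \frac{s_1^2 s_2 - s_1 s_3 - s_2^2}{s_2}
\;=\; (s_1^2 - s_2) - \frac{s_1 s_3}{s_2}
\;=\; h_0 + \frac{-s_1 s_3}{s_2},
\]
which identifies the two terminal layers $\frac{-s_3^2}{h_0},\,\frac{-s_1 s_3}{s_2}$ of the expansion.

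Finally, iterating the one-step relation from $r_n$ down to $r_0$ and substituting the base case unwinds the ratio into the nested fraction
\[
r_n = h_n + \cfrac{-s_3^2}{h_{n-1} + \cfrac{-s_3^2}{\ddots\, + \cfrac{-s_3^2}{h_0 + \cfrac{-s_1 s_3}{s_2}}}},
\]
which is precisely $\big[h_n;\frac{-s_3^2}{h_{n-1}},\dots,\frac{-s_3^2}{h_0},\frac{-s_1 s_3}{s_2}\big]$. A short induction on $n$ makes this rigorous, with the base case $n=1$ furnished by the computation of $r_0$ together with $r_1 = h_1 + (-s_3^2)/r_0$. I do not expect any genuine obstacle: the proof is purely formal once the recurrence is in hand. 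The one place demanding care is the asymmetry at the bottom of the expansion, where the tail layers $\frac{-s_3^2}{h_j}$ are produced by the recurrence while the final layer $\frac{-s_1 s_3}{s_2}$ instead comes from the explicit form of $P_3/P_1$; keeping the indexing of the $h_j$ and the constant numerators $s_3^2$ straight through the termination step is where a sign or off-by-one slip could enter.
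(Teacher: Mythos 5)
Your proof is correct and is essentially the paper's own argument: the paper states this theorem with no written proof, as the analog of the continued fraction formula for the odd $*$-Fibonacci polynomials (Theorem \ref{thm 10.16}), whose proof is the one-line remark that the formula follows from the recurrence relations. Your base case $r_0 = P_3/P_1 = h_0 - \frac{s_1 s_3}{s_2}$ and the one-step unwinding $r_n = h_n + \frac{-s_3^2}{r_{n-1}}$ for $n\geq 1$ supply exactly the details the paper leaves to the reader.
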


\begin{thm}
The odd $*$-Pell polynomials satisfy the following  identities:
\bea
&&
h_{n+1}P_{2n+3}^2 - h_n P_{2n+5}P_{2n+1} = 
s_3^2(h_{n}P_{2n+1}^2 - h_{n-1} P_{2n+3}P_{2n-1}) 
\\
&&
=
 s_3^{2n-1} \Big((s_1^2-s_2)s_2^2s_3\, -\, s_1\,(s_2^2-s_1s_3) \big((s_1^2-s_2)s_2 -s_1s_3\big)\Big)\,.
\eea
\qed
\end{thm}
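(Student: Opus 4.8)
The plan is to follow verbatim the structure of the proof of the Cassini identity for the odd $*$-Fibonacci polynomials given above, since the two statements differ only in the shift ($s_3$ becomes $s_3^2$) and the parity-dependent weight ($g_n$ becomes $h_n$). I introduce the Cassini difference
\[
D_n := h_n P_{2n+1}^2 - h_{n-1}P_{2n+3}P_{2n-1},
\]
so that the middle member of the asserted chain is $s_3^2 D_n$ and the left member is $D_{n+1}$. The argument then splits into two independent parts: a telescoping identity $D_{n+1}=s_3^2 D_n$, and a single base-case evaluation that fixes the closed form.

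For the telescoping step I substitute the defining recurrence $P_{2n+3}=h_n P_{2n+1}-s_3^2 P_{2n-1}$ into one of the two factors of $P_{2n+3}^2$ appearing in $D_{n+1}$, collect the terms carrying $P_{2n+1}$, and then invoke the next instance of the recurrence, $P_{2n+5}=h_{n+1}P_{2n+3}-s_3^2 P_{2n+1}$, to replace $h_{n+1}P_{2n+3}-P_{2n+5}$ by $s_3^2 P_{2n+1}$. This yields
\bea
D_{n+1}&=&h_{n+1}P_{2n+3}\big(h_n P_{2n+1}-s_3^2 P_{2n-1}\big)-h_n P_{2n+5}P_{2n+1}\\
&=&h_n P_{2n+1}\big(h_{n+1}P_{2n+3}-P_{2n+5}\big)-s_3^2 h_{n+1}P_{2n+3}P_{2n-1}\\
&=&s_3^2\big(h_n P_{2n+1}^2-h_{n+1}P_{2n+3}P_{2n-1}\big).
\eea
The crucial point, exactly as in the Fibonacci case, is that $h_n$ depends only on the parity of its index, so $h_{n+1}=h_{n-1}$; the bracket is therefore precisely $D_n$, and the telescoping relation follows.

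It remains to pin down the closed form by one evaluation. Iterating $D_{n+1}=s_3^2 D_n$ reduces everything to computing $D_0=h_0 P_1^2-h_{-1}P_{-1}P_3$. Reversing the Pell recurrence once (exactly as the odd $*$-Fibonacci polynomials are extended to negative indices) gives $P_{-1}=(h_0 P_1-P_3)/s_3^2=s_1/s_3=(P_1)^*$, while the parity rule gives $h_{-1}=h_1=s_2^2-s_1 s_3$ and $h_0=s_1^2-s_2$. Recognizing that $P_3=(s_1^2-s_2)s_2-s_1 s_3$ is exactly the inner factor of the asserted right-hand side, a one-line expansion shows
\[
s_3 D_0=(s_1^2-s_2)s_2^2 s_3-s_1\,(s_2^2-s_1 s_3)\big((s_1^2-s_2)s_2-s_1 s_3\big),
\]
which is the claimed bracket; this fixes the overall power of $s_3$.

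I expect the only real obstacle to be bookkeeping: keeping the parity-dependent weights $h_n$ consistent through the two substitutions so that $h_{n+1}$ genuinely collapses to $h_{n-1}$, and correctly handling the single negative-index term $P_{-1}$ (equivalently, the initial power of $s_3$) in the base case. Once these are tracked, the telescoping collapse and the base-case identity are both routine verifications, and the second asserted equality then follows by the induction built from $D_{n+1}=s_3^2 D_n$.
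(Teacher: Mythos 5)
Your proposal is correct and is essentially the paper's own proof: the paper proves the Fibonacci Cassini identity by exactly this two-step argument (telescoping via one substitution of the recurrence together with the parity identity $g_{n+1}=g_{n-1}$, then a base case using the negative-index polynomial $F_{-1}=s_2/s_3$), and for the Pell case it gives no separate argument, intending precisely the adaptation $s_3\mapsto s_3^2$, $g_n\mapsto h_n$, $F\mapsto P$ that you carried out. Your base case $D_0=h_0P_1^2-h_{-1}P_{-1}P_3=s_3^{-1}\big((s_1^2-s_2)s_2^2s_3-s_1(s_2^2-s_1s_3)((s_1^2-s_2)s_2-s_1s_3)\big)$ gives $D_n=s_3^{2n-1}\big(\cdots\big)$, which matches the stated closed form (read, as in the Fibonacci theorem, as the value of the parenthesized expression $h_nP_{2n+1}^2-h_{n-1}P_{2n+3}P_{2n-1}$).
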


\section{$*$-Markov group actions}
\label{sec *M act}

In this section we study the action of the $*$-Markov group
on $\C^6$ with coordinates 
\\
$(a,a^*$, $b,b^*$, $c,c^*)$.
 It is convenient to denote these coordinates by
$(x_1,\dots,x_6)$.

\subsection{Space $\C^6$ with involution and polynomials}
\label{sec 12.1}

Consider $\C^6$ with coordinates $\bm x=(x_1,\dots,x_6)$, involution
\bea
\nu:\C^6\to\C^6, \qquad (x_1, x_2,x_3, x_4,x_5, x_6)\mapsto (x_2,x_1,x_4,x_3,x_6,x_5),
\eea
polynomials
\bea
H_1=x_1x_2+x_3x_4+x_5x_6-x_1x_3x_5,
\qquad
H_2=x_1x_2+x_3x_4+x_5x_6-x_2x_4x_6.
\eea
The $*$-Markov group $\Ga_M$ acts on $\C^6$ by the formulas
\bean\notag
\la_{i,j}\colon&& \bm x\mapsto \left((-1)^i x_1,(-1)^i x_2,(-1)^{i+j} x_3, 
(-1)^{i+j} x_4, (-1)^j x_5, (-1)^j x_6\right),
\\
\notag
\sigma_1\colon &&\bm x\mapsto(x_3,x_4,x_1,x_2,x_5,x_6),
\\
\notag
\si_2\colon &&\bm x\mapsto (x_1,x_2,x_5,x_6,x_3,x_4),\\
\label{tt1}
\tau_1\colon &&\bm x\mapsto (-x_2,\, -x_1,\, x_6,\, x_5,\, x_4-x_1x_5,\, x_3-x_2x_6), \\
\label{tt2}
\tau_2\colon &&\bm x\mapsto (x_4,\, x_3,\, x_2-x_3x_5,\, x_1-x_4 x_6,\, -x_6,\, -x_5),
\eean
and the elements $\mu_{i,j}$ act on $\C^6$ by the identity maps.

\vsk.2>
The action of the $*$-Markov group on $\C^6$ commutes with the involution $\nu$.

\begin{lem}
\label{lem dd}

The $\Ga_M$-action   preserves each of the polynomials $H_1, H_2$, and 
\bea
\nu\pb H_1=H_2, \qquad \nu\pb H_2=H_1.
\eea
Hence
 the differential forms 
$dH_1$, $dH_2$, $ dH_1\wedge dH_2$
are $\Ga_M$-invariant,  and $ dH_1\wedge dH_2$ is $\nu$ anti-invariant.
\qed
\end{lem}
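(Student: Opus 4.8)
The plan is to verify the invariance directly on generators, using the commutation of the $\Ga_M$-action with $\nu$ (recorded just above the lemma) to halve the computation. First I would establish the purely algebraic relation $\nu\pb H_1=H_2$: substituting $(x_2,x_1,x_4,x_3,x_6,x_5)$ into $H_1$ leaves each quadratic product $x_ix_{i+1}$ unchanged (they are symmetric) and carries the cubic monomial $x_1x_3x_5$ to $x_2x_4x_6$, which is precisely $H_2$. Since $\nu$ is an involution, a second application gives $\nu\pb H_2=H_1$.

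Next I would show $g\pb H_1=H_1$ for each generator $g$ of $\Ga_M$; the corresponding statement for $H_2$ is then automatic. Indeed, because $g\nu=\nu g$ the induced pullbacks commute, so $g\pb H_2=g\pb\nu\pb H_1=\nu\pb g\pb H_1=\nu\pb H_1=H_2$. The diagonal generators $\la_{i,j}$ are immediate: every quadratic term acquires an even power of $-1$, and the cubic $x_1x_3x_5$ picks up $(-1)^{i+(i+j)+j}=1$. The permutations $\si_1,\si_2$ merely interchange the symmetric blocks $x_1x_2,\,x_3x_4,\,x_5x_6$ while fixing the monomial $x_1x_3x_5$, and the $\mu_{i,j}$ act trivially.

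The one genuinely computational point, and the main obstacle, is the braid generators $\tau_1,\tau_2$, whose nonlinearity creates spurious cubic and quartic terms. For $\tau_1$ I would substitute $(-x_2,-x_1,x_6,x_5,x_4-x_1x_5,x_3-x_2x_6)$ into $H_1$. The first two quadratic products reproduce $x_1x_2$ and $x_5x_6$; expanding $(x_4-x_1x_5)(x_3-x_2x_6)$ yields $x_3x_4-x_2x_4x_6-x_1x_3x_5+x_1x_2x_5x_6$; and the cubic contribution $-(-x_2)(x_6)(x_4-x_1x_5)=x_2x_4x_6-x_1x_2x_5x_6$ cancels exactly the extraneous $-x_2x_4x_6$ and $+x_1x_2x_5x_6$, leaving $H_1$. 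The verification for $\tau_2$ is the same computation with the outer blocks exchanged, so I would simply note it proceeds identically. This is the polynomial strengthening, to all of $\C^6$, of the fact from Proposition \ref{prop inv} that these maps preserve the $*$-Markov equation.

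Finally, the assertions about forms are formal. From $g\pb H_i=H_i$ and the commutation $d\circ g\pb=g\pb\circ d$ I get $g\pb(dH_i)=dH_i$ for $i=1,2$ and all $g\in\Ga_M$, whence $g\pb(dH_1\wedge dH_2)=dH_1\wedge dH_2$. From $\nu\pb H_1=H_2$ and $\nu\pb H_2=H_1$ I get $\nu\pb(dH_1\wedge dH_2)=dH_2\wedge dH_1=-\,dH_1\wedge dH_2$, which is the claimed $\nu$-anti-invariance.
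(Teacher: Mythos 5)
Your proof is correct and is exactly the direct verification the paper intends: Lemma \ref{lem dd} is stated with no written proof (it is left as a routine check on generators), and your computations for $\la_{i,j}$, $\si_1$, $\si_2$, $\mu_{i,j}$, $\tau_1$, $\tau_2$, together with the formal argument for the differential forms, supply precisely that check. The only (welcome) refinement is your use of the commutation of the $\Ga_M$-action with $\nu$ to deduce invariance of $H_2$ from that of $H_1$, which halves the computation but does not change the approach.
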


\begin{lem}
\label{lem 12.3}
The holomorphic volume form 
\bea
dV:=  dx_1\wedge dx_2\wedge dx_3\wedge dx_4\wedge dx_5\wedge dx_6\,
\eea
is $\la_{i,j}, \si_1, \si_2$ invariant and $\tau_1,\tau_2,\nu$ 
anti-invariant.
\qed
\end{lem}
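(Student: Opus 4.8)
The plan is to reduce the entire statement to one standard fact: for a polynomial self-map $g=(g_1,\dots,g_6)$ of $\C^6$, the pullback of the top form obeys $g\pb dV=\det\big(\partial g_i/\partial x_j\big)\,dV$. Hence $dV$ is $g$-invariant exactly when the Jacobian determinant of $g$ equals $1$, and $g$-anti-invariant exactly when it equals $-1$. So the lemma amounts to computing the Jacobian determinant of each of the six generators written out in Section \ref{sec 12.1} and reading off its sign.

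First I would dispose of the diagonal and permutation generators, for which no real computation is needed. The map $\la_{i,j}$ is linear and diagonal, so its Jacobian determinant is the product of the six scaling factors, $(-1)^{2i}(-1)^{2(i+j)}(-1)^{2j}=1$; thus $\la_{i,j}\pb dV=dV$. Each of $\si_1,\si_2,\nu$ is an unsigned permutation of the coordinates, whose Jacobian determinant is the sign of the underlying permutation. Since $\si_1$ realizes $(1\,3)(2\,4)$ and $\si_2$ realizes $(3\,5)(4\,6)$, both even, we get $\si_1\pb dV=\si_2\pb dV=dV$; since $\nu$ realizes $(1\,2)(3\,4)(5\,6)$, a product of three transpositions, we get $\nu\pb dV=-dV$.

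The only genuine computation is for $\tau_1$ and $\tau_2$ from \eqref{tt1} and \eqref{tt2}, and here the key step is to factor each as a composition of a unipotent shear and a signed permutation, since the determinant is multiplicative and order-independent. For $\tau_1$ I would write $\tau_1=\rho\circ\psi$, where $\psi\colon\bm x\mapsto(x_1,x_2,x_3-x_2x_6,x_4-x_1x_5,x_5,x_6)$ only adds to the third and fourth coordinates functions of the remaining variables and is therefore unipotent with Jacobian determinant $1$, and $\rho\colon\bm x\mapsto(-x_2,-x_1,x_6,x_5,x_4,x_3)$ is linear. One checks directly that $\rho\circ\psi=\tau_1$. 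Then $\det J_{\tau_1}=\det\rho$, which is $\sign$ of the permutation $(1\,2)(3\,6)(4\,5)$ times the product of the two sign changes, namely $(-1)^3\cdot(-1)^2=-1$; hence $\tau_1\pb dV=-dV$. The analogous factorization $\tau_2=\rho'\circ\psi'$ with $\psi'\colon\bm x\mapsto(x_1-x_4x_6,x_2-x_3x_5,x_3,x_4,x_5,x_6)$ and $\rho'\colon\bm x\mapsto(x_4,x_3,x_2,x_1,-x_6,-x_5)$ gives $\det J_{\tau_2}=\det\rho'=-1$ (permutation $(1\,4)(2\,3)(5\,6)$ together with two sign flips), so $\tau_2\pb dV=-dV$. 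Assembling the six cases yields the lemma.

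The one point requiring care -- and the main, though mild, obstacle -- is the sign bookkeeping in the factorizations of $\tau_1$ and $\tau_2$: one must verify that each shear genuinely leaves the Jacobian determinant equal to $1$ and then track both the parity of the permutation part and the coordinate sign flips. It is worth noting that, because each $\tau_i$ factors through a unipotent map, its Jacobian determinant is the constant $-1$ independent of $\bm x$, which is precisely what is needed for $dV$ to be negated as a global holomorphic form. Alternatively, one may bypass the factorization and expand the $6\times6$ Jacobian directly by cofactors along the columns indexed by $x_3,x_4$ (for $\tau_1$) or $x_1,x_2$ (for $\tau_2$), the columns in which those variables occur with coefficient $1$; this reduces to a product of two elementary $2\times2$ blocks and returns the same value $-1$.
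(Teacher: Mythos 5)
Your proof is correct, and it matches the paper's (implicit) approach: the paper states this lemma with no written proof, treating it as a direct verification, and the standard way to verify it is exactly your observation that $g\pb dV=\det(J_g)\,dV$, so one only needs the Jacobian determinants of the generators. Your factorizations $\tau_1=\rho\circ\psi$ and $\tau_2=\rho'\circ\psi'$ check out (each shear has Jacobian determinant $1$, each signed permutation has determinant $-1$), and all six signs agree with the lemma.
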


\begin{lem}
\label{lem OM}
The differential 4-form
\bean
\label{oM}
\Om 
&=& \ \
x_1x_3\, dx_2\wedge dx_4\wedge dx_5\wedge dx_6
\ +\
x_1x_4\, dx_2\wedge dx_3\wedge dx_5\wedge dx_6
\\
\notag
&& 
- x_1x_5\, dx_2\wedge dx_3\wedge dx_4\wedge dx_6
\ -\ 
x_1x_6 \,dx_2\wedge dx_3\wedge dx_4\wedge dx_5
\\
\notag
&& 
+ x_2x_3\, dx_1\wedge dx_4\wedge dx_5\wedge dx_6
\ +\ 
x_2x_4 \,dx_1\wedge dx_3\wedge dx_5\wedge dx_6
\\
\notag
&& 
- x_2x_5\, dx_1\wedge dx_3\wedge dx_4\wedge dx_6
\ -\ 
x_2x_6 \,dx_1\wedge dx_3\wedge dx_4\wedge dx_5
\\
\notag
&& 
+ x_3x_5\, dx_1\wedge dx_2\wedge dx_4\wedge dx_6
\ +\ 
x_3x_6\, dx_1\wedge dx_2\wedge dx_4\wedge dx_5
\\
\notag
&& 
+ x_4x_5\, dx_1\wedge dx_2\wedge dx_3\wedge dx_6
\ +\ 
x_4x_6\, dx_1\wedge dx_2\wedge dx_3\wedge dx_5\,.
\eean
 is $\la_{i,j}$, $\tau_1, \tau_2$ invariant and $ \si_1, \si_2, \nu$ anti-invariant.

\end{lem}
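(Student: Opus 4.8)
The plan is to verify the transformation law on each generator of $\Ga_M$ together with $\nu$, since invariance/anti-invariance is multiplicative along words in the generators. The key structural observation is that every monomial of $\Omega$ has the shape $\pm x_ax_b\,dx_{\widehat{a,b}}$, where $dx_{\widehat{a,b}}$ denotes the wedge of the four differentials $dx_k$ with $k\notin\{a,b\}$; thus each of the six indices occurs exactly once per term, the coefficient pairs absent from $\Omega$ being precisely $\{1,2\},\{3,4\},\{5,6\}$. For $\la_{i,j}$ this settles the matter immediately: each $x_k$ and each $dx_k$ is multiplied by the same sign, and the product of the six signs in a single term is $[(-1)^i]^2[(-1)^{i+j}]^2[(-1)^j]^2=1$, so $\la_{i,j}^*\Omega=\Omega$.

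For $\si_1,\si_2,\nu$, which act linearly by permuting coordinates, I would compute $g^*\Omega$ termwise. Each of these permutations preserves the collection of coefficient pairs occurring in $\Omega$ (for instance $\si_1=(13)(24)$ sends $\{1,4\}\mapsto\{2,3\}$ and fixes $\{1,3\},\{2,4\}$), so $g^*$ merely permutes the twelve monomials among themselves; what remains is the sign produced by reordering $dx_{\widehat{a,b}}$ into increasing order. A short check on a representative term in each orbit (e.g. $\si_1^*(x_1x_3\,dx_2\wedge dx_4\wedge dx_5\wedge dx_6)=-x_1x_3\,dx_2\wedge dx_4\wedge dx_5\wedge dx_6$) shows every monomial acquires the factor $-1$, whence $\si_1^*\Omega=\si_2^*\Omega=\nu^*\Omega=-\Omega$.

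The substantial case is $\tau_1,\tau_2$, which are quadratic maps. Here $\tau_1^*\Omega$ is obtained by substituting the components $y_k$ of $\tau_1$ for $x_k$ and the exact differentials $dy_k$ for $dx_k$; the components $y_5=x_4-x_1x_5$ and $y_6=x_3-x_2x_6$ contribute correction terms $dy_5=dx_4-x_5\,dx_1-x_1\,dx_5$ and $dy_6=dx_3-x_6\,dx_2-x_2\,dx_6$, so both the coefficients $y_ay_b$ and the wedges $dy_{\widehat{a,b}}$ expand into many terms. The heart of the proof is to show that, after expanding and using antisymmetry of the wedge to discard repeated differentials, everything reassembles into $+\Omega$; I expect this bookkeeping to be the main obstacle.

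The clean way to organize the computation, and to predict its outcome, is to write $\Omega=\iota_\pi\,dV$, where $\pi=\sum_{i<j}\pi_{ij}\,\der_{x_i}\wedge\der_{x_j}$ is the log-canonical bivector with $\pi_{12}=\pi_{34}=\pi_{56}=0$ and $\pi_{ij}=\pm x_ix_j$ otherwise, the signs being exactly those forced by matching $\iota_{\der_{x_j}}\iota_{\der_{x_i}}dV=(-1)^{i+j-1}dx_{\widehat{i,j}}$ against the coefficients of $\Omega$. Using the naturality identity $g^*(\iota_\pi\,\omega)=\iota_{(g^{-1})_*\pi}(g^*\omega)$ together with Lemma \ref{lem 12.3}, which records $g^*dV=\delta_g\,dV$ with $\delta_g=+1$ for $g\in\{\la_{i,j},\si_1,\si_2\}$ and $\delta_g=-1$ for $g\in\{\tau_1,\tau_2,\nu\}$, the claim reduces to showing that each generator scales $\pi$ by a sign $\eta_g$ with $(g^{-1})_*\pi=\eta_g\,\pi$: one finds $\la_{i,j}$ and $\nu$ are Poisson ($\eta_g=+1$) while $\si_1,\si_2,\tau_1,\tau_2$ are anti-Poisson ($\eta_g=-1$), so that $g^*\Omega=\delta_g\eta_g\,\Omega$ yields precisely the asserted signs; in particular $\tau_1^*\Omega=(-1)(-1)\Omega=\Omega$. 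Verifying that $\tau_1,\tau_2$ reverse $\pi$ is then the same quadratic computation as above, now reorganized into the finitely many scalar identities $\{x_a\circ\tau_1,\,x_b\circ\tau_1\}=-\{x_a,x_b\}\circ\tau_1$, which are far more transparent to check than the raw pullback of the $4$-form.
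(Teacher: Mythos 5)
Your proof is correct, but it takes a genuinely different route from the paper's. The paper proves Lemma \ref{lem OM} by raw direct verification — it expands $\tau_1\Om$ term by term and watches it recombine into $\Om$ — and offers a second, structural proof in Corollary \ref{cor an proof}, where $\Om$ is exhibited as the pullback $F\pb\om$ of the Gelfand--Leray residue form on the hypersurface $Y\subset\C^5$, so that its transformation law is inherited from Lemma \ref{lem 10.5} via the equivariance of $F$. You instead write $\Om=\iota_\pi\,dV$ for the log-canonical bivector $\pi$ and, using naturality of contraction together with the volume-form signs of Lemma \ref{lem 12.3}, reduce the pullback of a quartic $4$-form to finitely many scalar identities saying each generator rescales $\pi$ by a sign; the signs $\delta_g\eta_g$ then come out right in all six cases. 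Your bookkeeping is consistent with the paper: your $\pi$ is exactly the bracket matrix of Lemma \ref{lem P coef}, and your table of $\eta_g$ is exactly Theorem \ref{thm NP2} — in effect you prove that theorem first and derive Lemma \ref{lem OM} from it, reversing the paper's logical order. Two remarks. First, this reversal is legitimate only because you verify the anti-Poisson property of $\tau_1,\tau_2$ by direct computation on coordinates (e.g.\ $\{-x_2,\,x_4-x_1x_5\}=x_2x_4-x_1x_2x_5=-(-x_2)(x_4-x_1x_5)$ is a one-line check); citing Theorem \ref{thm NP2} itself would be circular, since the paper deduces it from this lemma. Second, the Jacobi identity plays no role: $\pi$ enters only as a bivector, so ``Poisson/anti-Poisson'' here means only that $g$ rescales the bivector by $\pm1$. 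What your route buys is that the only quadratic computation happens at the level of functions rather than forms; what the paper's second proof buys, and yours does not, is an explanation of why a $4$-form with these invariance properties exists at all.
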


\begin{proof}
The proof is by direct verification. 
For example, we have
\bea
&&\tau_1\Om=x_5 (x_4-x_1 x_5) dx_1\wedge dx_2\wedge dx_3\wedge dx_6
+x_6 (x_3-x_2 x_6) dx_1\wedge dx_2\wedge dx_4\wedge dx_5\\
&&+x_5 (x_3-x_2 x_6) (dx_1\wedge dx_2\wedge dx_4\wedge dx_6-x_1 dx_1\wedge dx_2\wedge dx_5\wedge dx_6)\\
&&-x_6 (x_4-x_1 x_5) (-dx_1\wedge dx_2\wedge dx_3\wedge dx_5
-x_2 dx_1\wedge dx_2\wedge dx_5\wedge dx_6)\\
&&+x_2 x_5 (-x_1 (x_6 dx_1\wedge dx_2\wedge dx_5\wedge dx_6
-dx_1\wedge dx_3\wedge dx_5\wedge dx_6)
+x_6 dx_1\wedge dx_2\wedge dx_4\wedge dx_6\\
&&-dx_1\wedge dx_3\wedge dx_4\wedge dx_6)-x_2 (x_4-x_1 x_5) (x_6 dx_1\wedge dx_2\wedge dx_5\wedge dx_6-dx_1\wedge dx_3\wedge dx_5\wedge dx_6)\\
&&+x_2 (x_3-x_2 x_6) dx_1\wedge dx_4\wedge dx_5\wedge dx_6-x_2 x_6 (-x_6 dx_1\wedge dx_2\wedge dx_4\wedge dx_5\\
&&-x_2 dx_1\wedge dx_4\wedge dx_5\wedge dx_6+dx_1\wedge dx_3\wedge dx_4\wedge dx_5)+x_1 (x_4-x_1 x_5) dx_2\wedge dx_3\wedge dx_5\wedge dx_6\\
&&+x_1 x_5 (x_5 dx_1\wedge dx_2\wedge dx_3\wedge dx_6+x_1 dx_2\wedge dx_3\wedge dx_5\wedge dx_6-dx_2\wedge dx_3\wedge dx_4\wedge dx_6)\\
&&+x_1 (x_3-x_2 x_6) (x_5 dx_1\wedge dx_2\wedge dx_5\wedge dx_6+dx_2\wedge dx_4\wedge dx_5\wedge dx_6)\\
&&-x_1 x_6 (x_5 (-dx_1\wedge dx_2\wedge dx_3\wedge dx_5-x_2 dx_1\wedge dx_2\wedge dx_5\wedge dx_6)+dx_2\wedge dx_3\wedge dx_4\wedge dx_5\\
&&-x_2 dx_2\wedge dx_4\wedge dx_5\wedge dx_6)=\Om.
\eea
See another proof of the lemma in  Corollary \ref{cor an proof}.
That other proof also provides reasons for the existence of such a 4-form $\Om$.
\end{proof}

\subsection{Casimir subalgebra}
Denote
\bean
\label{xy}
h_1=x_1x_2,\quad h_2=x_3x_4,
\quad h_3=x_5x_6,\quad h_4=x_1x_3x_5,\quad h_5=x_2x_4x_6,
\eean
and $\bs h=(h_1,\dots,h_5)$.
Then
\bea
h_1h_2h_3-h_4h_5=0
\eea
and
\[H_1=h_1+h_2+h_3-h_4,
\qquad 
H_2=h_1+h_2+h_3-h_5.
\]
Define the \emph{Casimir subalgebra} $\mc C\subset \C[\bm x]$ to be the subalgebra
generated by $h_1,\dots, h_5$.  

\begin{thm}
\label{thm CSA}
The Casimir subalgebra is $\nu$ and $\Ga_M$ invariant.
More precisely,
\bea
&& 
\nu : \bs h \mapsto (h_1,\,h_2,\,h_3,\,h_5,\,h_4),
\\
&&
\tau_1:
\bs h \mapsto (h_1,\,h_3,\,h_2+h_1h_3-h_4-h_5, \,-h_5+h_1h_3,\,-h_4+h_1h_3),
\\
&&
\tau_2:
\bs h
\mapsto (h_2,\,h_1+h_2h_3-h_4-h_5,\, h_3,\, -h_5+h_2h_3,\,-h_4+h_2h_3),
\\
&&
\si_1:
\bs h \mapsto (h_2,\,h_1,\, h_3,\, h_4,\, h_5),
\\
&&
\si_2:
\bs h \mapsto (h_1,\,h_3, \,h_2,\, h_4, \,h_5),
\eea
and the elements $\la_{i,j}, \mu_{i,j}\in \Ga_M$ fix elements of  $\mc C$ point-wise.
\qed

\end{thm}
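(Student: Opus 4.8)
The plan is to reduce the whole statement to the verification of the six displayed transformation formulas for $\bs h=(h_1,\dots,h_5)$, and then deduce invariance abstractly. The point is that $\mc C$ is the subalgebra generated by $h_1,\dots,h_5$, and every listed generator of $\Ga_M$, as well as $\nu$, acts on $\C^6$ as a polynomial automorphism, hence induces a ring automorphism of $\C[\bm x]$. Therefore it suffices to check that each generator $g$ carries every $h_i$ into $\mc C$: once $g(h_i)\in\mc C$ for all $i$, one has $g(\mc C)=\C[g(h_1),\dots,g(h_5)]\subseteq\mc C$, and since $\Ga_M$ is a group generated by these maps and $\nu^2=\id$, applying the same inclusion to inverses gives $g(\mc C)=\mc C$. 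Thus $\mc C$ is $\Ga_M$-stable and $\nu$-stable as soon as the explicit formulas are established.

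First I would dispatch the transformations acting monomially or by permutation. For $\la_{i,j}$ each of the monomials $x_1x_2$, $x_3x_4$, $x_5x_6$, $x_1x_3x_5$, $x_2x_4x_6$ collects an even total power of each sign $(-1)^i$, $(-1)^j$, so $\la_{i,j}$ fixes every $h_i$; combined with the fact that each $\mu_{i,j}$ acts on $\C^6$ by the identity, this yields the final assertion that $\la_{i,j},\mu_{i,j}$ fix $\mc C$ pointwise. The involution $\nu$ and the transpositions $\si_1,\si_2$ merely permute the coordinates in pairs, hence permute the five monomials; reading off the images gives $\nu:\bs h\mapsto(h_1,h_2,h_3,h_5,h_4)$, $\si_1:\bs h\mapsto(h_2,h_1,h_3,h_4,h_5)$, and $\si_2:\bs h\mapsto(h_1,h_3,h_2,h_4,h_5)$, exactly as stated.

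The substantive part is the pair of braid generators $\tau_1,\tau_2$. Substituting the formula for $\tau_1$ into $h_1,\dots,h_5$ and expanding, the images of the quadratic generators $h_1,h_2$ stay quadratic monomials ($h_1$ and $h_3$ respectively), while the images of $h_3,h_4,h_5$ each acquire a degree-four contribution. The key observation, and the reason $\mc C$ is closed at all, is that the only quartic monomial produced is $x_1x_2x_5x_6$, which factors as $(x_1x_2)(x_5x_6)=h_1h_3$ and therefore remains inside $\mc C$; every other term is one of $\pm h_2,\pm h_4,\pm h_5$. For instance $\tau_1(h_3)=(x_4-x_1x_5)(x_3-x_2x_6)=h_2-h_5-h_4+h_1h_3$, and similarly $\tau_1(h_4)=-h_5+h_1h_3$, $\tau_1(h_5)=-h_4+h_1h_3$. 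The computation for $\tau_2$ is entirely parallel, the exceptional quartic monomial being $x_3x_4x_5x_6=h_2h_3$ in place of $h_1h_3$. I do not expect any genuine difficulty beyond the bookkeeping; the one thing worth flagging is precisely this closure phenomenon, namely that the degree-four cross terms created by the braid action always reassemble into a product of two quadratic Casimir generators rather than an irreducible quartic, which is what keeps the images inside $\mc C$ and makes the theorem work.
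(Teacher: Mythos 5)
Your proposal is correct and takes essentially the same route as the paper: Theorem \ref{thm CSA} carries no written proof (it is a direct verification), and your substitution computations — in particular the closure observation that the only quartic monomials produced are $x_1x_2x_5x_6=h_1h_3$ under $\tau_1$ and $x_3x_4x_5x_6=h_2h_3$ under $\tau_2$ — are exactly that verification. The one loose phrase is ``applying the same inclusion to inverses'': checking generators only gives $g(\mc C)\subseteq\mc C$ for the monoid they generate, and since $\tau_1,\tau_2$ are not involutions on $\C^6$ one must still check $\tau_i^{-1}(\mc C)\subseteq\mc C$; this is immediate from your displayed formulas, which can be solved polynomially for $\bs h$ in terms of the images (for $\tau_1$, writing $(k_1,\dots,k_5)$ for the images: $h_1=k_1$, $h_3=k_2$, $h_5=k_1k_2-k_4$, $h_4=k_1k_2-k_5$, $h_2=k_3+k_1k_2-k_4-k_5$), so each $\tau_i$ maps $\mc C$ onto $\mc C$ and the full group invariance follows.
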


\subsection{Space $\C^5$ with polynomials}
\label{sec an pr}

 Consider $\C^5$ with coordinates $\bm y=(y_1,\dots, y_5)$, and involution
\[
\nu\colon\C^5\to\C^5,\quad \bm (y_1,y_2,y_3,y_4,y_5)\mapsto (y_1,y_2,y_3,y_5,y_4).
\] 
The $*$-Markov group acts on $\C^5$ by the formulas of Theorem \ref{thm CSA}.
The $*$-Markov group action  on $\C^5$ commutes with the involution $\nu$.

\vskip3mm
Denote
\bea
J = y_1y_2y_3-y_4y_5,\quad 
J_1 = y_1+y_2+y_3-y_4,
\quad
J_2 = y_1+y_2+y_3-y_5,
\eea
\bea
dW = dy_1\wedge dy_2\wedge dy_3\wedge dy_4\wedge dy_5.
\eea

\begin{lem}
\label{lem 10.5}
The polynomials $J,J_1,J_2$ are $\Ga_M$-invariant.
We have
\bea
\nu\pb J = J, \qquad \nu\pb J_1=J_2,\qquad \nu\pb J_2=J_1.
\eea
The differential form $dW$ is $\tau_1, \tau_2$ invariant and $\si_1,\si_2,\nu$ anti-invariant.
The differential form $dJ\wedge dJ_1\wedge dJ_2$ is 
$\tau_1, \tau_2$, $\si_1,\si_2$ invariant and $\nu$ anti-invariant.
\qed
\end{lem}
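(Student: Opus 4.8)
The entire lemma reduces to direct computation with the explicit generator formulas of Theorem \ref{thm CSA}, now read as the $\Ga_M$-action on the coordinates $\bm y=(y_1,\dots,y_5)$. I organize the verification in three stages: the polynomial invariances, the behaviour of the volume form $dW$, and finally the $3$-form $dJ\wedge dJ_1\wedge dJ_2$, which will follow formally from the first stage.

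First I would verify that $J,J_1,J_2$ are $\Ga_M$-invariant by substitution. The generators $\la_{i,j}$ and $\mu_{i,j}$ act trivially on $\C^5$, and $\si_1,\si_2$ merely permute $y_1,y_2,y_3$, under which $J=y_1y_2y_3-y_4y_5$, $J_1$ and $J_2$ are manifestly symmetric. For $\tau_1$ the only point requiring care is the cancellation of quadratic terms: in $\tau_1\pb J$ the contributions $y_1^2y_3^2$ and $\pm y_1y_3y_4$, $\pm y_1y_3y_5$ arising from $y_3\mapsto y_2+y_1y_3-y_4-y_5$ and from the product $(-y_5+y_1y_3)(-y_4+y_1y_3)$ cancel, leaving $y_1y_2y_3-y_4y_5$; and in $\tau_1\pb J_1$, $\tau_1\pb J_2$ the term $y_1y_3$ introduced in the image of $y_3$ is cancelled by the same term in the image of $y_4$, respectively $y_5$. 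The analogous computation for $\tau_2$, with $y_1y_3$ replaced throughout by $y_2y_3$, produces the same cancellations. The $\nu$-relations are immediate, since $\nu$ swaps $y_4\leftrightarrow y_5$: the product $y_4y_5$ in $J$ is fixed, while $J_1$ and $J_2$ are interchanged.

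Next I would treat $dW$ by computing Jacobian determinants. The generators $\si_1,\si_2,\nu$ act as the transpositions $(y_1\,y_2)$, $(y_2\,y_3)$, $(y_4\,y_5)$, each of Jacobian determinant $-1$, so $dW$ is anti-invariant under them. For the braid generators $\tau_1,\tau_2$ one must show the determinant is $+1$ in spite of the quadratic terms. Writing out the $5\times 5$ Jacobian of $\tau_1$, the rows coming from $y_1\mapsto y_1$ and $y_2\mapsto y_3$ are standard basis vectors, so cofactor expansion collapses the determinant to a small minor whose value is $+1$; the derivatives of the quadratic term $y_1y_3$ all lie in a single column and drop out of this minor. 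The same triangular structure gives $\det=+1$ for $\tau_2$. Hence $dW$ is $\tau_1,\tau_2$-invariant and $\si_1,\si_2,\nu$-anti-invariant.

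Finally, $dJ\wedge dJ_1\wedge dJ_2$ is handled formally. Since pullback commutes with $d$ and $J,J_1,J_2$ are $\Ga_M$-invariant by the first stage, the $3$-form is $\Ga_M$-invariant, in particular invariant under $\tau_1,\tau_2,\si_1,\si_2$. For $\nu$ the relations $\nu\pb J=J$, $\nu\pb J_1=J_2$, $\nu\pb J_2=J_1$ give $\nu\pb(dJ\wedge dJ_1\wedge dJ_2)=dJ\wedge dJ_2\wedge dJ_1=-\,dJ\wedge dJ_1\wedge dJ_2$, the sign being exactly the transposition $J_1\leftrightarrow J_2$. I expect the only genuine obstacle to be confirming that the braid-generator Jacobians equal $+1$ for $dW$; all the remaining assertions are either immediate or one-line formal consequences.
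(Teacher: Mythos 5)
Your proposal is correct and coincides with the paper's (implicit) proof: the lemma is stated there without any argument, as a direct verification, and your three-stage check — substitution into $J,J_1,J_2$ using the explicit action from Theorem \ref{thm CSA}, Jacobian determinants for $dW$, and the formal pullback/sign argument for $dJ\wedge dJ_1\wedge dJ_2$ — is exactly that verification. One trivial imprecision: the derivatives of the quadratic term $y_1y_3$ in the $\tau_1$-Jacobian occupy two columns (the first and the third), not a single one, but both columns are deleted by your cofactor expansions along the rows coming from $y_1\mapsto y_1$ and $y_2\mapsto y_3$, so the remaining minor is constant and the determinant is indeed $+1$.
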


Let $Y=\{\bm y\in  \C^5\ |\ J(\bm y)=0\}$
be the zero level hypersurface of the polynomial $J$.
The hypersurface $Y$ has a well-defined
holomorphic nonzero differential 4-form at its  nondegenerate points, 
$\om = dW/dJ$,
called the Gelfand-Leray residue form.
It is uniquely determined by the property
\bean
\label{dwo}
dW= dJ\wedge\om.
\eean
For example, if at some point $q\in Y$ we have $\frac{\der J}{\der y_1}(q)\ne 0$,
 then at a neighborhood of that point 
 \bea
 \om = 
 \frac{1}{\frac{\der J}{\der y_1}}dy_2 \wedge dy_3\wedge dy_4\wedge dy_5
 =
 \frac{1}{y_2y_3}dy_2 \wedge dy_3\wedge dy_4\wedge dy_5
 \eea
 with property \eqref{dwo}.

\begin{cor}
\label{cor 10.6}
The form
\bea
 \om = 
 \frac{1}{y_2y_3}dy_2 \wedge dy_3\wedge dy_4\wedge dy_5,
 \eea
restricted to $Y$, extends to  a  nonzero  differential 4-form on the regular part of $Y$.
That form  is 
 $\tau_1,\tau_2$ invariant and $\si_1,\si_2, \nu$ anti-invariant.
 \qed
\end{cor}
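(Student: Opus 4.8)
The plan is to obtain the invariance and anti-invariance of the residue form $\omega$ formally from the corresponding properties of the volume form $dW$ recorded in Lemma \ref{lem 10.5}, by exploiting the uniqueness that characterizes the Gelfand-Leray residue. The existence of $\omega$ as a nonvanishing holomorphic $4$-form on the regular part of $Y$ is already the content of the discussion preceding the corollary: the defining relation $dW = dJ\wedge\omega$ together with the fact that $dW$ is a nowhere-vanishing volume form forces $\omega$ to be nonzero wherever $dJ\neq 0$, that is, at every regular point of $Y$. Hence I would only recall this point briefly and concentrate on the symmetry claim.

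For the symmetries, let $g$ be any one of the generators $\tau_1,\tau_2,\si_1,\si_2,\nu$, and let $\epsilon(g)=\pm1$ denote the sign with which $g$ acts on $dW$; by Lemma \ref{lem 10.5} one has $\epsilon=+1$ for $\tau_1,\tau_2$ and $\epsilon=-1$ for $\si_1,\si_2,\nu$. The first thing to note is that each such $g$ satisfies $g\pb J=J$: this holds for $\tau_1,\tau_2,\si_1,\si_2$ because $J$ is $\Ga_M$-invariant, and for $\nu$ because $\nu\pb J=J$, again by Lemma \ref{lem 10.5}. Consequently every $g$ preserves the hypersurface $Y=\{J=0\}$ and restricts to a self-map $g|_Y$ of its regular part, so that the assertion $(g|_Y)\pb\omega=\epsilon(g)\,\omega$ makes sense.

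The key step is then to pull back the defining relation. Choosing any ambient $4$-form $\tilde\omega$ with $dW=dJ\wedge\tilde\omega$, so that $\tilde\omega|_Y=\omega$, I would compute
\[
dJ\wedge g\pb\tilde\omega=g\pb(dJ\wedge\tilde\omega)=g\pb\,dW=\epsilon(g)\,dW=dJ\wedge\big(\epsilon(g)\,\tilde\omega\big),
\]
where the crucial identity $g\pb\,dJ=d(g\pb J)=dJ$ comes from $g\pb J=J$. Thus both $g\pb\tilde\omega$ and $\epsilon(g)\,\tilde\omega$ are Gelfand-Leray primitives of $dW$ along $Y$, so by uniqueness of the residue their restrictions to $Y$ coincide, yielding $(g|_Y)\pb\omega=\epsilon(g)\,\omega$. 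Reading off $\epsilon(g)$ from Lemma \ref{lem 10.5} gives exactly the $\tau_1,\tau_2$-invariance and the $\si_1,\si_2,\nu$-anti-invariance of $\omega$.

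I do not expect a serious obstacle here. The only point that requires care is the bookkeeping behind the uniqueness of the residue, namely the standard fact that two primitives of $dW$ differ by a form of the type $dJ\wedge(\cdot)$ and therefore have equal restriction to $Y$. If a more concrete check is wanted, the explicit local expression $\omega=\frac{1}{y_2y_3}\,dy_2\wedge dy_3\wedge dy_4\wedge dy_5$, valid where $\der J/\der y_1=y_2y_3\neq0$, can be used both to confirm nonvanishing and to verify one of the symmetries by direct substitution of the formulas for $g$ from Theorem \ref{thm CSA}.
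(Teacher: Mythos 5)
Your proposal is correct and is exactly the argument the paper intends: the corollary is stated with no written proof precisely because it follows from Lemma \ref{lem 10.5} (invariance of $J$, the $\pm1$ action on $dW$) combined with the uniqueness of the Gelfand--Leray residue form characterized by $dW = dJ\wedge\om$, which is the mechanism you spell out. Your pullback computation and the bookkeeping that two primitives of $dW$ differ by $dJ\wedge(\cdot)$ and hence agree on $Y$ are the standard justification, so there is nothing missing.
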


\vsk.2>

Consider the map $F:\C^6\to \C^5$ defined by the formulas
\bean
\label{hx}
y_1=x_1x_2, 
\quad
y_2=x_3x_4, 
\quad
y_3=x_5x_6, 
\quad
y_4=x_1x_3x_5,
\quad
y_5=x_2x_4x_6.
\eean

\begin{lem} We have the following statements:
\begin{enumerate}
\item[(i)]

The map $F$ commutes with the actions of the $*$-Markov group on $\C^6$ and $\C^5$.

\item[(ii)]
The Casimir subalgebra $\mc C\subset\C[\bs x]$ is the preimage of the algebra $\C[\bs y]$
under 
\\
the map $F$. 

\item[(iii)]

The image of $F$ lies in the hypersurface $Y$. 
\qed

\end{enumerate}

\end{lem}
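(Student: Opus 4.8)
The Lemma asserts three things about the map $F:\C^6\to\C^5$ given by $y_1=x_1x_2$, $y_2=x_3x_4$, $y_3=x_5x_6$, $y_4=x_1x_3x_5$, $y_5=x_2x_4x_6$:

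(i) $F$ commutes with the $*$-Markov group actions on $\C^6$ and $\C^5$.

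(ii) The Casimir subalgebra $\mc C\subset\C[\bm x]$ is the preimage of $\C[\bm y]$ under $F$ (i.e., $F^*(\C[\bm y])=\mc C$).

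(iii) The image of $F$ lies in the hypersurface $Y=\{J=0\}$.

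Let me think about each part carefully.

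The plan is to dispatch the three assertions by first observing that the five components of $F$ in \eqref{hx} are literally the Casimir generators $h_1,\dots,h_5$ of \eqref{xy}, so that $F=(h_1,\dots,h_5)$ as a map $\C^6\to\C^5$. Once this identification is made, each part follows from material already in place, and I would organize the argument as (iii), then (i), then (ii).

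I would begin with (iii), which is the cleanest. Pulling back the defining polynomial of $Y$, one computes
\[
J\circ F = h_1h_2h_3 - h_4h_5 = (x_1x_2)(x_3x_4)(x_5x_6) - (x_1x_3x_5)(x_2x_4x_6) = 0,
\]
which is precisely the relation $h_1h_2h_3-h_4h_5=0$ recorded immediately after \eqref{xy}. Hence $J$ vanishes identically on the image of $F$, that is, $F(\C^6)\subset Y$.

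For (i), the essential point is that the $\Ga_M$-action on $\C^5$ is \emph{defined} by the formulas of Theorem \ref{thm CSA}: for each generator $g\in\{\tau_1,\tau_2,\si_1,\si_2\}$, that theorem records the pullback $h_k\circ g$ as the very polynomial in $(h_1,\dots,h_5)$ that specifies the induced transformation of $(y_1,\dots,y_5)$. Read through $F$, this is exactly the intertwining relation $F(g\cdot\bm x)=g\cdot F(\bm x)$, with $g$ acting on the source by its $\C^6$-formula \eqref{tt1}-\eqref{tt2} and on the target by its $\C^5$-formula. For the remaining generators $\la_{i,j}$ and $\mu_{i,j}$, Theorem \ref{thm CSA} asserts that they fix $\mc C$ pointwise, so they act trivially on $\C^5$ and the intertwining is immediate (a one-line check gives $h_k\circ\la_{i,j}=h_k$ for all $k$, the even sign exponents cancelling). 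Since these generators generate $\Ga_M$ and both actions are honest group actions, the intertwining propagates from generators to all of $\Ga_M$ by composition.

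Finally, for (ii), I would note that the pullback homomorphism $F\pb\colon\C[\bm y]\to\C[\bm x]$ sends $y_i\mapsto h_i$, so its image $F\pb(\C[\bm y])$ is exactly the subalgebra of $\C[\bm x]$ generated by $h_1,\dots,h_5$, which is $\mc C$ by its very definition. No genuine obstacle arises in any of the three parts; the only step deserving a word of care is the logical passage in (i) from intertwining on generators to intertwining on the whole group. This is legitimate precisely because the $\C^5$-formulas of Theorem \ref{thm CSA} are induced from the genuine $\C^6$-action through $F$, so the group relations are automatically inherited along $F$.
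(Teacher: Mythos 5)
Your proof is correct and is exactly the justification the paper has in mind: the paper states this lemma with no written proof (marking it \qed as immediate), since part (iii) is the identity $h_1h_2h_3-h_4h_5=0$ recorded after \eqref{xy}, part (i) is the content of Theorem \ref{thm CSA} together with the fact that the $\C^5$-action is defined by those very formulas, and part (ii) is the definition of $\mc C$ as the subalgebra generated by $h_1,\dots,h_5=F\pb(y_1),\dots,F\pb(y_5)$. Your write-up supplies precisely these routine verifications, including the correct handling of $\la_{i,j}$ and $\mu_{i,j}$ and the propagation from generators to the whole group.
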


\begin{cor}

The preimage $F\pb \om$ of the differential form $\om$ under the map $F$ is 
 $\tau_1,\tau_2$ invariant and $\si_1,\si_2, \nu$ anti-invariant.
\qed 

\end{cor}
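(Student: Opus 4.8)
The plan is to obtain the transformation law of $F\pb\om$ as a formal consequence of two facts already available: the equivariance of $F$ (part (i) of the preceding Lemma) and the transformation law of $\om$ on $Y$ (Corollary \ref{cor 10.6}). The only tool needed is the contravariant functoriality of pullback. I would first record the elementary principle: if $\psi\colon M\to N$ is a holomorphic map, $\alpha$ a differential form on $N$, and $a\colon M\to M$, $b\colon N\to N$ satisfy $\psi\circ a=b\circ\psi$, then
\[
a\pb(\psi\pb\alpha)=(\psi\circ a)\pb\alpha=(b\circ\psi)\pb\alpha=\psi\pb(b\pb\alpha).
\]
In particular, if $b\pb\alpha=\pm\alpha$, then $a\pb(\psi\pb\alpha)=\pm\,\psi\pb\alpha$.

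Next I would apply this with $\psi=F\colon\C^6\to\C^5$, taking $\alpha=\om$ on the regular part of $Y$; by part (iii) of the Lemma the image of $F$ lies in $Y$, so $F\pb\om$ is a well-defined $4$-form on the locus of $\C^6$ mapping into the regular part of $Y$. For a generator $g\in\{\tau_1,\tau_2,\si_1,\si_2,\nu\}$, let $g$ denote its action on $\C^6$ and $\tilde g$ the corresponding map on $\C^5$. Part (i) of the Lemma gives $F\circ g=\tilde g\circ F$ for $g\in\Ga_M$, and for $\nu$ the intertwining $F\circ\nu=\tilde\nu\circ F$ is the immediate check from the defining formulas \eqref{hx} that $y_4,y_5$ are interchanged while $y_1,y_2,y_3$ are preserved. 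By Corollary \ref{cor 10.6} the form $\om$ is $\tau_1,\tau_2$ invariant and $\si_1,\si_2,\nu$ anti-invariant on $Y$, i.e. $\tilde\tau_i\pb\om=\om$ and $\tilde\si_i\pb\om=-\om$, $\tilde\nu\pb\om=-\om$. Feeding these into the principle above yields $\tau_i\pb(F\pb\om)=F\pb(\tilde\tau_i\pb\om)=F\pb\om$, so $F\pb\om$ is $\tau_1,\tau_2$ invariant, while $\si_i\pb(F\pb\om)=F\pb(\tilde\si_i\pb\om)=-F\pb\om$ and $\nu\pb(F\pb\om)=-F\pb\om$, so $F\pb\om$ is $\si_1,\si_2,\nu$ anti-invariant, which is the assertion.

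I do not expect a genuine obstacle here: once equivariance and the transformation law of $\om$ are in hand, the corollary is purely formal. The only point requiring a word of care is that $\om$ is defined only on the regular part of $Y$ and involves the factor $1/(y_2y_3)$; thus $F\pb\om$ is a priori only a meromorphic $4$-form defined on a dense open subset of $\C^6$. Since the equivariant maps preserve $Y$ and its regular locus, the pullback identities hold on that dense open set and therefore as identities of meromorphic forms everywhere by continuity. The mildest bookkeeping step is simply confirming that $F$ intertwines the two $\nu$-involutions, which is transparent from \eqref{hx}.
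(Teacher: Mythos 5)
Your proposal is correct and is essentially the paper's own (implicit) argument: the corollary is stated with \qed precisely because it follows by functoriality of pullback from part (i) and (iii) of the preceding Lemma together with Corollary \ref{cor 10.6}, which is exactly the chain $g\pb(F\pb\om)=F\pb(\tilde g\pb\om)=\pm F\pb\om$ that you spell out. Your two added points of care --- checking separately that $F$ intertwines the two $\nu$-involutions (since $\nu\notin\Ga_M$, part (i) does not cover it) and noting that the identities first hold on the dense open locus mapping into the regular part of $Y$ --- are sound and fill in exactly what the paper leaves to the reader.
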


\begin{lem}
We have $F\pb \om =\Om$, where $\Om$ is defined in \eqref{oM}.
\end{lem}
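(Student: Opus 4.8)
The plan is to compute $F^{*}\om$ explicitly from the rational representative of $\om$ and to match the result against $\Om$ coefficient by coefficient. First I would check that the pullback is unambiguous, so that it may be computed from the ambient formula rather than from intrinsic data on $Y$. Write $F=\iota_Y\circ F_0$, where $\iota_Y\colon Y\hookrightarrow\C^5$ and $F_0\colon\C^6\to Y$ exists because the image of $F$ lies in $Y$ by part (iii) of the previous lemma. Any ambient $4$-form $\tilde\om$ with $dW=dJ\wedge\tilde\om$ restricts to $\om$ on $Y$, and two such choices differ by $dJ\wedge\eta$. Since $F^{*}dJ=d(F^{*}J)=0$ (indeed $F^{*}J=x_1x_2x_3x_4x_5x_6-x_1x_2x_3x_4x_5x_6=0$), this ambiguity dies under pullback, and $F^{*}\om=F^{*}\tilde\om$ for the concrete representative $\tilde\om=\tfrac{1}{y_2y_3}\,dy_2\wedge dy_3\wedge dy_4\wedge dy_5$ that defines $\om$.

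Next I would record the pullbacks of the coordinate differentials from \eqref{hx}:
$$F^{*}dy_2=x_4\,dx_3+x_3\,dx_4,\qquad F^{*}dy_3=x_6\,dx_5+x_5\,dx_6,$$
$$F^{*}dy_4=x_3x_5\,dx_1+x_1x_5\,dx_3+x_1x_3\,dx_5,\qquad F^{*}dy_5=x_4x_6\,dx_2+x_2x_6\,dx_4+x_2x_4\,dx_6,$$
together with $F^{*}(y_2y_3)=x_3x_4x_5x_6$, so that $F^{*}\om=\tfrac{1}{x_3x_4x_5x_6}\,(F^{*}dy_2)\wedge(F^{*}dy_3)\wedge(F^{*}dy_4)\wedge(F^{*}dy_5)$.

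The core of the argument is the expansion of this $4$-fold wedge. I would multiply out the $2\cdot 2\cdot 3\cdot 3=36$ elementary products, discard the ones containing a repeated differential, and sort each survivor into the standard order, tracking the sign of the sorting permutation. For each of the twelve index pairs $\{i,j\}$ occurring in $\Om$ one then isolates the coefficient of the corresponding $4$-form. For instance, the coefficient of $dx_2\wedge dx_4\wedge dx_5\wedge dx_6$ arises from the \emph{unique} admissible selection $dx_4$ (from $F^{*}dy_2$), $dx_6$ (from $F^{*}dy_3$), $dx_5$ (from $F^{*}dy_4$), $dx_2$ (from $F^{*}dy_5$), giving $x_1x_3^{2}x_4x_5x_6$ with sorting sign $+1$; after division by $x_3x_4x_5x_6$ this is exactly the coefficient $x_1x_3$ in \eqref{oM}. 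Carrying this out for all twelve $4$-forms shows simultaneously that every numerator is divisible by $x_3x_4x_5x_6$, so the quotient is a genuine polynomial form, and that it coincides with $\Om$.

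Finally, the identity $F^{*}\om=\Om$ then holds on the dense open locus $\{x_3x_4x_5x_6\neq 0\}$ where $\tilde\om$ is defined; since $\Om$ is a polynomial $4$-form on $\C^6$ and agrees with $F^{*}\om$ there, the two are equal on all of $\C^6$. The only genuine labor is the antisymmetric bookkeeping of signs in the wedge expansion and the verification that the denominator cancels uniformly; everything else is formal. As a bonus, combining this identity with the $\Ga_M$-equivariance of $F$ (part (i)) and Corollary \ref{cor 10.6} on the invariance of $\om$ gives a conceptual reproof of Lemma \ref{lem OM}, which is the content of the promised Corollary \ref{cor an proof}.
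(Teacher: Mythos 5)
Your proposal is correct and follows essentially the same route as the paper: both pull back the explicit representative $\frac{1}{y_2y_3}\,dy_2\wedge dy_3\wedge dy_4\wedge dy_5$ along the formulas \eqref{hx} and verify the resulting wedge expansion against \eqref{oM} term by term (the paper merely packages the expansion in the factored form \eqref{factor} rather than expanding all $36$ elementary products). Your preliminary observation that the ambient Gelfand--Leray ambiguity $dJ\wedge\eta$ dies under $F^{*}$ because $F^{*}J=0$ is a careful touch the paper leaves implicit, but it does not change the substance of the argument.
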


\begin{proof}
The lemma easily follows by direct verification from the formula
\bean
\label{factor}
F\pb \om
&=& 
\Big(\frac{ dx_3}{x_3} + \frac{ dx_4}{x_4}\Big)
\wedge
\Big(\frac{ dx_5}{x_5} + \frac{ dx_6}{x_6}\Big)
\\
&&
\notag
\wedge
\Big(x_3x_5 dx_1  + x_1x_5 dx_3+ x_1x_3 dx_5\Big)
\wedge
\Big(x_4x_6 dx_2  + x_2x_6 dx_4+ x_2x_4 dx_6\Big).
\eean
\end{proof}

\begin{cor}
\label{cor an proof}
The  differential form $\Om$ is
 $\tau_1,\tau_2$ invariant and $\si_1,\si_2, \nu$ anti-invariant.
\qed
\end{cor}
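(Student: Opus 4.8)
The statement will follow formally from two facts established immediately above, so the plan is to invoke the naturality of pullback under an equivariant map rather than to recompute anything by hand. The key input is that $F\colon\C^6\to\C^5$ intertwines the two $*$-Markov group actions: by part~(i) of the preceding Lemma one has $g\circ F=F\circ g$ for every generator $g$ of $\Ga_M$, and likewise $\nu\circ F=F\circ\nu$, where the same symbol denotes the action on $\C^6$ and on $\C^5$.

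First I would record the elementary principle that pullback is contravariantly functorial: for any automorphism $g$ commuting with $F$ one has $g\pb(F\pb\om)=(F\circ g)\pb\om=(g\circ F)\pb\om=F\pb(g\pb\om)$. Applying this to the generators $\tau_1,\tau_2,\si_1,\si_2$ and to $\nu$, and then substituting the transformation rules $\tau_i\pb\om=\om$, $\si_i\pb\om=-\om$, $\nu\pb\om=-\om$ supplied by Corollary~\ref{cor 10.6}, yields at once $\tau_i\pb(F\pb\om)=F\pb\om$, $\si_i\pb(F\pb\om)=-F\pb\om$, and $\nu\pb(F\pb\om)=-F\pb\om$. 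This is precisely the content of the Corollary stated just before, so that step is already in place.

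Finally I would invoke the identity $F\pb\om=\Om$ from the Lemma immediately preceding this Corollary to transport these transformation properties from $F\pb\om$ to $\Om$, which completes the argument. The one point deserving a remark is that $\om$ is defined only on the hypersurface $Y=\{J=0\}$, whereas $\Om$ is a form on all of $\C^6$; the pullback $F\pb\om$ nonetheless makes sense on $\C^6$ because $F(\C^6)\subset Y$ by part~(iii) of the Lemma, and the explicit factorization of $F\pb\om$ used to establish $F\pb\om=\Om$ shows the equality holds as forms on the whole of $\C^6$. There is therefore no genuine obstacle here: the corollary is a purely formal consequence of equivariance, and its value is conceptual, in that it re-derives the invariance of $\Om$ checked by the lengthy direct computation in the earlier Lemma and, as noted there, explains the origin of the otherwise unmotivated $4$-form $\Om$ as the Gelfand--Leray residue form pulled back along $F$.
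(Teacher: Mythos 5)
Your proposal is correct and follows exactly the paper's route: the paper derives Corollary \ref{cor an proof} by combining the equivariance of $F$ (part (i) of the preceding Lemma, plus part (iii) guaranteeing the image lies in $Y$ so $F\pb\om$ makes sense), the invariance properties of the Gelfand--Leray form $\om$ from Corollary \ref{cor 10.6}, and the identity $F\pb\om=\Om$. Your explicit spelling-out of the pullback functoriality $g\pb(F\pb\om)=F\pb(g\pb\om)$ is precisely the step the paper leaves implicit behind the \qed of its intermediate Corollary.
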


Cf. Lemma \ref{lem OM}.

\section{Poisson structures on $\C^6$ and $\C^5$}
\label{sec NPo}

\subsection{Nambu-Poisson manifolds}

\begin{defn}[\cite{Ta}]Let $A$ be the algebra of functions of a manifold $Y$.  The manifold $Y$ is a
{\it Nambu-Poisson manifold} of order $n$ if there exists
a multi-linear map 
\bea
\{\,,\dots,\,\} : A^{\otimes n} \to A\,,
\eea
a {\it Nambu bracket} of order $n$, satisfying the following
properties.
\begin{enumerate}
\item[(i)] Skew-symmetry,
\bea
\{f_1,\dots,f_n\} = (-1)^{\ep(\si)} \{f_{\si(1)},\dots,f_{\si(n)}\}\,, 
\eea
for all $f_1,\dots,f_n\in A$ and $\si\in S_n$.

\item[(ii)]
Leibniz rule,
\bea
\{f_1f_2, f_3, \dots,f_{n+1}\} = f_1\{f_2, f_3, \dots,f_{n+1}\} + f_2\{f_1, f_3, \dots,f_{n+1}\}\,,
\eea 
for all $f_1,\dots,f_{n+1}\in A$.

\item[(iiii)]
Fundamental Identity (FI),
\bean
\label{FI}
&&
\{ f_1,\dots,f_{n-1}, \{g_1,\dots,g_n\}\} =
\{ \{f_1,\dots,f_{n-1}, g_1\},g_2,\dots,g_n\}
\\
\notag
&&
\phantom{aaa}
+ \{ g_1,\{f_1,\dots,f_{n-1}, g_2\},g_3,\dots,g_n\} + \dots 
\\
\notag
&&
\phantom{aaa}
+
\{ g_1,\dots,g_{n-1}, \{f_1,\dots,f_{n-1}, g_n\}\}\,,
\eean
for all $f_1,\dots,f_{n-1}, g_1,\dots,g_n\in A$.

\end{enumerate}
In particular, for $n=2$ this is the standard Poisson structure.
\end{defn}

\begin{rem}
The brackets with properties (i-ii) were considered by Y.\,Nambu \cite{Na},
who was motivated by  problems of quark dynamics. 
The notion of a Nambu-Poisson manifold was introduced by L.\,Takhtajan \cite{Ta}
in order to formalize mathematically the $n$-ary generalization
of Hamiltonian mechanics proposed by Y.\,Nambu.
 The fundamental identity was 
discovered  by V.\,Filippov \cite{F} as a generalization of the Jacobi identity for an $n$-ary Lie algebra
 and then later and independently
by Takhtajan \cite{Ta} for the Nambu-Poisson setting.

 The fundamental identity is also called
the Filippov identity.
\end{rem}

The dynamics associated with the  Nambu bracket on a Nambu-Poisson  manifold 
of order $n$
is specified by $n-1$  Hamiltonians
$H_1, \dots ,H_{n-1}\in A$, and the time evolution of $f \in A$ is given by the equation
\bean
\label{fl}
\frac{df}{dt} 
= \{H_1, \dots, H_{n-1}, f\} \,.
\eean
Let $\phi_t$ be the flow associated with equation \eqref{fl} and $U_t$ the one-parameter
group  acting on $A$ by $f \mapsto U_t(f) = f \circ \phi_t$.

\begin{thm}
[\cite{Ta}] The flow preserves the Nambu bracket,
\bean
\label{pNB}
U_t(\{f_1,\dots,f_n\})
=
\{U_t(f_1),\dots,U_t(f_n)\})\,,
\eean
for all $f_1,\dots,f_n\in A$.
\qed
\end{thm}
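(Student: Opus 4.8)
The plan is to fix $f_1,\dots,f_n\in A$, regard both sides of \eqref{pNB} as functions of $t$, and show that they satisfy one and the same first-order evolution equation with identical value at $t=0$; uniqueness of solutions then forces them to agree for all $t$. Throughout I write $L(g):=\{H_1,\dots,H_{n-1},g\}$ for the derivation generating the dynamics \eqref{fl}. The preliminary step is the differentiation rule
\bea
\frac{d}{dt}\,U_t(g)=U_t(L(g))=L(U_t(g)),\qquad g\in A.
\eea
This follows directly from the construction of $\phi_t$: the equation \eqref{fl} says that the vector field generating $\phi_t$ acts on functions exactly as $L$, and since a flow commutes with its own infinitesimal generator, $U_t$ commutes with $L$.

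Next I treat the left-hand side. Setting $c:=\{f_1,\dots,f_n\}$ and $P(t):=U_t(c)$, the differentiation rule gives immediately
\bea
\frac{d}{dt}P(t)=L(P(t))=\{H_1,\dots,H_{n-1},P(t)\},\qquad P(0)=c.
\eea

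The heart of the argument is the right-hand side $Q(t):=\{U_t(f_1),\dots,U_t(f_n)\}$. Using that the Nambu bracket is multilinear (a consequence of property (ii)), I differentiate slot by slot and apply the differentiation rule in each slot, obtaining
\bea
\frac{d}{dt}Q(t)=\sum_{i=1}^n\{U_t(f_1),\dots,L(U_t(f_i)),\dots,U_t(f_n)\}.
\eea
Now the Fundamental Identity \eqref{FI}, read with $H_1,\dots,H_{n-1}$ in the role of $f_1,\dots,f_{n-1}$ and with $U_t(f_1),\dots,U_t(f_n)$ in the role of $g_1,\dots,g_n$, states precisely that this sum equals $\{H_1,\dots,H_{n-1},\{U_t(f_1),\dots,U_t(f_n)\}\}=L(Q(t))$. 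Hence $Q$ solves $\tfrac{d}{dt}Q(t)=L(Q(t))$ with $Q(0)=c$ as well.

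Thus $P$ and $Q$ solve the same linear evolution equation $\dot y=L(y)$ with the same initial datum $y(0)=\{f_1,\dots,f_n\}$, and by uniqueness of solutions they coincide for all $t$, which is exactly \eqref{pNB}. The point to emphasize — and the only genuine obstacle — is that invariance of the bracket is, infinitesimally, the assertion that $L$ is a derivation of the $n$-ary bracket; this derivation property is \emph{not} implied by skew-symmetry and the Leibniz rule alone, but is precisely what the Fundamental Identity \eqref{FI} encodes. So the entire content of the proof is the recognition that \eqref{FI} is exactly the ingredient that makes the computation of $\tfrac{d}{dt}Q(t)$ close up into $L(Q(t))$.
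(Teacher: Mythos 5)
Your proof is correct and takes essentially the same route as the paper, which offers no argument of its own beyond the remark that ``these two theorems follow from the fundamental identity'' (citing [Ta]); your ODE-uniqueness argument — differentiating $\{U_t(f_1),\dots,U_t(f_n)\}$ slot by slot and recognizing, via the Fundamental Identity with $H_1,\dots,H_{n-1}$ in the first $n-1$ slots, that the result is $L$ applied to the bracket — is precisely what that remark abbreviates. The two ingredients you isolate, namely $\frac{d}{dt}U_t = U_t L = L\,U_t$ and the reading of \eqref{FI} as the statement that $L$ is a derivation of the $n$-ary bracket, are exactly the content of the standard proof.
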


A function $f\in A$  is called an {\it integral of motion} for the system defined by equation
\eqref{fl} if it
satisfies $\{H_1, \dots, H_{n-1}, f\} =0$.

\begin{thm}
[\cite{Ta}]
Given $H_1, \dots, H_{n-1}$,  the Nambu bracket of $n$ integrals of motion is also an integral of motion.
\qed
\end{thm}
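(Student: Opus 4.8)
The plan is to obtain the statement as an immediate corollary of the Fundamental Identity \eqref{FI}; no deeper structure is needed. Write $D := \{H_1,\dots,H_{n-1},\,\cdot\,\}$ for the operator generating the time evolution in \eqref{fl}, so that by definition a function $f$ is an integral of motion precisely when $Df=0$. The quantity we must control is $D\{f_1,\dots,f_n\}$, where $f_1,\dots,f_n$ are the $n$ given integrals of motion, and the goal is to show it vanishes.

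First I would substitute into \eqref{FI} the $(n-1)$-tuple $(f_1,\dots,f_{n-1})=(H_1,\dots,H_{n-1})$ and the $n$-tuple $(g_1,\dots,g_n)=(f_1,\dots,f_n)$. The left-hand side of \eqref{FI} then becomes exactly $D\{f_1,\dots,f_n\}$, while the right-hand side becomes a sum of $n$ terms in which $D$ is applied to each argument $g_k=f_k$ in turn:
\begin{equation*}
D\{f_1,\dots,f_n\}
=\sum_{k=1}^n\{f_1,\dots,f_{k-1},\,\{H_1,\dots,H_{n-1},f_k\},\,f_{k+1},\dots,f_n\}
=\sum_{k=1}^n\{f_1,\dots,f_{k-1},\,Df_k,\,f_{k+1},\dots,f_n\}.
\end{equation*}
Next, since every $f_k$ is by hypothesis an integral of motion, each factor $Df_k$ equals zero. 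By the multilinearity of the Nambu bracket encoded in axioms (i)--(ii), any bracket having a zero among its entries vanishes, so each summand on the right is zero and the total sum is zero. Hence $D\{f_1,\dots,f_n\}=0$, i.e. $\{f_1,\dots,f_n\}$ is itself an integral of motion, which is the claim.

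There is essentially no analytic obstacle: the entire content of the theorem is packaged in the Fundamental Identity, which asserts precisely that $D$ acts on the $n$-bracket by a graded Leibniz rule. The only step deserving care is the index bookkeeping — one must verify that the chosen substitution turns the $k$-th term on the right-hand side of \eqref{FI} into a bracket whose $k$-th slot is $Df_k$ while all other slots retain the untouched $f_j$'s. Once this matching is confirmed, the vanishing follows at once from multilinearity, and I would present the argument in the compact two-line form above.
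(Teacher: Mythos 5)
Your proof is correct and follows exactly the route the paper intends: the paper itself only remarks that "these two theorems follow from the fundamental identity," and your substitution $(f_1,\dots,f_{n-1})=(H_1,\dots,H_{n-1})$, $(g_1,\dots,g_n)=(f_1,\dots,f_n)$ into \eqref{FI}, together with multilinearity killing every term containing $Df_k=0$, is precisely that argument written out in full.
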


These two theorems follow from the fundamental identity.

\subsection{Examples}
An example of a Nambu-Poisson manifold of order $n$ is $\C^n$ with 
standard coordinates $x_1,\dots,x_n$ and {\it canonical Nambu bracket} given by
\bean
\label{p1}
\{f_1,\dots,f_n\} = {\det}_{i,j=1}^n \Big(  \frac{\der f_i}{\der x_j}\Big).
\eean

This example was considered  by Nambu  \cite{Na}. Other 
examples of Nambu-Poisson manifolds see
 in \cite{CT, Ta}. See also \cite{BF, Ch, DT, GM, OR02}.

\vsk.2>

It turns out that any Nambu-Poisson manifold of order $n >2$ has  presentation \eqref{p1} locally.

\begin{thm}
Let $Y$ be an $m$-dimensional manifold which is  a Nambu-Poisson manifold of order $n$, 
$m\geq n >2$, 
with bracket $\{ \,,\dots,\,\}$. 
Let $x\in X$ be a point such that $\{ \,,\dots,\,\}$ is nonzero at $x$. Then there exists local coordinates
$x_1,\dots,x_m$ in a neighborhood of $x$ such that 
\bea
\{f_1,\dots,f_n\} = {\det}_{i,j=1}^n \Big(  \frac{\der f_i}{\der x_j}\Big)\,.
\eea

\end{thm}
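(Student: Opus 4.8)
The plan is to translate the bracket into tensorial language and extract all information from the Fundamental Identity \eqref{FI}, which for $n>2$ forces a rigidity absent in the Poisson case $n=2$. Skew-symmetry (i) and the Leibniz rule (ii) say exactly that the bracket is given by a section $\eta$ of $\Lambda^n TY$, with $\{f_1,\dots,f_n\}=\eta(df_1,\dots,df_n)$; properties (i)--(ii) then carry no further content, so the whole problem is to analyze the consequences of \eqref{FI} for $\eta$ near a point $x$ with $\eta_x\neq0$. I would fix a coordinate chart, write the components $\eta^{i_1\dots i_n}$, substitute coordinate functions for the arguments in \eqref{FI}, and split the resulting relations into a purely algebraic part (quadratic in the $\eta^{\bullet}$, with no derivatives) and a part linear in the first derivatives $\der\eta^{\bullet}$.

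First I would use the algebraic part to prove that $\eta$ is \emph{decomposable} near $x$. For $n\ge3$ this part is nontrivial and is precisely the contraction condition $(\iota_\alpha\eta)\wedge\eta=0$ for all $\alpha\in\Lambda^{n-1}T_x^*Y$, the classical Pl\"ucker criterion for $\eta_x$ to be a simple $n$-vector; hence locally $\eta=X_1\wedge\dots\wedge X_n$ for pointwise independent vector fields $X_1,\dots,X_n$. This is the step where $n>2$ is essential: a generic bivector (for instance a symplectic one) is not decomposable, which is exactly why the $n=2$ Darboux form is a sum of planes rather than a single Jacobian. Next I would read off from the derivative part of the same relations that the rank-$n$ distribution $D=\on{span}(X_1,\dots,X_n)$ is involutive, so by the Frobenius theorem one may choose coordinates $x_1,\dots,x_m$ in which the leaves of $D$ are the coordinate $n$-planes; then $\eta=g\,\der_{x_1}\wedge\dots\wedge\der_{x_n}$ for a nowhere-zero function $g$.

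It remains to normalize $g$ to $1$. A short computation with the Hamiltonian vector fields $\iota_{df_1\wedge\dots\wedge df_{n-1}}\eta$ shows that, once $\eta$ has the foliated form above, \eqref{FI} imposes \emph{no} condition on $g$; one may therefore straighten the leafwise density by the leaf-preserving change of coordinates $y_1=\int_0^{x_1}g(t,x_2,\dots,x_m)^{-1}\,dt$, $y_i=x_i$ for $i\ge2$, after which $\eta=\der_{y_1}\wedge\dots\wedge\der_{y_n}$ and evaluation on $df_1,\dots,df_n$ yields the claimed formula $\{f_1,\dots,f_n\}=\det_{i,j=1}^n(\der f_i/\der y_j)$. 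I expect the decomposability lemma to be the crux of the argument: the bookkeeping that isolates the algebraic residue of \eqref{FI} and identifies it with the Pl\"ucker relation is the only genuinely delicate point, while the Frobenius straightening and the density normalization are routine, and openness of the condition $\eta\neq0$ keeps the whole construction inside a neighborhood of $x$.
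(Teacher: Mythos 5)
The first thing to note is that the paper does not prove this theorem at all: it is quoted from the literature, with the remark that it was conjectured by Takhtajan \cite{Ta}, proved in \cite{AG} and \cite{Ga}, and is ultimately a consequence of an old result of Weitzenb\"ock \cite{W} reproduced in Schouten's textbook \cite{S}. So there is no in-paper argument to compare yours against; the relevant comparison is with those references, and your plan reconstructs their strategy faithfully: (1) skew-symmetry and Leibniz encode the bracket as an $n$-vector field $\eta$; (2) the Fundamental Identity, tested on arbitrary functions, splits into a pointwise quadratic constraint on $\eta$ (the coefficients of second derivatives of the test functions, which for $n\ge 3$ do not cancel automatically, unlike the Poisson case $n=2$) and a constraint that is first order in the derivatives of $\eta$; (3) the quadratic constraint forces decomposability; (4) the differential constraint gives involutivity of the induced distribution, so Frobenius straightens it; (5) the leafwise density $g$ is absorbed by a coordinate change. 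Your steps (1), (4), (5) are right and essentially routine; for (5) you also implicitly use that $g\,\der_{x_1}\wedge\dots\wedge\der_{x_n}$ satisfies the FI for an arbitrary nonvanishing $g$ --- true for $n\ge 3$, and easy once decomposability plus involutivity is known to characterize Nambu tensors.

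The gap sits exactly where you yourself locate the crux, step (3). You assert that the algebraic residue of the FI ``is precisely'' the Pl\"ucker criterion $(\iota_\alpha\eta)\wedge\eta=0$ for all $\alpha\in\Lambda^{n-1}T_x^*Y$. That identification is not bookkeeping: what actually falls out of the FI is a differently structured system of quadratic relations on the components of $\eta$ (coming from the second-derivative terms of the $f_a$'s, which appear on only one side of the identity), and proving that this system implies decomposability of $\eta_x$ is the substantive algebraic content of the whole theorem. It is exactly Weitzenb\"ock's lemma --- the result the paper points to in Schouten, Chap.~II, Sections 4 and 6, formula (6.7) --- and it is where the hypothesis $n>2$ does its work; note that for $n=2$ the analogous second-derivative terms cancel identically by skew-symmetry, which is why no pointwise constraint, and no decomposability, holds for Poisson bivectors. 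As a plan, your proposal is the correct one and matches the proofs in \cite{AG} and \cite{Ga}; as a proof, it invokes the key lemma rather than proving it, so the decomposability step still needs to be supplied.
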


This statement was conjectured by L.\,Takhtajan \cite{Ta},  proved in 
\cite{AG, Ga}. It was discovered eventually that the theorem is a consequence of an old result in \cite{W},
reproduced in the textbook by Schouten \cite{S},  Chap. II, Sections 4 and 6, formula (6.7).
  See on that in \cite{DFST}.

\subsection{Hierarchy of Nambu-Poisson structures}

A Nambu-Poisson manifold structure of order $n$ on a manifold $X$ 
 induces an infinite family of subordinated Nambu-Poisson manifold structures on $X$
  of orders $n-1$ and lower, including a family
of Poisson structures, \cite{Ta}. 
  
  \vsk.2>
  Indeed for $H_1,\dots,H_{n-k}\in A$ define the  $k$-bracket $\{\,,\dots,\,\}_H$ by the formula
  \bean
  \label{Hbr}
  \{h_1,\dots,h_k\}_H
=
  \{H_1,\dots,H_{n-k}, h_1,\dots,h_k\}\,.
  \eean
Clearly, the  bracket $\{\,,\dots,\,\}_H$ is skew-symmetric and satisfies the Leibnitz rule.
The fundamental identity for 
$\{\,,\dots,\,\}_H$ 
follows from the fundamental identity \eqref{FI} for the original bracket.

\vsk.2>

For example, for $n=6$ and $k=4$, the fundamental identity for the bracket
\bean
\label{Br64}
\{h_1,h_2,h_3,h_4\}_{H_1,H_2} :
=
  \{H_1,H_{2}, h_1,h_2,h_3,h_4\}\,
  \eean
 and 7 functions
$u_1,u_2,u_3, v_1,v_2,v_3,v_4$ 
follows from the fundamental identity for $n=6$ 
and 11 functions $f_1,f_2,f_3,f_4,f_5, g_1,g_2,g_3,g_4,g_5,g_6$ if 
\bea
(f_1,f_2,f_3,f_4,f_5, g_1,g_2,g_3,g_4,g_5,g_6) = (H_1,H_2, u_1,u_2,u_3, H_1,H_2,v_1,v_2,v_3,v_4)\,.
\eea

\vsk.2>
The family of subordinated $k$-brackets, obtained by this construction from a given $n$-bracket, satisfy the
matching conditions described in \cite{Ta}.

\begin{example} Consider $\C^3$ with coordinates $a,b,c$ and canonical Nambu bracket of 
order 3,
\bea
\{f_1,f_2,f_3\} = \frac{df_1\wedge df_2\wedge df_3}
{da\wedge db\wedge dc}\,.
\eea
The braid group $\B_3$ acts on $\C^3$ by the formulas,
\bea
&&
\tau_1: (a,b,c) \mapsto
(-a,c,b-ac)\,,
\\
&&
\tau_2: (a,b,c) \mapsto
(b,a-bc,-c)\,.
\eea
The polynomial $H=a^2+b^2+c^2-abc$
is braid group invariant.
The subordinated 2-bracket
\bea
\{h_1,h_2\}_H := \{H,h_1,h_2\}
\eea
is the braid group invariant Dubrovin Poisson structure on $\C^3$,
\bean\notag
\label{DuPo}
\{a,b\}_H = 2c-ab,
\qquad
\{b,c\}_H = 2a-bc,
\qquad
\{c,a\}_H = 2b-ac.
\eean

\end{example}

\subsection{Poisson structure on $\C^6$}
Let us return to the space $\C^6$ with involution $\nu$, two polynomials $H_1$, $H_2$,
the holomorphic volume form $dV$, differential form $\Om$,
considered in Section \ref{sec *M act}.

\vsk.2>
On $\C^6$ consider the canonical Nambu bracket of order 6,
\bean
\label{our6}
\{f_1,\dots, f_6\} = \frac{ df_1\wedge df_2\wedge df_3\wedge df_4\wedge df_5\wedge df_6}
{dV}\,,
\eean
and associated  brackets
\bean
\label{Our64}
&&
\{f_1,f_2,f_3,f_4\}_{H_1,H_2}
=
\frac{ dH_1\wedge dH_2\wedge df_1\wedge df_2\wedge df_3\wedge df_4}
{dV}\,,\\
\label{BPo}
&&
\{f_1,f_2\}_{\Om}
= \frac {\Om\wedge df_1\wedge df_2}{dV}\,.
\eean

\begin{thm}
The Nambu bracket $\{\,,\dots,\,\}_{H_1,H_2}$ defines a Nambu-Poisson manifold structure 
on $\C^6$ of order 4. The structure $\{\,,\dots,\,\}_{H_1,H_2}$ is 
$\la_{i,j}, \si_1,\si_2, \nu$ invariant and $\tau_1,\tau_2$ anti-invariant.
\end{thm}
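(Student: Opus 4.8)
The plan is to deduce both assertions from the general Nambu--Poisson hierarchy together with the two transformation lemmas already established, so that essentially no new computation is needed. For the first assertion I would start from the canonical order-$6$ Nambu bracket \eqref{our6}, which is the standard example \eqref{p1} and hence a genuine Nambu--Poisson structure on $\C^6$. The bracket $\{\,\cdot\,,\cdot\,,\cdot\,,\cdot\,\}_{H_1,H_2}$ of \eqref{Our64} is exactly the subordinated $4$-bracket obtained by freezing the $n-k=2$ Hamiltonians $H_1,H_2$, in the sense of \eqref{Hbr} with $n=6$ and $k=4$. Skew-symmetry and the Leibniz rule are immediate from the determinantal (volume-form ratio) formula, and the fundamental identity is inherited from the order-$6$ fundamental identity by the substitution of the eleven functions spelled out in the Hierarchy subsection. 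This furnishes the order-$4$ Nambu--Poisson structure with no further work.

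For the equivariance statement the key observation is that the bracket is the ratio of two top-degree forms, namely $dH_1\wedge dH_2\wedge df_1\wedge df_2\wedge df_3\wedge df_4$ and $dV$, hence a function, and that pullback by a diffeomorphism $g$ commutes with the exterior derivative. Writing $g\pb(dH_1\wedge dH_2)=\ep_H\,dH_1\wedge dH_2$ and $g\pb dV=\ep_V\,dV$ with $\ep_H,\ep_V\in\{+1,-1\}$, a one-line computation pulling back the defining numerator and comparing with $\{f_1,\dots,f_4\}_{H_1,H_2}\cdot dV$ yields the master formula $\{f_1\circ g,\dots,f_4\circ g\}_{H_1,H_2}=\ep_H\ep_V\,\bigl(\{f_1,\dots,f_4\}_{H_1,H_2}\circ g\bigr)$. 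Thus the bracket is invariant when $\ep_H\ep_V=+1$ and anti-invariant when $\ep_H\ep_V=-1$, and it only remains to read off the two signs for each generator.

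By Lemma \ref{lem dd} the polynomials $H_1,H_2$ are $\Ga_M$-invariant, so $\ep_H=+1$ for every element of $\Ga_M$; by Lemma \ref{lem 12.3} the volume form $dV$ is $\la_{i,j},\si_1,\si_2$ invariant and $\tau_1,\tau_2$ anti-invariant, giving $\ep_V=+1$ for $\la_{i,j},\si_1,\si_2$ and $\ep_V=-1$ for $\tau_1,\tau_2$. Hence $\ep_H\ep_V=+1$ for $\la_{i,j},\si_1,\si_2$ (invariance) and $\ep_H\ep_V=-1$ for $\tau_1,\tau_2$ (anti-invariance). For $\nu$ both inputs flip sign: the relations $\nu\pb H_1=H_2$ and $\nu\pb H_2=H_1$ force $\nu\pb(dH_1\wedge dH_2)=dH_2\wedge dH_1=-dH_1\wedge dH_2$, so $\ep_H=-1$, while $dV$ is $\nu$ anti-invariant, so $\ep_V=-1$; their product is $+1$ and the bracket is $\nu$-invariant, as claimed.

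No step here is genuinely hard: the fundamental identity is supplied by the cited hierarchy construction, and everything else is bookkeeping around the master formula. The only point that requires care is the sign accounting, in particular noticing that the two sign reversals attached to $\nu$ --- one from the swap $H_1\leftrightarrow H_2$ in $dH_1\wedge dH_2$ and one from the anti-invariance of $dV$ --- cancel, so that $\nu$ lands in the invariant rather than the anti-invariant column. I would therefore present the master formula as a short lemma and then dispatch all six cases in a single table of $(\ep_H,\ep_V)$ values.
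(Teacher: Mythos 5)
Your proposal is correct and follows essentially the same route as the paper: the paper's proof is the one-line statement that the theorem is a corollary of Lemma \ref{lem dd} and Lemma \ref{lem 12.3}, with the Nambu--Poisson property supplied by the subordinated-bracket hierarchy (the $n=6$, $k=4$ case spelled out just before the theorem). Your master formula $\{f_1\circ g,\dots,f_4\circ g\}_{H_1,H_2}=\ep_H\ep_V\,(\{f_1,\dots,f_4\}_{H_1,H_2}\circ g)$ and the sign table, including the cancellation of the two minus signs for $\nu$, simply make explicit the bookkeeping the paper leaves implicit.
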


\begin{proof}
The theorem is a corollary of Lemmas \ref{lem dd} and \ref{lem 12.3}.
\end{proof}

\begin{thm}
\label{thm NP2}

The bracket $\{\,,\,\}_{\Om}$ defines a Poisson structure 
on $\C^6$. The Poisson structure $\{\,,\,\}_{\Om}$ is $\la_{i,j}, \nu$ invariant and 
$\tau_1,\tau_2,\si_1,\si_2$ anti-invariant.
\end{thm}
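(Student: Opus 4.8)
The plan is to check that $\{\,,\,\}_\Om$ is a genuine Poisson bracket by verifying its three axioms, and then to read off the equivariance directly from the transformation laws of $\Om$ and $dV$ established in Lemmas \ref{lem OM} and \ref{lem 12.3}. From the definition \eqref{BPo} the value $\{f_1,f_2\}_\Om$ depends on $f_1,f_2$ only through $df_1\wedge df_2$, so skew-symmetry and the Leibniz rule are immediate. Thus the only substantive point is the Jacobi identity, and the key observation — matching the claim in the introduction that the structure is log-canonical — is that $\{\,,\,\}_\Om$ has constant structure coefficients on the coordinate functions.

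First I would compute the bracket on coordinates. The defining identity reads $\{x_i,x_j\}_\Om\,dV=\Om\wedge dx_i\wedge dx_j$, so $\{x_i,x_j\}_\Om$ equals $\pm$ the coefficient in $\Om$ of the basic $4$-form $dx_I$ with $I=\{1,\dots,6\}\setminus\{i,j\}$, the sign being the one reordering $dx_I\wedge dx_i\wedge dx_j$ into $dV$. Inspecting the twelve terms of \eqref{oM}, each coefficient is a quadratic monomial $\pm x_ax_b$ whose index set $\{a,b\}$ is exactly the complement of the wedge indices. Hence $\{x_i,x_j\}_\Om=a_{ij}x_ix_j$ with $a_{ij}\in\{0,\pm1\}$; the three pairs $\{1,2\},\{3,4\},\{5,6\}$ are absent from \eqref{oM}, giving $a_{ij}=0$ there.

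Next I would invoke the general fact that any skew bracket with $\{x_i,x_j\}=a_{ij}x_ix_j$ and $a_{ij}$ constant satisfies the Jacobi identity. Indeed, the Leibniz rule gives $\{x_i,\{x_j,x_k\}\}=a_{jk}(a_{ij}+a_{ik})\,x_ix_jx_k$, so the cyclic sum equals $\big(a_{jk}(a_{ij}+a_{ik})+a_{ki}(a_{jk}+a_{ji})+a_{ij}(a_{ki}+a_{kj})\big)x_ix_jx_k$, which vanishes identically once one substitutes $a_{ji}=-a_{ij}$, $a_{kj}=-a_{jk}$, $a_{ki}=-a_{ik}$. Since the Jacobi identity on generators together with the Leibniz rule forces it on all polynomials, $\{\,,\,\}_\Om$ is a Poisson bracket.

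Finally, for the equivariance, applying $g^*$ to $\{f_1,f_2\}_\Om\,dV=\Om\wedge df_1\wedge df_2$ and writing $g^*\Om=\epsilon_\Om(g)\,\Om$, $g^*dV=\epsilon_V(g)\,dV$ gives $\{f_1\circ g,f_2\circ g\}_\Om=\tfrac{\epsilon_V(g)}{\epsilon_\Om(g)}\,\{f_1,f_2\}_\Om\circ g$, so the structure is $g$-invariant exactly when $\epsilon_V(g)=\epsilon_\Om(g)$ and anti-invariant when $\epsilon_V(g)=-\epsilon_\Om(g)$. Reading off the signs from Lemma \ref{lem OM} ($\Om$ is $\la_{i,j},\tau_1,\tau_2$ invariant and $\si_1,\si_2,\nu$ anti-invariant) and Lemma \ref{lem 12.3} ($dV$ is $\la_{i,j},\si_1,\si_2$ invariant and $\tau_1,\tau_2,\nu$ anti-invariant) yields precisely invariance under $\la_{i,j}$ and $\nu$ and anti-invariance under $\tau_1,\tau_2,\si_1,\si_2$. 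The whole argument is mechanical; the only genuine obstacle is sign bookkeeping — both in fixing the $a_{ij}$ from the wedge reorderings and in tracking $\epsilon_V/\epsilon_\Om$ — so that is where I would concentrate the care.
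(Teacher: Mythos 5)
Your proof is correct, but it reaches the Jacobi identity by a genuinely different route than the paper. The paper's own proof is two lines: by formula \eqref{factor}, the form $\Om$ is a wedge product of four differentials, $\Om = dg_1\wedge dg_2\wedge dg_3\wedge dg_4$ (with $g_1=\log(x_3x_4)$, $g_2=\log(x_5x_6)$, $g_3=x_1x_3x_5$, $g_4=x_2x_4x_6$), so $\{\,,\,\}_{\Om}$ is the bracket subordinated to the canonical Nambu bracket of order $6$ with Hamiltonians $g_1,\dots,g_4$, and the fundamental identity of the Nambu hierarchy (Takhtajan) gives the Jacobi identity at once; the equivariance is then read off from Lemmas \ref{lem 12.3} and \ref{lem OM}, exactly as you do in your last paragraph. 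You instead compute $\{x_i,x_j\}_{\Om}=a_{ij}x_ix_j$ with constant $a_{ij}\in\{0,\pm1\}$ (this is the content of the paper's Lemma \ref{lem P coef}, which the paper states only after the theorem) and verify Jacobi by the elementary cyclic-sum identity for constant-coefficient log-canonical brackets. Both arguments are valid. Yours is more elementary and self-contained: it avoids the Nambu--Poisson machinery and formula \eqref{factor} entirely, and it establishes the log-canonical property claimed in the introduction as a byproduct. One point you should make explicit: the step ``Jacobi on generators plus Leibniz forces Jacobi on all polynomials'' uses the standard fact that the Jacobiator of a skew biderivation is itself a derivation in each argument (equivalently, a trivector field), so that its vanishing on coordinates implies its vanishing identically. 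The paper's route buys structural insight that yours does not: it explains where $\Om$ comes from (the pullback under $F$ of the Gelfand--Leray form on the hypersurface $Y$) and places $\{\,,\,\}_{\Om}$ inside the hierarchy of subordinated Nambu brackets, which the paper exploits elsewhere in Section \ref{sec NPo}.
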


\begin{proof}
By formula \eqref{factor}, the form $\Om$ is the wedge-product of four differentials. Hence
the bracket $\{\,,\,\}_{\Om}$ defines a Nambu-Poisson manifold structure on $\C^6$
of order 2. The invariance properties of it follow from  Lemmas \ref{lem 12.3} and \ref{lem OM}.
\end{proof}

\begin{lem}
\label{lem P coef}
The Poisson structure $\{\,,\,\}_{\Om}$ is log-canonical. The Poisson brackets 
$\{x_i,x_j\}_\Om$ are given by the following matrix,
 \[
\left( 
\begin{array}{cccccc}
 0 & 0 & -x_1 x_3 & x_1 x_4 & x_1 x_5 & -x_1 x_6 \\
 0 & 0 & x_2 x_3 & -x_2 x_4 & -x_2 x_5 & x_2 x_6 \\
 x_1 x_3 & -x_2 x_3 & 0 & 0 & -x_3 x_5 & x_3 x_6 \\
 -x_1 x_4 & x_2 x_4 & 0 & 0 & x_4 x_5 & -x_4 x_6 \\
 -x_1 x_5 & x_2 x_5 & x_3 x_5 & -x_4 x_5 & 0 & 0 \\
 x_1 x_6 & -x_2 x_6 & -x_3 x_6 & x_4 x_6 & 0 & 0 \\
\end{array}
\right).
\]

\end{lem}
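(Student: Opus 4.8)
The plan is to compute each bracket $\{x_i,x_j\}_\Om$ directly from its definition \eqref{BPo} and read off the matrix; log-canonicity will then be immediate, because every entry turns out to be $\pm x_ix_j$ or $0$. First I would record the defining identity for the coordinate functions,
\[
\{x_i,x_j\}_\Om\, dV = \Om\wedge dx_i\wedge dx_j,
\]
so that the whole problem reduces to evaluating the six-form $\Om\wedge dx_i\wedge dx_j$ against $dV$.

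The key structural observation is that $\Om$ in \eqref{oM} is a sum of twelve terms, each of the shape $c\,x_kx_l\,dx_K$, where $K\subset\{1,\dots,6\}$ is a four-element index set and $\{k,l\}$ is precisely its complement. When I wedge such a term with $dx_i\wedge dx_j$ it vanishes unless $K=\{1,\dots,6\}\setminus\{i,j\}$, since a repeated differential kills the product. Hence for each pair $\{i,j\}$ at most one term of $\Om$ survives, namely the one whose coefficient is a multiple of $x_ix_j$. For the three pairs $\{1,2\},\{3,4\},\{5,6\}$ no term of $\Om$ carries the complementary index set $\{3,4,5,6\},\{1,2,5,6\},\{1,2,3,4\}$ respectively, so these brackets vanish, producing the three $2\times2$ zero blocks on the diagonal.

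For each of the remaining twelve pairs I would then reorder $dx_K\wedge dx_i\wedge dx_j$ into $dV$, which contributes the sign $\sign(\pi_{ij})$ of the permutation straightening $(K,i,j)$ into $(1,2,3,4,5,6)$; multiplying this sign by the coefficient listed in \eqref{oM} gives the entry. For instance, the pair $\{1,3\}$ selects the term $x_1x_3\,dx_2\wedge dx_4\wedge dx_5\wedge dx_6$, and straightening $(2,4,5,6,1,3)$ requires an odd number of transpositions, so $\{x_1,x_3\}_\Om=-x_1x_3$, in agreement with the displayed matrix; the other eleven entries follow from the same one-line sign count.

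The computation is purely combinatorial, and the only thing to get right is the bookkeeping of the twelve signs; this is the sole, routine obstacle. I could halve the work either by invoking the factorization \eqref{factor}, which exhibits $\Om$ as a wedge of four explicit one-forms and makes the sign pattern transparent, or by using antisymmetry $\{x_i,x_j\}_\Om=-\{x_j,x_i\}_\Om$ together with the $\la_{i,j},\si_1,\si_2$ equivariance of $\Om$ from Lemma \ref{lem OM} to propagate a few computed entries across the matrix. That $\{\,,\,\}_\Om$ is genuinely a Poisson bracket is already Theorem \ref{thm NP2}, so nothing beyond the entrywise evaluation remains, and since every entry is $\pm x_ix_j$, the bracket is log-canonical with structure constants in $\{0,\pm1\}$.
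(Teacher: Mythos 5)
Your proposal is correct and follows essentially the same route as the paper, which simply evaluates $\Om\wedge dx_i\wedge dx_j$ against $dV$ by explicit computation; your observation that each term of $\Om$ has the form $\pm x_kx_l\,dx_K$ with $\{k,l\}$ the complement of $K$ (so at most one term survives per pair, and none for $\{1,2\},\{3,4\},\{5,6\}$) is exactly the bookkeeping the paper leaves implicit behind ``and so on.'' Your sample entry $\{x_1,x_3\}_\Om=-x_1x_3$ agrees with the paper's displayed computation, and the sign count you describe settles the remaining entries.
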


\begin{proof}

 Explicit computations give
\begin{align*}
\Om\wedge dx_1\wedge dx_2&=0,\\
\Om\wedge dx_1\wedge dx_3&=-x_1 x_3\, dV,\\
\Om\wedge dx_1\wedge dx_4&=x_1 x_4\, dV,\\
\Om\wedge dx_1\wedge dx_5&=x_1 x_5\, dV,
\end{align*}
and so on.
\end{proof}

\begin{lem}
\label{lem Cas}
For any $f\in \C[\bs x]$ and $h\in \mc C$, we have
$\{f,h\}_\Om=0$. In particular, 
$\{f,H_1\}_\Om=\{f,H_2\}_\Om=0$ for any $f\in\C[\bs x]$.

\end{lem}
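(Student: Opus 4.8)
The plan is to reduce the statement to the assertion that each of the five generators $h_1,\dots,h_5$ of the Casimir subalgebra $\mc C$ is a Casimir element for $\{\,,\,\}_\Om$, and then to recast that assertion as a single linear-algebra computation exploiting that the bracket is log-canonical. First I would observe that, because $\{\,,\,\}_\Om$ is a biderivation (it satisfies the Leibniz rule in each slot), the set of $h\in\C[\bs x]$ with $\{f,h\}_\Om=0$ for all $f$ is a subalgebra: it is closed under sums by bilinearity and under products since $\{f,gh\}_\Om=g\{f,h\}_\Om+h\{f,g\}_\Om$ vanishes whenever $g,h$ are Casimir. Hence it suffices to prove that each generator $h_k$ is Casimir, and, applying the Leibniz rule in the first argument, it further suffices to check $\{x_i,h_k\}_\Om=0$ for all $i=1,\dots,6$ and $k=1,\dots,5$.

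By Lemma \ref{lem P coef} the bracket is log-canonical, $\{x_i,x_j\}_\Om=\epsilon_{ij}\,x_ix_j$, with an antisymmetric integer matrix $\epsilon=(\epsilon_{ij})$ having entries in $\{0,\pm 1\}$. For a monomial $m=x_1^{a_1}\cdots x_6^{a_6}$ with exponent vector $a=(a_1,\dots,a_6)$, the derivation $\{x_i,\cdot\}_\Om$ together with the Euler relation $x_k\der_{x_k}m=a_k m$ gives $\{x_i,m\}_\Om=\sum_k(\der_{x_k}m)\{x_i,x_k\}_\Om=\big(\sum_k\epsilon_{ik}a_k\big)\,x_i\,m=(\epsilon a)_i\,x_i\,m$. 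Thus a monomial $m$ is Casimir precisely when its exponent vector lies in $\ker\epsilon$. Writing $e_1,\dots,e_5$ for the exponent vectors of $h_1,\dots,h_5$, namely $e_1=(1,1,0,0,0,0)$, $e_2=(0,0,1,1,0,0)$, $e_3=(0,0,0,0,1,1)$, $e_4=(1,0,1,0,1,0)$, $e_5=(0,1,0,1,0,1)$, the desired claim becomes simply $\epsilon e_k=0$ for each $k$.

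The remaining step is the one explicit check, which is also the heart of the argument: each $\epsilon e_k$ is the signed sum of the columns of the matrix in Lemma \ref{lem P coef} indexed by the support of $h_k$, and in every case these cancel. For $e_1$ the first two columns of $\epsilon$ are negatives of one another, so $\epsilon e_1=0$; the same pairwise cancellation handles $e_2$ (columns $3,4$) and $e_3$ (columns $5,6$), while for $e_4$ (columns $1,3,5$) and $e_5$ (columns $2,4,6$) the three contributing columns sum entrywise to zero. I do not expect a genuine obstacle here—this is bookkeeping of signs rather than a hard point—so the main thing to get right is the log-canonical monomial formula above and the faithful transcription of the sign matrix.

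Once $\epsilon e_k=0$ is established for all $k$, each $h_k$ is Casimir, hence so is every element of the subalgebra $\mc C$, giving $\{f,h\}_\Om=0$ for all $f\in\C[\bs x]$ and $h\in\mc C$. Finally, since $H_1=h_1+h_2+h_3-h_4$ and $H_2=h_1+h_2+h_3-h_5$ lie in $\mc C$, the special cases $\{f,H_1\}_\Om=\{f,H_2\}_\Om=0$ follow immediately.
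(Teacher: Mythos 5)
Your proof is correct and takes essentially the same route as the paper's: the paper's proof simply states that the equalities $\{x_i,h_j\}_\Om=0$, $i=1,\dots,6$, $j=1,\dots,5$, are easily checked directly, and your exponent-vector formulation $\epsilon\, e_k=0$ is just a clean, systematic way of organizing exactly that check (your reduction to generators via the Leibniz rule is implicit in the paper's statement). The paper also sketches an alternative geometric argument — that the image of $F$ lies in the hypersurface $Y$ while $\om$ is a top-degree form on $Y$ — but your computational route is the one the paper actually leads with.
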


This statement justifies the name of the Casimir subalgebra for the subalgebra $\mc C\subset \C[\bs x]$.

\begin{proof} 
The equalities  $\{x_i,h_j\}_\Om =0$,  $i=1,\dots,6$, $j=1,\dots,5$,  are easily checked directly.
The statement also follows from the fact that the image of $F$ lies in $Y$ and $\om$ is a top degree form
on $Y$.
\end{proof}

\begin{lem}
The symplectic leaves of  the Poisson structure $\{\,,\,\}_\Om$ are at most two-dimensional
and lie in fibers of the map $F$.
\qed
\end{lem}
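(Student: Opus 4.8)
The plan is to prove the two assertions separately: that each symplectic leaf is contained in a fiber of $F$, and that each leaf is at most two-dimensional. Throughout I would use that the symplectic leaves are the maximal connected integral submanifolds of the characteristic distribution spanned by the Hamiltonian vector fields $X_f=\{f,\,\cdot\,\}_\Om$, that the dimension of the leaf through a point equals the rank of the Poisson matrix $A=(\{x_i,x_j\}_\Om)$ of Lemma~\ref{lem P coef} at that point, and that any Casimir function is constant along every Hamiltonian flow and hence on every connected leaf.

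First I would settle the inclusion in fibers. By Lemma~\ref{lem Cas} each generator $h_j$ of the Casimir subalgebra $\mc C$ satisfies $\{f,h_j\}_\Om=0$ for all $f\in\C[\bs x]$, so $X_f(h_j)=0$ for every $f$; thus $h_j$ is constant along the flow of every $X_f$ and therefore constant on each connected symplectic leaf. Since the map of \eqref{hx} is exactly $F=(h_1,\dots,h_5)$, the whole tuple $F$ is constant on every leaf, and each leaf is contained in a single fiber $F^{-1}(F(p))$.

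Next I would bound the dimension, and here the computation in Lemma~\ref{lem P coef} does essentially all of the work. Writing $w=(0,0,-x_3,x_4,x_5,-x_6)$, $u=(x_1,-x_2,0,0,-x_5,x_6)$, and $v=(-x_1,x_2,x_3,-x_4,0,0)$, one reads off from the matrix that its six rows are $x_1w,\,-x_2w,\,x_3u,\,-x_4u,\,x_5v,\,-x_6v$, so they all lie in the span of the three vectors $w,u,v$. Hence $\rank A\le 3$ at every point; since $A$ is antisymmetric its rank is even, and therefore $\rank A\le 2$ everywhere. This shows the characteristic distribution is at most two-dimensional, which is exactly the assertion.

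The cleaner conceptual reason for the rank bound, which I would record as a remark, is the factorization \eqref{factor}: the form $\Om=F\pb\om$ is decomposable as a wedge $\alpha_1\wedge\alpha_2\wedge\alpha_3\wedge\alpha_4$ of four $1$-forms. From the definition \eqref{BPo} one gets $\iota_{X_f}dV=\pm\,\Om\wedge df$, and wedging with any $\alpha_k$ yields $\alpha_k(X_f)\,dV=\pm\,\alpha_k\wedge\Om\wedge df=0$, so each $X_f$ lies in $\bigcap_k\ker\alpha_k$; where the $\alpha_k$ are independent this kernel is two-dimensional, and where they are dependent $\Om$ vanishes and the bracket is zero. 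I do not expect a genuine obstacle: both assertions reduce directly to Lemmas~\ref{lem Cas} and~\ref{lem P coef}, and the only points requiring care are the sign bookkeeping in $\iota_{X_f}dV=\pm\,\Om\wedge df$ and the standard observation that an antisymmetric matrix of rank at most $3$ in fact has rank at most $2$.
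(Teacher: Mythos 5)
Your proof is correct and follows exactly the route the paper intends: the lemma is stated there without proof as an immediate consequence of Lemma \ref{lem Cas} (the components $h_1,\dots,h_5$ of $F$ are Casimirs, hence constant on connected leaves) and of the fact, recorded in \eqref{factor}, that $\Om$ is decomposable as a wedge of four $1$-forms, which forces the characteristic distribution to have rank at most two. Your explicit verification that the rows of the matrix in Lemma \ref{lem P coef} lie in the span of three vectors, combined with evenness of the rank of an antisymmetric matrix, is a correct alternative bookkeeping of the same bound.
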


\subsection{Remarks}

\subsubsection{} The log-canonical Poisson structure $\{\,,\,\}_\Om$ can be encoded by the quiver in Figure \ref{quiver}.
\begin{figure}
\centering
\def\svgscale{0.7}
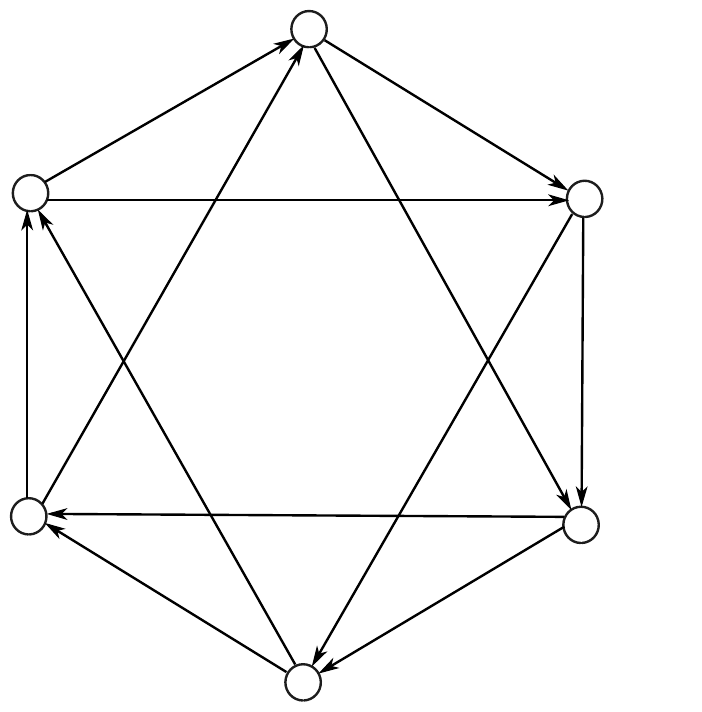
\caption{}\label{quiver}
\end{figure}
It would be interesting to determine if some of the $*$-Markov group transformations 
can be obtained as a sequence of mutations in the cluster algebra of that quiver. We were able to represent
in this way only
the action of the permutations $\si_1,\si_2$. To obtain 
$\si_1$ one needs to mutate the cluster variables at vertex 1, then
at vertex 3, then at vertex 1 and so on as in the sequence $1\,3\,1\,3\,1\,2\,4\,2\,4\,2$\ (10 mutations). The permutation $\si_2$
is obtained by the sequence of mutations $3\,5\,3\,5\,3\,4\,6\,4\,6\,4$. Cf. \cite{CS20}, where the braid group action 
was presented by   mutations for the $A_n$ quivers.

\subsubsection{}
We say that a Poisson structure on $\C^6$ is \emph{quadratic},
 if the $\{x_i,x_j\}$ are homogeneous quadratic polynomials in $\bm x$.

\begin{lem}
The Poisson structure $\{\,,\,\}_\Om$ is the {unique}, up to rescaling by a nonzero constant, 
nonzero quadratic Poisson structure on $\C^6$ having both $H_1$ and $H_2$ as Casimir elements.
\end{lem}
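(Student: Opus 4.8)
The plan is to pass from bivectors to $4$-forms and thereby reduce the statement to a finite piece of linear algebra. On $\C^6$ the assignment $\pi\mapsto\beta$ determined by $\{f,g\}_\pi=(\beta\wedge df\wedge dg)/dV$ is a linear isomorphism between bivector fields and $4$-forms, and a quadratic bracket corresponds to a $\beta$ whose coefficients are homogeneous quadratic polynomials. Moreover a polynomial $H$ is a Casimir of $\{\,,\,\}_\pi$ if and only if $\beta\wedge dH=0$: indeed $\beta\wedge dH$ is a $5$-form and the vanishing of $\beta\wedge dH\wedge df$ for all $f$ forces $\beta\wedge dH=0$. Hence the lemma is equivalent to showing that the space of quadratic $4$-forms $\beta$ with $\beta\wedge dH_1=\beta\wedge dH_2=0$ is one--dimensional, spanned by $\Om$. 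The qualifier ``Poisson'' is then automatic: every nonzero scalar multiple of the Poisson bivector $\{\,,\,\}_\Om$ (which is Poisson by Theorem~\ref{thm NP2}) is again Poisson, since $[c\pi,c\pi]=c^2[\pi,\pi]$ for the Schouten bracket.

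The first key step is a homogeneity reduction. Write $H_1=Q-C_1$ and $H_2=Q-C_2$ with $Q=x_1x_2+x_3x_4+x_5x_6$, $C_1=x_1x_3x_5$, $C_2=x_2x_4x_6$. For a quadratic $\beta$ the $5$-form $\beta\wedge dH_1$ splits into its degree-$3$ part $\beta\wedge dQ$ and its degree-$4$ part $-\beta\wedge dC_1$, which must vanish separately; the same holds for $H_2$. So the two Casimir conditions are equivalent to the three conditions
\[
\beta\wedge dQ=0,\qquad \beta\wedge dC_1=0,\qquad \beta\wedge dC_2=0,
\]
that is, to $Q$, $C_1$ and $C_2$ being Casimirs individually. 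This is the structural gain of the argument: the mixed cubics are forced to be Casimir on their own.

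Next I would solve $\beta\wedge dQ=0$. The coefficients of $dQ$ are $x_2,x_1,x_4,x_3,x_6,x_5$, a permutation of the regular sequence $x_1,\dots,x_6$, so the complex $(\Omega^\bullet,\,dQ\wedge-)$ is exact in positive degrees. In particular $\ker\bigl(dQ\wedge-\colon\Omega^4\to\Omega^5\bigr)=\operatorname{im}\bigl(dQ\wedge-\colon\Omega^3\to\Omega^4\bigr)$, so $\beta=dQ\wedge\gamma$ for a $3$-form $\gamma$ with \emph{linear} coefficients, unique modulo $dQ\wedge\Omega^2$. Substituting this parametrization into $\beta\wedge dC_1=0$ and $\beta\wedge dC_2=0$ turns the problem into a homogeneous linear system in the finitely many coefficients of $\gamma$. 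I would then cut this system down using the symmetries: the group generated by $\si_1,\si_2$, which permutes the three coordinate pairs $\{1,2\},\{3,4\},\{5,6\}$, together with $\nu$, which swaps the two indices in each pair and interchanges $C_1\leftrightarrow C_2$. Since $\Om$ already solves all three conditions (its $H_1,H_2$, and indeed $Q,C_1,C_2$, are Casimirs by Lemma~\ref{lem Cas}), it lies in the solution space, and the computation should show that space collapses to the single line $\C\,\Om$.

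The main obstacle is precisely this final linear-algebra computation. A priori $\beta$ need \emph{not} be log-canonical, so one may not assume the ansatz $\{x_i,x_j\}=a_{ij}x_ix_j$ at the outset, and the system in the coefficients of $\gamma$ is sizeable. The purpose of the exactness step and of the $\si_1,\si_2,\nu$-symmetry reduction is exactly to shrink this system to something short enough to carry out by hand (or to confirm symbolically). Once the solution space is shown to be one--dimensional, comparison with the explicit $\Om$ of Lemma~\ref{lem OM} completes the proof.
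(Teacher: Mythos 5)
Your proposal is correct, and it takes a genuinely different route from the paper's own proof. The paper argues bluntly: write $\{x_i,x_j\}=\sum_{k,l}Q^{kl}_{ij}x_kx_l$ with unknown constants, impose skew-symmetry and the two Casimir conditions, and let a computer solve the resulting linear system in the $315$ coefficients, finding that the solution space is the line spanned by $\{\,,\,\}_\Om$; like you, the paper never imposes the Jacobi identity, since uniqueness among skew quadratic brackets together with the fact that $\{\,,\,\}_\Om$ is Poisson (Theorem \ref{thm NP2}) disposes of the Poisson qualifier. Your route inserts genuine structure before any computation: the translation to $4$-forms via the volume form, the homogeneity splitting showing the conditions decouple into $\beta\wedge dQ=\beta\wedge dC_1=\beta\wedge dC_2=0$ (so the quadratic and cubic parts of $H_1,H_2$ must be Casimirs separately — a fact consistent with Lemma \ref{lem Cas} but not observed in the paper), and the Koszul-exactness step, which is legitimate because the coefficients of $dQ$ form a regular sequence, and which parametrizes every solution of the first equation as $\beta=dQ\wedge\gamma$ with $\gamma$ a $3$-form with linear coefficients. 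This cuts the paper's system down to one in roughly $105$ effective unknowns (the $120$ coefficients of $\gamma$ modulo the $15$-dimensional ambiguity $dQ\wedge\Omega^2$), further reducible by the $\si_1,\si_2,\nu$ symmetry. What each approach buys: the paper's is shorter to state and fully verified, at the cost of an opaque machine computation; yours yields structural insight and a computation small enough to plausibly check by hand — though you should be clear that the final collapse to the line $\C\,\Om$ is still a computation you have set up but not executed, so your argument, exactly like the paper's, ultimately rests on a (smaller) finite linear-algebra verification against the explicit form of $\Om$ in Lemma \ref{lem OM}.
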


\proof

Let $\{x_i,x_j\}=\sum_{k,l}Q^{kl}_{ij}x_kx_l$ with unknown coefficients $Q_{ij}^{kl}\in\C$. Assume  the
skew-symmetry of $\{\cdot,\cdot\}$, and that both $H_1$ and $H_2$ are Casimir elements. 
This gives  a  system of linear equations 
for $Q_{ij}^{kl}$. A computer assistant calculation shows 
that the matrix of coefficients of that system  has rank 1, and the space of solutions is spanned by the Poisson tensor
$\{\,,\,\}_\Om$.
\endproof

\subsubsection{}

Let  $\{x_i,x_j\} =Q_{i,j}(\bs x)$ be a polynomial Poisson structure on $\C^6$ having  $H_1, H_2$ as Casimir elements.
Expand the coefficients $Q_{ij} (x)$ at the origin,
$Q_{ij}(x) = \sum_{k=1}^6 Q^k_{ij} x_k + \dots ,$ \ $Q_{ij}^k\in\C$. 
A computer assistant calculation shows that $Q^k_{i,j}=0$ for all $i,j,k$.

\subsubsection{}
A computer assistant calculation shows 
that the Poisson structure $\{\,,\,\}_\Om$ is the {unique}, up to rescaling by a nonzero constant, nonzero
log-canonical Poisson structure on $\C^6$, which remains to be log-canonical after the action on it  by any element of the
 braid group $\mc B_3$.
 
\subsection{Poisson structure on $\C^5$}

 Consider $\C^5$ with coordinates $\bm y=(y_1,\dots, y_5)$, and objects discussed in Section \ref{sec an pr}.

Consider on $\C^5$ the canonical Nambu bracket of order 5,
\bean
\label{our6}
\{f_1,\dots, f_5\} = \frac{ df_1\wedge df_2\wedge df_3\wedge df_4\wedge df_5}
{dW}\,,
\eean
and the associated  bracket
\bean
\label{BP5}
&&
\{f_1,f_2\}_{J,J_1,J_2}
= \frac {dJ\wedge dJ_1\wedge dJ_2\wedge df_1\wedge df_2}{dW}\,.
\eean

\begin{thm}
\label{thm NP2}

The bracket $\{\,,\,\}_{J,J_1,J_2}$ defines a Poisson structure 
on $\C^5$ with Casimir elements $J,J_1,J_2$.
The Poisson structure $\{\,,\,\}_{J,J_1,J_2}$ is $\la_{i,j}, \tau_1,\tau_2, \nu$ invariant and 
$\si_1,\si_2$ anti-invariant.
\end{thm}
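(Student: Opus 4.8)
The plan is to recognize $\{\,,\,\}_{J,J_1,J_2}$ as a member of the hierarchy of brackets subordinated to the canonical order-$5$ Nambu--Poisson structure on $\C^5$, and then to read off the equivariance properties from Lemma \ref{lem 10.5} by pulling back the defining identity of $5$-forms.

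First I would note that, comparing the definition \eqref{BP5} with the canonical Nambu bracket \eqref{p1} for $n=5$, one has
\[
\{f_1,f_2\}_{J,J_1,J_2} = \{J,J_1,J_2,f_1,f_2\},
\]
i.e. the bracket is exactly the subordinated $2$-bracket \eqref{Hbr} obtained by inserting the three Hamiltonians $J,J_1,J_2$. Since the canonical order-$5$ bracket is a Nambu--Poisson structure, the hierarchy construction shows that this $2$-bracket is skew-symmetric, satisfies the Leibniz rule, and obeys the Jacobi identity (the $k=2$ instance of the fundamental identity \eqref{FI}); hence it is a genuine Poisson structure. That $J,J_1,J_2$ are Casimir elements is then immediate: substituting any one of them for $f_1$ produces a wedge product in which the corresponding differential occurs twice, so the numerator of \eqref{BP5} vanishes identically, giving $\{J,f\}=\{J_1,f\}=\{J_2,f\}=0$ for all $f$.

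For the equivariance I would rewrite \eqref{BP5} as the identity of $5$-forms
\[
\{f_1,f_2\}_{J,J_1,J_2}\,dW = dJ\wedge dJ_1\wedge dJ_2\wedge df_1\wedge df_2,
\]
and apply the pullback $g\pb$ along each generator $g$, using that $g\pb$ commutes with $d$ and $\wedge$. The right-hand side becomes $g\pb(dJ\wedge dJ_1\wedge dJ_2)\wedge d(g\pb f_1)\wedge d(g\pb f_2)$, while the left-hand side becomes $(g\pb\{f_1,f_2\})\,g\pb(dW)$. By Lemma \ref{lem 10.5} the $3$-form $dJ\wedge dJ_1\wedge dJ_2$ is $\tau_1,\tau_2,\si_1,\si_2$ invariant and $\nu$ anti-invariant, while $dW$ is $\tau_1,\tau_2$ invariant and $\si_1,\si_2,\nu$ anti-invariant. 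Dividing, the bracket picks up the sign equal to the product of these two signs: invariant under $\tau_1,\tau_2$ (both forms invariant), anti-invariant under $\si_1,\si_2$ (only $dW$ flips), and invariant under $\nu$ (both forms flip and the signs cancel). For $\la_{i,j}$ I would invoke Theorem \ref{thm CSA}, according to which $\la_{i,j}$ fixes the Casimir subalgebra pointwise; hence $\la_{i,j}$ acts as the identity on $\C^5$ and invariance is automatic.

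This argument runs entirely parallel to the proof of the corresponding statement for $\{\,,\,\}_\Om$ on $\C^6$, and I expect no genuine obstacle. The only step demanding care is the sign bookkeeping in the $\nu$-case, where the anti-invariance of $dW$ and the anti-invariance of $dJ\wedge dJ_1\wedge dJ_2$ must be seen to cancel, so that $\nu$ comes out \emph{invariant} rather than anti-invariant; the remaining cases are direct consequences of the parity table in Lemma \ref{lem 10.5}.
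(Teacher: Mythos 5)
Your proposal is correct and follows essentially the paper's own route: the paper's one-line proof ("The theorem follows from Lemma \ref{lem 10.5}") tacitly relies on exactly the subordination construction \eqref{Hbr} for the Poisson property and Casimirs, plus the parity table of Lemma \ref{lem 10.5} for the equivariance, which is what you spell out. Your sign bookkeeping (including the cancellation of the two $\nu$ anti-invariances and the triviality of the $\la_{i,j}$-action on $\C^5$ via Theorem \ref{thm CSA}) is accurate.
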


\begin{proof}
The theorem follows from  Lemma \ref{lem 10.5}.
\end{proof}

Introduce new linear coordinates  on $\C^5$,
$(u_1,u_2,u_3,u_4,u_5):=(y_1,y_2,y_3, J_1,J_2)$.

\begin{lem}
\label{lem P5 coef}
The Poisson brackets 
$\{u_i,u_j\}_{J,J_1,J_2}$ are given by the  formulas:
\bea
&&
\{u_1,u_2\}_{J,J_1,J_2} =
u_1u_2 -2(u_1+u_2+u_3) +u_4+u_5,
\\
&&
\{u_2,u_3\}_{J,J_1,J_2} =
u_2u_3 -2(u_1+u_2+u_3) +u_4+u_5,
\\
&&
\{u_3,u_1\}_{J,J_1,J_2} =
u_3u_1 -2(u_1+u_2+u_3) +u_4+u_5,
\eea
and $\{u_i,u_4\}_{J,J_1,J_2}=\{u_i,u_5\}_{J,J_1,J_2}=0$ for all $i$.
\qed
\end{lem}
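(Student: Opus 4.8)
The plan is to compute all of the brackets $\{u_i,u_j\}_{J,J_1,J_2}$ directly from the defining formula \eqref{BP5}, treating the vanishing brackets and the three nontrivial ones separately. First I would dispose of every bracket involving $u_4=J_1$ or $u_5=J_2$. Since $J_1$ and $J_2$ are Casimir elements of $\{\,,\,\}_{J,J_1,J_2}$ by Theorem \ref{thm NP2}, we get $\{u_i,u_4\}_{J,J_1,J_2}=\{u_i,u_5\}_{J,J_1,J_2}=0$ for all $i$ at once. Concretely, the numerator of $\{f,J_1\}_{J,J_1,J_2}$ is $dJ\wedge dJ_1\wedge dJ_2\wedge df\wedge dJ_1$, which contains $dJ_1$ twice and hence vanishes, and likewise for $J_2$.

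For the three nontrivial brackets I would first compute $\{y_i,y_j\}_{J,J_1,J_2}$ in the original coordinates. Writing out
\[
dJ=y_2y_3\,dy_1+y_1y_3\,dy_2+y_1y_2\,dy_3-y_5\,dy_4-y_4\,dy_5,\quad dJ_1=dy_1+dy_2+dy_3-dy_4,\quad dJ_2=dy_1+dy_2+dy_3-dy_5,
\]
the key observation is that in the $5$-form $dJ\wedge dJ_1\wedge dJ_2\wedge dy_i\wedge dy_j$ only the components of $dJ,dJ_1,dJ_2$ along the three differentials complementary to $dy_i,dy_j$ survive the wedge with $dy_i\wedge dy_j$. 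For the pair $(y_1,y_2)$ the surviving parts are $y_1y_2\,dy_3-y_5\,dy_4-y_4\,dy_5$, $dy_3-dy_4$, $dy_3-dy_5$, and their wedge equals
\[
\det\begin{pmatrix} y_1y_2 & -y_5 & -y_4 \\ 1 & -1 & 0 \\ 1 & 0 & -1 \end{pmatrix}\,dy_3\wedge dy_4\wedge dy_5=(y_1y_2-y_4-y_5)\,dy_3\wedge dy_4\wedge dy_5.
\]
Reordering $dy_3\wedge dy_4\wedge dy_5\wedge dy_1\wedge dy_2$ into $dW$ is an even permutation, so $\{y_1,y_2\}_{J,J_1,J_2}=y_1y_2-y_4-y_5$. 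The same computation for $(y_2,y_3)$ and $(y_3,y_1)$ produces the analogous determinants $y_2y_3-y_4-y_5$ and $y_1y_3-y_4-y_5$, the reordering sign being $+1$ in each case.

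Finally I would pass to the coordinates $u$. Since $y_i=u_i$ for $i=1,2,3$, and the relations $J_1=y_1+y_2+y_3-y_4$, $J_2=y_1+y_2+y_3-y_5$ give $y_4=u_1+u_2+u_3-u_4$ and $y_5=u_1+u_2+u_3-u_5$, we have $y_4+y_5=2(u_1+u_2+u_3)-u_4-u_5$. Substituting into the three brackets reproduces the stated formulas, for instance $\{u_1,u_2\}_{J,J_1,J_2}=u_1u_2-2(u_1+u_2+u_3)+u_4+u_5$. There is no genuine obstacle here; the computation is routine, and the only point demanding care is the bookkeeping of the reordering signs of the wedge products, all of which turn out to be even, so the brackets come out with the uniform $+$ sign displayed in the lemma.
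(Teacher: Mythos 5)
Your computation is correct: the determinant evaluations, the evenness of all three reordering permutations, the vanishing of brackets with $J_1,J_2$ (the numerator contains a repeated differential), and the substitution $y_4+y_5=2(u_1+u_2+u_3)-u_4-u_5$ all check out. The paper states this lemma with no written proof (it is marked as following by direct verification from the definition \eqref{BP5}), and your argument is exactly that direct verification, carried out in full.
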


Notice  the similarity of these formulas with Dubrovin's formulas
\eqref{DuPo}. Similarly to Dubrovin's case the linear and quadratic parts
of the Poisson structure  $\{\,,\,\}_{J,J_1,J_2}$ form a  pencil of Poisson structures,
that, is any linear combination of them is a Poisson structure too.

\begin{rem}
It would be interesting to compare the Poisson structures of this Section \ref{sec NPo} with numerous examples in
\cite{OR02}.

\end{rem}

\section{$*$-Analog of Horowitz Theorem}
\label{sec *Hor}

In this section we discuss analogs of the Horowitz Theorem \ref{thm Hor-c}.

\subsection{Algebra $\mc R$ and $\nu$-endomorphisms}
\label{*endo}

Let $\bs x=(x_1,\dots,x_6)$.
Define an involution $\nu$ on the polynomial algebra $\mathcal R:=\Zs[\bm x]$
as follows.  For an element 
\[
f =\sum_{\bs a\in \N^6}
A_{\bs a}\ x_1^{a_1}
x_2^{a_2}x_3^{a_3}x_4^{a_4}x_5^{a_5}x_6^{a_6},
\qquad A_{\bm a}\in\Zs ,
\]
define
\[
\nu f =
\sum_{\bs a\in \N^6}
A_{\bs a}^*\ x_1^{a_2}
x_2^{a_1}x_3^{a_4}x_4^{a_3}x_5^{a_6}x_6^{a_5}.
\]
A $\Zs$-algebra endomorphism $\phi\colon \mathcal R\to\mathcal R$ is called a $\nu$-\emph{endomorphism} if
\[
\phi(\nu f)=\nu \phi(f),\qquad f\in \mathcal R.
\]
If $\phi$ is invertible, then $\phi$  is called a $\nu$-\emph{automorphism} of $\mc R$. 
The group of $\nu$-automorphisms of $\mc R$ is denoted by ${\rm Aut}_\nu(\mc R)$.

There is a one-to-one correspondence between $\nu$-endomorphisms of $\mc R$ and triples
 of polynomials $(P,Q,R)\in\mc R^3$. Such a triple defines a $\nu$-endomorphism
\bea
 &&x_1\mapsto P,\quad \ x_3\mapsto Q,\qquad x_5\mapsto R,\\
 && x_2\mapsto \nu P,\quad x_4\mapsto \nu Q,\quad x_6\mapsto \nu R.
\eea
In what follows we  often define a $\nu$-endomorphism by giving a triple $(P,Q,R)$.

\subsection{Markov group of $\nu$-automorphism}

Consider the following four groups of 
\\
$\nu$-automorphisms of $\mc R$:
\vskip 2mm
{\bf Type I. } The group $G_1^{\on{aut}}$ of $\nu$-automorphisms generated by
transformations
\beq
\notag
\La_{i,j}\colon x_1\mapsto (-1)^i x_1,\quad x_3\mapsto (-1)^{i+j} x_3,\quad x_5\mapsto (-1)^j x_5,
\qquad i,j\in\Z_2.
\eeq
\vskip 2mm
{\bf Type II. }
The group $G_2^{\on{aut}}$ of $\nu$-automorphisms generated by
transformations, 
\begin{align}
\notag
\Si_1\ \colon\ x_1\mapsto x_3,\quad x_3\mapsto x_1,\quad x_5\mapsto x_5,\\
\notag
\Si_2\ \colon \ x_1\mapsto x_1,\quad x_3\mapsto x_5,\quad x_5\mapsto x_3.
\end{align}
\vskip 2mm
{\bf Type III. }
The group $G_3^{\on{aut}}$ of $\nu$-automorphisms generated by
transformations
\bea
\notag
&&
{\rm T}_1\ \colon\ x_1\mapsto -x_2, \qquad 
\phantom{a}
x_3\mapsto x_6-x_1x_3, \qquad
x_5\mapsto x_4,
\\
\notag
&&
{\rm T}_2\ \colon\  x_1\mapsto x_4-x_1x_5, 
\qquad
x_3\mapsto x_2, \qquad
x_5\mapsto -x_6.
\eea
We have 
$\rm T_1 T_2T_1=T_2T_1T_2$.

\vskip 2mm
{\bf Type IV. }
The group $G_4^{\on{aut}}$ of $\nu$-automorphisms generated by
transformations
\beq
\notag
{\rm M}_{i,j}\colon x_1\mapsto s_3^{-i}x_1,\quad x_3\mapsto s_3^{i+j}x_3,\quad x_5\mapsto s_3^{-j}x_5,
\qquad i,j\in\Z.
\eeq

Define the $*$-\emph{Markov group} $\Ga_M^{\on{aut}}$ of $\nu$-automorphisms of $\mc R$
as the group generated by $G_1^{\on{aut}},G_2^{\on{aut}},G_3^{\on{aut}},G_4^{\on{aut}}$.

Define the \emph{Vi\`ete $\nu$-involutions}
$V_1,V_2,V_3\in \Ga_M^{\on{aut}}$  by the formulas
\begin{align}
\notag
V_1\colon &&x_1\mapsto x_3x_5-x_2, &&x_3\mapsto x_4, &&x_5\mapsto x_6,&\\
\notag
V_2\colon &&x_1\mapsto x_2, &&x_3\mapsto x_1x_5-x_4, &&x_5\mapsto x_6,&\\
\notag
V_3\colon &&x_1\mapsto x_2, &&x_3\mapsto x_4, &&x_5\mapsto x_1x_3-x_6.&
\end{align}
We denote by $\Ga_V^{\on{aut}}$ the group generated by $V_1,V_2,V_3$.
We have
\begin{align}
\notag
V_1=\La_{1,1}\ \Si_1\ {\rm T}_2,\qquad
\notag
V_2=\La_{1,0}\ \Si_2\ {\rm T}_1,\qquad
\notag
V_3=\La_{1,1}\ {\rm T}_1\ \Si_2.
\end{align}

\begin{thm}\label{identaut}
We have the following identities:
\begin{align}
\notag
\Si_1\La_{i,j}\Si_1&=\La_{i+j,j}\,,
\\
\notag
\Si_2\La_{i,j}\Si_2&=\La_{i, i+j}\,,
\end{align}
\begin{alignat}{2}
\notag
\Si_1 V_1\Si_1&=V_2,\qquad \Si_2 V_1\Si_2&=V_1,\\
\notag
\Si_1 V_2\Si_1&= V_1,\qquad \Si_2 V_2\Si_2&=V_3,\\
\notag
\Si_1 V_3\Si_1&=V_3,\qquad \Si_2 V_3\Si_2&=V_2,
\end{alignat}
\beq
\notag
\La_{k,l} V_i\La_{k,l} =V_i\,,\qquad i=1,2,3,
\eeq
\bea
&&
\La_{k,l}{\rm M}_{i,j}\La_{k,l}={\rm M}_{i,j}\,,\qquad
\phantom{a}
 k,l\in\Z_2,\quad i,j\in\Z,
\\
\notag
&&
\Si_1{\rm M}_{i,j}\Si_1={\rm M}_{-i-j,j}\,,\qquad \Si_2{\rm M}_{i,j}\Si_2={\rm M}_{i,-i-j}\,,
\\
\notag
&&
V_k{\rm M}_{i,j}V_k
={\rm M}_{-i,-j}\,,\qquad
\phantom{a}
 k=1,2,3.
\eea

\end{thm}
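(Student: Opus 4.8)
The plan is to reduce every asserted identity to a short finite check on the three generators $x_1,x_3,x_5$. Recall that a $\nu$-endomorphism $\phi$ of $\mc R=\Zs[\bm x]$ is completely determined by the triple $(\phi(x_1),\phi(x_3),\phi(x_5))$, since $\nu$-equivariance forces $\phi(x_2)=\nu\phi(x_1)$, $\phi(x_4)=\nu\phi(x_3)$, $\phi(x_6)=\nu\phi(x_5)$. Hence to prove an equality $\Phi=\Psi$ between two $\nu$-automorphisms it suffices to verify $\Phi(x_k)=\Psi(x_k)$ for $k=1,3,5$; the remaining three equalities follow automatically. All the generators in play are $\Zs$-algebra maps, so they fix the coefficient ring and distribute over products and over integer powers of $s_3$. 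The only place where coefficients move is through $\nu$, which acts on $\Zs$ by the $*$-involution and in particular sends $s_3\mapsto s_3^{*}=s_3^{-1}$. I would record these two facts as a preliminary remark, as they organize all subsequent computation.

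First I would dispatch the identities in which no nontrivial coefficient twist occurs: the $\La$–$\Si$ commutations, the six relations $\Si_a V_i\Si_a$, and $\La_{k,l}V_i\La_{k,l}=V_i$. For these the computation on $x_1,x_3,x_5$ is formally identical to the classical computation behind Proposition \ref{ident}, only now carried out with six variables. For instance $(\Si_1\La_{i,j}\Si_1)(x_1)=\Si_1(\La_{i,j}(x_3))=(-1)^{i+j}x_1$, $(\Si_1\La_{i,j}\Si_1)(x_3)=\Si_1(\La_{i,j}(x_1))=(-1)^{i}x_3$, and $(\Si_1\La_{i,j}\Si_1)(x_5)=(-1)^{j}x_5$, which is exactly the action of $\La_{i+j,j}$ (noting $(-1)^{2j}=1$ on the middle coordinate). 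The conjugation relations for the $V_i$ and the relations $\La_{k,l}V_i\La_{k,l}=V_i$ are settled the same way.

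The genuinely subtle identities are those involving ${\rm M}_{i,j}$, above all $V_k{\rm M}_{i,j}V_k={\rm M}_{-i,-j}$, where the $*$-twist must be tracked. Here the point is that while ${\rm M}_{i,j}(x_1)=s_3^{-i}x_1$, applying $\nu$-equivariance gives ${\rm M}_{i,j}(x_2)=\nu(s_3^{-i}x_1)=(s_3^{-i})^{*}x_2=s_3^{\,i}x_2$, and similarly the $s_3$-exponents on $x_4,x_6$ flip sign relative to those on $x_3,x_5$. Substituting into $V_1(x_1)=x_3x_5-x_2$ one finds ${\rm M}_{i,j}(x_3x_5-x_2)=s_3^{\,i}(x_3x_5-x_2)$, whence $(V_1{\rm M}_{i,j}V_1)(x_1)=s_3^{\,i}x_1={\rm M}_{-i,-j}(x_1)$; the images of $x_3$ and $x_5$ are handled identically, producing exponents $-(i+j)$ and $j$, matching ${\rm M}_{-i,-j}$. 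The relations $\La_{k,l}{\rm M}_{i,j}\La_{k,l}={\rm M}_{i,j}$ and $\Si_a{\rm M}_{i,j}\Si_a$ are easier instances of the same bookkeeping.

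The main obstacle is therefore not conceptual but one of careful bookkeeping: correctly propagating the twist $s_3\mapsto s_3^{-1}$ through $\nu$ to the starred variables $x_2,x_4,x_6$, since it is precisely this sign reversal of $s_3$-exponents that turns ${\rm M}_{i,j}$ into ${\rm M}_{-i,-j}$ under Viète conjugation. As an organizing sanity check I would observe that forgetting the starred variables carries the entire list onto the corresponding identities of Proposition \ref{ident}; agreement there confirms the combinatorics of the indices and isolates the $s_3$-twist as the only genuinely new point to verify.
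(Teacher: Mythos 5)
Your proposal is correct and matches the paper's approach: the paper disposes of Theorem \ref{identaut} with the remark that the identities follow by straightforward computation, and your argument is exactly that computation, organized via the paper's own observation (Section \ref{*endo}) that a $\nu$-endomorphism is determined by the triple $(\phi(x_1),\phi(x_3),\phi(x_5))$. Your spot checks, including the key bookkeeping point that $\nu$ twists coefficients by $*$ so that $s_3\mapsto s_3^{-1}$ on the starred variables (which is what produces ${\rm M}_{-i,-j}$ under Vi\`ete conjugation), are accurate.
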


\proof
These identities are proved by straightforward computations.
\endproof

\begin{thm}
We have an epimorphism of groups $\iota\colon\smg\to\amg$ defined on generators by
\begin{align}
\label{iotagen}
\iota(\la_{i,j}):=\La_{i,j},\quad \iota(\sigma_i):=\Si_i,
\quad \iota(v_j):=V_j,\quad \iota(\mu_{i,j}):={\rm M}_{i,j}.
\end{align}
\end{thm}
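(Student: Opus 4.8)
The plan is to build $\iota$ from the semidirect product decomposition $\smg=\svg\rtimes\langle G_1,G_2,G_4\rangle$ of Corollary~\ref{decv124}, assembling it out of homomorphisms defined on the two factors and invoking the universal properties of free and semidirect products. First observe that, although $\smg$ is generated by $\la_{i,j},\si_i,\tau_i,\mu_{i,j}$, the relations \eqref{vtau1} express $\tau_1,\tau_2$ as words in $v_j,\la_{i,j},\si_i$; hence $\{\la_{i,j},\si_i,v_j,\mu_{i,j}\}$ is also a generating set, so \eqref{iotagen} does prescribe an image for each generator. Since the map is given on generators, the whole content is to verify that these assignments respect every relation of $\smg$, and then that the image exhausts $\amg$.

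For well-definedness I treat the factors separately. On the normal factor $\svg$: by the Lemma following Proposition~\ref{kerphiM}, $\phi_M$ restricts to an isomorphism $\svg\cong\vg$, and by Theorem~\ref{EH} the latter is the free product $\Z_2*\Z_2*\Z_2$ on $v_1^c,v_2^c,v_3^c$; thus $\svg=\langle v_1\rangle*\langle v_2\rangle*\langle v_3\rangle$ with each $v_j$ of order $2$. Consequently, to obtain a homomorphism $\alpha\colon\svg\to\amg$ with $\alpha(v_j)=V_j$, it suffices by the universal property of free products to check that each $V_j$ is an involution. This is a one-line computation from the defining formulas; e.g. $V_1^2$ sends $x_1\mapsto x_3x_5-x_2\mapsto x_4x_6-(x_4x_6-x_1)=x_1$, and similarly on $x_3,x_5$, hence on $x_2,x_4,x_6$ by $\nu$-equivariance.

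On the complementary factor $H:=\langle G_1,G_2,G_4\rangle$: Corollary~\ref{dec124} gives the unique normal form $g=g_4g_1g_2$, and Proposition~\ref{ident} shows that $\langle G_1,G_2\rangle$ normalizes $G_4$ and $G_2$ normalizes $G_1$, so $H=G_4\rtimes(G_1\rtimes G_2)$ with $G_1\cong\Z_2^2$, $G_2\cong S_3$, $G_4\cong\Z^2$ and explicit actions. I define $\beta\colon H\to\amg$ layer by layer via $\la_{i,j}\mapsto\La_{i,j}$, $\si_i\mapsto\Si_i$, $\mu_{i,j}\mapsto{\rm M}_{i,j}$, verifying the internal relations of each factor (additivity and order $2$ for the $\La$'s, the $S_3$-relations for the $\Si$'s, additivity for the ${\rm M}$'s) together with the conjugation relations $\Si\La\Si=\La'$, $\Si{\rm M}\Si={\rm M}'$, $\La{\rm M}\La={\rm M}$ used to glue the layers. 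All of these are precisely the identities recorded in Theorem~\ref{identaut}, which mirror term by term those of Proposition~\ref{ident}; hence $\beta$ is a well-defined homomorphism.

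Finally, $\alpha$ and $\beta$ assemble into $\iota$ on $\smg=\svg\rtimes H$ once the conjugation relations tying $\svg$ to $H$ are seen to be preserved: for each generator pair these are $\si_1v_1\si_1=v_2$ and its companions, $\la_{k,l}v_i\la_{k,l}=v_i$, and $v_k\mu_{i,j}v_k=\mu_{-i,-j}$ in Proposition~\ref{ident}, whose images satisfy the identical relations $\Si_1V_1\Si_1=V_2$, $\La_{k,l}V_i\La_{k,l}=V_i$, $V_k{\rm M}_{i,j}V_k={\rm M}_{-i,-j}$ of Theorem~\ref{identaut}. This yields the homomorphism $\iota$ realizing \eqref{iotagen}. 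For surjectivity, $\on{im}\iota$ contains $G_1^{\on{aut}},G_2^{\on{aut}},G_4^{\on{aut}}$ and $\Ga_V^{\on{aut}}$, and solving the relations displayed just before Theorem~\ref{identaut} gives ${\rm T}_1=\Si_2\La_{1,0}V_2$ and ${\rm T}_2=\Si_1\La_{1,1}V_1$, so $G_3^{\on{aut}}\subseteq\on{im}\iota$ as well; as these four subgroups generate $\amg$, the map is onto. The one genuine point needing care is the assertion that the relations of Proposition~\ref{ident} together with the internal relations of the factors form a complete presentation of $\smg$ — so that checking Theorem~\ref{identaut} really suffices — and this is exactly what the iterated semidirect decomposition above supplies.
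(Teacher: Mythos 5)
Your proposal is built from the same two ingredients as the paper's proof --- the unique factorization $g=vg_4g_1g_2$ of Corollaries \ref{dec124} and \ref{decv124}, and the fact that the commutation relations of Proposition \ref{ident} are mirrored by those of Theorem \ref{identaut} --- but it packages them differently: the paper defines $\iota$ directly on normal forms by $\iota(g):=\iota(v)\iota(g_4)\iota(g_1)\iota(g_2)$ and verifies multiplicativity by hand, rewriting $g\tilde g$ into normal form with the commutation rules and undoing those moves in the target, whereas you try to get well-definedness for free from universal properties. Your handling of the separate factors is correct and in places more careful than the paper's (the free-product argument on $\svg$ with the explicit check that the $V_j$ are involutions; the structure $H=G_4\rtimes(G_1\rtimes G_2)$), and so is your surjectivity step via ${\rm T}_1=\Si_2\La_{1,0}V_2$, ${\rm T}_2=\Si_1\La_{1,1}V_1$, which the paper does not spell out.

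The gap is in the gluing step. You invoke the universal property of the semidirect product $\smg=\svg\rtimes H$, but that structure does not exist: $\svg$ is \emph{not} normal in $\smg$, despite what Corollary \ref{decv124} asserts (only the unique-factorization part of that corollary is correct, and that is all the paper's own proof uses). Indeed, the very relation you cite, $v_k\,\mu_{i,j}\,v_k=\mu_{-i,-j}$, gives $\mu_{i,j}\,v_k\,\mu_{i,j}^{-1}=v_k\,\mu_{-2i,-2j}$, which by uniqueness of the factorization $g=vh$ (Proposition \ref{uniq}) lies outside $\svg$ whenever $(i,j)\neq(0,0)$. Note that this relation runs in the ``wrong direction'' for $\svg\rtimes H$: it expresses conjugation of $G_4$ by $\svg$, not of $\svg$ by $G_4$, and there is no relation expressing $\mu_{i,j}v_k\mu_{i,j}^{-1}$ as an element of $\svg$ because it is not one. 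Consequently your closing claim, that the iterated semidirect decomposition certifies completeness of the presentation, is unsupported as written. The repair is cheap and uses exactly the relations you already checked: the correct nesting is $\smg=(\svg\ltimes G_4)\rtimes(G_1\rtimes G_2)$, where $v_k\mu_{i,j}v_k=\mu_{-i,-j}$ now appears as the action of $\svg$ on the normal factor $G_4$, and $\langle G_1,G_2\rangle$ normalizes both $\svg$ and $G_4$; with this ordering your universal-property assembly goes through verbatim. Alternatively, drop universal properties altogether and argue as the paper does: the unique normal form plus the generator-level commutation rules, mirrored in $\amg$ by Theorem \ref{identaut}, yield $\iota(g\tilde g)=\iota(g)\iota(\tilde g)$ directly, with no normality assumption anywhere.
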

\proof
Let us show that the morphism $\iota\colon\smg\to\amg$ is well defined. First, 
notice that \eqref{iotagen} uniquely extends to a group morphism on each of the groups $G_1, G_2, G_4, \svg$. Any $g\in\smg$ admits a unique decomposition $g=vg_4g_1g_2$ with $v\in\svg$, $g_1\in G_1,\ g_2\in G_2,\ g_4\in G_4$, by Corollaries \ref{dec124} and \ref{decv124}. We define
\[\iota(g):=\iota(v)\iota(g_4)\iota(g_1)\iota(g_2).
\]Given $\tilde g\in\smg$, we have to show that $\iota(g\tilde g)=\iota(g)\iota(\tilde g)$. We have 
\begin{align*}g\tilde g=vg_4g_1g_2\tilde v\tilde g_4\tilde g_1\tilde g_2
=v\tilde v'g_4\tilde g_4'g_1\tilde g_1'g_2\tilde g_2,
\end{align*}
where in the second line we use the commutations relations of Proposition \ref{ident}. The map $\iota$ preserves the commutations relations among the generators $\La_{\al,\bt},$ $\Si_i,{\rm V}_j,{\rm M}_{\al,\bt}$, by Theorem \ref{identaut}. So, we have
\begin{align*}
\iota(g\tilde g)&=\iota(v\tilde v')\iota(g_4\tilde g_4')\iota(g_1\tilde g_1')\iota(g_2\tilde g_2)\\
&=\iota(v)\iota(\tilde v')\iota(g_4)\iota(\tilde g_4')\iota(g_1)\iota(\tilde g_1')\iota(g_2)\iota(\tilde g_2)\\
&=\iota(v)\iota(g_4)\iota(g_1)\iota(g_2)\iota(\tilde v)\iota(\tilde g_4)\iota(\tilde g_1)\iota(\tilde g_2)\\
&=\iota(g)\iota(\tilde g).
\end{align*}
This completes the proof.
\endproof

Let $P(\bm x)\in\mc R$. Define the map $\widehat P\colon \left(\Zs\right)^3\to \Zs$
by the formula
\[
\widehat P\colon (f_1,f_2,f_3)\mapsto P(f_1,f_1^*,f_2,f_2^*,f_3,f_3^*).
\]

\begin{prop}\label{iota}
For any $P\in\mc R$,  $g\in\smg$, $ (f_1,f_2,f_3)\in\left(\Zs\right)^3$, we have
\beq
\notag
\widehat{\iota(g)P}(f_1,f_2,f_3)=\widehat P\left(g^{-1}(f_1,f_2,f_3)\right).
\eeq
\end{prop}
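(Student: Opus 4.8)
The plan is to recast the identity as an equality of two ring homomorphisms and then reduce it to a finite coordinatewise check. Write $\bm f=(f_1,f_2,f_3)$ and set $\Phi(\bm f)=(f_1,f_1^*,f_2,f_2^*,f_3,f_3^*)\in(\Zs)^6$, so that by definition $\widehat P(\bm f)=P(\Phi(\bm f))$. For fixed $\bm f$ let $\on{ev}_{\bm f}\colon\mc R\to\Zs$ denote the evaluation $P\mapsto P(\Phi(\bm f))=\widehat P(\bm f)$, which is a $\Zs$-algebra homomorphism. Since $\widehat{\iota(g)P}(\bm f)=\on{ev}_{\bm f}(\iota(g)P)$ and $\widehat P(g^{-1}\bm f)=\on{ev}_{g^{-1}\bm f}(P)$, the proposition is equivalent to
\[
\on{ev}_{\bm f}\circ\iota(g)=\on{ev}_{g^{-1}\bm f}\qquad(\star_g),
\]
an equality of $\Zs$-algebra homomorphisms $\mc R\to\Zs$ (recall that each $\iota(g)$ is a $\nu$-automorphism, hence in particular a $\Zs$-algebra automorphism of $\mc R$). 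Two such homomorphisms coincide as soon as they agree on the algebra generators $x_1,\dots,x_6$, so the whole statement will follow from a bounded number of checks on these generators.

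Next I would show that the set of $g\in\smg$ for which $(\star_g)$ holds for every $\bm f$ is a subgroup. Using that $\iota$ is a homomorphism, for $g,h$ in this set one computes
\[
\on{ev}_{\bm f}\circ\iota(gh)=\bigl(\on{ev}_{\bm f}\circ\iota(g)\bigr)\circ\iota(h)=\on{ev}_{g^{-1}\bm f}\circ\iota(h)=\on{ev}_{h^{-1}g^{-1}\bm f}=\on{ev}_{(gh)^{-1}\bm f},
\]
so $gh$ again lies in the set; replacing $\bm f$ by $g\bm f$ in $(\star_g)$ yields $(\star_{g^{-1}})$. Hence it suffices to establish $(\star_g)$ when $g$ ranges over the generators $\lambda_{i,j},\sigma_1,\sigma_2,v_1,v_2,v_3,\mu_{i,j}$ of $\smg$ (here I use $\smg=\langle\svg,G_1,G_2,G_4\rangle$ and $\svg=\langle v_1,v_2,v_3\rangle$), on each of which $\iota$ is given explicitly by $\iota(\lambda_{i,j})=\La_{i,j}$, $\iota(\sigma_i)=\Si_i$, $\iota(v_j)=V_j$, $\iota(\mu_{i,j})={\rm M}_{i,j}$.

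The final step is the direct verification of $\widehat{\iota(g)x_k}(\bm f)=\widehat{x_k}(g^{-1}\bm f)$ for each such generator $g$ and each $k$, substituting $x_{2\ell-1}\mapsto f_\ell$ and $x_{2\ell}\mapsto f_\ell^*$ and using only $(h^*)^*=h$ and $s_3^*=s_3^{-1}$. The generators $\lambda_{i,j},\sigma_i$ are immediate. For $v_1$ (an involution, so $g^{-1}=v_1$) one has $v_1\bm f=(f_2f_3-f_1^*,f_2^*,f_3^*)$, and since $V_1$ sends $x_1\mapsto x_3x_5-x_2$ one gets $\widehat{V_1x_1}(\bm f)=f_2f_3-f_1^*=\widehat{x_1}(v_1\bm f)$; the companion rule $x_2\mapsto x_4x_6-x_1$ then reproduces the $*$ of this equality, while $x_3,\dots,x_6$ are handled the same way, with $v_2,v_3$ entirely analogous. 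The case $\mu_{i,j}$ is the instructive one, because $s_3$ enters as a coefficient rather than a variable: ${\rm M}_{i,j}$ multiplies $x_1$ by $s_3^{-i}$, which matches $\mu_{i,j}^{-1}=\mu_{-i,-j}$ scaling the first entry of $\bm f$ by $s_3^{-i}$. The one delicate point pervading all these computations is the bookkeeping of the involution: the even variables carry the values $f_\ell^*$, and because each $\iota(g)$ is $\nu$-equivariant, the identity on $x_1,x_3,x_5$ propagates automatically to $x_2,x_4,x_6$ upon applying $*$. This is exactly why recording the action of the generators on $x_1,x_3,x_5$ alone suffices, and it is the only part where care is genuinely required; everything else is mechanical substitution.
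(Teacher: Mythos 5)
Your proof is correct and follows essentially the same route as the paper, whose entire proof is the remark that the identity "is easily checked on the generators of $\smg$." You simply make explicit the two things the paper leaves implicit — that the set of $g$ satisfying the identity is a subgroup (so checking the generators $\lambda_{i,j},\sigma_i,v_j,\mu_{i,j}$ suffices), and that $\nu$-equivariance of $\iota(g)$ reduces the check to the odd variables $x_1,x_3,x_5$ — and your generator computations match the paper's formulas.
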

\proof
This is easily checked on the generators of $\smg$.
\endproof

\begin{prop}
The morphism $\iota$ is an isomorphism.
\end{prop}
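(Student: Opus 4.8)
The morphism $\iota$ is already known to be a surjective homomorphism of groups, so the plan is simply to establish injectivity, that is $\ker\iota=\{\id\}$. The key device is Proposition \ref{iota}, which converts a statement about the $\nu$-automorphism $\iota(g)$ of $\mc R$ into a statement about the transformation $g$ of $(\Zs)^3$; combined with the fact that $\smg$ acts on $(\Zs)^3$ faithfully by its very definition as a group of transformations of that set, this forces any element of $\ker\iota$ to be trivial.

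Concretely, I would take $g\in\ker\iota$, so that $\iota(g)$ is the identity $\nu$-automorphism of $\mc R$ and hence $\iota(g)P=P$ for every $P\in\mc R$. Feeding this into Proposition \ref{iota} gives
\[
\widehat P(f_1,f_2,f_3)=\widehat{\iota(g)P}(f_1,f_2,f_3)=\widehat P\bigl(g^{-1}(f_1,f_2,f_3)\bigr)
\]
for all $P\in\mc R$ and all $(f_1,f_2,f_3)\in(\Zs)^3$.

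Next I would specialize to the three generating coordinates $P=x_1,x_3,x_5$. Straight from the definition of $\widehat P$ one reads off $\widehat{x_1}(f_1,f_2,f_3)=f_1$, $\widehat{x_3}(f_1,f_2,f_3)=f_2$, and $\widehat{x_5}(f_1,f_2,f_3)=f_3$, so the three resulting equalities recover the three coordinates of $g^{-1}(f_1,f_2,f_3)$ and force $g^{-1}(f_1,f_2,f_3)=(f_1,f_2,f_3)$ for every triple. Thus $g^{-1}$ is the identity transformation of $(\Zs)^3$; since $\smg$ is by construction a group of transformations of $(\Zs)^3$, this means $g^{-1}=\id$ in $\smg$, whence $g=\id$. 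Therefore $\ker\iota=\{\id\}$ and, together with surjectivity, $\iota$ is an isomorphism.

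The argument is essentially immediate once Proposition \ref{iota} is in hand, so I do not expect a serious obstacle. The only points that require care are the bookkeeping of the inverse $g^{-1}$ appearing on the right-hand side of Proposition \ref{iota}, and the observation that faithfulness of the $\smg$-action is not an extra fact to be proved but is baked into the definition of $\smg$ as a transformation group. It is also worth recording explicitly that the three projections $\widehat{x_1},\widehat{x_3},\widehat{x_5}$ already determine a triple, which is precisely what makes the specialization step conclusive.
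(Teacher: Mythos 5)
Your proposal is correct and follows essentially the same route as the paper's own proof: both take $g\in\ker\iota$, apply Proposition \ref{iota} to the coordinate polynomials $x_1,x_3,x_5$ viewed as the natural projections $(\Zs)^3\to\Zs$, and conclude $g^{-1}=\mathrm{id}$, hence $g=\mathrm{id}$. The only difference is presentational: you spell out the intermediate identity $\widehat{P}(f_1,f_2,f_3)=\widehat{P}(g^{-1}(f_1,f_2,f_3))$ and the role of faithfulness of the action, which the paper leaves implicit.
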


\proof
Consider the polynomials $x_1,x_3,x_5\in\mc R$. They define the natural projections
\begin{align*}
\widehat x_1\colon \left(\Zs\right)^3\to \Zs,\quad (f_1,f_2,f_3)\mapsto f_1,\\
\widehat x_3\colon \left(\Zs\right)^3\to \Zs,\quad (f_1,f_2,f_3)\mapsto f_2,\\
\widehat x_5\colon \left(\Zs\right)^3\to \Zs,\quad (f_1,f_2,f_3)\mapsto f_3.
\end{align*}
Let $g\in\ker \iota$. By Proposition \ref{iota}, we have 
\[
\widehat x_1(g^{-1}(f_1,f_2,f_3))=f_1,\quad 	\widehat x_3(g^{-1}(f_1,f_2,f_3))=f_2,\quad \widehat x_5(g^{-1}(f_1,f_2,f_3))=f_3.
\]
Hence $g^{-1}=\rm id$. 
\endproof

\subsection{$\nu$-Endomorphisms of maximal rank} Let $\phi\colon\mc R\to\mc R$ 
be a $\nu$-endomorphism, defined by a triple $ P,Q,R\in\mc R$,
\[
x_1\mapsto P,\quad x_3\mapsto Q,\quad x_5\mapsto R,
\quad x_2\mapsto \nu P,\quad x_4\mapsto \nu Q,\quad x_6\mapsto \nu R.
\]
For any fixed $\bm p\in\C^3$, denote by $P_{\bm p}, Q_{\bm p}, R_{\bm p},\nu P_{\bm p}, \nu Q_{\bm p}, \nu R_{\bm p}\in\C[\bm x]$ the specialization of $P,Q,R,\nu P,\nu Q,\nu R$ at $\bm z=\bm p$. 
\vskip2mm
For any fixed $\bm p\in\C^3$, we have a polynomial map $\phi_{\bm p}\colon \C^6\to\C^6$ defined by
\[\bm q\mapsto 
\left(
P_{\bm p}(\bm q),\quad
\nu P_{\bm p}(\bm q),\quad
Q_{\bm p}(\bm q),\quad
\nu Q_{\bm p}(\bm q),\quad
R_{\bm p}(\bm q),\quad
\nu R_{\bm p}(\bm q)
\right).
\]

\vsk.2>
A $\nu$-endomorphism $\phi$ of $\mc R$ is said to be \emph{of maximal rank}
 if there exist $\bm p\in\C^3$ and $\bm q\in\C^6$ such that the Jacobian matrix 
 of $\phi_{\bs p}$ at the point $\bs q$  is invertible.

\subsection{Horowitz type theorem  for  $*$-Markov group}

Define the $\nu$-Horowitz group $\GH$ as the group of $\nu$-automorphisms of $\mathcal R$ which preserve the polynomial
\[
H=x_1x_2+x_3x_4+x_5x_6-x_1x_3x_5.
\]
Define $\Gamma^{\rm max}$ to be the set of $\nu$-endomorphisms of
$\mc R$ of maximal rank,  which preserve the polynomial $H$.

\vsk.2>
We have $\amg\subseteq \GH\subseteq \Gamma^{\rm max}$.

\begin{thm}\label{thmhor}
We have $\amg= \GH= \Gamma^{\rm max}$. In particular, any element of $\Gamma^{\rm max}$ is a $\nu$-automorphism.
\end{thm}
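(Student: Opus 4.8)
The plan is to establish the one missing inclusion $\gmax\subseteq\amg$; since the chain $\amg\subseteq\GH\subseteq\gmax$ is already noted, this forces $\amg=\GH=\gmax$, and in particular shows every element of $\gmax$ is a $\nu$-automorphism. So fix $\phi\in\gmax$, presented by a triple $(P,Q,R)\in\mc R^3$ with $\phi(H)=H$, i.e. $P\,\nu P+Q\,\nu Q+R\,\nu R-PQR=x_1x_2+x_3x_4+x_5x_6-x_1x_3x_5$.

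First I would build a \emph{classical shadow} homomorphism reducing everything to Theorem \ref{thm Hor-c}. Specializing $\bs s\mapsto\bs s_o$ realizes $\evo\colon\Zs\to\Z$ and turns the $*$-involution, hence $\nu$, into the plain swap $x_1\leftrightarrow x_2,\ x_3\leftrightarrow x_4,\ x_5\leftrightarrow x_6$; restricting further to the diagonal $x_2=x_1,\ x_4=x_3,\ x_6=x_5$ gives a surjection $\mc R\twoheadrightarrow\Z[a,b,c]$ carrying $H$ to $H_{\mathrm{cl}}=a^2+b^2+c^2-abc$. Because $\phi$ is $\Zs$-linear and $\nu$-equivariant, one checks $\nu_oP|_{\mathrm{diag}}=P|_{\mathrm{diag}}$, so the diagonal ideal is preserved and $\phi$ descends to an endomorphism $\bar\phi$ of $\Z[a,b,c]$ fixing $H_{\mathrm{cl}}$. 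The maximal-rank hypothesis is exactly what forces $\bar\phi$ to be surjective (the invertible Jacobian prevents the image from dropping dimension), and a surjective endomorphism of the Noetherian domain $\Z[a,b,c]$ is an automorphism; thus $\bar\phi\in\mg$ by Theorem \ref{thm Hor-c}. Using the isomorphism $\iota\colon\smg\xrightarrow{\sim}\amg$ together with the epimorphism $\phi_M\colon\smg\twoheadrightarrow\mg$ — whose composite with the shadow construction is the identity on $\mg$ under the identification of Proposition \ref{iota} — I can pick $\gamma\in\amg$ with shadow $\bar\phi$ and replace $\phi$ by $\psi:=\gamma^{-1}\phi$, a maximal-rank $H$-preserving $\nu$-endomorphism with \emph{trivial} classical shadow.

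It remains to show $\psi\in\amg$; note one should not expect $\psi=\mathrm{id}$, since each scaling ${\rm M}_{i,j}\in G_4^{\on{aut}}\subseteq\amg$ already has trivial shadow and fixes $H$ (indeed $(s_3^{-i})^*=s_3^{\,i}$ keeps $x_1x_2$, $x_3x_4$, $x_5x_6$ and $x_1x_3x_5$ invariant). The claim to prove is the rigidity statement that the only such $\psi$ lie in $G_4^{\on{aut}}$. I would prove it by a degree reduction in the $\bm x$-variables parallel to the Markov-tree argument of Lemma \ref{uv}: writing $d_P,d_Q,d_R$ for the $\bm x$-degrees of $P,Q,R$ and comparing top-degree parts in $P\,\nu P+Q\,\nu Q+R\,\nu R-PQR=H$, whenever the degrees are not all $\le 1$ exactly one Vi\`ete $\nu$-involution $V_1,V_2,V_3$ strictly decreases $\max(d_P,d_Q,d_R)$, the non-degeneracy of the relevant leading coefficients being guaranteed by maximal rank. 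Iterating terminates at $d_P=d_Q=d_R=1$, where $P,Q,R$ are forced to be the monomials $s_3^{a}x_1,\,s_3^{b}x_3,\,s_3^{c}x_5$ (up to the sign/permutation automorphisms in $\langle G_1^{\on{aut}},G_2^{\on{aut}}\rangle$, which the trivial shadow then eliminates), so that $\psi\in G_4^{\on{aut}}$ and hence $\phi=\gamma\psi\in\amg$.

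I expect the main obstacle to be this last degree-reduction step: the leading-term analysis must be carried out in the presence of $\nu$, whose action conjugates the $\Zs$-coefficients by $*$, and one must verify — as the analog of Lemma \ref{uv} — that a unique Vi\`ete move reduces the top $\bm x$-degree and that the process cannot stall, with the maximal-rank hypothesis feeding in precisely to exclude degenerate leading coefficients that would otherwise break either the uniqueness of the reducing move or the identification of the terminal triple with the initial solution.
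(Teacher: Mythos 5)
Your overall architecture (first kill the ``classical shadow'' via Theorem \ref{thm Hor-c}, then handle the kernel) differs from the paper's, but it has a genuine gap at its first step. To invoke the classical Horowitz theorem you need the descended map $\bar\phi$ on $\Z[a,b,c]$ to be an automorphism (or, for the strengthened version in Appendix \ref{appendix}, to be of maximal rank). You derive this from the maximal-rank hypothesis on $\phi$, but that hypothesis only provides an invertible Jacobian at \emph{some} pair $(\bm p,\bm q)\in\C^3\times\C^6$; nothing forces a witness with $\bm p$ equal to the special point $z_1=z_2=z_3=1$ and $\bm q$ on the diagonal $x_2=x_1,\,x_4=x_3,\,x_6=x_5$. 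The Jacobian determinant could a priori vanish identically on that particular fiber, and ruling this out is essentially the theorem you are trying to prove, so the step is circular. Worse, even where an invertible Jacobian is available, it gives \emph{dominance} of the polynomial map, i.e.\ \emph{injectivity} of the ring endomorphism, not surjectivity: ``the image cannot drop dimension'' is satisfied by maps like $(a,b,c)\mapsto(a^2,b,c)$, which are dominant but not automorphisms. So the inference ``invertible Jacobian $\Rightarrow$ surjective endomorphism $\Rightarrow$ automorphism'' reverses the correct implication, and $\bar\phi\in\mg$ is not established. (The part of this step that does work is the descent itself: since $\bs s_o$ is fixed by $*$, the involution $\nu$ specializes to the plain swap, so $\phi$ preserves the diagonal ideal.)

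Second, even granting the reduction to trivial shadow, your remaining ``rigidity'' claim --- that a maximal-rank $H$-preserving $\nu$-endomorphism with trivial shadow lies in $G_4^{\on{aut}}$ --- is precisely where all the work lies, and you leave it as a sketch with an acknowledged obstacle. This step is, in substance, the paper's entire proof: after using $G_2^{\on{aut}}$ to order ${\rm Deg}\,P\leq{\rm Deg}\,Q\leq{\rm Deg}\,R$, one compares top homogeneous parts in $P\cdot\nu P+Q\cdot\nu Q+R\cdot\nu R-PQR=H$ to force $r=p+q$ and the key identity $\nu R_r=P_pQ_q$, then composes with the Vi\`ete $\nu$-involution $V_3$ to strictly lower the top degree, inducting down to the linear case, where unique factorization pins the triple to $\langle G_1^{\on{aut}},G_4^{\on{aut}}\rangle$. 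You never carry out the cancellation analysis that produces $\nu R_r=P_pQ_q$, which is what makes the degree drop. Note also that the \emph{uniqueness} of the reducing Vi\`ete move, which you model on Lemma \ref{uv}, is neither needed nor relevant here: that uniqueness is a feature of positive Markov triples, while the Horowitz-type induction only needs \emph{existence} of a reducing move after ordering degrees. In sum, the detour through the classical shadow buys nothing --- the degree-reduction engine is still required in full --- and the shadow step as written is incorrect.
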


\proof
It is sufficient to prove that $\Gamma^{\rm max}\subseteq \amg$. The proof is an adaptation of the original argument of \cite[Theorem 2]{H}.
Let 
\beq\label{auto}
x_1\mapsto P,\quad x_3\mapsto Q,\quad x_5\mapsto R,\quad x_2\mapsto \nu P,\quad x_4\mapsto \nu Q,\quad x_6\mapsto \nu R,
\eeq
be an element of $\gmax$, where $P,Q,R\in \mathcal R$. Up to an action of $\autG{2}$, we can 
assume that the total degrees of $P,Q,R$ with respect to $x_1,x_2,x_3,x_4,x_5,x_6$ are in ascending order, i.e.
\beq
\notag
{\rm Deg}\, P\leq {\rm Deg}\, Q\leq {\rm Deg}\, R.
\eeq
Set
\begin{align}
\label{formauto}
P&=P_p+P_{p-1}+\dots + P_0,\\
\notag
Q&=Q_q+Q_{q-1}+\dots +Q_0,\\
\nonumber R&=R_r+R_{r-1}+\dots+ R_0,
\end{align}
where $P_k,Q_k,R_k$ are homogeneous polynomials in $x_1,x_2,x_3,x_4,x_5,x_6$ of degree $k$. Necessarily, we must have $p,q,r\geq 1$, otherwise \eqref{auto} does not define an endomorphism of maximal rank. 
Since \eqref{auto} is an element of $\gmax$, we have
\beq\label{inv}
P\cdot \nu P+Q\cdot\nu Q+R\cdot\nu R-PQR=x_1x_2+x_3x_4+x_5x_6-x_1x_3x_5.
\eeq

Suppose that $p=q=r=1$. By comparison of the highest degree terms of the l.h.s. and r.h.s. 
of \eqref{inv}, we deduce that $P_1Q_1R_1=x_1x_3x_5$. Since $x_1,x_3,x_5$ are irreducible,
 unique factorization implies that up to reordering of $P_1,Q_1,R_1$ we have
\beq\label{base}
P_1=\gamma_1 x_1,\quad Q_1=\gamma_3 x_3,\quad R_1=\gamma_5 x_5,
\eeq
 where $\ga_1,\ga_3,\ga_5\in\Zs$ and 
 $\gamma_1\gamma_3\gamma_5=1$. Hence each of $\ga_j$  is of the form $\pm s_3^{a_j}$.
 
If we substitute \eqref{base} in \eqref{formauto}, and expand \eqref{inv}, we deduce that $P_0=Q_0=R_0=0$ 
(since the r.h.s. of \eqref{inv} has no terms $x_1x_3, \,x_1x_5,\,x_3x_5$). 
Thus, the only possible form of \eqref{auto} is
\bea
\notag
&&
x_1\mapsto (-1)^i s_3^{a_3}x_1, \ \
\quad x_3\mapsto (-1)^{i+j}s_3^{a_3}x_3,\quad 
\ 
x_5\mapsto (-1)^j s_3^{a_5}x_5,
\\
&&
x_2 \mapsto (-1)^i s_3^{-a_1}x_2, 
\quad x_4\mapsto (-1)^{i+j}s_3^{-a_3}x_4,\quad x_6\mapsto (-1)^j s_3^{-a_5}x_6,
\eea
where $i,j\in\mathbb Z_2$ and $a_1,a_2,a_3\in\Z$, $a_1+a_2+a_3=0$. 
All these transformations are in $\langle\autG{1},\autG{4}\rangle\subseteq\amg$.

Now we proceed by induction on the maximum $r$ of the degrees of $P,Q,R$. If we expand \eqref{inv} using \eqref{formauto}, we obtain
\beq\label{inv2}
P_p\cdot\nu P_p+Q_q\cdot\nu Q_q+R_r\cdot\nu R_r-P_pQ_qR_r+\dots=x_1x_2+x_3x_4+x_5x_6-x_1x_3x_5,
\eeq
where the dots denote lower degree terms. The term $P_pQ_qR_r$ is 
of degree at least 4. The degree of every term of the r.h.s. of \eqref{inv2}  is less than 4.
Hence $P_pQ_qR_r$  must cancel with another term of the l.h.s.
This is possible  if and only if $r=p+q$. 
If $r=p+q$, then the terms of highest degree are 
$R_r\cdot\nu R_r$ and $P_pQ_qR_r$, and we must have
$R_r\cdot\nu R_r-P_pQ_qR_r=0$. Thus, 
\beq\label{id1}
\nu R_r=P_pQ_q.
\eeq
The transformation
\beq\label{auto2}
x_1\mapsto \nu P,\quad x_3\mapsto \nu Q,\quad x_5\mapsto PQ-\nu R,
\eeq
extends to a $\nu$-endomorphism of $\mathcal R$. Such an endomorphism 
is the composition of \eqref{auto} with $V_3\in\amg$. 
The endomorphism \eqref{auto2} has the highest degree less than $r$, because of \eqref{id1}. 
 Hence, by induction hypothesis, the endomorphism \eqref{auto2} is a $\nu$-automorphism in $\amg$. 
 This completes the proof.
\endproof

\subsection{Horowitz type theorem for $\C^5$}

Recall the action of the $*$-Markov group on $\C^5$ with coordinates $(y_1,\dots,y_5)$.
In particular, the $*$-Vi\`ete involutions act on $\C^5$ by the formulas
\bea
&&
v_1: (y_1,\dots,y_5) \mapsto (y_1+y_2y_3-y_4-y_5,y_2,y_3, -y_5+y_2y_3,-y_4+y_2y_3),
\\
&&
v_2: (y_1,\dots,y_5) \mapsto (y_1,y_2+y_1y_3-y_4-y_5,y_3, -y_5+y_1y_3,-y_4+y_1y_3),
\\
&&
v_2: (y_1,\dots,y_5) \mapsto (y_1,y_2, y_3+y_1y_2-y_4-y_5, -y_5+y_1y_2,-y_4+y_1y_2).
\eea

\begin{thm}
\label{thm H5}

Let $\psi :\C^5\to\C^5$ be a maximal rank polynomial map,  which
preserves the polynomials
\bea
J = y_1y_2y_3-y_4y_5,
\qquad
J_1 =y_1+y_2+y_3-y_4,
\qquad
J_2 =y_1+y_2+y_3-y_5. 
 \eea
 Then $\psi$ is invertible and lies in the image of the $*$-Markov group.

\end{thm}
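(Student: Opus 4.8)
The plan is to mimic, directly on $\C^5$, the degree-induction used to prove Theorem \ref{thmhor}, rather than to lift $\psi$ along $F$ to $\C^6$: a lift would only pin $\psi$ down on the hypersurface $Y=\{J=0\}$, since $F$ is not dominant onto all of $\C^5$, so it cannot recover $\psi$ as a global self-map. Write $\psi=(\psi_1,\dots,\psi_5)$ with $\psi_i\in\C[y_1,\dots,y_5]$. First I would exploit the two \emph{linear} invariants: $\psi^\star J_1=J_1$ and $\psi^\star J_2=J_2$ force $\psi_4=S+y_4$ and $\psi_5=S+y_5$, where $S:=\psi_1+\psi_2+\psi_3-(y_1+y_2+y_3)$. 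Thus $\psi$ is completely determined by the triple $(\psi_1,\psi_2,\psi_3)$, and substituting into $\psi^\star J=J$ collapses the cubic constraint to the single identity
\[
\psi_1\psi_2\psi_3 - S^2 - S(y_4+y_5) = y_1y_2y_3 \qquad (\dagger).
\]
This is the $\C^5$-analogue of the identity \eqref{inv} from the proof of Theorem \ref{thmhor}, with $(\psi_1,\psi_2,\psi_3)$ playing the role of $(P,Q,R)$ and with $y_4,y_5$ interchanged by $\nu$.

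Next I would note that maximal rank forces every $\psi_i$ to be nonconstant: a constant $\psi_i$ would make its row of the Jacobian vanish identically, so $\det$ would vanish everywhere. Using the permutations $\sigma_1,\sigma_2$, which lie in the image of $\smg$ and permute $y_1,y_2,y_3$, I would arrange $1\le\deg\psi_1\le\deg\psi_2\le\deg\psi_3=:r$. In the base case $r=1$, only $\psi_1\psi_2\psi_3$ can contribute in degree $3$ in $(\dagger)$ (because $\deg S\le1$, so $\deg S^2,\deg S(y_4+y_5)\le 2$), whence the linear parts satisfy $\psi_1^{\mathrm{top}}\psi_2^{\mathrm{top}}\psi_3^{\mathrm{top}}=y_1y_2y_3$; unique factorization gives $\psi_i^{\mathrm{top}}=c_iy_i$ after a permutation, with $c_1c_2c_3=1$. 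Comparing the coefficients of $y_4,y_5$ in $(\dagger)$ forces $S=0$, and then the subleading part of the product forces all constants $c_i=1$ and all constant terms to vanish, i.e. $\psi=\mathrm{id}$.

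For the inductive step $r\ge2$ I would compare top degrees in $(\dagger)$: since $\deg(\psi_1\psi_2\psi_3)=\deg\psi_1+\deg\psi_2+r$, while $\deg(S^2)=2r$ and $\deg(S(y_4+y_5))=r+1$, the right-hand side can have degree $3$ only if $\deg\psi_1+\deg\psi_2=r$ (otherwise a term of degree $\ge 4$ survives uncancelled). As $\psi_1$ is nonconstant this gives $\deg\psi_2<r$, hence $S^{\mathrm{top}}=\psi_3^{\mathrm{top}}$, and the leading identity reads $\psi_1^{\mathrm{top}}\psi_2^{\mathrm{top}}\psi_3^{\mathrm{top}}=(\psi_3^{\mathrm{top}})^2$, i.e. $\psi_3^{\mathrm{top}}=\psi_1^{\mathrm{top}}\psi_2^{\mathrm{top}}$. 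Composing with the Vi\`ete involution $v_3$ replaces $\psi_3$ by $\psi_3+\psi_1\psi_2-\psi_4-\psi_5=\psi_3+\psi_1\psi_2-2S-(y_4+y_5)$, whose degree-$r$ part is $\psi_3^{\mathrm{top}}+\psi_1^{\mathrm{top}}\psi_2^{\mathrm{top}}-2\psi_3^{\mathrm{top}}=0$. Thus $v_3\circ\psi$ still lies in $\gmax$ (it preserves $J,J_1,J_2$ and is maximal rank, being the composite with an invertible generator) but has strictly smaller maximal degree. Iterating reaches the base case and exhibits $\psi$ as a product of the invertible generators $\sigma_i,v_j$; in particular $\psi$ is invertible and lies in the image of $\smg$.

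The main obstacle is the leading-term bookkeeping in the inductive step: one must check that the degenerate alternatives $\deg\psi_1+\deg\psi_2\ne r$ are genuinely impossible, and here maximal rank, through the nonconstancy of $\psi_1$, is exactly what excludes both the branch $\deg S<r$ and the $\deg\psi_1=0$ branch that would break the $v_3$-cancellation. A second point requiring care, special to $\C^5$, is that the scalar ambiguity $c_1c_2c_3=1$ in the base case must be forced to be trivial: unlike the $\C^6$ situation there is no rescaling generator available to absorb nontrivial $c_i$, since $\lambda_{i,j}$ and $\mu_{i,j}$ act as the identity on $\C^5$ by Theorem \ref{thm CSA}, so the argument must extract $c_i=1$ directly from $(\dagger)$.
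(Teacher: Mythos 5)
Your proposal is correct and is essentially the paper's own proof: the same reduction of $\psi$ to the triple $(\psi_1,\psi_2,\psi_3)$ via the two linear invariants, the same unique-factorization argument in the linear base case (including forcing the scalars $c_i=1$ directly from the cubic identity), and the same induction in which a degree comparison yields the leading-term relation $\psi^{\rm top}_{\rm max}=(\text{product of the other two tops})$ and composing with a Vi\`ete involution strictly lowers the top degree. The only differences are cosmetic: you order the degrees ascending and compose with $v_3$, while the paper orders them descending and composes with $v_1$.
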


\begin{rem}
One can easily add the parameters $\Zs$ and reformulate Theorem \ref{thm H5} similarly to
Theorem \ref{thmhor}.

\end{rem}

\begin{cor}
If the map  $\psi$ satisfies the assumptions of Theorem \ref{thm H5}, then it
commutes with the involution 
\bea
\nu :(y_1,\dots,y_5)
\mapsto (y_1,y_2,y_3,y_5,y_4).
\eea
\end{cor}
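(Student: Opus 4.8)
The plan is to read the corollary off Theorem \ref{thm H5} together with the $\nu$-equivariance of the $*$-Markov group action on $\C^5$ recorded in Section \ref{sec an pr}. By Theorem \ref{thm H5}, the map $\psi$ lies in the image of the $*$-Markov group, so there is an element $g\in\smg$ whose action on $\C^5$ (via the formulas of Theorem \ref{thm CSA}) coincides with $\psi$. Since the set of self-maps of $\C^5$ that commute with the involution $\nu$ is closed under composition, it suffices to check that \emph{each generator} of $\smg$ acts on $\C^5$ by a map commuting with $\nu$; the desired identity $\psi\circ\nu=\nu\circ\psi$ then follows by writing $g$ as a word in the generators.

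First I would reduce the list of generators to be checked. By Theorem \ref{thm CSA} the elements $\la_{i,j}$ and $\mu_{i,j}$ fix the Casimir generators $h_1,\dots,h_5$ pointwise, hence act as the identity on $\C^5$ and commute with $\nu$ trivially. Thus only $\si_1,\si_2,\tau_1,\tau_2$ remain. The transformations $\si_1,\si_2$ act, in the coordinates $y_i=h_i$, as the coordinate swaps $y_1\leftrightarrow y_2$ and $y_2\leftrightarrow y_3$, each fixing $y_4,y_5$; since $\nu$ is the swap $y_4\leftrightarrow y_5$ fixing $y_1,y_2,y_3$, these commute with $\nu$ because the two swaps act on disjoint coordinate blocks.

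Next I would verify $\tau_1$ and $\tau_2$ by a direct computation from the explicit formulas of Theorem \ref{thm CSA}. For instance, using $\tau_1:(y_1,\dots,y_5)\mapsto(y_1,y_3,y_2+y_1y_3-y_4-y_5,-y_5+y_1y_3,-y_4+y_1y_3)$ and $\nu:(y_1,\dots,y_5)\mapsto(y_1,y_2,y_3,y_5,y_4)$, one computes that both $\nu\circ\tau_1$ and $\tau_1\circ\nu$ send $(y_1,\dots,y_5)$ to $(y_1,y_3,y_2+y_1y_3-y_4-y_5,-y_4+y_1y_3,-y_5+y_1y_3)$ (the only discrepancy, $-y_4-y_5$ versus $-y_5-y_4$, being harmless), so $\tau_1$ is $\nu$-equivariant; the case of $\tau_2$ is identical after exchanging the roles of the first and third arguments. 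Combining these checks, every generator acts $\nu$-equivariantly, and hence so does $\psi$.

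The main point to record is that there is essentially no genuine obstacle here: the full weight of the statement is carried by Theorem \ref{thm H5}, which forces $\psi$ into the image of the $*$-Markov group, and the corollary is merely the observation that this image already consists of $\nu$-equivariant maps. The mild friction is the verification of the $\tau_1,\tau_2$ cases, which is a routine substitution. Indeed, the $\nu$-equivariance of the entire action is precisely the assertion stated in Section \ref{sec an pr}, so one may alternatively cite it directly in place of the generator-by-generator computation and conclude immediately.
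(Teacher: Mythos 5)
Your proposal is correct and follows the paper's route exactly: the corollary is immediate from Theorem \ref{thm H5} combined with the fact, recorded in Section \ref{sec an pr}, that the $*$-Markov group action on $\C^5$ (given by the formulas of Theorem \ref{thm CSA}) commutes with $\nu$. Your generator-by-generator verification of that equivariance, including the explicit check for $\tau_1$ and $\tau_2$, is accurate and merely spells out the routine computation the paper leaves implicit.
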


\vsk.2>
\noindent
{\it Proof of Theorem \ref{thm H5}.}
Let $\psi$ send $(y_1,\dots,y_5)$ to  $(P_1,\dots, P_5)$. Then
\bea
P_1P_2P_3-P_4P_5 = y_1y_2y_3-y_4y_5.
\eea
\bea
P_4-P_1-P_2-P_3 = y_4- y_1-y_2-y_3,
\qquad P_5-P_1-P_2-P_3 = y_5- y_1-y_2-y_3.
\eea
Hence
\bea
&&
P_4  = y_4-y_1-y_2-y_3 +P_1+P_2+P_3,
\\
&&
P_5 =
y_5-y_1-y_2-y_3 +P_1+P_2+P_3,
\eea
and the map $\psi$ is completely determined by the three polynomials $P_1,P_2,P_3$.

\vsk.2>

First assume that $\psi$ is a linear map, $P_i= P_{i,0}+a_i$, 
where $a_i\in \C$ and
$P_{i,0}$ are  homogeneous  polynomials in $\bs y$ of degree 1.
 Then
\bea
y_1y_2y_3 =   P_{1,0}P_{2,0}P_{3,0}.
\eea
Hence after a permutation of $P_1,P_2,P_3$  we will have
\bea
P_i = b_iy_i + a_i, \qquad i=1,2,3, \quad b_i\in\C, \ \ b_1b_2b_3=1.
 \eea
We have
\bea
P_4  
&=& 
y_4-y_1-y_2-y_3 +P_1+P_2+P_3 
\\
&&
= y_4
+(b_1-1)y_1+(b_2-1)y_2 + (b_3-1)y_3
+a_1+a_2+a_3,
\\
P_5 
& = &
y_5-y_1-y_2-y_3 +P_1+P_2+P_3 
\\
&&
= y_4
+(b_1-1)y_1+(b_2-1)y_2 + (b_3-1)y_3
+a_1+a_2+a_3,
\eea
Hence
\bea
&&
\big(y_4
+(b_1-1)y_1+(b_2-1)y_2 + (b_3-1)y_3
+a_1+a_2+a_3\big)
\\
&&
\times
\big(y_5
+(b_1-1)y_1+(b_2-1)y_2 + (b_3-1)y_3
+a_1+a_2+a_3\big)
\\
&&
-(b_1y_1+a_1)
(b_2y_2+a_2)(b_3y_3+a_3)= y_4y_5-y_1y_2y_3.
\eea
This means that $b_i=1$ for all $i$ and hence
\bea
&&
\big(y_4
+a_1+a_2+a_3\big)
\big(y_5
+a_1+a_2+a_3\big)
\\
&&
\phantom{aaa}
-(y_1+a_1)
(y_2+a_2)(y_3+a_3)= y_4y_5-y_1y_2y_3.
\eea
This implies that $a_i=0$ for all $i$.

\vsk.2>
Equation
$P_4P_5-P_1P_2P_3 = y_4y_5-y_1y_2y_3$
can be rewritten as
\bean
\label{eQ}
&&
(y_4-y_1-y_2-y_3 +P_1+P_2+P_3)
(y_5-y_1-y_2-y_3 +P_1+P_2+P_3)
- P_1P_2P_3
\phantom{aaaaaaaaaaaaa}
\\
\notag
&&
\phantom{aaaaaaaaaaaaaaaaaaaaaaaa}
=y_4y_5-y_1y_2y_3.
\eean
Let  us write
$P_i = P_{i,1}+\dots$,\ $i=1,2,3$,
where $P_{i,1}$ is the top degree homogeneous component of $P_i$.
Denote $d_i$ the degree of $P_i$.
Since $\psi$ is of maximal rank, we have $d_i>0$.

\vsk.2>

Assume that the maximum of $d_1,d_2,d_3$ is greater than 1. 
After a permutation of the first three coordinates we may assume that $d_1\geq d_2\ge d_3$.
Then equation \eqref{eQ} implies that $d_1=d_2+d_3$ and there are exactly two terms of degree
$2d_1$ which have to cancel,  
\bea
P_{1,1}P_{2,1}P_{3,1} - P_{1,1}^2 = P_{1,1}(P_{2,1}P_{3,1} - P_{1,1})=0.
\eea
Let us compose $\psi$ with involution $v_1$. Then $v_1\circ \psi$ sends
$(y_1,\dots,y_5)$ to $(\tilde P_1,\dots,\tilde P_5)$, where $\tilde P_2=P_2$, $\tilde P_3=P_3$,
\bea
&&
\tilde P_1 =
P_1+P_2P_3-P_4-P_5
\\
&&
\phantom{aa}
= P_1 + P_2 P_3 - (y_4-y_1-y_2-y_3 +P_1+P_2+P_3)
\\
&&
\phantom{aaaa}
-(y_5-y_1-y_2-y_3 +P_1+P_2+P_3)
\\
&&
\phantom{aaaaaa}
= P_2P_3 - P_1 -2P_2-2P_3 -y_4-y_5 + 2y_1+2y_3+2y_3,
\eea
\bea
&&
\tilde P_4 =
-(y_5-y_1-y_2-y_3 +P_1+P_2+P_3) + P_2P_3,
\\
&&
\tilde P_5 =
-(y_4-y_1-y_2-y_3 +P_1+P_2+P_3) + P_2P_3,
\eea
These formulas show that $\deg \tilde P_1 <\deg P_1$, 
while $\deg \tilde P_i=\deg P_i$ for $i=2,3$, 
and the theorem 
follows from the iteration of this procedure.
\qed

\appendix

\section{Horowitz type theorems}
\label{appendix}

\subsection{Classical setting}
Let $a_0,\dots,a_n\in\Z$ be non-zero integers such that 
\bea
a_j\ \ \on{divides}\ \  a_0,\quad \on{for}\ \ j=1,\dots,n.
\eea
Consider the polynomial in $n$ variables $x_1,\dots,x_n$,
\bea
H:=\sum_{j=1}^n a_j x_j^2 - a_0\prod_{j=1}^n x_j \,.
\eea
The polynomial $H$ is quadratic with respect to each variable $x_j$. This
 ensures that the polynomial has a nontrivial group of symmetries.

\begin{thm}
\label{thmperm}
Let $\si\in\frak S_n$ be such that $(a_{\si(1)},\dots, a_{\si(n)})=(a_1,\dots,a_n)$. Then the permutation $(x_1,\dots, x_n)\mapsto (x_{\si(1)},\dots, x_{\si(n)})$  preserves the polynomial $H$.
\qed
\end{thm}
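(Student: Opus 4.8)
The plan is to prove the invariance by a single direct substitution, treating the two homogeneous pieces of $H$ separately. Writing $T_\sigma$ for the substitution $x_j \mapsto x_{\sigma(j)}$, I would first record that
$$
T_\sigma(H) = \sum_{j=1}^n a_j\, x_{\sigma(j)}^2 \;-\; a_0 \prod_{j=1}^n x_{\sigma(j)},
$$
and then compare this expression with $H$ term by term.

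The cubic term is the easy half. Since $\sigma$ is a bijection of $\{1,\dots,n\}$, the family $\{x_{\sigma(j)}\}_{j=1}^n$ is merely a reordering of $\{x_j\}_{j=1}^n$, so $\prod_{j} x_{\sigma(j)} = \prod_{j} x_j$ and this term is invariant no matter what the coefficients are. This step uses neither the hypothesis $a_{\sigma(j)}=a_j$ nor the divisibility assumptions.

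The only real content sits in the quadratic term $\sum_{j} a_j x_{\sigma(j)}^2$. Here I would reindex by setting $k=\sigma(j)$, i.e. $j=\sigma^{-1}(k)$, which rewrites the sum as $\sum_{k} a_{\sigma^{-1}(k)}\, x_k^2$. For this to coincide with $\sum_{k} a_k x_k^2$ it suffices that $a_{\sigma^{-1}(k)}=a_k$ for every $k$. The one point to handle with care is that the stated hypothesis $a_{\sigma(j)}=a_j$ is exactly equivalent to this: substituting $j=\sigma^{-1}(k)$ into $a_{\sigma(j)}=a_j$ gives $a_{\sigma(\sigma^{-1}(k))}=a_{\sigma^{-1}(k)}$, that is $a_k=a_{\sigma^{-1}(k)}$, because $\sigma\circ\sigma^{-1}=\mathrm{id}$. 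With the two pieces matched, $T_\sigma(H)=H$.

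I do not expect any genuine obstacle, as the statement is elementary and the argument is essentially a two-line reindexing. The only thing worth flagging is the bookkeeping of the direction of the permutation action (acting on indices versus on coefficients, and $\sigma$ versus $\sigma^{-1}$), which is the sole place an error could creep in. It is also worth noting that the divisibility hypotheses $a_j \mid a_0$ are irrelevant to this particular permutation symmetry; they are presumably reserved for the Vi\`ete-type involutions that exploit the fact that $H$ is quadratic in each $x_j$ separately.
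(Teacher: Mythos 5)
Your proof is correct, and it is the same direct reindexing computation that the paper regards as immediate (the paper states Theorem \ref{thmperm} with no written proof, ending it with \qed). Your care with the direction of the permutation action ($\sigma$ versus $\sigma^{-1}$) and your remark that the divisibility hypotheses $a_j \mid a_0$ play no role here are both accurate.
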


\begin{thm}
\label{rgm A.2}
For any $i=1,\dots,n$, the transformation
\bea
v_i : (x_1,\dots,x_n) \mapsto \left(x_1, \dots,x_{i-1}, -x_i + \frac{a_0}{a_i}\prod_{j\ne i} x_j, x_{i+1},\dots,x_n\right)
\eea
is an involution preserving  the polynomial $H$.
\end{thm}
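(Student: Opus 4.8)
The plan is to recognize $v_i$ as the Vi\`ete involution attached to the variable $x_i$, exploiting the fact that $H$ is quadratic in each variable separately. First I would rewrite $H$ as a quadratic polynomial in $x_i$ alone, treating the remaining variables as parameters:
\begin{equation*}
H = a_i x_i^2 - a_0\Big(\prod_{j\ne i} x_j\Big) x_i + \sum_{j\ne i} a_j x_j^2.
\end{equation*}
The divisibility hypothesis $a_i \mid a_0$ guarantees that $a_0/a_i \in \Z$, so that $v_i$ is a genuine transformation with integer-polynomial components and the statement is well posed.

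Next I would observe that the new $i$-th coordinate $x_i' := -x_i + \tfrac{a_0}{a_i}\prod_{j\ne i} x_j$ is exactly the \emph{second root} of this quadratic when the value of $H$ and the parameters $x_j$ ($j\ne i$) are held fixed: by Vi\`ete's formula the two roots sum to $\tfrac{a_0}{a_i}\prod_{j\ne i}x_j$, which is precisely $x_i + x_i'$. Since $v_i$ leaves every $x_j$ with $j\ne i$ unchanged and replaces $x_i$ by the conjugate root of the level quadratic, the value of $H$ cannot change, giving $H\circ v_i = H$. Concretely, writing $P=\prod_{j\ne i}x_j$ and $S=\tfrac{a_0}{a_i}P$ so that $x_i'=S-x_i$ and $a_0P=a_iS$, the terms involving $x_i$ collapse by the one-line identity $a_i(S-x_i)^2 - a_iS(S-x_i) = a_i(x_i^2 - Sx_i)$, which equals $a_ix_i^2 - a_0Px_i$, confirming the invariance.

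Finally, to check $v_i^2 = \id$, I would use that a single application of $v_i$ fixes every $x_j$ with $j\ne i$ and hence leaves the product $P=\prod_{j\ne i}x_j$ unchanged. Applying $v_i$ once more then sends
\begin{equation*}
x_i' \mapsto -x_i' + \frac{a_0}{a_i}\,P = -\Big(-x_i + \frac{a_0}{a_i}\,P\Big) + \frac{a_0}{a_i}\,P = x_i,
\end{equation*}
recovering the original point. There is no genuine obstacle here; the only point requiring care is that the reflection is performed in the single variable $x_i$ with everything else frozen, so that the parameter $P$ really is constant throughout both the invariance and the involution computations. This is the same mechanism that underlies the Vi\`ete involutions $v_1^c, v_2^c, v_3^c$ of Section \ref{sec 3}, specialized to the polynomial $H$ of the present appendix.
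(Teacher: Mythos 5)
Your proposal is correct and follows essentially the same route as the paper: the paper's proof is exactly the direct substitution check (done for $v_1$, with $y=\frac{a_0}{a_1}\prod_{j>1}x_j$) that the terms involving $x_i$ are unchanged, which is your one-line identity $a_i(S-x_i)^2-a_iS(S-x_i)=a_ix_i^2-a_0Px_i$ in expanded form. Your Vi\`ete ``conjugate root'' framing and the explicit verification of $v_i^2=\id$ (which the paper leaves implicit, since the variables $x_j$, $j\ne i$, are frozen) are pleasant but minor additions to the same computation.
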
 

\proof We check this for $v_1$. Denote $y=\frac{a_0}{a_1}\prod_{j>1} x_j$.
Then
\begin{align*}
&
H(-x_1 + y, x_2,\dots,x_n) =
a_1(-x_1 + y)^2 -a_0 (-x_1+y) \prod_{j>1} x_j + \sum_{l>1}x_l^2
\\
&
= a_1x_1^2 - 2x_1 a_0\prod_{j> 1} x_j + \frac{a_0^2}{a_1}\prod_{j> 1} x_j^2
+
a_0 x_1\prod_{j>1} x_j - \frac{a_0^2}{a_1} \prod_{j>1} x_j ^2 + \sum_{l>1}x_l^2
\\
&
\puqed
= H(x_1, x_2,\dots,x_n).
\phantom{aaaaaaaaaaaaaaaaaaaaaaaaaaaaaaaaaaaaaaa}
\qedhere
\poqed
\end{align*}

The permutations of Theorem \ref{thmperm} and the Vi\`ete maps of Theorem \ref{rgm A.2}
are automorphisms of the algebra $\Z[\bm x]$.

We say that an endomorphism of algebras $\phi\colon \Z[\bm x]\to \Z[\bm x]$ defined by 
\[x_j\mapsto P_j(\bm x),\quad P_j\in\Z[\bm x],\quad j=1,\dots,n,
\]is \emph{of maximal rank} if there exists a point $\bm q\in\C^n$ such that the Jacobian matrix 
of $\phi$ at $\bs q$  is invertible.

The following is a stronger version of the original Horowitz Theorem.
\begin{thm} Any endomorphism of maximal rank preserving the polynomial $H$ is an automorphism.
The group of all automorphisms of $\Z[\bm x]$ preserving $H$ is generated by the Vi\`ete transformations,
by the permutation of variables preserving the $n$-tuple $(a_1,\dots,a_n)$, and by multiplication by $-1$
of an even number of variables.
\end{thm}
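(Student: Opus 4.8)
The plan is to follow the proof of Theorem~\ref{thmhor} almost verbatim, replacing the three coordinates there by the $n$ coordinates here. Write $G$ for the group generated by the Vi\`ete involutions $v_1,\dots,v_n$, by the permutations of $\{x_1,\dots,x_n\}$ fixing the tuple $(a_1,\dots,a_n)$, and by the sign changes flipping an even number of variables. By Theorems~\ref{thmperm} and~\ref{rgm A.2} every generator of $G$ is an automorphism of $\Z[\bm x]$ preserving $H$, so $G$ is contained in the group of $H$-preserving automorphisms; and each such automorphism has invertible (constant) Jacobian, hence lies in $\gmax$. Thus both assertions of the theorem follow at once from the single inclusion $\gmax\subseteq G$, which forces $G=\{H\text{-preserving automorphisms}\}=\gmax$; in particular every element of $\gmax$ is then an automorphism.

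To prove $\gmax\subseteq G$ I would fix $\phi\in\gmax$, say $x_j\mapsto P_j$ with $P_j\in\Z[\bm x]$, and induct on $r:=\max_j\deg P_j$ (total degree). Maximal rank forces $\deg P_j\ge1$ for all $j$, since a constant $P_j$ gives a vanishing row of the Jacobian. Throughout I use the defining identity
\[
\sum_{j=1}^n a_jP_j^2-a_0\prod_{j=1}^n P_j=\sum_{j=1}^n a_jx_j^2-a_0\prod_{j=1}^n x_j,
\]
whose right-hand side has total degree $n$; I work in the range $n\ge3$, so the monomial $a_0\prod_j x_j$ strictly dominates the quadratic part. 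For the base case $r=1$, each $P_j=L_j+c_j$ is affine. Comparing the degree-$n$ parts gives $\prod_jL_j=\prod_jx_j$, so unique factorization in $\Z[\bm x]$ yields a permutation $\sigma$ with $L_j=\gamma_jx_{\sigma(j)}$, $\gamma_j\in\Z$ and $\prod_j\gamma_j=1$; hence $\gamma_j=\pm1$ with an even number of them equal to $-1$. Comparing the degree-$2$ parts gives $a_j\gamma_j^2=a_{\sigma(j)}$, i.e. $a_{\sigma(j)}=a_j$, so $\sigma$ fixes the tuple; and matching the squarefree monomials $\prod_{j\ne k}x_j$ of degree $n-1$, which do not occur in $H$, forces every $c_k=0$. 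Thus $\phi\in G$.

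For the inductive step $r\ge2$ I would carry out the degree bookkeeping exactly as in Theorem~\ref{thmhor}. Writing $P_j=P_{j,d_j}+(\text{lower})$ with $d_j=\deg P_j$ and $D=\max_jd_j$, one first checks that $D$ is attained by a single index $j^*$: if two distinct indices attained $D$, then $\deg\big(a_0\prod_jP_{j,d_j}\big)=\sum_jd_j\ge2D+(n-2)$ would exceed both $2D$ and $n$, making the product term the unique leading term of the left-hand side, which is nonzero and cannot cancel. Comparing the two extremal homogeneous pieces $a_{j^*}P_{j^*,D}^2$ (degree $2D$) and $-a_0\prod_jP_{j,d_j}$ (degree $\sum_jd_j$), and noting that whichever is strictly larger would give a nonzero uncancelled term of degree exceeding $n$, I would conclude $2D=\sum_jd_j$ and the forced cancellation
\[
a_{j^*}P_{j^*,D}^2=a_0\prod_jP_{j,d_j},\qquad\text{so}\qquad a_{j^*}P_{j^*,D}=a_0\prod_{j\ne j^*}P_{j,d_j}.
\]
The divisibility hypothesis $a_{j^*}\mid a_0$ makes $v_{j^*}$ integral, and composing $\phi$ with it replaces $P_{j^*}$ by $-P_{j^*}+\tfrac{a_0}{a_{j^*}}\prod_{j\ne j^*}P_j$, whose degree-$D$ part cancels by the displayed identity. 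Hence $v_{j^*}\circ\phi$ is again in $\gmax$ (composition with an automorphism preserves both maximal rank and the invariance of $H$) and has strictly smaller maximal degree, so $v_{j^*}\circ\phi\in G$ by induction; as $v_{j^*}$ is an involution, $\phi\in G$.

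I expect the only delicate points to be the degree accounting in the inductive step — proving that a single index attains the maximal degree and that the two extremal pieces are forced to share the degree $2D=\sum_jd_j$ — and, in the base case, confirming that the permutation $\sigma$ genuinely preserves the tuple $(a_1,\dots,a_n)$ rather than merely permuting the variables. Everything else (the inclusion $G\subseteq\gmax$, the stability of maximal rank under composition with $v_{j^*}$, and the vanishing of the affine constants) is routine once Theorems~\ref{thmperm} and~\ref{rgm A.2} are invoked.
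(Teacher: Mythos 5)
Your proposal is correct and is essentially the paper's own argument: the paper proves this theorem simply by citing the proof of Theorem~\ref{thmhor} (induction on the maximal degree, a unique-factorization base case, and composition with a Vi\`ete involution to remove the forced top-degree cancellation), which is exactly what you carry out for general $n$. Your degree bookkeeping — a unique index attaining the maximal degree $D$, the relation $2D=\sum_j d_j$, and the cancellation $a_{j^*}P_{j^*,D}^2=a_0\prod_j P_{j,d_j}$ — is the correct $n$-variable analog of the paper's steps $r=p+q$ and $\nu R_r=P_pQ_q$, and your preliminary reductions (automorphisms have unit Jacobian, maximal rank forces $\deg P_j\ge 1$) are sound.
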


\proof
The argument is the same of the proof of Theorem \ref{thmhor}.
\endproof

\subsection{$*$-Setting}Let $m,n$ be two positive integers.
Let $a_0, \dots, a_n$ be symmetric Laurent polynomials in $z_1,\dots,z_m$ with integer coefficients and such that
$$
a_j^* = a_j,\quad\text{and}\quad a_j\ \on{divides}\  a_0\quad \on{for}\  j=1,\dots,n,
$$ 
in the algebra $\Z[\bm z^{\pm 1}]^{\frak S_m}$ of symmetric Laurent polynomials.
\vsk.2>

Consider the polynomial in $2n$ variables $x_1,x_2\dots,x_{2n-1},x_{2n}$,
\bea
H :=\sum_{j=1}^n a_jx_{2j-1}x_{2j} - a_0\prod_{j=1}^n x_{2j-1} \,.
\eea
The algebra $\Z[\bm z^{\pm 1}]^{\frak S_m}[\bm x]$ admits an involution 
\[\nu\colon x_{2j-1}\mapsto x_{2j},\quad x_{2j}\mapsto x_{2j-1},\qquad j=1,\dots,n.
\]The notions of a $\nu$-endomorphism and a $\nu$-automorphism given in Section \ref{*endo} 
obviously extend to the algebra $\Z[\bm z^{\pm 1}]^{\frak S_m}[\bm x]$.

\vskip2mm
The polynomial $H$ has a nontrivial group of symmetries. 
\begin{thm}
\label{thmperm*}
Let $\si\in\frak S_n$ be such that $(a_{\si(1)},\dots, a_{\si(n)})=(a_1,\dots,a_n)$. 
Then the permutation 
$\bm x\mapsto (x_{2\si(1)-1},x_{2\si(1)}\dots, x_{2\si(n)-1},x_{2\si(n)})$  preserves the polynomial $H$.
\qed
\end{thm}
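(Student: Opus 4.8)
The plan is to verify directly that the substitution defining the permutation leaves $H$ unchanged, splitting $H$ into its quadratic part $\sum_{j=1}^n a_j x_{2j-1}x_{2j}$ and its product part $a_0\prod_{j=1}^n x_{2j-1}$ and checking each separately. The prescribed permutation $\bm x\mapsto (x_{2\si(1)-1},x_{2\si(1)},\dots,x_{2\si(n)-1},x_{2\si(n)})$ amounts to the substitution $x_{2j-1}\mapsto x_{2\si(j)-1}$ and $x_{2j}\mapsto x_{2\si(j)}$ for each $j$. The key structural observation is that the two variables $x_{2j-1},x_{2j}$ of each block are moved together, so the substitution permutes the blocks $(x_{2j-1},x_{2j})$ among themselves rather than mixing odd- and even-indexed variables.

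First I would handle the product term. Since $\si$ is a bijection of $\{1,\dots,n\}$, reindexing by $k=\si(j)$ gives $\prod_{j=1}^n x_{2\si(j)-1}=\prod_{k=1}^n x_{2k-1}$, so this factor is invariant, and no hypothesis on the $a_j$ is needed for it. Next I would treat the quadratic sum: after substitution it becomes $\sum_{j=1}^n a_j x_{2\si(j)-1}x_{2\si(j)}$, and setting $k=\si(j)$ rewrites this as $\sum_{k=1}^n a_{\si^{-1}(k)}x_{2k-1}x_{2k}$. Here the hypothesis $a_{\si(j)}=a_j$ for all $j$ is exactly what is required: it is equivalent to $a_{\si^{-1}(k)}=a_k$ for all $k$, so the coefficients return to their original values and the sum equals $\sum_{k=1}^n a_k x_{2k-1}x_{2k}$. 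Combining the two computations recovers $H$.

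I expect essentially no obstacle; the argument is identical in spirit to the classical Theorem \ref{thmperm}, and the only point requiring care is the index bookkeeping together with the observation that the permutation acts on the paired blocks, so that the odd-indexed factors in the product are permuted only among themselves. Notably nothing about the $*$-involution $\nu$ or the ground ring $\Z[\bm z^{\pm 1}]^{\frak S_m}$ enters the verification, since the $a_j$ play only the role of fixed coefficients invariant under the reindexing.
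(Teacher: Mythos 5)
Your verification is correct and is exactly the direct check the paper has in mind: the paper states this theorem with no written proof (the \qed follows the statement immediately, as with its classical counterpart), treating it as the routine reindexing argument you spell out. Your split into the product part (invariant under any permutation of the blocks) and the quadratic part (where the hypothesis $a_{\si(j)}=a_j$ is used) is precisely the intended bookkeeping.
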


\begin{thm}
\label{thm A.6}
For any $i=1,\dots,n$, the transformation $v_i$ defined by
\bea
&&
x_1\mapsto x_2,\qquad x_2\mapsto x_1, \quad \dots\ ,
\quad
x_{2i-3}\mapsto x_{2i-2},\qquad x_{2i-2}\mapsto x_{2i-3},
\\
&&
x_{2i-1}\mapsto -x_{2i} + \frac{a_0}{a_i}\prod_{j\ne i} x_{2j-1},\qquad \ \
x_{2i}\mapsto-x_{2i-1} + \frac{a_0}{a_i} \prod_{j\ne i} x_{2j},
\\
&&
x_{2i+1}\mapsto x_{2i+2},\quad x_{2i+2}\mapsto x_{2i+1},
\quad\dots\ , \quad
x_{2n-1}\mapsto x_{2n},\quad
x_{2n}\mapsto x_{2n-1},
\eea
is an involution preserving the polynomial $H$.

\end{thm}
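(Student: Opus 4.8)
The plan is to verify the two assertions directly, following the pattern of the proof of Theorem \ref{rgm A.2}: that $v_i$ squares to the identity and that $v_i$ fixes $H$. Since $a_i$ divides $a_0$ in $\Z[\bm z^{\pm 1}]^{\frak S_m}$, the coefficient $a_0/a_i$ lies in the base ring, so the prescribed assignment extends uniquely to a $\Z[\bm z^{\pm 1}]^{\frak S_m}$-algebra endomorphism $v_i$ of $\Z[\bm z^{\pm 1}]^{\frak S_m}[\bm x]$; in particular $v_i$ fixes every $a_j$. Throughout I would abbreviate $A := \tfrac{a_0}{a_i}\prod_{j\ne i}x_{2j-1}$ and $B := \tfrac{a_0}{a_i}\prod_{j\ne i}x_{2j}$, so that $v_i(x_{2i-1}) = -x_{2i}+A$ and $v_i(x_{2i})=-x_{2i-1}+B$, while for $k\ne i$ the map merely swaps the pair $(x_{2k-1},x_{2k})$.

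First I would establish that $v_i$ is an involution. For an index $k\ne i$ the swap of $(x_{2k-1},x_{2k})$ is visibly involutive, so $v_i^2$ fixes those generators. The one computation needed is for the pair at position $i$, and the crucial observation is that, because $v_i$ interchanges $x_{2j-1}\leftrightarrow x_{2j}$ for every $j\ne i$, one has $v_i(A)=B$ and $v_i(B)=A$. Hence
\[
v_i^2(x_{2i-1}) = -v_i(x_{2i}) + v_i(A) = -(-x_{2i-1}+B) + B = x_{2i-1},
\]
and symmetrically $v_i^2(x_{2i})=x_{2i}$. This gives $v_i^2=\mathrm{id}$.

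Next I would check $v_i(H)=H$ by expanding block by block. For each $j\ne i$ the swap yields $v_i(a_j x_{2j-1}x_{2j}) = a_j x_{2j}x_{2j-1} = a_j x_{2j-1}x_{2j}$, so these terms are untouched. The two remaining contributions are the $i$-th quadratic term and the cubic product term. Using $a_iA=a_0\prod_{j\ne i}x_{2j-1}$ and $a_iB=a_0\prod_{j\ne i}x_{2j}$, expansion of the former gives
\[
v_i(a_i x_{2i-1}x_{2i}) = a_i x_{2i-1}x_{2i} - a_0\prod_{j=1}^n x_{2j} - a_0\prod_{j=1}^n x_{2j-1} + \tfrac{a_0^2}{a_i}\prod_{j\ne i}x_{2j-1}x_{2j},
\]
while
\[
v_i\Big(a_0\prod_{j=1}^n x_{2j-1}\Big) = -a_0\prod_{j=1}^n x_{2j} + \tfrac{a_0^2}{a_i}\prod_{j\ne i}x_{2j-1}x_{2j}.
\]
Subtracting the second display from the first and adding back the untouched quadratic terms, the $a_0\prod_j x_{2j}$ terms cancel against one another and the two $\tfrac{a_0^2}{a_i}\prod_{j\ne i}x_{2j-1}x_{2j}$ terms cancel as well, leaving exactly $\sum_{j} a_j x_{2j-1}x_{2j} - a_0\prod_{j} x_{2j-1}=H$.

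The computation is entirely routine; the only genuine subtlety — and the one place where the doubling of variables and the swaps built into $v_i$ are essential — is the bookkeeping of the two products $\prod_{j\ne i}x_{2j-1}$ and $\prod_{j\ne i}x_{2j}$. It is precisely the fact that $v_i$ interchanges them (i.e.\ $v_i(A)=B$) that forces both the involutivity and the cancellation of the mixed cubic terms; without the swaps on the off-$i$ pairs neither identity would hold. I would also note in passing that the same swap symmetry shows $v_i\circ\nu=\nu\circ v_i$ on generators, so that $v_i$ is in fact a $\nu$-automorphism in the sense of Section \ref{*endo}, consistent with the $\nu$-Vi\`ete involutions introduced there.
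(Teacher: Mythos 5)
Your proof is correct, and it is exactly the direct verification that the paper has in mind: the paper's own proof of Theorem \ref{thm A.6} simply states that the result follows "by straightforward calculation, the same as for $n=3$," deferring to the pattern of Theorem \ref{rgm A.2}, which is precisely the pattern you follow. Your write-up supplies the computation the paper omits, and your key observation that $v_i$ interchanges the two products $\frac{a_0}{a_i}\prod_{j\ne i}x_{2j-1}$ and $\frac{a_0}{a_i}\prod_{j\ne i}x_{2j}$ is indeed what makes both the involutivity and the cancellation in $v_i(H)=H$ work.
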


\begin{proof} 
The proof is by straightforward calculation, the same as for $n=3$.
\end{proof}

The permutations of Theorem \ref{thmperm*} and the Vi\`ete maps of Theorem \ref{thm A.6}
are $\nu$-automorphisms of the algebra $\Z[\bm z^{\pm 1}]^{\frak S_m}[\bm x]$.
\vskip2mm
Given $P\in \Z[\bm z^{\pm 1}]^{\frak S_m}[\bm x]$ and $\bm p\in\C^m$
 denotes by $P_{\bm p}\in\C[\bm x]$ the specialization of $P$ at $\bm z=\bm p$. 
\vskip2mm
Let  $\phi\colon \Z[\bm z^{\pm 1}]^{\frak S_m}[\bm x]\to \Z[\bm z^{\pm 1}]^{\frak S_m}[\bm x]$
 be a $\nu$-endomorphism defined by 
 $$
 x_{2j-1}\mapsto P_j(\bm x),\quad x_{2j}\mapsto \nu P_j(\bm x),\quad j=1,\dots, n.
 $$
For any $\bm p\in\C^m$ there is a map $\phi_{\bm p}\colon\C^{2n}\to\C^{2n}$ defined by
\[\bm x\mapsto (P_{1,\bm p}(\bm x),\nu P_{1,\bm p}(\bm x), \dots,P_{n,\bm p}(\bm x),\nu P_{n,\bm p}(\bm x)).
\]

The $\nu$-endomorphism $\phi\colon \Z[\bm z^{\pm 1}]^{\frak S_m}[\bm x]\to \Z[\bm z^{\pm 1}]^{\frak S_m}[\bm x]$
is said to be \emph{of maximal rank} if there exist a point $\bm p\in\C^m$ and a point $\bm q\in\C^{2n}$
 such that the Jacobian matrix of  $ \phi_{\bm p}$ at $q$ is invertible.

\begin{thm} 
Any $\nu$-endomorphism of maximal rank preserving $H$ 
is a $\nu$-automorphism. The group of all $\nu$-automorphisms of
 $\Z[\bm z^{\pm1}]^{\frak S_m}[\bm x]$ preserving 
$H$ is generated by the Vi\`ete transformations of Theorem \ref{thm A.6},
by the permutation of variables preserving $(a_1,\dots, a_n)$, by multiplication by 
$-1$ of an even number of variables,
 and by multiplication of variables by powers of
$s_m:=\prod_{j=1}^mz_j$.
\end{thm}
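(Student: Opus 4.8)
The plan is to adapt, essentially verbatim, the argument proving Theorem \ref{thmhor}, now carried out in $2n$ variables and with the constant coefficients of the $\mathbb P^2$ case replaced by the symmetric Laurent polynomials $a_0,\dots,a_n$. First I would record the easy inclusion: the Vi\`ete involutions of Theorem \ref{thm A.6}, the tuple-preserving permutations of Theorem \ref{thmperm*}, the sign changes of an even number of variables, and multiplication of variables by powers of $s_m=\prod_{j=1}^m z_j$ all preserve $H$ and are $\nu$-automorphisms (here the hypotheses $a_j^*=a_j$ and $a_j\mid a_0$ are exactly what make these maps $\nu$-equivariant and polynomial). Hence the group they generate is contained in the Horowitz group, and the whole content of the theorem is the reverse inclusion together with invertibility. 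A $\nu$-endomorphism $\phi$ is determined by the images $P_j$ of $x_{2j-1}$ (the images of $x_{2j}$ being $\nu P_j$), and preservation of $H$ becomes the single identity
\[
\sum_{j=1}^n a_j\,P_j\,\nu P_j-a_0\prod_{j=1}^n P_j=\sum_{j=1}^n a_j x_{2j-1}x_{2j}-a_0\prod_{j=1}^n x_{2j-1}.
\]

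The proof then proceeds by induction on $D:=\max_j d_j$, where $d_j:=\deg_{\bm x}P_j$. Maximal rank forces every $P_j$ to be nonconstant, so $D\ge 1$. For the base case $D=1$ (assume $n\ge 3$, the very small cases being checked by hand) the unique top-degree component of the left-hand side is $a_0\prod_j P_{j,1}$, which must equal $a_0\prod_j x_{2j-1}$; since $\Z[\bm z^{\pm1}]^{\frak S_m}\cong\Z[e_1,\dots,e_{m-1},e_m^{\pm1}]$ is a unique factorization domain whose units are exactly $\pm s_m^{k}$, I would conclude $P_{j,1}=\pm s_m^{k_j}x_{2\sigma(j)-1}$ for a permutation $\sigma$, with the product of the units equal to $1$ (forcing $\sum_j k_j=0$ and the number of sign changes to be even). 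Matching the degree-two part and using $a_j^*=a_j$ then shows $\sigma$ preserves $(a_1,\dots,a_n)$, and comparing lower-degree terms kills the constant parts $P_{j,0}$; thus $\phi$ is a product of the listed generators.

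For the inductive step $D\ge 2$, I would compare top-degree homogeneous parts to locate an index $j_0$ of maximal degree for which $d_{j_0}=\sum_{j\ne j_0}d_j$ and the leading terms satisfy $\nu P_{j_0,D}=\tfrac{a_0}{a_{j_0}}\prod_{j\ne j_0}P_{j,d_j}$ — precisely as $r=p+q$ and $\nu R_r=P_pQ_q$ arise in Theorem \ref{thmhor}. Composing $\phi$ with the Vi\`ete involution $V_{j_0}$ of Theorem \ref{thm A.6} (well defined because $a_{j_0}\mid a_0$) replaces the $j_0$-image by $-\nu P_{j_0}+\tfrac{a_0}{a_{j_0}}\prod_{j\ne j_0}P_j$, whose top terms then cancel, strictly lowering the maximal degree while merely permuting the remaining images; the induction hypothesis finishes the proof and in particular shows $\phi$ is invertible.

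The hard part will be the degree bookkeeping in the inductive step for general $n$, where $\deg H=n$ may be large. Unlike the case $n=3$, one can no longer sort the $P_j$ by degree (only tuple-preserving permutations are available), and the top quadratic degree $2D$ need not exceed $n$, so I must rule out the anomalous cancellation $\sum_{j:\,d_j=D}a_j P_{j,D}\,\nu P_{j,D}=0$ and confirm that exactly one coordinate is reducible. I expect to settle this by specializing $\bm z$ to a generic point of $\C^m$, where the $a_j$ become nonzero constants and the classical Horowitz-type argument of the appendix applies, and then lifting the conclusion back to $\Z[\bm z^{\pm1}]^{\frak S_m}$ while tracking the unit factors $\pm s_m^{k}$; keeping this specialization compatible with the $\nu$-structure is the delicate point.
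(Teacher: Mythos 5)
Your proposal follows exactly the route the paper takes: the paper's entire proof of this theorem is the remark that the argument of Theorem \ref{thmhor} applies verbatim, i.e.\ induction on the maximal degree $D$ of the $P_j$, with the base case $D=1$ settled by unique factorization in $\Z[\bm z^{\pm 1}]^{\frak S_m}[\bm x]$ (whose units are $\pm s_m^k$) and the inductive step by composing with a Vi\`ete involution of Theorem \ref{thm A.6}. Your reduction to the $P_j$, your base case, and your inductive step are faithful reconstructions of this.

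The one defect is your final paragraph: the ``hard part'' you flag has a short direct resolution inside your own framework, and the specialization workaround you propose instead should be deleted. Write $d_j=\deg_{\bm x}P_j$, which is $\ge 1$ by maximal rank, $D=\max_j d_j$, and $S=\sum_j d_j\ge n$. If $D\ge 2$ and two distinct indices attained $D$, then $S\ge 2D+(n-2)>2D$ (using $n\ge 3$), so the degree-$S$ homogeneous part of the left-hand side of your identity is the single nonzero term $-a_0\prod_j P_{j,d_j}$, while the right-hand side vanishes in degree $S$ because $S\ge n+2>n$: contradiction. Hence for $D\ge 2$ the maximal index $j_0$ is automatically unique, and the anomalous cancellation $\sum_{j:\,d_j=D}a_jP_{j,D}\,\nu P_{j,D}=0$ never has to be confronted. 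The same comparison then rules out both $2D<S$ (the product term would survive alone in degree $S>n$) and $2D>S$ (the single term $a_{j_0}P_{j_0,D}\,\nu P_{j_0,D}$ would survive alone in degree $2D>n$), so $2D=S$ and $a_{j_0}P_{j_0,D}\,\nu P_{j_0,D}=a_0\prod_j P_{j,d_j}$; cancelling $P_{j_0,D}$ in the integral domain gives exactly the factorization needed for the Vi\`ete reduction. In particular no sorting --- indeed no permutation whatsoever --- is needed in the inductive step; tuple-preserving permutations enter only in the base case, where comparing degree-2 parts forces $a_{\sigma(j)}=a_j$ for all $j$, just as you say. As for the specialization: it cannot serve the purpose you assign to it, because the classical theorem of Appendix \ref{appendix} concerns $\sum_j a_jx_j^2-a_0\prod_j x_j$ in $n$ variables, not the $2n$-variable form $\sum_j a_jx_{2j-1}x_{2j}-a_0\prod_j x_{2j-1}$ that survives after evaluating $\bm z$; so specializing reduces the statement only to the same statement over $\C$, which still requires the induction above, and --- as you yourself observe --- a generic point $\bm p$ is incompatible with $\nu$, since $*$ inverts the $z_i$ and hence $\nu$ does not descend through a generic evaluation.
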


\begin{proof}
The proof is the same as for $n=3$.
\end{proof}

\section{$*$-Equations for $\Bbb P^3$ and Poisson structures}
\label{app B}

\subsection{$*$-Equations for $\Bbb P^3$}

As we wrote in the Introduction, a $T$-full exceptional collection $(E_1,E_2,E_3)$ in
$\D^b_T(\Bbb P^2)$
has the matrix 
  $(\chi_T(E_i^*\otimes E_j))$ of equivariant Euler characteristics of the form
$\left(\begin{array}{cccccc}
 1 & a & b
 \\
 0 & 1 & c
 \\
 0&0&1
 \end{array}
\right)$, 
  where $(a,b,c)$ are symmetric  Laurent polynomials 
  in the equivariant parameters $z_1,z_2,z_3$ satisfying the
  $*$-Markov equation 
\bean
\label{mma}
aa^*+bb^*+cc^*-ab^*c = 3-\frac{z_1^3+z_2^3+z_3^3}{z_1z_2z_3}.
\eean
Similar objects and equations are available for any projective space $\Bbb P^n$.
For example, for $\Bbb P^3$ the matrix  of equivariant Euler characteristics has the form
$\begin{pmatrix}
1&a&b&c\\
0&1&d&e\\
0&0&1&f\\
0&0&0&1
\end{pmatrix}$, where $(a,b,c,d,e,f)$
are symmetric  Laurent polynomials 
  in the equivariant parameters $z_1,z_2,z_3,z_4$ satisfying the
 system of equations 
\begin{align}\nonumber
aa^*&+bb^*+cc^*+dd^*+ee^*+ff^*\\
\label{mark4n1}
&-a^*bd^*-a^*ce^*-b^*cf^*-d^*ef^*+a^*cd^*f^*\\
\nonumber
&=4+\frac{z_2z_3z_4}{z_1^3}+\frac{z_1z_3z_4}{z_2^3}+\frac{z_1z_2z_4}{z_3^3}+\frac{z_1z_2z_3}{z_4^3},
\end{align}
\begin{align}
\nonumber -2 a a^*&-2 b b^*-2 c c^*-2 d d^*-2 e e^*-2 f f^*\\
\nonumber &+a b^* d+a^* b d^*+a c^* e+a^* c e^*+b^* c f^*+b c^* f+d e^* f+d^* e f^*\\
\nonumber&-a b^* e f^*-a^* b e^* f-b c^* d^* e-b^* c d e^*\\
\label{mark4n2}
&+a a^* f f^*+b b^* e e^*+c c^* d d^*\\
\nonumber&=-6+\frac{z_2^2 z_4^2}{z_1^2 z_3^2}+\frac{z_2^2 z_3^2}{z_1^2 z_4^2}+
\frac{z_1^2 z_2^2}{z_3^2 z_4^2}+\frac{z_3^2 z_4^2}{z_1^2 z_2^2}+\frac{z_1^2 z_4^2}{z_2^2 z_3^2}+
\frac{z_1^2 z_3^2}{z_2^2 z_4^2},
\end{align}
\begin{align}
\nonumber aa^*&+bb^*+cc^*+dd^*+ee^*+ff^*\\
\label{mark4n3}
&-ab^*d-ac^*e-bc^*f-de^*f+ac^*df\\
\nonumber&=4+\frac{z_1^3}{z_2 z_3 z_4}+\frac{z_2^3}{z_1 z_3 z_4}+\frac{z_3^3}{z_1 z_2 z_4}+\frac{z_4^3}{z_1 z_2 z_3},
\end{align}
see \cite[Formulas (3.24)-(3.26)]{CV}.  One may study this system of equations similarly to our study of the $*$-Markov equation.

In this appendix we briefly discuss the analogs for the system of equations
\eqref{mark4n1}-\eqref{mark4n3}  of the Poisson structure on $\C^6$ constructed  in  Section \ref{sec NPo}.  It
 will be a family of Poisson structures on $\C^{12}$.

\subsection{Poisson structures on $\C^{12}$}

Consider $\C^{12}$ with coordinates $\bm x=(x_1,\dots, x_{12})$, involution
\[\nu\colon\C^{12}\to\C^{12},\qquad x_{2j-1}\mapsto x_{2j},\qquad 
x_{2j}\mapsto x_{2j-1},
\qquad
j=1,\dots,6,
\]
and polynomials
\bea
H_1(\bm x)=&&x_1 x_2+x_3 x_4+x_5 x_6+x_7 x_8+x_9 x_{10}+x_{11} x_{12}
\\
&&-x_2x_3 x_8 -x_2x_5 x_{10} -x_4 x_5 x_{12}-x_8 x_9 x_{12}+x_2x_5 x_8 x_{12},\\
H_2(\bm x)=&&-2 x_1 x_2-2 x_3 x_4-2 x_5 x_6-2 x_7 x_8-2 x_9 x_{10}-2 x_{11} x_{12}\\
&&+x_3 x_8 x_2+x_5 x_{10} x_2+x_1 x_4 x_7+x_1 x_6 x_9+x_3 x_6 x_{11}\\
&&+x_7 x_{10} x_{11}+x_4 x_5 x_{12}+x_8 x_9 x_{12}-x_3 x_{10} x_{11} x_2+x_1 x_{11} x_{12} x_2\\
&&+x_5 x_6 x_7 x_8-x_3 x_6 x_8 x_9-x_4 x_5 x_7 x_{10}\\
&&+x_3 x_4 x_9 x_{10}-x_1 x_4 x_9 x_{12},\\
H_3(\bm x)=&&x_1 x_2+x_3 x_4+x_5 x_6+x_7 x_8+x_9 x_{10}+x_{11} x_{12}\\
&&-x_1 x_4 x_7-x_1 x_6 x_9-x_3 x_6 x_{11}-x_7 x_{10} x_{11}+x_1 x_6 x_7 x_{11}.
\eea
We have  $\nu^\star H_1=H_3$, $\nu^\star H_2=H_2$,  $\nu^\star H_3=H_1$.
\vsk.2>

Consider the braid group $\mc B_4$ with standard generators $\tau_1,\tau_2,\tau_3$.
The group $\mc B_4$ acts on $\C^{12}$,
\begin{align*}
\tau_1^\star x_1&=-x_2,& \tau_2^\star x_1&=x_3-x_1 x_7,& \tau_3^\star x_1&=x_1, \\
\tau_1^\star x_2&=-x_1,& \tau_2^\star x_2&=x_4-x_2 x_8,& \tau_3^\star x_2&=x_2, \\
\tau_1^\star x_3&=-x_2 x_3+x_7,& \tau_2^\star x_3&=x_1,& \tau_3^\star x_3&=x_5-x_3 x_{11}, \\
\tau_1^\star x_4&=-x_1 x_4+x_8,& \tau_2^\star x_4&=x_2,& \tau_3^\star x_4&=x_6-x_4 x_{12}, \\
\tau_1^\star x_5&=-x_2 x_5+x_9,& \tau_2^\star x_5&=x_5,& \tau_3^\star x_5&=x_3, \\
\tau_1^\star x_6&=-x_1 x_6+x_{10},& \tau_2^\star x_6&=x_6,& \tau_3^\star x_6&=x_4, \\
\tau_1^\star x_7&=x_3,& \tau_2^\star x_7&=-x_8,& \tau_3^\star x_7&=x_9-x_7 x_{11}, \\
\tau_1^\star x_8&=x_4,& \tau_2^\star x_8&=-x_7,& \tau_3^\star x_8&=x_{10}-x_8 x_{12}, \\
\tau_1^\star x_9&=x_5,& \tau_2^\star x_9&=-x_8 x_9+x_{11},& \tau_3^\star x_9&=x_7, \\
\end{align*}
\begin{align*}
\tau_1^\star x_{10}&=x_6,& \tau_2^\star x_{10}&=-x_7 x_{10}+x_{12},& \tau_3^\star x_{10}&=x_8, \\
\tau_1^\star x_{11}&=x_{11},& \tau_2^\star x_{11}&=x_9,& \tau_3^\star x_{11}&=-x_{12}, \\
\tau_1^\star x_{12}&=x_{12},& \tau_2^\star x_{12}&=x_{10},& \tau_3^\star x_{12}&=-x_{11}. \\
\end{align*}

\begin{thm}
\label{thm B1}

The space $V$ of all quadratic Poisson structures on $\C^{12}$, which have $H_1,H_2,H_3$ as Casimir elements,
is a 3-dimensional vector space consisting  of log-canonical structures. 
For suitable coordinates $\bs b=(b_1,b_2,b_3)$ on $V$, the Poisson structures have the form:
\begin{align*}
\{x_1,x_2\}&=0, &
\{x_3,x_4\}&=0, &
\{x_5,x_6\}&=0, &
\{x_7,x_8\}&=0, &
\{x_9,x_{10}\}&=0, &
\{x_{11},x_{12}\}&=0, &
\end{align*}
\begin{align*}
\{x_1,x_{11}\}&=b_2x_1x_{11}, &
\{x_1,x_{12}\}&=-b_2x_1x_{12}, &
\\
\{x_2,x_{11}\}&=-b_2x_2x_{11}, &
\{x_2,x_{12}\}&=b_2x_2x_{12}, &
\\
\{x_3,x_{9}\}&=-(b_1-b_2+b_3)x_3x_{9}, &
\{x_3,x_{10}\}&=(b_1-b_2+b_3)x_3x_{10}, &
\\
\{x_4,x_{9}\}&=(b_1-b_2+b_3)x_4x_{9}, &
\{x_4,x_{10}\}&=-(b_1-b_2+b_3)x_4x_{10}, &
\\
\{x_5,x_{7}\}&=(b_1-b_3) x_5 x_7, &
\{x_5,x_{8}\}&=-(b_1-b_3) x_5 x_8, &
\\
\{x_6,x_{7}\}&=-(b_1-b_3) x_6 x_7, &
\{x_6,x_{8}\}&=(b_1-b_3) x_6 x_8, &
\\
\{x_1,x_{3}\}&=-b_3 x_1 x_3, &
\{x_1,x_{4}\}&=b_3 x_1 x_4, &
\\
\{x_2,x_{3}\}&=b_3 x_2 x_3, &
\{x_2,x_{4}\}&=-b_3 x_2 x_4, &
\\
\{x_1,x_{5}\}&=-(b_3-b_2) x_1 x_5, &
\{x_1,x_{5}\}&=(b_3-b_2) x_1 x_6, &
\\
\{x_2,x_{5}\}&=(b_3-b_2) x_2 x_5, &
\{x_2,x_{6}\}&=-(b_3-b_2) x_2 x_6, &
\\
\{x_1,x_{7}\}&=-b_3 x_1 x_7, &
\{x_1,x_{8}\}&=b_3 x_1 x_8, &
\\
\{x_2,x_{7}\}&=b_3 x_2 x_7, &
\{x_2,x_{8}\}&=-b_3 x_2 x_8, &
\\
\{x_1,x_{9}\}&=-(b_3-b_2) x_1 x_9, &
\{x_1,x_{10}\}&=(b_3-b_2) x_1 x_{10}, &
\\
\{x_2,x_{9}\}&=(b_3-b_2) x_2 x_9, &
\{x_2,x_{10}\}&=-(b_3-b_2) x_2 x_{10}, &
\\
\{x_3,x_{5}\}&=-(b_1-b_2) x_3 x_5, &
\{x_3,x_{6}\}&=(b_1-b_2) x_3 x_6, &
\\
\{x_4,x_{5}\}&=(b_1-b_2) x_4 x_5, &
\{x_4,x_{6}\}&=-(b_1-b_2) x_4 x_6, &
\\
\{x_3,x_{7}\}&=-b_3 x_3 x_7, &
\{x_3,x_{8}\}&=b_3 x_3 x_8, &
\\
\{x_4,x_{7}\}&=b_3 x_4 x_7, &
\{x_4,x_{8}\}&=-b_3 x_4 x_8, &
\\
\{x_3,x_{11}\}&=-(b_1-b_2) x_3 x_{11}, &
\{x_3,x_{12}\}&=(b_1-b_2) x_3 x_{12}, &
\\
\{x_4,x_{11}\}&=(b_1-b_2) x_4 x_{11}, &
\{x_4,x_{12}\}&=-(b_1-b_2) x_4 x_{12}, &
\\
\{x_5,x_{7}\}&=(b_1-b_3) x_5 x_{7}, &
\{x_5,x_{8}\}&=-(b_1-b_3) x_5 x_{8}, &
\\
\{x_6,x_{7}\}&=-(b_1-b_3) x_6 x_{7}, &
\{x_6,x_{8}\}&=(b_1-b_3) x_6 x_{8}, &
\\
\{x_5,x_{9}\}&=-(b_3-b_2) x_5 x_{9}, &
\{x_5,x_{10}\}&=(b_3-b_2) x_5 x_{10}, &
\\
\{x_6,x_{9}\}&=(b_3-b_2) x_6 x_{9}, &
\{x_6,x_{10}\}&=-(b_3-b_2) x_6 x_{10}, &
\\
\{x_5,x_{11}\}&=-(b_1-b_2)x_5 x_{11}, &
\{x_5,x_{12}\}&=(b_1-b_2)x_5 x_{12}, &
\\
\{x_6,x_{11}\}&=(b_1-b_2)x_6 x_{11}, &
\{x_6,x_{12}\}&=-(b_1-b_2)x_6 x_{12}, &
\end{align*}
\begin{align*}
\{x_7,x_{9}\}&=-b_1 x_7 x_{9}, &
\{x_7,x_{10}\}&=b_1 x_7 x_{10}, &
\\
\{x_8,x_{9}\}&=b_1 x_8 x_{9}, &
\{x_8,x_{10}\}&=-b_1 x_8 x_{10}, &
\\
\{x_7,x_{11}\}&=-b_1 x_7 x_{11}, &
\{x_7,x_{12}\}&=b_1 x_7 x_{12}, &
\\
\{x_8,x_{11}\}&=b_1 x_8 x_{11}, &
\{x_8,x_{12}\}&=-b_1 x_8 x_{12}, &
\\
\{x_9,x_{11}\}&=-b_1 x_9 x_{11}, &
\{x_9,x_{12}\}&=b_1 x_9 x_{12}, &
\\
\{x_{10},x_{11}\}&=b_1 x_{10} x_{11}, &
\{x_{10},x_{12}\}&=-b_1 x_{10} x_{12}. &
\end{align*}
Each of these Poisson structures is $\nu$-invariant.
If $(b_1,b_2,b_3)\ne (0,0,0)$, then the Poisson structure is of rank 2.

\end{thm}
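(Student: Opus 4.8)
The plan is to reduce the statement to a finite linear-algebra computation, exactly as in the proof of the uniqueness lemma for the quadratic Poisson structure on $\C^6$ in Section~\ref{sec NPo}. Write a general quadratic bivector $\{x_i,x_j\}=\sum_{k\le l}Q_{ij}^{kl}x_kx_l$ with undetermined constants $Q_{ij}^{kl}\in\C$, subject to skew-symmetry $Q_{ji}^{kl}=-Q_{ij}^{kl}$. The requirement that each of $H_1,H_2,H_3$ be a Casimir, i.e. $\{x_i,H_a\}=0$ for all $i$ and $a=1,2,3$, is a system of \emph{linear} equations in the $Q_{ij}^{kl}$. First I would solve this linear system (a computer-assisted calculation, as in the $\C^6$ case) and show that its solution space is $3$-dimensional and spanned by the three log-canonical bivectors recorded in the statement, parametrized by $\bs b=(b_1,b_2,b_3)$. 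Exhibiting these three explicit, linearly independent log-canonical solutions then identifies the solution space with the span of the displayed formulas.

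The conceptual point that makes the Jacobi identity free of charge is that \emph{any} log-canonical bivector $\{x_i,x_j\}=a_{ij}x_ix_j$ with constant skew matrix $A=(a_{ij})$ automatically satisfies the Jacobi identity. Hence, once the previous step shows that every solution of the (linear) skew-symmetry-plus-Casimir system is log-canonical, all such solutions are genuine Poisson structures, so $V$ coincides with this $3$-dimensional solution space and consists of log-canonical structures. It is convenient to reformulate the Casimir condition intrinsically: for a log-canonical structure one computes $\{x_i,x^m\}=\big(\sum_j m_j a_{ij}\big)x_ix^m$ for a monomial $x^m=\prod_j x_j^{m_j}$, so $\{x_i,H_a\}=0$ holds precisely when every exponent vector $m$ of every monomial of $H_a$ lies in $\ker A$ (one checks, degree by degree, that no two of the resulting monomials $x_ix^m$ coincide, so there are no cancellations). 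Thus $V$ is identified with the space of skew matrices $A$ whose kernel contains the subspace $W\subset\C^{12}$ spanned by all exponent vectors of the monomials of $H_1,H_2,H_3$; this space has dimension $\binom{12-\dim W}{2}$, and the direct computation $\dim W=9$ then yields both $\dim V=3$ and the corank estimate used below.

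For $\nu$-invariance, I would use that $\nu^\star$ permutes $\{H_1,H_2,H_3\}$ (indeed $\nu^\star H_1=H_3$, $\nu^\star H_2=H_2$, $\nu^\star H_3=H_1$), so $\nu$ preserves the defining condition of $V$ and therefore acts linearly on the $3$-dimensional space $V$. Since the claim is that every structure is $\nu$-invariant, it suffices to verify that $\nu$ fixes each of the three basis structures, which is a direct check on the displayed brackets, pairing the index swaps $2j-1\leftrightarrow 2j$ with the sign patterns of the $a_{ij}$.

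Finally, for the rank statement, note that at a generic point all $x_i\ne0$, so the bracket matrix $\big(\{x_i,x_j\}\big)=\big(a_{ij}x_ix_j\big)=DAD$ with $D=\diag(x_1,\dots,x_{12})$ invertible; hence the rank of the Poisson structure equals $\rank A(\bs b)$. Because $\ker A\supseteq W$ with $\dim W=9$, we have $\rank A\le 12-9=3$, and since a skew matrix has even rank this forces $\rank A\le 2$. The parametrization $\bs b\mapsto A(\bs b)$ is injective, since, e.g., the brackets $\{x_7,x_9\}=-b_1x_7x_9$, $\{x_1,x_{11}\}=b_2x_1x_{11}$, $\{x_1,x_3\}=-b_3x_1x_3$ recover $b_1,b_2,b_3$; thus $A(\bs b)\ne0$ whenever $\bs b\ne(0,0,0)$, giving $\rank A\ge2$ and hence $\rank A=2$. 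The main obstacle is the first step: verifying by explicit computation that the linear skew-symmetry-plus-Casimir system has solution space of dimension exactly $3$ and that every solution is log-canonical (equivalently, that $\dim W=9$ and that no non-log-canonical bivector survives the Casimir conditions). Everything after that is formal.
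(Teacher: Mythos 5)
Your proposal is correct, and its load-bearing step coincides with the paper's entire proof: a computer-assisted solution of the linear system imposed on the coefficients $Q_{ij}^{kl}$ by skew-symmetry and the three Casimir conditions (the paper's proof is literally the single sentence asserting that this computation uniquely determines the displayed structures). What you add around that computation is genuinely different and more complete. First, you make explicit the point the paper uses tacitly: elements of $V$ must satisfy the Jacobi identity, and this is free only because the computed solutions are log-canonical. Second, your reformulation of the Casimir condition for log-canonical brackets --- $\{x_i,x^m\}=\bigl(\sum_j m_j a_{ij}\bigr)x_i x^m$ with no cancellations, hence the condition $\ker A\supseteq W$ --- reduces the log-canonical part of the classification to the computation $\dim W=9$, which is consistent with the paper's later assertion that the $\C$-span of $\log m_1,\dots,\log m_{20}$ is $9$-dimensional, and gives $\dim V=\binom{3}{2}=3$ conceptually rather than by brute force. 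Third, the paper's proof says nothing about the remaining claims of the theorem, namely $\nu$-invariance and rank $2$; your arguments --- equivariance of the defining conditions plus a check on the basis structures for $\nu$, and the factorization $(a_{ij}x_ix_j)=DAD$ together with the corank bound from $\ker A\supseteq W$, evenness of the rank of a skew matrix, and injectivity of $\bs b\mapsto A(\bs b)$ for the rank --- settle both cleanly with no further computation. The one caveat, which you state correctly yourself: the kernel reformulation applies only to log-canonical bivectors, so it cannot by itself exclude non-log-canonical solutions of the linear system; the computer verification in your first step remains indispensable, exactly as in the paper.
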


The Poisson structure with parameters $\bs b$ is denoted by $\{\,,\,\}_{\bs b}$.

\proof
A computer assistant calculation shows that the only requirements on a quadratic bracket $\{\,,\,\}$ to be skew-symmetric and have $H_1,H_2,H_3$ as Casimir elements  uniquely determines the Poisson structures above. 
 \endproof

Another computer assistant calculation shows that if 
    a polynomial Poisson structure $\{\,,\,\}$ on $\C^{12}$ has  $H_1, H_2, H_3$ as Casimir elements,
   then its Taylor expansion  at the origin, has to start with at least  quadratic terms.

\vsk.2>

\subsection{Braid group $\mc B_4$ action}

Given a Poisson bracket $\{\,,\,\}$ on $\C^{12}$ define the  Poisson bracket $\{\,,\,\}^{\tau_i}$ by
\[
\{f,g\}^{\tau_i}:=\tau_i^\star\{f\circ \tau_i^{-1}, g\circ \tau_i^{-1}\},\quad i=1,2,3.
\]
These formulas define a braid group $\mc B_4$ action on the space of Poisson structures on $\C^{12}$.

\begin{thm}
\label{thm b act}
The three-parameter family of Poisson structures $\{\,,\,\}_{\bs b}$ is invariant with 
respect to the braid group $\mc B_4$-action on the space of all Poisson structures. The induced braid group $\mc B_4$ action
$\rho$  on the space of parameters
$V$ is a vector representation defined by the formulas,
\bea
\tau_1\colon &&(b_1,b_2,b_3)\mapsto (b_1-b_2,\quad -b_2,\quad -b_3),
\\
\tau_2\colon &&(b_1,b_2,b_3)\mapsto (-b_1,\quad -b_1+b_2-b_3,\quad -b_3),
\\
\tau_3\colon &&(b_1,b_2,b_3)\mapsto (-b_1,\quad -b_2,\quad -b_2+b_3).
\eea
The representation $\rho$ factors through a representation of the symmetric group
$\frak S_4$, $\rho(\tau_i^2)=\id$, $i=1,2,3$.
The representation $\rho: \frak S_4 \to \on{GL}(V)$ is irreducible and is isomorphic to
the standard three-dimensional representation tensored with the $\on{sgn}$ representation.
\qed
\end{thm}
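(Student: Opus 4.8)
The plan is to split the statement into a computational part---that the three‑parameter family $\{\,,\,\}_{\bs b}$ is carried into itself by the $\mc B_4$‑action and that the induced action $\rho$ on the parameters is the displayed linear one---and a purely representation‑theoretic part---the factorization through $\frak S_4$ and the identification of the module. I would arrange the argument so that the explicit $3\times3$ matrices of $\rho(\tau_1),\rho(\tau_2),\rho(\tau_3)$ are pinned down first, and then used as the only input for everything afterwards.

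For the invariance, the key is that by Theorem~\ref{thm B1} the space $V$ is \emph{intrinsically} the space of all quadratic Poisson structures on $\C^{12}$ having $H_1,H_2,H_3$ as Casimir elements. Since each $\tau_i$ is a polynomial automorphism, the pullback $\{\,,\,\}^{\tau_i}$ of a Poisson structure is again Poisson, and $H_j$ is a Casimir of $\{\,,\,\}^{\tau_i}$ precisely when $H_j\circ\tau_i^{-1}$ is a Casimir of the original bracket. So I would first check, using $\nu^\star H_1=H_3$, $\nu^\star H_2=H_2$ and the explicit action of $\tau_1,\tau_2,\tau_3$ on the $x_k$, that $\mc B_4$ stabilizes the triple $\{H_1,H_2,H_3\}$ (up to relabeling); this preserves the Casimir condition. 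The genuinely delicate point---and what I expect to be the main obstacle---is that the $\tau_i$ are \emph{nonlinear}, so a priori $\{\,,\,\}^{\tau_i}_{\bs b}$ need be neither quadratic nor log‑canonical, and one must see that all higher‑degree contributions cancel. I would settle this by direct (computer‑assisted) evaluation of $\{x_a,x_b\}^{\tau_i}_{\bs b}$ on the coordinate pairs, which simultaneously confirms quadraticity and reads off the new parameters $\bs b'$ as the stated linear functions of $\bs b$. Because the $\mc B_4$‑action on the space of Poisson tensors is linear in the tensor, the induced maps $\rho(\tau_i)\colon V\to V$ are linear, and these evaluations determine them completely.

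With the matrices in hand, the next assertions are formal. Each displayed formula applied twice returns $(b_1,b_2,b_3)$ (for instance $\tau_1$ sends $b_1\mapsto b_1-b_2\mapsto (b_1-b_2)-(-b_2)=b_1$, and likewise for the other two entries), so $\rho(\tau_i)^2=\id$ for $i=1,2,3$. Since $\rho$ is by construction a representation of $\mc B_4$, the braid relations $\rho(\tau_i)\rho(\tau_{i+1})\rho(\tau_i)=\rho(\tau_{i+1})\rho(\tau_i)\rho(\tau_{i+1})$ and the far‑commutation $\rho(\tau_1)\rho(\tau_3)=\rho(\tau_3)\rho(\tau_1)$ hold automatically; adjoining the involutivity $\rho(\tau_i)^2=\id$ gives exactly the Coxeter presentation of $\frak S_4$, so $\rho$ descends to a representation of $\frak S_4$.

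Finally, to identify the $\frak S_4$‑module I would compute the character $\chi_\rho$ from the explicit matrices. One has $\chi_\rho(e)=3$ and, from $\rho(\tau_1)=\left(\begin{smallmatrix}1&-1&0\\0&-1&0\\0&0&-1\end{smallmatrix}\right)$, $\tr\rho(\tau_1)=-1$ on a transposition. Evaluating the traces of $\rho(\tau_1\tau_3)$, $\rho(\tau_1\tau_2)$ and $\rho(\tau_1\tau_2\tau_3)$ on representatives of the classes of double transpositions, $3$‑cycles and $4$‑cycles yields the character vector $(3,-1,-1,0,1)$; then $\langle\chi_\rho,\chi_\rho\rangle=1$ shows the module is irreducible, and comparison with the character table of $\frak S_4$ gives $\chi_\rho=\chi_{\mathrm{std}}\cdot\on{sgn}$, i.e.\ $\rho$ is the standard three‑dimensional representation tensored with the sign representation. (Equivalently, the value $-1$ on a transposition already distinguishes $\mathrm{std}\otimes\on{sgn}$ from $\mathrm{std}$, the only two three‑dimensional irreducibles of $\frak S_4$.)
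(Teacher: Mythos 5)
Your proposal is correct, and its computational core coincides with what the paper actually does: the theorem carries no written argument (it ends with the end-of-proof symbol, and, as with Theorem \ref{thm B1} and the surrounding remarks, the verification is computer-assisted), so checking that each $\tau_i$ preserves $H_1,H_2,H_3$, that the pullbacks $\{\,,\,\}^{\tau_i}_{\bs b}$ stay quadratic --- the genuine nonlinearity issue you correctly flag, since Theorem \ref{thm B1} then forces them back into the family $V$ --- and reading off $\bs b'$ is exactly the intended proof. Your formal tail is also sound: involutivity of the displayed maps together with the automatic braid relations yields the Coxeter presentation of $\frak S_4$, and your character values $(3,-1,-1,0,1)$ on the classes $(e,\ \text{transpositions},\ \text{double transpositions},\ 3\text{-cycles},\ 4\text{-cycles})$ are correct, giving $\langle\chi,\chi\rangle=1$ and the identification with $\mathrm{std}\otimes\on{sgn}$. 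The one genuine (if minor) divergence is in that last identification: the paper's following subsection instead identifies $V$ with the sum-zero hyperplane $\{\bs t\in\C^4 \mid \sum_i t_i=0\}$ via $(b_1,b_2,b_3)\mapsto b_1v_1+b_2v_2+b_3v_3$, under which $\rho(\tau_i)$ becomes $-1$ times the transposition $(i,i+1)$; this exhibits the isomorphism with $\mathrm{std}\otimes\on{sgn}$ (and its irreducibility) by an explicit intertwiner rather than by character theory. Both routes are standard; yours requires no change of basis but does require the trace computations, while the paper's makes the weight-lattice discussion that follows immediate.
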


By Theorem \ref{thm b act},
there is no a $\mc B_4$-invariant or $\mc B_4$-anti-invariant quadratic Poisson structure on $\C^{12}$, which has
$H_1,H_2,H_3$ as Casimir elements.

\begin{rem}
A computer assistant calculation shows that if  a log-canonical Poisson structure $\{\,,\,\}$ on $\C^{12}$ remains
to be log-canonical
 after the action on it  by any element of the
 braid group $\mc B_4$, then $\{\,,\,\}$ is one of the Poisson structures $\{\,,\,\}_{\bs b}$
in Theorem \ref{thm B1}.

\end{rem}

\subsection{Coefficients of $\{\,,\,\}_{\bs b}$ and elements of weight lattice} 
Consider $\C^4$ with  standard Euclidean quadratic form $(\,,\,)$. Denote 
$(1,-1,0,0)$, $(0,1,-1,0)$, $(0,0,1,-1) \in \C^4$ by $v_1,v_2,v_3$.
We identify the space of parameters $V$ with the subspace
$\{\bs t\in \C^4\ |\ \sum_{i=1}^4t_i=0\}$, by sending a point of $V$ with coordinates $(b_1,b_2,b_3)$
to the point $b_1v_1+b_2v_2+b_3v_3$. The vectors $v_1,v_2,v_3$ generate the {\it root lattice} in $V$.

\vsk.2>
For $i=1,2,3$, the linear map $\rho(\tau_i):V\to V$ permutes the $i$-th and $i+1$-st coordinates of vectors of $V$ and multiplies 
the vectors by $-1$.

\vsk.2>
The {\it weight lattice} in $V$ is the lattice of the elements $t=(t_1,t_2,t_3,t_4)\in \C^4$ such that
$\sum_{i=1}^4t_i=0$ and $(t, v_i)\in \Z$, $i=1,2,3$.
The weight lattice has a basis
$w_1=(3,-1,-1,-1)/4$, $w_2=(2,2,-2,-2)/4$, $w_3=(1,1,1,-3)/4$
with the property $(w_i,v_j) = \delta_{ij}$  for all $i,j$.

There are exactly 8 vectors of the weight lattice of square length 12/16,
\bean
\label{l12}
\pm w_1, \quad
\pm w_1\mp w_2, 
\quad
\pm w_2\mp w_3, \quad \pm w_3,
\eean
and there are exactly 6 vectors of the weight lattice of square length 1,
\bean
\label{l16}
\pm w_2,\quad, \pm w_1\mp w_2\pm w_3,\quad \pm w_1\mp w_3.
\eean
All other vectors of the root lattice are longer. These two groups of vectors form
two $\frak S_4$-orbits.

\vsk.2>
The scalar products of these  $14$ vectors with the vector $b_1v_1+b_2v_2+b_3v_3$ give us
the linear functions in $b_1,b_2,b_3$,
\bea
&&
\pm b_1, \quad
\pm b_1\mp b_2, 
\quad
\pm b_2\mp b_3, \quad \pm b_3,
\quad\pm b_2,\quad, \pm b_1\mp b_2\pm b_3,\quad \pm b_1\mp b_3.
\eea
These are exactly the linear functions appearing as coefficients of the Poisson structure
$\{\,,\,\}_{\bs b}$ of Theorem \ref{thm B1}.

\subsection{Casimir subalgebra} 
Denote by $\mc C$ the subalgebra of $\C[\bs x]$ generated by the following $20$ monomials:
\begin{align*}
m_{1}&=x_{1} x_{2}, & 
m_{2}&=x_{3} x_{4}, &
m_{3}&=x_{5} x_{6}, &
m_{4}&=x_{7} x_{8}, \\
m_{5}&=x_{9} x_{10}, &
m_{6}&=x_{11} x_{12}, &
m_{7}&=x_{2} x_{3} x_{8}, &
m_{8}&= x_{2} x_{5} x_{10}, \\
m_{9}&= x_{4} x_{5} x_{12}, &
m_{10}&= x_{8} x_{9} x_{12}, &
m_{11}&=x_{1} x_{4} x_{7}, &
m_{12}&=x_{1} x_{6} x_{9}, \\
m_{13}&= x_{3} x_{6} x_{11}, &
m_{14}&=x_{7} x_{10} x_{11} , &
m_{15}&= x_{2} x_{5} x_{8}x_{12}, &
m_{16}&=x_{2} x_{3}x_{10} x_{11}, \\
m_{17}&=x_{3} x_{6} x_{8} x_{9}, &
m_{18}&= x_{4} x_{5} x_{7}x_{10}, &
m_{19}&=x_{1} x_{4} x_{9}x_{12} , &
m_{20}&=x_{1}  x_{6} x_{7}x_{11}.&
\end{align*}
It is easy to see that the polynomials $H_1,H_2,H_3$ are elements of the subalgebra $\mc C$.

\begin{thm}
For every $\bs b$ each element of the subalgebra $\mc C$ is a Casimir element of the Poisson structure
$\{\,,\,\}_{\bs b}$.
The subalgebra $\mc C$ is $\nu$-invariant and the braid group $\mc B_4$ action invariant.

\end{thm}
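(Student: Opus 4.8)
The statement bundles three claims, which I will dispatch in order: (a) every element of $\mc C$ is a Casimir element of $\{\,,\,\}_{\bs b}$ for all $\bs b$; (b) $\nu^\star\mc C=\mc C$; and (c) $\tau_i^\star\mc C=\mc C$ for $i=1,2,3$. Each reduces to a finite verification on the twenty generators $m_1,\dots,m_{20}$. For (a), I would first use that the Poisson bracket is a biderivation: for fixed $\bs b$ and fixed $g$ the map $f\mapsto\{f,g\}_{\bs b}$ is a derivation, so the Casimir elements form a subalgebra, and it suffices to show each $m_k$ is a Casimir. Since $\{m_k,\cdot\}_{\bs b}$ is again a derivation and the $x_j$ generate $\C[\bs x]$, it is enough to check $\{m_k,x_j\}_{\bs b}=0$ for every coordinate. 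Writing the log-canonical bracket of Theorem \ref{thm B1} as $\{x_i,x_j\}_{\bs b}=a_{ij}(\bs b)\,x_ix_j$ with $a_{ij}$ antisymmetric and linear in $\bs b$, and $m_k=\prod_i x_i^{e_i}$, one obtains $\{m_k,x_j\}_{\bs b}=\big(\sum_i e_i\,a_{ij}(\bs b)\big)\,m_kx_j$. Thus $m_k$ is a simultaneous Casimir exactly when its exponent vector $e$ is annihilated by each of the three integer antisymmetric matrices $A^{(1)},A^{(2)},A^{(3)}$ read off from the coefficients of $b_1,b_2,b_3$ in Theorem \ref{thm B1}. This is a finite linear-algebra check; for instance $m_7=x_2x_3x_8$ requires $a_{2j}+a_{3j}+a_{8j}=0$ for all $j$, which collapses to telescoping cancellations of the linear forms $\pm b_i$ and $\pm(b_i-b_j)$ occurring in the bracket.

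For (b) the plan is to observe directly that $\nu$, which swaps $x_{2j-1}\leftrightarrow x_{2j}$, permutes the twenty generators. It fixes each of $m_1,\dots,m_6$, interchanges the two octets of cubic generators by $m_7\leftrightarrow m_{11}$, $m_8\leftrightarrow m_{12}$, $m_9\leftrightarrow m_{13}$, $m_{10}\leftrightarrow m_{14}$, and interchanges the quartics by $m_{15}\leftrightarrow m_{20}$, $m_{16}\leftrightarrow m_{19}$, $m_{17}\leftrightarrow m_{18}$. Since $\nu^\star$ is an algebra involution carrying generators to generators, it preserves $\mc C$; this is immediate once the table above is recorded.

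For (c) there are two routes. The conceptual one rests on Theorem \ref{thm b act}: because $\{\,,\,\}^{\tau_i}_{\bs b}=\{\,,\,\}_{\rho(\tau_i)\bs b}$ with $\rho(\tau_i)$ an invertible linear map on the parameter space, the algebra $\mc C^{\mathrm{sim}}$ of elements simultaneously Casimir for all $\bs b$ is stable under the automorphism $\tau_i^\star$; combined with (a) this yields $\mc B_4$-invariance provided one also shows $\mc C=\mc C^{\mathrm{sim}}$, i.e. that the submonoid of nonnegative exponent vectors killed by $A^{(1)},A^{(2)},A^{(3)}$ is generated by the twenty vectors of the $m_k$. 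The more self-contained route is direct: verify $\tau_i^\star(m_k)\in\mc C$ for each $i$ and $k$, which gives $\tau_i^\star(\mc C)\subseteq\mc C$, and then equality since $\tau_i^\star$ is an automorphism (and by the companion inclusion for the inverse braids, or by pairing with the $\nu$-symmetry of (b)). I expect this direct verification to be the main obstacle, because the $\tau_i^\star$ are nonlinear, e.g. $\tau_2^\star x_1=x_3-x_1x_7$, so each image must be expanded and then reassembled in the generators; the mechanism is illustrated by
\[
\tau_2^\star(m_1)=(x_3-x_1x_7)(x_4-x_2x_8)=m_2-m_7-m_{11}+m_1m_4\in\mc C,
\]
where the quartic cross term recombines as the product $m_1m_4$. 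The already-recorded facts that $H_1,H_2,H_3\in\mc C$ and that these are $\nu$- and $\mc B_4$-compatible (Theorems \ref{thm B1} and \ref{thm b act}) serve as consistency checks that no generator escapes $\mc C$ under these moves.
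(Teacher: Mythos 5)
Your proposal is correct and matches the paper's own argument, which is exactly the direct verification you describe: checking each of the twenty generators $m_k$ is a Casimir, observing that $\nu$ permutes the generators, and computing $\tau_i^\star m_k$ and recombining in the generators (the paper's sample identity $\tau_1^\star m_2=-m_{11}+m_1m_2+m_4-m_7$ is of precisely the same form as your $\tau_2^\star(m_1)=m_2-m_7-m_{11}+m_1m_4$). Your exponent-vector formalization of the Casimir check and your care about inverses are clean packagings of what the paper leaves as ``direct verification,'' but the route is the same.
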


\begin{proof} 
The theorem is proved by direct verification.
For example,  easy calculations lead to formulas like
\bea
\tau_1^\star m_2
&=&
-m_{11}+m_{1} m_{2}+m_{4}-m_{7},
\\
\tau_3^\star m_{18}&=&
m_{17}-m_{10} m_2-m_{13} m_4+m_2 m_4 m_6 ,
\eea
which prove the braid group invariance of  $\mc C$.
\end{proof}

\subsection{Symplectic leaves}
Since $\{\,,\,\}_{\bs b}$ is of rank 2, the symplectic leaves of  $\{\,,\,\}_{\bs b}$ are two-dimensional.
In logarithmic coordinates $\log x_i$, $i=1,\dots,12$, they  are two-dimensional
affine subspaces.
More precisely, we have the following statement.

\begin{thm}
Given $(b_1,b_2,b_3)\ne (0,0,0)$, then  the function 
\bea
C_{\bs b}(\bs x) = (b_1-b_2)\log x_1+(b_2-b_3)\log x_3+b_3\log x_5
\eea
is a Casimir element of $\{\,,\,\}_{\bs b}$; the $\C$-span of  $C_{\bs b}$ 
and the functions $\log m_i$, $i=1,\dots, 20$, 
is $10$-dimensional, while the  $\C$-span of  the functions $\log m_i$, $i=1,\dots, 20$, 
is $9$-dimensional.
\qed

\end{thm}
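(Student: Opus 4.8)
The plan is to pass to logarithmic coordinates $\xi_k=\log x_k$ on the open torus $\{x_1\cdots x_{12}\neq 0\}$, where a log-canonical structure becomes constant-coefficient. Writing $\{x_i,x_j\}_{\bs b}=a_{ij}(\bs b)\,x_ix_j$ with the $a_{ij}$ read off from Theorem~\ref{thm B1}, one has $\{\xi_i,\xi_j\}_{\bs b}=a_{ij}(\bs b)$, a translation-invariant bracket. A linear function $L=\sum_k c_k\xi_k$ is then Casimir precisely when $\sum_k c_k a_{kj}(\bs b)=0$ for all $j$, i.e.\ when its coefficient vector lies in $\ker A(\bs b)$, where $A(\bs b)=(a_{ij}(\bs b))$. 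Since every generator $m_i$ of the Casimir subalgebra satisfies $\{m_i,x_j\}_{\bs b}=0$, each $\log m_i=\sum_k e_{ik}\xi_k$, with $e_{ik}$ the exponent of $x_k$ in $m_i$, is a linear Casimir function; likewise $C_{\bs b}$ has coefficient vector $\mathbf c(\bs b)=(b_1-b_2,0,b_2-b_3,0,b_3,0,\dots,0)$. Thus all three assertions become linear-algebra statements about the exponent vectors $\mathbf e_1,\dots,\mathbf e_{20}\in\Z^{12}$ and the vector $\mathbf c(\bs b)$.

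First I would verify that $C_{\bs b}$ is Casimir by checking $(b_1-b_2)a_{1j}+(b_2-b_3)a_{3j}+b_3a_{5j}=0$ for every $j=1,\dots,12$, reading $a_{1j},a_{3j},a_{5j}$ from the bracket table. Each identity collapses after a common factor is extracted; for instance at $j=7$ it reads $b_3\big[-(b_1-b_2)-(b_2-b_3)+(b_1-b_3)\big]=0$, and at $j=9$ it reads $(b_2-b_3)\big[(b_1-b_2)-(b_1-b_2+b_3)+b_3\big]=0$. These telescoping cancellations reflect that the coefficients of $\{\cdot,\cdot\}_{\bs b}$ are the pairings of weight-lattice vectors with $b_1v_1+b_2v_2+b_3v_3$, so the computation is short and uniform.

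For the two dimension counts, the span of $\log m_1,\dots,\log m_{20}$ has dimension $\rank E$, where $E$ is the $20\times12$ integer matrix with rows $\mathbf e_i$, and $\rank E=12-\dim\ker E$. I would compute $\ker E$ directly: $m_1,\dots,m_6$ force $v_{2j}=-v_{2j-1}$, then $m_7,\dots,m_{10}$ express $v_3,v_5,v_{11},v_{12}$ in terms of three free parameters $(v_1,v_7,v_9)$, while $m_{11},\dots,m_{20}$ are then automatically satisfied. Hence $\ker E$ is spanned by
\[
\bs u^{(1)}=(1,-1,1,-1,1,-1,0,0,0,0,0,0),\quad
\bs u^{(2)}=(0,0,1,-1,0,0,1,-1,0,0,-1,1),
\]
\[
\bs u^{(3)}=(0,0,0,0,1,-1,0,0,1,-1,1,-1),
\]
so $\dim\ker E=3$ and $\rank E=9$, which is the third assertion. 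For the second assertion I would use that the row span of $E$ equals $(\ker E)^{\perp}$ with respect to the standard bilinear form: pairing $\mathbf c(\bs b)$ against the basis gives $\langle\mathbf c,\bs u^{(1)}\rangle=b_1$, $\langle\mathbf c,\bs u^{(2)}\rangle=b_2-b_3$, and $\langle\mathbf c,\bs u^{(3)}\rangle=b_3$. These vanish simultaneously only when $\bs b=0$, so for $\bs b\neq0$ the vector $\mathbf c(\bs b)$ is not in the row span of $E$, and the span of $C_{\bs b}$ together with the $\log m_i$ has dimension $9+1=10$.

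The computation is entirely mechanical; the only conceptual step is the logarithmic-coordinate dictionary turning ``Casimir'' into ``lies in $\ker A(\bs b)$'' and ``dimension of a span of monomials'' into ``rank of an exponent matrix.'' I expect the only mild obstacle to be organizing the twelve Casimir checks for $C_{\bs b}$ and the null-space computation for $E$ so that the automatic cancellations are transparent. Once $\ker E$ is in hand the jump from $9$ to $10$ is immediate from the three pairings above, and this is consistent with the rank-$2$ statement of Theorem~\ref{thm B1}: the full space of linear Casimir functions is $10$-dimensional, and it is exactly what $C_{\bs b}$ and the $\log m_i$ together span.
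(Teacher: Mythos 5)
Your proposal is correct, and every numerical claim in it checks out against the bracket table of Theorem \ref{thm B1}: the twelve identities $(b_1-b_2)a_{1j}+(b_2-b_3)a_{3j}+b_3a_{5j}=0$ all collapse by the telescoping cancellations you describe; the kernel of the $20\times 12$ exponent matrix $E$ is exactly the span of your $\bs u^{(1)},\bs u^{(2)},\bs u^{(3)}$ (the constraints from $m_{11},\dots,m_{20}$ are indeed automatic once $m_1,\dots,m_{10}$ are imposed); and the pairings $\langle\mathbf c(\bs b),\bs u^{(i)}\rangle=b_1,\ b_2-b_3,\ b_3$ vanish simultaneously only at $\bs b=0$, giving the jump from dimension $9$ to $10$.

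Your route differs from the paper's in that the paper gives no written argument at all: the theorem is stated with an immediate \textrm{qed}, consistent with the surrounding appendix where such facts are settled by direct, computer-assisted verification. What your approach buys is a hand-checkable proof with a transparent structure: the logarithmic-coordinate dictionary reduces ``Casimir'' to membership in $\ker A(\bs b)$ and reduces the dimension counts to the rank of an integer exponent matrix, and producing an explicit basis of $\ker E$ certifies both $\operatorname{rank}E=9$ and (via $\operatorname{rowsp}(E)=(\ker E)^{\perp}$, valid for the nondegenerate standard bilinear form over $\C$) the non-membership of $\mathbf c(\bs b)$ for $\bs b\neq 0$. Your closing consistency remark is also sound: since the structure has rank $2$, linear Casimirs in the $\xi$-coordinates form a $10$-dimensional space, and your computation shows $C_{\bs b}$ together with the $\log m_i$ spans exactly that space. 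One cosmetic point: your phrasing that $m_7,\dots,m_{10}$ ``express $v_3,v_5,v_{11},v_{12}$'' is slightly loose ($v_{11}$ comes from $m_6$ via $v_{11}=-v_{12}$, and $m_9,m_{10}$ give the same relation for $v_{12}$), but this does not affect the count.
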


Hence the symplectic leaves of $\{\,,\,\}_{\bs b}$ are the surfaces, on which
the functions of this $10$-dimensional $\C$-span are constant. In particular,
the leaves do depend on $\bs b$.

We may also conclude that  $x_1^{b_1-b_2}x_3^{b_2-b_3}x_5^{b_3}$ is
a  Casimir element of $\{\,,\,\}_{\bs b}$
functionally independent of the Casimir elements $m_i$, $i=1,\dots, 20$.

\bigskip

\end{document}